\documentclass{amsart}


\usepackage{amsmath,amstext,amssymb,mathrsfs,amscd,amsthm,relsize,stmaryrd}
\usepackage{mathtools}
\usepackage{amsfonts}
\usepackage[all,cmtip]{xy}
\usepackage{booktabs}
\usepackage{verbatim}
\usepackage[shortlabels]{enumitem}
\usepackage[utf8]{inputenc}
\usepackage[bookmarks=false]{hyperref}
\usepackage{color}
\usepackage{graphicx}
\usepackage{subfig}
\usepackage{xspace}

\newtheorem{lemma}{Lemma}[section]
\newtheorem{proposition}[lemma]{Proposition}
\newtheorem{theorem}[lemma]{Theorem}

\newtheorem{atheorem}{Theorem}
\newtheorem{acorollary}[atheorem]{Corollary}

\newtheorem{corollary}[lemma]{Corollary}

\newtheorem{predf}[lemma]{Definition} 
\newenvironment{df}{\begin{predf}\rm}{\end{predf}}
\newtheorem{preremark}[lemma]{Remark}  
\newenvironment{remark}{\begin{preremark}\rm}{\end{preremark}}
\newtheorem{preremark0}[lemma]{Remark}  

\newtheorem{preexample}[lemma]{Example}
\newenvironment{example}{\begin{preexample}\rm}{\end{preexample}}
\newtheorem*{prenotation}{Notation}
\newenvironment{notation}{\begin{prenotation}\rm}{\end{prenotation}}
\newtheorem*{preproblem}{Problem}

\newtheorem{prequestion}[lemma]{Question}

\newtheorem*{prewarning}{Warning}
\newenvironment{warning}{\begin{prewarning}\rm}{\end{prewarning}}
\numberwithin{equation}{section}

\newcommand\cupi[1]{cup-$#1$\xspace}
\newcommand\undersumtwo[3]{\displaystyle\sum_{\substack{ #1 \\ #2}}{#3}}
\newcommand\undersumthree[4]{\sum_{\substack{ #1 \\ #2 \\ #3}}{#4}}

\newcommand\undercoprodthree[4]{\coprod_{\substack{ #1 \\ #2 \\ #3 }}{#4}}
\def\even{{\text{even}}}
\def\odd{{\text{odd}}}

\def\sq{\mathrm{Sq}}
\def\Sq{\mathfrak{sq}}
\def\ansob{an augmented semi-simplicial object in the Burnside category\xspace}
\def\sob{augmented semi-simplicial object in the Burnside category\xspace}
\def\sobs{augmented semi-simplicial objects in the Burnside category\xspace}
\def\Sobs{Augmented semi-simplicial objects in the Burnside category\xspace}
\def\osob{ordered augmented semi-simplicial object in the Burnside category\xspace}
\def\osobs{ordered augmented semi-simplicial objects in the Burnside category\xspace}

\newcommand{\smallwedge}{\circ}

\def\op{\mathrm{op}}
\def\source{\mathrm{source}}
\def\target{\mathrm{target}}
\def\Tot{\operatorname{Tot}}
\def\Setp{\mathbf{Set}_\bullet}
\def\Set{\mathbf{Set}}

\def\Kh{Kh}
\makeatletter
\renewcommand{\boxed}[1]{\text{\fboxsep=.2em\fbox{\m@th$\displaystyle#1$}}}
\makeatother
\newcommand{\cube}[1]{\mathbf{2^{#1}}}
\newcommand{\cubeop}[1]{\left(\cube{#1}\right)^{\op}}

\def\Deltainj{\Delta_{\mathrm{inj}}}
\def\Deltainjaug{\Delta_{\mathrm{inj*}}}
\def\DoldKan{Moore\xspace}
\def\DK{M}
\def\Power{P}
\newcommand\abelianisation[1]{$#1$-linearization}
\newcommand\caja[1]{\setlength{\fboxrule}{0pt}\boxed{#1}}
\newcommand\btoabfunctor[1]{\cA_{#1}}
\newcommand\btoab[2]{\cA_{#1}\left(#2\right)}
\def\igualdos{\equiv}
\def\igual{=}

\newcommand\presh[2]{{#2}^{#1^\op}}
\newcommand\preshcube[2]{{#2}^{(#1)^\op}}
\newcommand\Mod[1]{\text{$#1$-Mod}}
\def\Ch{\mathrm{Ch}}
\def\Sp{\mathbf{Sp}}

\def\APower{\mathcal{P}}
\def\udiamond{\vartriangle}
\def\bdiamond{\triangledown}
\def\diamond{\lozenge}

\def\oneU{\dot{U}}
\def\twoU{\ddot{U}}
\def\twoV{\ddot{V}}

\def\sign{\sigma}
\def\bwedge{\mathbin{\bar{\wedge}}}
\def\pwedge{\mathbin{||}}
\def\ind{\mathrm{ind}}

\def\sd{\operatorname{sd}}
\newcommand\edge[2]{\vect{e}(#1,#2)}

\newcommand{\maxstrong}[1]{h^s_{#1}}
\newcommand{\maxweak}[1]{h^w_{#1}}
\newcommand{\minstrong}[1]{\ell^s_{#1}}
\newcommand{\minweak}[1]{\ell^w_{#1}}
\newcommand{\almaxweak}[1]{m^w_{#1}}
\def\semipositive{semi-positive\xspace}
\def\seminegative{semi-negative\xspace}
\def\semiparallel{semi-parallel\xspace}
\def\W{\boldsymbol{W}}
\def\WW{\mathbb{W}}
\def\weak{\mathrm{weak}}
\def\strong{\mathrm{strong}}
\def\m{n_1}
\def\bnabla{\bar{\nabla}}

\def\cerodos{{012}}
\def\cerouno{{01}}
\def\bbpartial{\bar{\bar{\partial}}}
\def\bpartial{\bar{\partial}}
\def\bmu{\bar{\mu}}

\def\very{very\xspace}
\def\wellordered{well-ordered\xspace}

\newcommand\gen[1]{\mathbf{#1}}
\def\z{\gen{z}}
\def\y{\gen{y}}
\def\x{\gen{x}}



\newcommand{\bC}{\mathbb{C}}

\newcommand{\bF}{\mathbb{F}}

\newcommand{\bR}{\mathbb{R}}
\newcommand{\bS}{\mathbb{S}}

\newcommand{\bW}{\mathbb{W}}

\newcommand{\bZ}{\mathbb{Z}}

\newcommand{\cA}{\mathcal{A}}
\newcommand{\cB}{\mathcal{B}}
\newcommand{\cC}{\mathcal{C}}
\newcommand{\cD}{\mathcal{D}}

\newcommand{\cF}{\mathcal{F}}

\newcommand{\cO}{\mathcal{O}}

\newcommand{\cS}{\mathcal{S}}

\newcommand{\cX}{\mathcal{X}}

\newcommand\lra{\longrightarrow}
\newcommand\lla{\longleftarrow}
\newcommand\la{\leftarrow}

\newcommand{\vect}[1]{\boldsymbol{#1}}

\renewcommand{\geq}{\geqslant}
\renewcommand{\leq}{\leqslant}
\def\Id{\mathrm{Id}}
\def\map{\operatorname*{Map}}

\usepackage{mathrsfs}
\newcommand\mnote[1]{}

\title{Higher Steenrod squares for Khovanov homology}
\author{Federico Cantero Mor\'an}
\thanks{The author would like to thank the Isaac Newton Institute for Mathematical Sciences for support and hospitality during the programme Homotopy Harnessing Higher Structures and the Biblioteca Miguel Gonz\'alez Garc\'es at A Coru\~{n}a, where work on this paper was undertaken. The author was supported by EPSRC grant number EP/R014604/1, by project MTM2016-76453-C2 (AEI/FEDER, UE), and by the Spanish Ministry of Economy and Competitiveness through the Mar\'ia de Maeztu Programme for Units of Excellence in R\&D (MDM-2014-0445).}
\email{federico.j.cantero@gmail.com}
\address{Departament de Matem\`atiques i Inform\`atica, Universitat de Barcelona, Gran via de les corts catalanes, 585, 08007 Barcelona}
\begin{document}
\begin{abstract} We describe stable cup-$i$ products on the cochain complex with $\bF_2$~coefficients of any augmented semi-simplicial object in the Burnside category. An example of such an object is the Khovanov functor of Lawson, Lipshitz and Sarkar. Thus we obtain explicit formulas for cohomology operations on the Khovanov homology of any link. 
\end{abstract}
\maketitle
\section{Introduction}\label{section:intro}
In 2014 \cite{LS}, Lipshitz and Sarkar, using framed flow categories, defined a new invariant of knots and links valued in spectra that refined Khovanov homology: they associated to each link a cellular spectrum $\cX$ whose cellular cochain complex was the Khovanov complex, and so its cohomology was the Khovanov homology of the link. As a consequence, Khovanov homology became endowed with stable operations, such as Steenrod squares when cohomology is taken with coefficients in the field $\bF_2$ with two elements. 

Shortly after, Lipshitz and Sarkar \cite{LS-Steenrod} were able to give a combinatorial formula for the second Steenrod square on the Khovanov homology of any link $L$ 
\[\sq^2\colon \Kh^*(L;\bF_2)\lra  \Kh^{*+2}(L;\bF_2),\]  
in terms of the Khovanov complex and an extra datum called ladybug matching. They also showed (see also \cite{Seed}) that $\sq^2$ distinguishes some pairs of knots that are not distinguished by Khovanov homology. 

Three years later, together with Lawson \cite{LLS2015}, they gave two new constructions of Khovanov spectra that simplified the original construction of Lipshitz and Sarkar. 
 In their second construction, they associated to each link diagram $D$ a strictly unital lax $2$-functor $\cF_D$ from a cube poset to the Burnside $2$-category, and associated a realisation spectrum $|\cF_D|$ to each such $2$-functor. The spectrum $|\cF_D|$ is homotopy equivalent to the spectrum $\cX$ first constructed by Lipshitz and Sarkar.
This construction was revisited in \cite{LS-refinements} and in \cite{LLS-cube}, where they asked the following question: 

%
\begin{quotation}
\emph{Are there nice formulations of the action of the Steenrod algebra on $\Kh^*(L;\bF_2)$, purely in terms of the Khovanov functor to the Burnside category?}
\end{quotation}
%


\subsection*{Symmetric multiplications} In order to make the question concrete, we introduce the following nice formulation of Steenrod squares on a cochain complex: 
A \emph{symmetric multiplication} on a cochain complex $(C^*,d)$ of $\bF_2$-modules is a family of operations
\[\smile_i\colon C^p\otimes C^q\lra C^{p+q-i}, \quad i\in \bZ,\]
satisfying that 
\begin{align}
\label{eq:1} \alpha\smile_{i} \beta &= 0 &&\text{ for $i<0$,}\\
\label{eq:2} d(\alpha\smile_i \beta) &= d\alpha\smile_{i} \beta + \alpha\smile_{i} d\beta + \alpha\smile_{i-1} \beta + \beta\smile_{i-1} \alpha &&\text{ for all $i$.}
\end{align}
Such structure endows the cohomology groups of the cochain complex with \emph{Steenrod squares}, which are operations
\[\Sq^i\colon H^n(C^*)\lra H^{n+i}(C^*), \quad \Sq^i([\alpha]) = [\alpha\smile_{n-i} \alpha], \]
defined for $i\geq 0$. As a consequence of \eqref{eq:1}, $\Sq^i([\alpha]) = 0$ if $i>n$ and the $0$th operation $\smile_0$ gives a well-defined graded multiplication on the cohomology of $(C^*,d)$.

The prominent example of these structures appears in the normalised cochain complex $N^*(Y_\bullet;\bF_2)$ of a simplicial set $Y_\bullet$, which becomes endowed with a symmetric multiplication using the cup-$i$ product formulas of Steenrod \cite{Steenrod}. 

The normalisation process in the construction of $N^*(X_\bullet;\bF_2)$ is done in two steps: first, kill the image of the degeneracies of the simplicial set $Y_\bullet$ thus obtaining a semi-simplicial set $X_\bullet$ (a simplicial set without degeneracies), and then take the dual of the chain complex $C_*(X_\bullet;\bF_2)$ of alternating sums of face maps on the semi-simplicial set $X_\bullet$. The \cupi{i}products are defined out of the semi-simplicial structure and involve only face maps, so the cochain complex of any semi-simplicial set is also enhanced with \cupi{i}products.

As an example, here are formulas for $\smile_0$ and $\Sq^1$. If $\alpha\in C^p(X_\bullet;\bF_2)$ and $\beta\in C^q(X_\bullet;\bF_2)$, then $\alpha\smile_0\beta\in C^{p+q}(X_\bullet;\bF_2)$ is the Alexander--Whitney product of $\alpha$ and $\beta$, whose value on a chain $\sigma\in C_{p+q}(X_\bullet;\bF_2)$ is
\[(\alpha\smile_0 \beta)(\sigma) = \alpha\left(\partial_{p+1}\cdots\partial_{p+1}\sigma\right)\cdot \beta\left(\partial_{0}\cdots \partial_0 \sigma\right).\]
If $\alpha\in C^{n}(X_\bullet;\bF_2)$ is a cocycle, then the first Steenrod square $\Sq^1([\alpha])$ of $[\alpha]$ can be computed as $[\alpha\smile_{n-1} \alpha]$, which is defined as 
\[(\alpha\smile_{n-1} \alpha)(\sigma) = \undersumtwo{j<k}{j,k\text{ even}}{\alpha(\partial_{j}\sigma)\cdot \alpha(\partial_k\sigma)} + \undersumtwo{j>k}{j,k\text{ odd}}{\alpha(\partial_{j}\sigma)\cdot \alpha(\partial_k\sigma)}.\]

On the other hand, Steenrod squares on a topological space $X$ can be defined as natural transformations $\sq^i\colon  H^*(X;\bF_2)\to H^{*+i}(X;\bF_2)$ that satisfy certain axioms. The fact that the Steenrod squares $\Sq^i$ for a simplicial set $X_\bullet$ that arise from the cup-$i$ products coincide with the axiomatic Steenrod squares $\sq^i$ for the topological space $|X_\bullet|$ is not immediate, and uses the singular chain functor of Eilenberg \cite{eilenberg1944} to compare the cohomology operations in both settings.

Observe also that the simplicial structure is crucial to define the symmetric multiplication. In contrast, the cellular cochain complex of a CW-complex does not have in general a symmetric multiplication. 

\subsection*{Stable symmetric multiplications} Condition \eqref{eq:1} above implies that $\smile_0$ is a well-defined product on cohomology, but is not necessary for the definition of the Steenrod squares. A \emph{stable symmetric multiplication} on a cochain complex $C^*$ of $\bF_2$-modules is a family of operations
\[\smile_i\colon C^p\otimes C^q\lra C^{p+q-i}, \quad i\in \bZ,\]
satisfying \eqref{eq:2}. Such a structure gives again operations
\[\Sq^i\colon H^n(C^*)\lra H^{n+i}(C^*), \quad \sq^i([\alpha]) = [\alpha\smile_{n-i} \alpha] \]
defined for $i\geq 0$. 

In Section \ref{ssection:Khovanov-intro} we explain how to associate to the Khovanov functor $\cF_D$ of Lawson, Lipshitz and Sarkar \ansob $X_\bullet$ whose cochain complex $C^*(X;\bF_2)$ is an iterated suspension of the Khovanov complex. These objects are defined in Section \ref{section:techniques}, and are generalisations of augmented semi-simplicial sets, where face maps $\partial_i\colon X_n\to X_{n-1}$ are replaced by zig-zags $X_{n}\la Q^n_i\to X_{n-1}$ plus some higher categorical data. When each map $X_n\la Q^n_i$ is a bijection one recovers the concept of augmented semi-simplicial set, and if additionally the augmentation is trivial, one recovers the concept of semi-simplicial set. An order on a \sob is a choice of order on each $Q^n_i$ and on the higher categorical data. 


\begin{atheorem} The cochain complex of an \osob has a natural stable symmetric multiplication, i.e., there are explicit operations
\[\smile_i\colon C^p(X_\bullet;\bF_2)\otimes C^q(X_\bullet;\bF_2)\lra C^{p+q-i}(X_\bullet;\bF_2), \quad i\in \bZ\]
satisfying \eqref{eq:2}, and, for every free order-preserving map $f\colon X_\bullet\to Y_\bullet$, 
\[f^*(\alpha\smile_i \beta) = f^*(\alpha)\smile_i f^*(\beta).\]
If $X_\bullet$ is a semi-simplicial set, then these operations are the Steenrod cup-$i$ products.
\end{atheorem}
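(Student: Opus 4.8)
The plan is to transport Steenrod's cup-$i$ formulas from semi-simplicial sets to the Burnside category, and then to verify in turn the coboundary identity \eqref{eq:2}, naturality under free order-preserving maps, and the comparison with the semi-simplicial case.

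First I recall from Section~\ref{section:techniques} that the cochain complex $C^*(X_\bullet;\bF_2)$ of an \osob is the $\bF_2$-linearization of $X_\bullet$ --- with differential the sum of the linearized face spans $X_n\la Q^n_i\to X_{n-1}$, a span $S\la U\to T$ being sent to $\bF_2[S]\to\bF_2[T]$, $s\mapsto\sum_{u\mapsto s}\mathrm{target}(u)$ --- dualized, up to a shift recording the augmentation; write $C_*(X_\bullet;\bF_2)$ for the underlying chain complex. The key point --- already responsible for $C_*(X_\bullet;\bF_2)$ being a complex, and which I would isolate as a lemma --- is that the coherent system of $2$-isomorphisms carried by an \osob forces any two composites of face spans that agree, as composites of face maps in the semi-simplicial category, to induce \emph{equal} $\bF_2$-linear maps after linearization. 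Granting this, recall that on a semi-simplicial set Steenrod's cup-$i$ product is $\alpha\smile_i\beta=(\alpha\otimes\beta)\circ\Delta_i$ for a cup-$i$ coproduct
\[\Delta_i\colon C_m(X_\bullet;\bF_2)\lra\bigoplus_{p+q=m+i}C_p(X_\bullet;\bF_2)\otimes C_q(X_\bullet;\bF_2)\]
that is a sum of tensor products of iterated front and back faces of a simplex, indexed by the combinatorial set of alternating front/back decompositions (the strong/weak order data of Section~\ref{section:techniques}). I would define the Burnside-category $\Delta_i$ by replacing each iterated face map in the classical formula by the corresponding composite span --- composition of spans being iterated pullback --- assembling these into a single span $X_m\la\cR^m_i\to\bigsqcup_{p+q=m+i}X_p\times X_q$, linearizing, and setting $\alpha\smile_i\beta:=(\alpha\otimes\beta)\circ\Delta_i$ (and $\smile_i:=0$ for $i<0$). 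Unwound, $(\alpha\smile_i\beta)(\sigma)$ becomes a sum over the components of $\cR^m_i$ of products of values of $\alpha$ and $\beta$ on components of iterated pullbacks of face spans, so the resulting formulas are as explicit as Steenrod's. The order on $X_\bullet$ is what makes the composite spans, the span $\cR^m_i$, and the indexing of its summands canonical; concretely $\cR^m_i$ should be cut out of a subdivision $\sd$ of $X_\bullet$ using the diamond operations $\udiamond,\bdiamond$ of Section~\ref{section:techniques}.

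The identity \eqref{eq:2} is the $\bF_2$-dual of the assertion that $\Delta_i$ is a chain homotopy from $\Delta_{i-1}$ to $\tau\circ\Delta_{i-1}$, that is, $\partial\circ\Delta_i+\Delta_i\circ\partial=(1+\tau)\circ\Delta_{i-1}$ over $\bF_2$, where $\partial$ denotes the differential on $C_*(X_\bullet;\bF_2)$ --- or on its tensor square, as the domain dictates --- and $\tau$ transposes the two tensor factors. On a semi-simplicial set this is Steenrod's combinatorial identity: the iterated face maps occurring on the two sides cancel in pairs --- each pair related by a single simplicial identity --- and the terms that survive are exactly $(1+\tau)\Delta_{i-1}$. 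By the lemma each such cancellation descends, since every composite of face spans that appears is a composite of face maps in the semi-simplicial category, so the two members of a pair induce the same $\bF_2$-linear map. The main obstacle is the lemma itself: one must show that the coherence of the lax structure of an \osob really supplies an identification of composite face spans for \emph{every} reassociation Steenrod's argument uses, and that these identifications are mutually consistent --- independent of the chain of $2$-isomorphisms chosen --- so that the bookkeeping is unambiguous. This coherence, together with the role of the order in pinning down which composite span and which identification is meant, is exactly what the machinery of Section~\ref{section:techniques} is built to provide.

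Naturality is then essentially formal. A free order-preserving map $f\colon X_\bullet\to Y_\bullet$ linearizes to a chain map $f^*\colon C^*(Y_\bullet;\bF_2)\to C^*(X_\bullet;\bF_2)$; because $\cR^m_i$ is built functorially from the face spans and the order data, and freeness of $f$ guarantees that the pullbacks used to compose $f$ with those spans do not collapse components, the spans $\cR^{Y,m}_i\circ f$ and $(f\times f)\circ\cR^{X,m}_i$ are isomorphic, hence induce the same linear map, and pairing with $\alpha\otimes\beta$ yields $f^*(\alpha\smile_i\beta)=f^*\alpha\smile_i f^*\beta$. Finally, when $X_\bullet$ is a semi-simplicial set every face span is an honest function, every composite span is the composite function, and all $2$-isomorphisms are identities, so $\cR^m_i$ recovers Steenrod's combinatorial cup-$i$ indexing, $\Delta_i$ is the Steenrod cup-$i$ coproduct, and the $\smile_i$ are the Steenrod cup-$i$ products, as asserted.
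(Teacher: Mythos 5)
There is a genuine gap, and it sits exactly where the real work of the paper lies. Your plan is to transport Steenrod's classical cup-$i$ coproduct verbatim --- a sum, over the classical non-repeating front/back decompositions, of tensor products of composites of face spans --- and to argue that identity \eqref{eq:2} follows because each cancellation in Steenrod's proof ``descends'' via your coherence lemma. The coherence lemma you isolate (any two composites of face spans representing the same morphism of $\Deltainjaug$ have equal $\bF_2$-linearizations) is true, but it is not the obstacle. The obstacle is that Steenrod's cancellations use that face maps are \emph{functions}: for a function $f$ one has $\Delta\circ f=(f\times f)\circ\Delta$, whereas for a general span $\partial_x$ the composite $\Delta_{n-1}\circ\partial_x$ is only the sub-span $(\partial_x\pwedge\partial_x)\circ\Delta_n$ of \emph{equal} pairs, not $(\partial_x\times\partial_x)\circ\Delta_n$, whose extra off-diagonal pairs $(a,b)$, $a\neq b$, $\source(a)=\source(b)$, do not cancel mod $2$ (this is Lemma \ref{lemma:aux_diagonal} and the reason Corollary \ref{cor:sset} singles out the free case). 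Consequently the coproduct you propose does not satisfy the homotopy identity: when you expand $\partial\circ\Delta_i+\Delta_i\circ\partial$, the diagonal-type terms survive and nothing in the transported classical formula cancels them. The paper's construction is forced to add new terms indexed by sequences with \emph{repeated} entries, i.e.\ the spans $\partial_{U^-}\wedge\partial_{U^+}$ with $\twoU\neq\emptyset$, which are not composites of face spans at all but are cut out of $\partial_{U^-}\times\partial_{U^+}$ using the chosen order, by counting positive maximal chains of $(s,t)$-good pairs; proving \eqref{eq:2} then requires the combinatorics of Section \ref{section:proof} (Propositions \ref{prop:diagonalpart} and \ref{prop:nondiagonalpart}, the pairing of Lemma \ref{lemma:non_repeated}, and the colored-graph parity argument culminating in Proposition \ref{prop:final}), none of which your outline supplies or anticipates. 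Example \ref{example:rp2} makes the failure concrete: there the term $\partial_0\wedge\partial_0\circ\Delta$ (a repeated-index term absent from your $\cR^m_i$) is a one-point span and is what makes $\Sq^1$ nontrivial on the Moore spectrum $M(\bZ_2,-1)$; with your transported formula this operation would vanish, which is wrong.

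A second, related problem: you set $\smile_i:=0$ for $i<0$. The theorem asserts a \emph{stable} symmetric multiplication, i.e.\ \eqref{eq:2} for all $i\in\bZ$ with genuinely nonzero negative operations; these negative cup-$i$ products are precisely the ones carrying the content here (all the computations in Section \ref{section:examples}, and the Khovanov squares of Corollary \ref{cor:citable} after desuspension, use $\smile_{-1},\smile_{-2},\smile_{-3}$). Truncating at $i=0$ both changes the statement and, even for the surviving range, leaves \eqref{eq:2} unproved for the reason above. Your naturality paragraph is in the spirit of Proposition \ref{prop:naturality-cup} and would be fine once the correct order-dependent coproduct is in place, but as it stands the construction and the verification of \eqref{eq:2} --- the heart of the theorem --- are missing.
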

The explicit formulas for the $\smile_i$ products are given in Section \ref{section:theorem} 
 and the naturality is proven in Proposition \ref{prop:naturality-cup}. In Section \ref{section:well-defined} we prove that the Steenrod squares induced by the symmetric multiplication are invariant under suspension, satisfy a Cartan formula and the first square $\Sq^1$ is the Bockstein homomorphism. In Theorem \ref{thm:naturality-Steenrod} we improve their naturality properties obtaining the following result.
\begin{atheorem} The Steenrod operations $\Sq^i\colon H^*(X_\bullet;\bF_2)\to H^{*+i}(X_\bullet;\bF_2)$ associated to the stable symmetric multiplication are natural with respect to maps of \sobs. In particular, they do not depend on the chosen order on $X_\bullet$.
\end{atheorem}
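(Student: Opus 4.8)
The plan is to reduce naturality of the Steenrod operations to the already-established naturality along \emph{free} order-preserving maps (Theorem A) by factoring an arbitrary map of \sobs\ through a chain of such maps and comparisons that induce the identity on cohomology. Concretely, given a map $f\colon X_\bullet\to Y_\bullet$ of \sobs\ (not necessarily order-preserving, nor free), I would first choose any orders on $X_\bullet$ and $Y_\bullet$, and then build an auxiliary \osob\ together with free order-preserving maps to both $X_\bullet$ and $Y_\bullet$ — morally the ``mapping cylinder'' or a subdivided/doubled version of $X_\bullet$ in which every $Q^n_i$ has been replaced by a copy that maps freely and order-preservingly. The point is that subdivision or free replacement does not change the cochain complex up to a quasi-isomorphism that is compatible with the $\smile_i$ (because the $\smile_i$ are natural along free order-preserving maps), so on cohomology the comparison maps are isomorphisms intertwining the $\Sq^i$.

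The key steps, in order, are: (1) show that for a fixed \sob\ $X_\bullet$, any two choices of order yield the same Steenrod operations on $H^*(X_\bullet;\bF_2)$ — this is the special case of the statement for $f=\Id$, and it should follow by exhibiting, for any two orders, a common refinement or a zig-zag of free order-preserving maps connecting them, invoking Theorem A at each stage and the fact that each such map induces the identity on cohomology while commuting with $\smile_i$; (2) given a general map $f\colon X_\bullet\to Y_\bullet$, replace it by a free order-preserving model $\tilde f\colon \tilde X_\bullet\to Y_\bullet$ with a free order-preserving ``collapse'' $\pi\colon \tilde X_\bullet\to X_\bullet$ that is a quasi-isomorphism on cochains and such that $f\circ\pi$ is homotopic (as maps of cochain complexes, or more precisely agrees on cohomology) to $\tilde f$; (3) chase the resulting diagram on cohomology, using Theorem A for $\tilde f$ and $\pi$ and step (1) to compare the operations defined via different orders, concluding $f^*\Sq^i = \Sq^i f^*$. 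The last sentence of the statement (independence of the order) is then exactly step (1).

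I expect the main obstacle to be step (2): constructing a functorial \emph{free} order-preserving replacement of an arbitrary map of \sobs. Unlike the simplicial-set setting, here the ``face maps'' are zig-zags $X_n\la Q^n_i\to X_{n-1}$ with higher coherence data, and a general map of \sobs\ need not respect any chosen order on the $Q^n_i$, so one must produce a new \osob\ $\tilde X_\bullet$ whose combinatorics absorb both the non-freeness and the order-incompatibility while keeping the cochain complex quasi-isomorphic in a $\smile_i$-compatible way. This likely requires a careful bar-type or subdivision construction at the level of the Burnside category (analogous to the edgewise subdivision $\sd$, for which notation is already reserved), together with a verification — using only the naturality along free order-preserving maps from Theorem A — that the induced cochain maps are quasi-isomorphisms. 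Once that replacement is in hand, the cohomology-level diagram chase is routine, since all the higher-categorical subtleties have been pushed into establishing that the comparison maps exist and are free and order-preserving.
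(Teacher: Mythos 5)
Your high-level strategy --- reduce everything to naturality along free order-preserving maps and interpose a mapping-cylinder-like object --- is indeed the shape of the paper's argument, but the proposal stops exactly where the proof lives. Your step (2) asks for a span $X_\bullet \xleftarrow{\pi} \tilde X_\bullet \xrightarrow{\tilde f} Y_\bullet$ in which \emph{both} legs are genuine free order-preserving maps of \sobs and $\pi$ is a quasi-isomorphism; you do not construct it, and it is not clear such a ``free order-preserving replacement'' in that direction exists (your fallback of a bar/subdivision construction is left entirely speculative). The paper works with a cospan instead: it builds, by hand, the mapping cylinder $M(\Sigma^2 f)_\bullet$ of the \emph{double suspension} of $f$ as an \sob --- the double suspension is needed so that the extra cylinder cells and all structural $2$-morphisms $\bar{\bar{\mu}}_{V_1,V_2}$ can be accommodated in the augmented semi-simplicial framework --- together with free, order-preserving inclusions $\Sigma^2 X_\bullet \to M(\Sigma^2 f)_\bullet \leftarrow \Sigma^2 Y_\bullet$, the second of which realises to a chain homotopy equivalence. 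Crucially, the comparison going back from the cylinder to $\Sigma^2 Y_\bullet$ is \emph{not} a map of \sobs at all, only a chain-level homotopy inverse $g$; its compatibility with $\Sq^i$ is deduced indirectly from Corollary \ref{cor:nat_free} applied to $h_{\Sigma^2 f}$, using that $|h_{\Sigma^2 f}|^*$ is an isomorphism on cohomology. Your formulation, which insists that all comparison maps be free order-preserving \sob maps, avoids this trick but thereby demands a construction strictly stronger than what the paper provides, and no argument is given that it can be met. Finally, since the cylinder is built for $\Sigma^2 f$, one still needs Proposition \ref{prop:suspension} (suspension-invariance of the squares) to descend to $f$ itself; this ingredient is absent from your outline.

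Your step (1) has the same problem in miniature: for two orders on the same $X_\bullet$ the identity is a map of \sobs but typically not order-preserving, and it is not clear what a ``common refinement'' of two orders on the spans $Q^n_U$ would mean, nor how to produce a zig-zag of free order-preserving maps between the two ordered objects without already having the cylinder construction. The paper does not prove order-independence first; it obtains it as a corollary of the general naturality statement, which is proved via $M(\Sigma^2 f)_\bullet$. So the proposal correctly names the target (reduction to Proposition \ref{prop:naturality-cup}/Corollary \ref{cor:nat_free}) and even the right gadget (a mapping cylinder), but the two constructions that constitute the actual proof --- the explicit \sob structure on the cylinder of the double suspension, and the argument transferring $\Sq^i$-compatibility to the chain-level homotopy inverse --- are missing, and the replacement you propose in their place is in the wrong direction and unsupported.
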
 
In Corollary \ref{cor:citable} we obtain the following consequence for Khovanov homology.
\begin{acorollary}\label{cor:Khovanov} There is an explicit stable symmetric multiplication on the Khovanov complex of any oriented link diagram $D$ with $n_-$ negative crossings
\[\smile_i\colon C^{p-n_-+1}(D;\bF_2)\otimes C^{q-n_-+1}(D;\bF_2)\lra C^{p+q-i-n_-+1}(D;\bF_2)\quad i\in \bZ.\]
Therefore Khovanov homology becomes endowed with the Steenrod squares
\[\Sq^i\colon \Kh^{n}(D;\bF_2)\lra \Kh^{n+i}(D;\bF_2), \quad \Sq^i([\alpha]) = [\alpha\smile_{n+n_--1-i } \alpha]\]
associated to this stable symmetric multiplication, which are invariant under Reidemeister moves and reordering of the crossings.
\end{acorollary}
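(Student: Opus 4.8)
The plan is to derive the corollary from Theorems~A and~B through the passage $\cF_D\rightsquigarrow X_\bullet^D$ of Section~\ref{ssection:Khovanov-intro}. Fix a diagram $D$ and choose an order on the \osob $X_\bullet^D$ attached there to the Khovanov $2$-functor $\cF_D$ (for instance the order induced by an ordering of the crossings of $D$). Theorem~A equips $C^*(X_\bullet^D;\bF_2)$ with explicit operations $\smile_i$ satisfying \eqref{eq:2}, given by the formulas of Section~\ref{section:theorem}. Since, as recalled in Section~\ref{ssection:Khovanov-intro}, $C^*(X_\bullet^D;\bF_2)$ is an iterated suspension of the Khovanov complex $C^*(D;\bF_2)$ with the shift controlled by the number $n_-$ of negative crossings, transporting the $\smile_i$ along this (de)suspension isomorphism produces a family of operations on $C^*(D;\bF_2)$ in exactly the bidegrees stated in the corollary; condition \eqref{eq:2} survives this transport because it relates only the consecutive indices $i$ and $i-1$ and does not involve the cochain degree. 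Finally, a cocycle representing a class in $\Kh^n(D;\bF_2)$ sits in internal degree $n+n_--1$ of $C^*(X_\bullet^D;\bF_2)$, so the formula $\Sq^i([\alpha])=[\alpha\smile_{\deg\alpha-i}\alpha]$ of the introduction reads $\Sq^i([\alpha])=[\alpha\smile_{n+n_--1-i}\alpha]$. This produces the stable symmetric multiplication and the squares; it remains to prove the two invariance assertions, using Theorem~B and Section~\ref{section:well-defined}.

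Reordering the crossings of $D$ relabels the coordinates of the indexing cube $\cube{n}$ by a permutation. This is an isomorphism of the indexing poset, preserving Hamming weight and the sign of every crossing, hence an isomorphism of $\cF_D$ with the Khovanov $2$-functor of the reordered diagram; since the passage of Section~\ref{ssection:Khovanov-intro} is functorial, it yields an isomorphism of the corresponding \sobs whose effect on cochains is the tautological relabelling of the Khovanov complexes, and $n_-$ is unchanged so the suspension shifts match. By Theorem~B the induced isomorphism on cohomology commutes with the $\Sq^i$, and the $\Sq^i$ are moreover independent of the order chosen on $X_\bullet^D$. (The $\smile_i$ themselves do depend on that order, just as the classical cup-$i$ products depend on auxiliary choices; this has no effect on the $\Sq^i$.)

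For invariance under a Reidemeister move $D\to D'$ I would use that the Khovanov $2$-functors $\cF_D$ and $\cF_{D'}$ are connected by a zig-zag of maps of strictly unital lax $2$-functors to the Burnside $2$-category, built in the various cases from inclusions of and projections onto acyclic sub- and quotient functors, each inducing the standard Khovanov isomorphism on (co)homology up to a grading shift, as in the work of Lipshitz--Sarkar and Lawson--Lipshitz--Sarkar~\cite{LS,LLS2015}. Applying the functor of Section~\ref{ssection:Khovanov-intro} turns this into a zig-zag of maps of \sobs, hence on cochains into a zig-zag of quasi-isomorphisms realising the Khovanov isomorphism after the relevant suspensions. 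Theorem~B makes each arrow of the zig-zag commute with the $\Sq^i$, and the suspension-invariance of the $\Sq^i$ established in Section~\ref{section:well-defined} reconciles the grading shifts; composing, the Khovanov isomorphism $\Kh^*(D;\bF_2)\cong\Kh^*(D';\bF_2)$ intertwines the Steenrod squares.

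The main obstacle is this last step: one must confirm that the known Reidemeister equivalences are available in the form of (zig-zags of) maps of \sobs to which Theorem~B applies, and that the resulting identification of Khovanov homologies is the standard one. If suitable Burnside-level maps are not directly at hand, an alternative route is to identify the combinatorial $\Sq^i$ on $C^*(X_\bullet^D;\bF_2)$ with the topological Steenrod squares $\sq^i$ on $\tilde H^*(|\cF_D|)\cong\Kh^*(D;\bF_2)$ — using that, by Theorem~A, the $\smile_i$ restrict to the Steenrod cup-$i$ products on semi-simplicial sets, whose induced squares are the topological ones, together with the Cartan formula, the identity $\Sq^1=$ Bockstein and the suspension-invariance of Section~\ref{section:well-defined}, which between them pin down the operations — and then invoke the Reidemeister invariance of the Khovanov spectrum of Lipshitz and Sarkar.
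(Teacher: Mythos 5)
Your first two steps (transporting the $\smile_i$ of Theorem~A through the suspension isomorphism and reading off the index $n+n_--1-i$) match the paper, but there is a genuine gap in your invariance argument, precisely at the point where the paper has to work hardest. Your claim that reordering the crossings ``yields an isomorphism of the corresponding \sobs whose effect on cochains is the tautological relabelling'' is false. If $\omega$ is the permutation of coordinates, then $\cF_{D'}=\cF_D\circ\omega$, but the functor $\Lambda$ of Section~\ref{ssection:lambda} depends on the ordering of the cube coordinates through the bijections $\varphi_A$: on the component indexed by $A$, the $U$th face of $\Lambda(\cF_D\circ\omega)$ deletes $\omega(\varphi^{-1}_{\omega^{-1}(\omega A)}(U))$ while that of $\Lambda(\cF_D)$ deletes $\varphi_{\omega A}^{-1}(U)$, so the two \sobs have genuinely different (permuted) face maps and there is no map of \sobs between them; only their total complexes agree. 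Nor does the order-independence in Theorem~B help here: that statement concerns the auxiliary ordering of the spans $Q^n_U$ of a \emph{fixed} \sob, not the ordering of the cube coordinates. Closing this gap is exactly the content of Proposition~\ref{prop:zigzag} (the Steenrod-style doubling of the cube, with the explicit chain homotopies of Section~\ref{ssection:reordering}), combined with Lemma~\ref{lemma:multiple} and Corollaries~\ref{cor:seqgood}--\ref{cor:steq}, which produce a zig-zag of equivalences of \sobs between $\Sigma^{k}\Lambda(\cF_D)$ and $\Sigma^{k}\Lambda(\omega_*\cF_D)$ to which Theorem~\ref{thm:naturality-Steenrod} and Proposition~\ref{prop:suspension} can then be applied.

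The same issue infects your Reidemeister step, which you yourself flag as the main obstacle. The input the paper uses is the statement from Lawson--Lipshitz--Sarkar that $(\cF_D,-n_-)$ and $(\cF_{D'},-n'_-)$ are \emph{stably equivalent}, where the stable equivalences involve face inclusions $\cube{c}\to\cube{c'}$ of positive degree that need not be sequential; turning these into zig-zags of equivalences of \sobs related by honest suspensions again requires the reordering Proposition~\ref{prop:zigzag}, not just suspension-invariance of the squares. Your proposed fallback --- identifying the combinatorial $\Sq^i$ with the topological $\sq^i$ on $|\cF_D|$ and invoking invariance of the Khovanov spectrum --- is explicitly outside the scope of this paper (the comparison with the spectrum-level operations is deferred to the companion paper), and the assertion that the Cartan formula, $\Sq^1=$ Bockstein and suspension-invariance ``pin down'' the operations is not justified on this restricted class of objects, since the axiomatic characterisation of Steenrod squares requires naturality over all spaces or spectra together with normalisation on specific classes.
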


The techniques of this paper do not allow us to prove that the Steenrod squares of Corollary \ref{cor:Khovanov} coincide with the Steenrod squares 
\[\sq^i\colon H^n(|\cF_D|;\bF_2)\lra  H^{n+i}(|\cF_D|;\bF_2),\quad i\geq 0\]
of the realisation spectrum of $\cF_D$. Such comparison will be developed in the companion paper \cite{Cantero-Gutierrez}, where we will construct a ``singular chain functor'' from the category of spectra to the category of \sobs. The results in the present paper are purely combinatorial, and have to be compared with the constructions of Steenrod in \cite{Steenrod} for simplicial complexes, whereas the results of the companion paper \cite{Cantero-Gutierrez} are mainly homotopy-theoretic and have to be compared with the constructions of Eilenberg \cite{eilenberg1944} for topological spaces. In particular, spectra will be essentially absent from this paper, and will only be barely mentioned in some examples in Section \ref{section:examples}.


\subsection*{Outline of the paper} In Section \ref{section:techniques}, we first explain how to translate the framework of \cite{LLS2015}, which is expressed in terms of cubes in the Burnside category, to our framework in terms of augmented semi-simplicial objects in the Burnside category. Then we introduce several definitions and constructions that will be used through the paper. In Section \ref{section:theorem}, we present formulas for \cupi{i}products, and we prove in Section \ref{section:proof} that they endow the cohomology of any \sob with a stable symmetric multiplication. In Section \ref{section:well-defined} we define Steenrod squares and we prove that they are stable under suspension, that they satisfy a Cartan formula and that the first square is the Bockstein homomorphism. The proofs of naturality are deferred to Section \ref{section:functoriality}. In Section \ref{section:khovanov} we apply the previous results to the Khovanov functor of Lawson, Lipshitz and Sarkar and we prove Corollary \ref{cor:Khovanov}. The paper finishes with several examples in Section \ref{section:examples}. The reader only interested on explicit formulas for operations on Khovanov homology will find them in Section \ref{section:theorem} after having got used to the terminology introduced in Section \ref{section:techniques}, and may afterward safely skip Sections \ref{section:proof}, \ref{section:well-defined} and \ref{section:functoriality} and proceed directly to Section \ref{ssection:khovanov-khovanov} and the examples in Section \ref{section:examples}. 


\subsection*{Acknowledgments}The author is especially grateful to An\'ibal Medina-Mar\-do\-nes for the inspiration received while reading his paper \cite{Anibal}. He is also grateful to Fernando Muro and David Chataur, and to Javier Guti\'errez, Carles Casacuberta, Joana Cirici and Marithania Silvero from the Topology group at Barcelona. He thanks Tyler Lawson, Clemens Berger, 
 and Oscar Randal-Williams for their feedback during his stay at the Isaac Newton Institute for Mathematical Sciences.
\section{Khovanov functors and semi-simplicial objects in the Burnside category}\label{section:techniques}
This section begins with a quick explanation of how to translate the context in \cite{LLS2015,LLS-cube,LS-refinements} (cubes in the Burnside category) to our context (augmented semi-simplicial objects in the Burnside category). Sections \ref{ssection:sequences} to \ref{ssection:realisations} are devoted to present and prove the concepts and claims used in this explanation, while Sections \ref{ssection:sob} and \ref{ssection:sobmaps} will set up the notation for augmented semi-simplicial objects in the Burnside category. In this exposition, $R$ denotes a commutative ring with unit.


\subsection{Khovanov spectra}\label{ssection:Khovanov-intro} In \cite{Khovanov,Bar-Natan}, Khovanov associated to each link diagram $D$ with $c$ ordered crossings and $n_-$ negative crossings, a contravariant functor $F_D$ from the cube category $\cube{c}$ to the category of $R$-modules. Every such functor has a totalisation $\Tot F_D\in \Ch(R)$, which is a bounded chain complex of $R$-modules. Khovanov proved that if two link diagrams $D$ and $D'$ are obtained from each other using some Reidemeister move, then the chain complexes $\Sigma^{-n_-}\Tot F_D$ and $\Sigma^{-n_-'}\Tot F_{D'}$ are quasi-isomorphic. The Khovanov homology of a link $L$ is then defined as the cohomology of $\Sigma^{-n_-}\Tot F_D$, for any diagram $D$ representing $L$, and it is a link invariant up to isomorphism.

The construction of the Khovanov spectrum of Lawson, Lipshitz and Sarkar \cite{LLS2015, LLS-cube} first associates to a link diagram $D$ with $c$ ordered crossings a lax strictly unital contravariant functor $\cF_D\colon \cube{c}\to \cB$ from the cube category $\cube{c}$ to the Burnside $2$-category; and to a Reidemeister move from $D$ to a diagram $D'$ a ``stable equivalence'' between $\cF_D$ and $\cF_{D'}$.
 They showed that postcomposing with a certain functor $\btoabfunctor{R}\colon \cB\to \Mod{R}$, one obtains back the Khovanov construction: $F_D = \btoabfunctor{R}\circ \cF_D$. Moreover, they constructed a realisation functor $|\cdot|$ that converts every contravariant strictly unital lax functor $F\colon \cube{c}\to \cB$ into a spectrum $|F|$ and every strictly unital lax natural transformation into a map of spectra, with the property that 
\[H_*(\Tot \btoabfunctor{R}\circ \cF_D)\cong H_*(|\cF_D|;R).\]
Finally, they proved that the stable equivalences
 associated to the Reidemeister moves are converted into weak equivalences of spectra, thus concluding that the stable homotopy type of $\Sigma^{-n_-}|\cF_D|$ is a link invariant.

An augmented semi-simplicial object in a $2$-category $\cC$ is a strictly unital lax functor from the category of possibly empty finite ordinals $\Deltainjaug$ to $\cC$. There is a cofinal functor $\cube{c}\to \Deltainjaug$, and, if $\cC$ has finite coproducts, taking left Kan extension along it defines a functor
\[\Lambda\colon \preshcube{\cube{c}}{\cC}\to \presh{\Deltainjaug}{\cC}\]
between categories of strictly unital lax functors. When $\cC$ is the category of $R$-modules, we let  
\[\DK\colon \presh{\Deltainjaug}{\Mod{R}}\lra \Ch(R)\] 
be the functor that takes a semi-simplicial $R$-module to its Moore chain complex\mnote{Moore o Dold-Kan} of alternating sums of face maps. Then, the upper part of the following diagram commutes: 
\begin{equation}
\label{eq:picture}\begin{aligned}\xymatrix{
\preshcube{\cube{c}}{\cB}\ar[d]^-\Lambda\ar[r]^-{\btoabfunctor{R}\circ -} & \preshcube{\cube{c}}{(\Mod{R})}\ar[d]^-\Lambda \ar[r]^-{\Tot}  &  \Ch(R)\ar[d]^-{\Sigma^{-1}}\\
\presh{\Deltainjaug}{\cB}\ar[d]^-{*}\ar[r]^-{\btoabfunctor{R}\circ-} & \presh{\Deltainjaug}{(\Mod{R})}\ar[r]^-{\DK} &\Ch(R)\ar[d]^-{H_*} \\
\mathbf{Spectra}\ar[rr]^-{H_*(-;R)}&& \text{Graded \Mod{R}}
}\end{aligned}\end{equation}
In this paper we are only interested in the upper part of the diagram. We give the following comment without proof regarding the bottom part: The functor $*$ is constructed in \cite{Barwick} and the composition of $\Lambda$ and that functor is homotopy equivalent in an $\infty$-categorical sense to the desuspension of the realisation construction $|\cdot|$ of Lawson, Lipshitz and Sarkar. Additionally, the bottom square commutes.

Let $X_\bullet\in \presh{\Deltainjaug}{\cB}$, and let $C^*(X_\bullet;\bF_2)$ be the dual of the Moore chain complex $\DK(\btoabfunctor{\bF_2}\circ X_\bullet)$. We will construct natural operations 
\[\smile_i\colon C^p(X_\bullet;\bF_2)\otimes C^q(X_\bullet;\bF_2)\to C^{p+q-i}(X_\bullet;\bF_2)\] analogous to the classical \cupi{i}products for semi-simplicial sets. If $\cF_D\in \preshcube{\cube{c}}{\cB}$ is the functor of Lipshitz, Lawson and Sarkar, then, since the above diagram commutes, we will have operations $\smile_i$ defined on 
$$\DK\circ\btoabfunctor{R}\circ \Lambda(\cF_D) = \Sigma^{-1}\Tot\circ \btoabfunctor{R}(\cF_D) = \Sigma^{-1}\Tot(F_D),$$ the $(n_--1)st$ suspension of the  Khovanov complex. Thus, we will obtain, for any link $L$, cohomology operations
\[\Sq^i\colon \Kh^*(L;\bF_2)\lra \Kh^{*+i}(L;\bF_2),\quad i\geq 0.\]


\subsection{Sequences}\label{ssection:sequences} Let $\Power_q(n)$ be the set of all increasing sequences $U=(u_1,\ldots,u_q)$ such that $0\leq u_i\leq n$ for each $i=1,\ldots,q$, and let $\Power(n) = \bigcup_q \Power_q(n)$. Suppose that $V\in \Power_p(n)$ and define 
\begin{align*}
\psi_{V}\colon \{U\in \Power_{q}(n)\mid U\cap V=\emptyset\}&\lra \Power_q(n-p)& \\ 
(u_1,\ldots,u_q) &\longmapsto (w_1,\ldots,w_q), &w_j = u_j-|\{v\in V\mid v<u_j\}|,\\[.2cm]
\gamma_V\colon \Power_q(n) &\lra \Power_{q}(n+p)& \\
(u_1,\ldots,u_q)&\lra (w_1,\ldots,w_q),&w_j = u_j+|\{v\in V\mid v\leq u_j\}|,\\[.2cm]
\eta_V\colon \{U\in \Power_q(n)\mid V\subset U\}&\lra \Power_{q-p}(n-p)& \\
U&\longmapsto \psi_{V}(U\smallsetminus V),&\\[.2cm]
\xi_V\colon \Power_q(n) &\lra \Power_{q+p}(n+p) &\\
U&\lra \gamma_V(U)\cup V.&
\end{align*}
We have that
\begin{align*}
\psi_{V}(\gamma_V(U))&=U,& \gamma_{V}(\psi_V(U))&=U, \\
\eta_{V}(\xi_V(U))&=U,& \xi_{V}(\eta_V(U))&=U.
\end{align*}

\subsection{The augmented semi-simplicial category} Let $\Deltainj$ be the category of non-empty finite ordinals and order-preserving injections between them, and let $\Deltainjaug$ be the category of finite ordinals and order-preserving maps between them. We write $[n] = \{0,\ldots,n\}$ for the ordinal with $n+1$ elements and note that $[-1]$ stands for the empty ordinal. For each sequence $U\in \Power_q(n)$, define the \emph{$U$th generalised face map} $\partial^n_U\colon [n-q]\to [n]$ as the unique order-preserving injective map that misses $U$. Every morphism in $\Deltainjaug$ is a generalised face map, and, if $U\in \Power(n)$ and $U=V\cup W$, with $V\in \Power_q(n)$ then
\begin{equation}\label{eq:simplicial}\partial^{n-q}_{\psi_V(W)}\circ \partial^{n}_{V} = \partial^n_{U}.\end{equation}
If $V\in \Power_q(n)$ and $W\in \Power_p(n-q)$, we can rewrite this condition as:
\[\partial^{n-q}_{W}\circ \partial^n_V = \partial^n_{\xi_V(W)}.\]
When $U=\{u\}$ has a single element, $\partial_U$ is called a \emph{face map} and the equation \eqref{eq:simplicial} becomes the usual simplicial identities of the face maps. In that case, we write $\partial_u$ instead of $\partial_{\{u\}}$. Every generalised face map is a composition of face maps. 

%
%

\subsection{The cube poset}\label{ssection:cubes} Let $0<1$ be the poset with two elements $0,1$ and one morphism from $0$ to $1$. Let $\cube{n}$ be the $n$th power of this poset, whose elements are tuples $v=(v_1,\ldots,v_n)$ with $v_i\in \{0,1\}$ and $v\leq v'$ if and only if $v_i\leq v'_i$ for all $i=1\ldots n$. The elements of the cube are graded by the Manhattan norm $|v| = \sum_i v_i$. Alternatively, $\cube{n}$ is the poset of subsets of $\{1,\ldots,n\}$ ordered by inclusion. We will denote a subset $A$ of $\{1,\ldots,n\}$ as the ordered sequence $(a_1,\ldots,a_m)$ on the elements of $A$. The grading of a subset $A\subset \{1,\ldots,n\}$ is its cardinality $|A|$. The relation between both perspectives is given by identifying a tuple $(v_1,\ldots,v_n)$ with the subset $A=\{i\mid v_i=1\}\subset \{1,\ldots,n\}$. In this paper we will use the second description of $\cube{n}$.
 

\subsection{2-categories} In this section we quickly remind the definitions of $2$-category, of strictly unital lax functor and of strictly unital lax natural transformation \cite[\S 7.5]{Borceaux}. As the $2$-morphisms in the $2$-categories used in this paper are invertible, it is customary to replace the adjective ``lax'' by ``pseudo-natural''. Moreover, it will be convenient to work with op-lax functors and op-lax natural transformations, i.e., our structural $2$-morphisms in a pseudo-functor or a pseudo-natural transformation are the inverses of the usual $2$-morphisms. In Sections \ref{ssection:sob} and \ref{ssection:sobmaps} we give a more careful definition of the particular functors and natural transformations that are used in the paper.

Recall that a $2$-category $\cC$ is a category enriched in the category of small categories, i.e., it consists on the data of
\begin{enumerate}
\item a collection of objects,
\item for each pair of objects $X,Y$, a category of morphisms $\hom(X,Y)$,
\item for each object $X$, an object $\Id_X$ of $\hom(X,X)$ and
\item for each triple of objects $X,Y,Z$, a composition functor $$\hom(Y,Z)\times \hom(X,Y)\to \hom(X,Z)$$
\end{enumerate}
satisfying that the composition is associative and unital. The objects of $\hom(X,Y)$ are called \emph{$1$-morphisms} and the morphisms in $\hom(X,Y)$ are called \emph{$2$-morphisms}.

Recall that a strictly unital pseudo-functor $F$ from a category $\cD$ to a $2$-category $\cC$ with invertible $2$-morphisms consists of the data:
\begin{enumerate}
\item for each object $a$ of $\cD$, an object $F(a)$ in $\cC$,
\item for each morphism $f$ of $\cD$, a morphism $F(f)$ in $\cC$,
\item for each decomposition $f=g_2\circ g_1$, a $2$-morphism $F(g_1,g_2)$ from $F(f)$ to $F(g_2)\circ F(g_1)$
\end{enumerate}
satisfying that
\begin{enumerate}[resume]
\item for each object $a$ of $\cD$, $F(\Id_a) = \Id_{F(a)}$,
\item for each morphism $f\colon a\to b$ of $\cD$, both $\mu_{\Id_a,f}$ and $\mu_{f,\Id_b}$ are the identity $2$-morphisms.
\item for each decomposition $f=g_3\circ g_2\circ g_1$, we have
\[(\Id\times F(g_2,g_3))\circ F(g_1,g_3\circ g_2) = (F(g_1,g_2)\times \Id)\circ F(g_2\circ g_1,g_3)\]
\end{enumerate}
Recall that a strictly unital pseudo-natural transformation between two $2$-functors $F,G\colon \cD\to \cC$ consists of the data:
\begin{enumerate}
\item for each object $a$ of $\cD$, a morphism $\alpha_a\colon F(a)\to G(a)$ and
\item for each morphism $f\colon a\to b$ of $\cD$, a $2$-morphism $\alpha_f$ from $\alpha_b\circ F(f)$ to $G(f)\circ \alpha_a$,
\end{enumerate}
satisfying that
\begin{enumerate}[resume]
\item for each object $a$ of $\cD$, $\alpha_{\Id_a}$ is the identity $2$-morphism on $\alpha_a\colon F(a)\to G(a)$ and
\item for each decomposition $f=g_2\circ g_1$, we have:
\[G(g_1,g_2)\circ \alpha_f = \alpha_{g_2}\circ\alpha_{g_1}\circ F(g_1,g_2)\]
\end{enumerate}
Let $\presh{\cD}{\cC}$ be the category whose objects are contravariant strictly unital pseudo-functors from $\cD$ to $\cC$ and whose morphisms are strictly unital pseudo-natural transformations between them.

\subsection{The Burnside $2$-category}\label{ssection:burnside}   Given two sets $X,Y$, a \emph{locally finite span} from $X$ to $Y$ is a pair of functions 
\[\xymatrixcolsep{36pt}\xymatrix{X & Q\ar[l]_-{\source}\ar[r]^-{\target} & Y}.\]
such that $\source^{-1}(x)$ is a finite set for every $x\in X$. A locally finite span is \emph{free} if the source map is an injection. A \emph{fibrewise bijection} between two locally finite spans $X\overset{\source}{\longleftarrow} Q\overset{\target}{\longrightarrow} Y$ and $X\overset{\source'}{\longleftarrow} Q'\overset{\target'}{\longrightarrow} Y$ is a bijection $\tau\colon Q\to Q'$ such that $\source'\circ\tau=\source$ and $\target'\circ\tau=\target$:
\[\xymatrixcolsep{32pt}\xymatrixrowsep{10pt}\xymatrix{
&Q\ar[ld]_{\source}\ar[rd]^\target\ar[dd]^{\tau}&\\
X && Y \\
&Q'\ar[lu]^{\source'}\ar[ru]_{\target'}&
}\]
The composition of two fibrewise bijections of locally finite spans is the composition of bijections:
\[\xymatrixcolsep{32pt}\xymatrixrowsep{10pt}\xymatrix{
& Q_1\ar[d]^{\tau_1}\ar[dr]\ar[dl] & \\
X&Q_2\ar[d]^{\tau_2}\ar[r]\ar[l] & Y.\\
&Q_3\ar[ur]\ar[ul]&
}\qquad
\xymatrix{
& Q_1\ar[dd]^{\tau_2\circ\tau_1}\ar[dr]\ar[dl] & \\
X& & Y,\\
&Q_3\ar[ur]\ar[ul]&
}
\]
and the identity morphism of a locally finite span is the identity bijection. This defines a category of locally finite spans from a set $X$ to a set $Y$.

\begin{df} We denote by $\cB$ the Burnside $2$-category for the trivial group, whose objects are sets, and the category of morphisms from a set $X$ to a set $Y$ is the category of locally finite spans from $X$ to $Y$. Composition of locally finite spans is given by taking their fibre product:
\[\xymatrixcolsep{16pt}\xymatrixrowsep{6pt}\xymatrix{
&& Q_1\times_{X_2} Q_2\ar@{-->}[dl] \ar@{-->}[dr] && \\
&Q_1\ar[dl]\ar[dr] && Q_2\ar[dl]\ar[dr] & \\
X_1 && X_2 && X_3,
}\]
composition of fibrewise bijections is their fibre product
\[\xymatrixcolsep{24pt}\xymatrixrowsep{8pt}\xymatrix{
&Q_1\ar[dl]\ar[dr]\ar[dd]^{\tau_1} && Q_2\ar[dl]\ar[dr]\ar[dd]^{\tau_2} & \\
X_1 && X_2 && X_3 \\
&Q_3\ar[ul]\ar[ur] && Q_4\ar[ul]\ar[ur] & \\
}\qquad
\xymatrix{
& Q_1\times_{X_2} Q_2\ar[dd]^{\tau_1\times\tau_2}\ar[dr]\ar[dl] & \\
X_1& & X_3.\\
&Q_3\times_{X_2} Q_4,\ar[ur]\ar[ul]&
}\]
and the identity on a set $X$ is the span $X\overset{\Id}{\la} X\overset{\Id}{\to} X$.
\end{df}
\begin{warning}\label{warning-burnside} In \cite{LLS2015} and \cite{LLS-cube} the objects of the Burnside category are required to be finite sets. This restriction is unnecessary for the results in this paper. 
\end{warning}
\mnote{old description of Burnside category here}

The Burnside $2$-category has a coproduct
\[\amalg\colon \cB\times \cB\lra \cB\]
that sends a pair of finite sets to their disjoint union and a pair of spans $X\la Q\to Y$ and $X'\la Q'\to Y'$ to the span
\[X\amalg X'\lla Q\amalg Q'\lra Y\amalg Y'.\]
The categorical product in $\cB$ coincides with the coproduct and there is a tensor product
\[\times\colon \cB\times\cB\lra \cB\]
that sends a pair of finite sets to their product and a pair of spans $X\la Q\to Y$ and $X'\la Q'\to Y'$ to the product span
\[X\times X'\lla Q\times Q'\lra Y\times Y'.\]
The category of pointed sets $\Setp$ includes into the Burnside $2$-category $\cB$ by sending a pointed set $(X,x_0)$ to $X\smallsetminus \{x_0\}$ and a morphism $f\colon (X,x_0)\to (Y,y_0)$ to the span 
$$X\smallsetminus \{x_0\}\hookleftarrow X\smallsetminus f^{-1}(y_0) \overset{f}{\to} Y\smallsetminus \{y_0\}.$$
This inclusion induces an equivalence of categories between the category $\Setp$ and the $2$-subcategory $\cB_{\mathrm{free}}$ of $\cB$ whose objects are sets and whose $1$-morphisms are free spans. Observe that there is at most one $2$-morphism between any two free spans.

The category of sets $\Set$ includes into the category $\Setp$ by sending a set to its disjoint union with some fixed basepoint. The composition of these two inclusions induces an equivalence of categories between $\Set$ and the $2$-subcategory of $\cB$ whose objects are sets and whose $1$-morphisms are spans for which the source map is a bijection. 

\subsection{\boldmath From the Burnside category to the category of $R$-modules}
Define the \emph{\abelianisation{R}} functor $$\btoabfunctor{R}\colon \cB\lra \Mod{R}$$ that takes a set to the free $R$-module on it, and a span $\xymatrixcolsep{30pt}\xymatrix{X & Q\ar[l]_-{\source}\ar[r]^-{\target} & Y}$ to the homomorphism $R\langle X\rangle\to R\langle Y\rangle$ whose value on $x\in X$ is 
\[\sum_{y\in Y} \left|\source^{-1}(x)\cap \target^{-1}(y)\right| \cdot y.\]
Any two spans connected by a $2$-morphism are sent to the same homomorphism and the sum is well-defined because $\source^{-1}(x)$ is finite, so $\btoabfunctor{R}$ is well-defined. Moreover, this functor is symmetric monoidal with respect to the tensor products $\amalg,\times$ in $\cB$ and the tensor products $\oplus,\otimes$ in $\Mod{R}$, so:
\[\btoab{R}{X\amalg Y}\cong \btoab{R}{X}\oplus \btoab{R}{Y},\quad \btoab{R}{X\times Y}\cong \btoab{R}{X}\otimes \btoab{R}{Y}.\]
Let $f,g\colon X\to Y$ be the spans $X\la Q\to Y$ and $X\la Q'\to Y$. We denote by $f+g$ the span $X\la Q\amalg Q'\to Y$. This operation commutes with \abelianisation{R} too: 
\[\btoab{R}{f+g} = \btoab{R}{f}+\btoab{R}{g}.\]  
\begin{df} Two spans $X\la Q\to Y$ and $X\la Q'\to Y$ are \emph{equivalent} if there is a $2$-morphism between them. Alternatively, they are equivalent if they have the same \abelianisation{\bZ}. They are \emph{$\bF_2$-equivalent} if they have the same \abelianisation{\bF_2}.
\end{df}
\begin{notation}
We use the symbol ``$\igual$'' to denote the relation of being equivalent, and the symbol ``$\igualdos$'' for the relation of being $\bF_2$-equivalent. 
\end{notation}

\subsection{\boldmath The functor $\Lambda$}\label{ssection:lambda} Let $\cC$ be a $2$-category with invertible $2$-morphisms. An \emph{$n$-cube in $\cC$} is a strictly unital pseudo-functor $F\colon \cube{n}\lra \cC$, a \emph{semi-simplicial object in $\cC$} is a strictly unital pseudo-functor $X\colon \Deltainj^\op\to \cC$ and an \emph{augmented semi-simplicial object in $\cC$} is a strictly unital pseudo-functor $X\colon \Deltainjaug^\op\to \cC$. As customary, if $X$ is an augmented semi-simplicial object in a category $\cC$, we will write $X_\bullet$ for $X$, $X_n$ for $X(n)$ and $\partial^n_U$ for $X(\partial^n_U)$.

A \emph{map} between two $n$-cubes in $\cC$ is a strictly unital pseudo-natural transformation between them. A \emph{map} between two \sobs in $\cC$ is a strictly unital pseudo-natural transformation between them.

If $A=(a_1,\ldots,a_m)$ is an element of $\cube{n}$, let $\varphi_A\colon A\to \{0,\ldots,m-1\}$ be the function $\varphi_A(a_i)=i-1$. There is a functor $\lambda\colon \cube{n}\to \Deltainjaug$ that sends a vertex $A\subset \{1,\ldots,n\}$ of $\cube{n}$ to the ordinal $|A|-1$ and a morphism $B\subset A$ to the morphism $\partial^{|A|-1}_{\varphi_A(A\smallsetminus B)}$. If $\cC$ has finite coproducts, left Kan extension along this functor, defines a functor
\[\Lambda\colon \preshcube{\cube{n}}{\cC}\lra \presh{\Deltainjaug}{\cC}\]
given explicitly on an object $F\colon \cubeop{n}\to \cC$ as
\begin{align*}
\Lambda(F)(k) &= \coprod_{|A|=k+1} F(A) &k\geq -1,\\
\Lambda(F)(\partial^k_{U}) &= \coprod_{|A|=k+1} F\left(A\smallsetminus \varphi_A^{-1}(U)\subset A\right) & U\in \Power_q(n),\\
\Lambda(F)(\partial^n_{V},\partial^{n-q}_{W}) &= \coprod_{|A|=k+1} F(B\subset A,C\subset B) & V\in \Power_q(n), W\in \Power(n-q),
\end{align*}
where $B=A\smallsetminus \varphi_A(V)$ and $C=B\smallsetminus \varphi_B(W)$.
A similar formula holds for maps between $n$-cubes in $\bC$. Setting $\cC$ to be either the Burnside $2$-category or the category of $R$-modules, seen as a $2$-category with only identity $2$-morphisms, we obtain the upper left square in \eqref{eq:picture}:
\[\xymatrix{
\preshcube{\cube{c}}{\cB}\ar[d]^\Lambda\ar[r]^-{\btoabfunctor{R}\circ -} & \preshcube{\cube{c}}{(\Mod{R})}\ar[d]^\Lambda  \\
\presh{\Deltainjaug}{\cB}\ar[r]^-{\btoabfunctor{R}\circ-} & \presh{\Deltainjaug}{(\Mod{R})},
}\]
which commutes because $\btoabfunctor{R}\colon \cB\to \Mod{R}$ preserves finite coproducts.


%
%

\subsection{Realisations and totalisations}\label{ssection:realisations} 

Define the \emph{\DoldKan functor} $$\DK\colon \presh{\Deltainjaug}{\Mod{R}}\lra\Ch(R)$$ as the functor that sends an augmented semi-simplicial $R$-module $X_\bullet$ to the following chain complex $C_*(X_\bullet)$:
\[C_n(X_\bullet) = X_n,\quad d_n = \sum_{i=0}^n (-1)^{i}{\partial_i^n},\]
and sends a map $f\colon X_\bullet \to Y_\bullet$ to the homomorphism of chain complexes which in degree $n$ is
\[f_n\colon C_n(X_\bullet)\lra C_n(Y_\bullet).\] 
Define the \emph{totalisation} functor $\Tot\colon \preshcube{\cube{n}}{\Mod{R}}\to \Ch(R)$ as the functor that sends a cube of $R$-modules $F$ to the following chain complex $C_*(F)$:
\[C_m(F) = \bigoplus_{|A|=m} F(A),\quad d_m = \undersumtwo{|A|=m}{A=(a_1,\ldots,a_m)}{\sum_{i=1}^{m} (-1)^{i}F(A\smallsetminus \{a_{i}\}\subset A)}\]
and sends a map $f\colon F\to G$ to the homomorphism of chain complexes which in degree $m$ is
\[ \bigoplus_{|A|=m} f_A\colon \bigoplus_{|A|=m} F(A)\lra  \bigoplus_{|A|=m} G(A).\]
When $X_\bullet = \Lambda(F)$, the Moore chain complex of $X_\bullet$ and the totalisation of $F$ agree up to sus\-pen\-sion: $C_*(\Lambda(F)) = \Sigma^{-1}C_*(F)$, so the upper right square in \eqref{eq:picture} commutes. 

\begin{df} If $R$ is a commutative ring, the \emph{$R$-realisation functor} is the composition
\[|\cdot|_R\colon \presh{\Deltainjaug}{\cB}\overset{\btoabfunctor{R}\circ -}{\lra} \presh{\Deltainjaug}{\Mod{R}}\overset{\DK}{\lra} \Ch(R).\]
Explicitly, the $R$-realisation $|X_\bullet|_R$ of an augmented semi-simplicial object in the Burnside category $X_\bullet$ is the chain complex
\[C_n(X_\bullet;R) = R\langle X_n\rangle,\quad d_n = \sum_{i=0}^n (-1)^{i}\btoab{R}{\partial_i^n}.\]
The \emph{$R$-totalisation functor} is the composition
\[\Tot_R\colon \preshcube{\cube{n}}{\cB}\overset{\btoabfunctor{R}\circ -}{\lra} \preshcube{\cube{n}}{\Mod{R}}\overset{\Tot}{\lra} \Ch(R).\]
Explicitly, the $R$-totalisation $\Tot_R(F)$ of a cube in the Burnside category $F\colon \cube{n}\to \cB$ is the chain complex
\[C_m(F;R) = \bigoplus_{|A|=m} R\langle F(A)\rangle,\quad d_m = \undersumtwo{|A|=m}{A=(a_1,\ldots,a_m)}{\sum_{i=1}^{m} (-1)^{i}\btoab{\bF_2}{F(A\smallsetminus \{a_{i}\}\subset A)}}.\]
\end{df}

\begin{df}
A map $f\colon X_\bullet\to Y_\bullet$ of \sobs is an \emph{equivalence} if its $\bZ$-realisation $C_*(f;\bZ)\colon C_*(X_\bullet;\bZ)\to C_*(Y_\bullet;\bZ)$ is a chain homotopy equivalence. Two \sobs $X_\bullet,Y_\bullet$ are \emph{equivalent} if there is a zig-zag of equivalences between them, in which case we write $X_\bullet\simeq Y_\bullet$.
\end{df}
\begin{df} A map $f\colon F\to G$ of cubes in the Burnside category, is an \emph{equivalence} if the induced map $C_*(f)\colon C_*(F;\bZ)\to C_*(G;\bZ)$ is a chain homotopy equivalence. Two cubes $F,G\colon \cube{n}\to\cB$ are \emph{equivalent} if there is a zig-zag of equivalences between them, in which case we write $F\simeq G$.
\end{df}

The inclusions $\Set\subset\Setp\subset \cB$ of Section \ref{ssection:burnside} induce inclusions of categories
\begin{equation}\label{eq:inclusions}\presh{\Deltainjaug}{\Set}\lra \presh{\Deltainjaug}{\Setp}\lra \presh{\Deltainjaug}{\cB}\end{equation}
which commute with the realisation functors.

\subsection{\Sobs}\label{ssection:sob} An \sob $X\colon \Deltainjaug^\op\to \cB$ will be denoted $X_\bullet$. Additionally, the set $X(n)$ is denoted $X_n$, the locally finite span $X(\partial^n_U)$ is denoted $\partial^n_U$ and, if $\partial^n_U= \partial^{n-q}_{W}\circ \partial^n_V$, the fibrewise bijection $X(\partial^n_{V_1},\partial^{n-q}_{W_2})$ is denoted $\mu^n_{V,\gamma_{V}(W)}$. With this notation, \ansob consists of the following data: 
\begin{enumerate}
\item For each $n\geq -1$ a set $X_n$,
\item For each $n\geq -1$ and each $U \in \Power_q(n)$, a locally finite span $X_n\la Q^n_{U}\to X_{n-q}$ that we denote by $\partial^n_U$.
\item\label{sob:3} For each $n\geq -1$ and each $U \in \Power_q(n)$, and each partition $U=V_1\cup V_2$ with $V_1\in \Power_{q_1}(n)$, a fibrewise bijection
\[\mu^n_{V_1,V_2}\colon \partial^{n}_U\lra \partial^{n-q_1}_{\psi_{V_1}(V_2)}\circ \partial^n_{V_1},\]
i.e., a fibrewise bijection over $X_n$ and $X_{n-q_1-q_2}$
\[\mu^n_{V_1,V_2}\colon Q^{n}_U\lra Q^n_{V_1}\underset{X_{n-q_1}}{\times} Q^{n-q_1}_{\psi_{V_1}(V_2)}.\]
\end{enumerate}
such that 
\begin{enumerate}[resume]
\item\label{sob:4} for each $n\geq -1$, $\partial^n_\emptyset$ is the identity span on $X_n$, 
\item\label{sob:5} for each $n\geq -1$, and each $V\in \Power_q(n)$, $\mu_{\emptyset,V}$ and $\mu_{V,\emptyset}$ are the identity bijections
\item\label{sob:6} for each $n\geq -1$, and each $U\in \Power_q(n)$ and each partition $U=V_1\cup V_2\cup V_3$ with $V_i\in \Power_{q_i}(n)$, the following square commutes:
\[\xymatrix{
\partial^n_U\ar[rr]^-{\mu^n_{V_1,V_{23}}} \ar[d]^-{\mu^n_{V_{12},V_3}} &&  \partial^{n_1}_{W_{23}}\circ \partial^n_{V_1} \ar[d]^-{\Id\times \mu^{n_1}_{W_2,W_3}} \\
\partial^{n_2}_{W_3}\circ \partial^n_{V_{12}}\ar[rr]^-{\mu^{n}_{V_1,V_2}\times \Id} && \partial^{n_2}_{W_3}\circ\partial^{n_1}_{W_2}\circ \partial^n_{V_1} 
}
\]
where
\begin{align*}
n_j &= n - \sum_{i=1}^{j} q_j,& W_2 &= \psi_{V_1}(V_2),& W_3 &= \psi_{V_1\cup V_2}(V_3) \\
V_{12} &= V_1\cup V_2,& V_{23} &= V_2\cup V_3,& 
W_{23} &= \psi_{V_1}(V_2\cup V_3)
\end{align*}
\end{enumerate}
%
%

\begin{notation} We will systematically refer to the set $Q^n_U$ with the name $\partial^n_U$ of the whole span. In particular, ``$x\in \partial^n_U$'' will be used instead of ``$x\in Q^n_U$''. We will denote the diagonal bijection in \eqref{sob:6} as
\[\mu^n_{V_1,V_2,V_3}\colon \partial^{n}_U\lra \partial^{n_2}_{W_3}\circ \partial^{n_1}_{W_2}\circ \partial^n_{V_1} .\]
If $V_1\subset U$, we write $V_2=U\smallsetminus V_1$ and define
\[\xymatrix{
\lambda^n_{V_1}\colon \partial^{n}_U\ar[rr]^-{\mu^n_{V_1,V_2}}&& \partial^{n_1}_{W_2}\circ \partial^n_{V_1}  \ar[rr]^-{\mathrm{proj}} && \partial^{n}_{V_1}.
}  \]
If $V_1,V_2\subset U$ are disjoint, we write $V_3=U\smallsetminus (V_1\cup V_2)$ and define
\[\xymatrix{
\lambda^n_{V_1,V_2}\colon \partial^{n}_U\ar[rr]^-{\mu^n_{V_1,V_2,V_3}}&& \partial^{n_1}_{W_3}\circ\partial^{n_1}_{W_2}\circ \partial^n_{V_1}  \ar[rr]^-{\mathrm{proj}} && \partial^{n_1}_{W_2} = \partial^{n_1}_{\psi_{V_1}(V_2)}.
}  \]
We will omit the superscript $n$ in $\partial_U^n$ or $\mu^n_{V_1,V_2}$ or $\lambda^n_{V_1,V_2}$ whenever it agrees with the the variable $n$ in the context.
\end{notation}

\begin{df}\label{df:osob} An \emph{\osob} is \ansob together with an ordering of each span $\partial_{U}^n$ (i.e., of the set $Q_U^n$).
\end{df}

\begin{df}\label{df:suspension} The suspension of an (ordered) \sob $X_\bullet$ is the (ordered) \sob 
\[(\Sigma X_\bullet)_{-1} = \emptyset,\quad (\Sigma X_\bullet)_n = X_{n-1}\quad \text{if $n>-1$},\]
 with face maps $\bar{\partial}^n_U$ given by
\[\bar{\partial}^n_U = 
\begin{cases} 
\partial^{n-1}_{\psi_0(U)} & \text{if $0\notin U$} \\
\emptyset & \text{if $0\in U$}.
\end{cases}\]
Here $\emptyset$ is the empty span $(\Sigma X_\bullet)_n\leftarrow \emptyset \to (\Sigma X_\bullet)_{n-|U|}$. If $x\in X_{n-1}$, we write $\Sigma x$ for the corresponding element in $(\Sigma X_\bullet)_{n}$. Note that $C_*(\Sigma X_\bullet;R)=\Sigma C_*(X_\bullet;R)$.
\end{df}

\subsection{Maps between \sobs}\label{ssection:sobmaps} Let $X_\bullet$ and $Y_\bullet$ be \sobs, and write $\partial_U^n$ and $\mu^n_{V_1,V_2}$ for the structure maps in $X_\bullet$ and $\bar{\partial}_U^n$ and $\bar{\mu}^n_{V_1,V_2}$ for the structure maps in $Y_\bullet$.

A map $f$ from $X_\bullet$ to $Y_\bullet$ 
 consists on the following data:
\begin{enumerate}
\item For every $n\geq -1$, a span $f_n$: $X_n\la F_n\to Y_n$,
\item For every $n\geq -1$, every $q\geq 0$ and every $U\in \Power_q(n)$, a fibrewise bijection $f^n_{U}\colon  f_{n-q}\circ \partial^n_U \to \bpartial^n_U\circ f_n$. In other words, a $2$-morphism
\[\xymatrix{
X_n\ar[r]^{f_n}\ar[d]_{\partial^n_U} & Y_n \ar[d]^{\bar{\partial}^n_U}\ar@{=}[ld]|-{f^{n}_{U}} \\
X_{n-q}\ar[r]^{f_{n-q}} & Y_{n-q}
}\]
\end{enumerate}
such that 
\begin{enumerate}[resume]
\item for each $n\geq -1$, $f^n_\emptyset$ is the identity bijection,
\item for each $n\geq -1$, each $U\in \Power_q(n)$ and each partition $U=V_1\cup V_2$, we have, using the notation of \ref{ssection:sob}, that the following diagram of $2$-morphisms commutes:
\[\xymatrixcolsep{4pc}\xymatrixrowsep{4pc}\xymatrix{&
\ar@/_5pc/[dd]|{\caja{\partial^n_U}}  X_n \ar[d]|{\caja{\partial^n_{V_1}}} \ar[r]|{F_n} \ar@{}[dr]|{\caja{f^n_{V_1}}} 
&  
\ar@/^5pc/[dd]|{\caja{\bar{\partial}^n_U}}Y_n      \ar[d]|{\caja{\bar{\partial}^n_{V_1}}} 		
&\\&
\ar@{}[l]|{\caja{\mu^n_{V_1,V_2}}} X_{n_1} \ar[d]|{\caja{\partial^{n_1}_{W_2}}}  \ar[r]|{\caja{F_{n_1}}} \ar@{}[dr]|{\caja{f^{n_1}_{W_2}}} 
&
Y_{n_1}  \ar[d]|{\caja{\bar{\partial}^{n_1}_{W_2}}} \ar@{}[r]|{\caja{\bar{\mu}^n_{V_1,V_2}}}
&\\&
X_{n_2} \ar[r]|{\caja{F_{n_2}}} &  Y_{n_2}  &
}\]
in other words, the following diagram of bijections commutes:
\begin{equation}\label{eq:sobmaps2}
\begin{aligned}\xymatrix{
f_{n_2}\circ \partial^n_U \ar[d]^-{\mu^n_{V_1,V_2}} \ar[rr]^{f^n_U} &&
\bar{\partial}^n_U\circ f_n \ar[d]^-{\bar{\mu}^n_{V_1,V_2}}
\\
f_{n_2}\circ \partial^{n_1}_{W_2}\circ \partial^{n}_{V_1} \ar[r]^{f^{n_1}_{W_2}} 
& \bar{\partial}^{n_1}_{W_2}\circ f_{n_1}\circ \partial^{n}_{V_1} \ar[r]^{f^{n}_{V_1}}
& \bar{\partial}^{n_1}_{W_2}\circ \bar{\partial}^{n}_{V_1}\circ f_n
}
\end{aligned}
\end{equation}
(here we have omitted the subscripts under the $\times$ symbols to lighten the diagram).
\end{enumerate}
\begin{remark} If $U=V_1\cup V_2\cup V_3$ and $n-|V_1|=n_1$ and $n-|V_1|-|V_2|=n_2$, the following diagrams commute
\begin{equation}\label{eq:sobmaps_lambda1}
\begin{aligned}
\xymatrix{
\partial^n_U\ar[dd]_{\lambda_{V_1}} & \ar[l]f_{n_2}\circ \partial^n_U\ar[r]^{f^n_U}\ar[d]& \bar{\partial}^n_U\circ f_n \ar[d]\ar[r] & \bpartial^n_{U}\ar[dd]^{\bar{\lambda}_{V_1}}\\
&\partial^{n_1}_{W_2}\circ f_{n_1}\circ \partial^n_{V_1}\ar[r]\ar[d]^{\lambda_{V_1}} & \partial^{n_1}_{W_2}\circ  \partial^n_{V_1}\circ f_{n}\ar[d]^{\bar{\lambda}_{V_1}}&\\ 
\partial^n_{V_1} & \ar[l]f_{n_1}\circ \partial^n_{V_1} \ar[r]^{f^n_{V_1}} & \bar{\partial}^n_{V_1}\circ f_n\ar[r]& \bpartial^n_{V_1}
}
\end{aligned}
\end{equation}
\begin{equation}\label{eq:sobmaps_lambda2}
\begin{aligned}
\xymatrix{
\partial^n_U\ar[dd]_{\lambda_{V_1,V_2}} & \ar[l]f_{n_2}\circ \partial^n_U\ar[r]^{f^n_U}\ar[d]& \bar{\partial}^n_U\circ f_n \ar[d]\ar[r] & \bpartial^n_{U}\ar[dd]^{\bar{\lambda}_{V_1,V_2}}\\
&f_{n_2}\circ \partial^{n_1}_{W_2}\circ \partial^n_{V_1}\ar[r]\ar[d] & \partial^{n_1}_{W_2}\circ f_{n_1}\circ \partial^n_{V_1}\ar[d]&\\ 
\partial^n_{V_1} & \ar[l]f_{n_2}\circ \partial^{n_1}_{W_2} \ar[r]^{f^{n_1}_{W_2}} & \bar{\partial}^{n_1}_{V_1}\circ f_{n_1}\ar[r]& \bpartial^n_{V_1,V_2}
}
\end{aligned}
\end{equation}
\end{remark}

\section{Stable symmetric multiplications}\label{section:theorem}
In this section we state the bulk of our main theorem: the cochain complex of any \sob has a stable symmetric multiplication. The proof is given in the next section. Our presentation is parallel to the presentation of the symmetric multiplication on the cochain complex of a simplicial set given in \cite{Anibal}.

\subsection{Sequences}
Let $\APower_q(n)$ be the set of $q$-tuples $U$ of non-decreasing non-negative integers bounded by $n$ where every number appears at most twice, i.e., an element is a sequence $(0\leq u_1\leq u_2\leq\ldots\leq u_q\leq n)$ where $u_{i-1}=u_i=u_{i+1}$ never occurs. The cardinal of a sequence $U$ will be denoted $|U|$.
 
If $U\in \APower_q(n)$, let $\oneU\subset U$ be the subset of non-repeated numbers and let $\twoU\subset U$ be the subset of repeated numbers. For example, if $U=(1,2,2,3,4,4)$, then $\oneU=(1,3)$ and $\twoU=(2,4)$. Define $\APower_q^r(n) = \{U\in \APower_q(n)\mid |\twoU|=r\}$.

If $U\in \APower^0_q(n)$, define the \emph{index of $u_i$ in $U$} as $\ind_U(u_i)=u+i$. If $U\in P_q^r(n)$, let $\bar{U}\in \APower^0_{q-r}(n)$ be the result of removing one instance of each repeated number. If $u_i\in \oneU$, define the \emph{index of $u_i$ in $U$} as $\ind_{\bar{U}}(u_i)$ and if $u_i\in \twoU$, define the \emph{index of $u_i$ in $U$} as $\ind_U(u_i)=\{u+i,u+i+1\}$. Set $U^+,U^-\subset U$ as the subsets that contain all elements of even (odd) index.


\subsection{Wedge products of face spans} Let $X_\bullet$ be an \osob, as defined in Section \ref{ssection:sob}.
\begin{df}\label{df:good} Let $U,V\in \Power(n)$ and let $(s,t)\in \partial_{U}\times \partial_V$. A pair of disjoint subsets $W=(W^\shortparallel,W^\smallwedge)$ of $U\cap V$ is \emph{$(s,t)$-good} if 
\begin{align*}
\lambda_{W^\shortparallel}(s)&=\lambda_{W^\shortparallel}(t) & \lambda_{W^\shortparallel,W^\smallwedge}(s)&\neq \lambda_{W^\shortparallel,W^\smallwedge}(t).\end{align*}
Furthermore, such pair is \emph{$(s,t)$-positive} (or \emph{positive}, for short) if
\begin{align*}
n+ |U|+|V|\text{ is even and }\lambda_{W^\shortparallel,W^\smallwedge}(s)&< \lambda_{W^\shortparallel,W^\smallwedge}(t)\text{ or}\\
n+ |U|+|V|\text{ is odd and }\lambda_{W^\shortparallel,W^\smallwedge}(s)&> \lambda_{W^\shortparallel,W^\smallwedge}(t)
\end{align*}
 and \emph{$(s,t)$-negative} (or \emph{negative}, for short) if
\begin{align*}
n+ |U|+|V|\text{ is even and }\lambda_{W^\shortparallel,W^\smallwedge}(s)&> \lambda_{W^\shortparallel,W^\smallwedge}(t)\text{ or}\\
n+ |U|+|V|\text{ is odd and }\lambda_{W^\shortparallel,W^\smallwedge}(s)&<\lambda_{W^\shortparallel,W^\smallwedge}(t).
\end{align*}
\end{df}
Clearly, if $\lambda_{W^\shortparallel}(s)=\lambda_{W^\shortparallel}(t)$ and $W_1^\shortparallel\subset W^\shortparallel$, then $\lambda_{W^\shortparallel_1}(s)=\lambda_{W^\shortparallel_1}(t)$ as well, and if $\lambda_{W^\shortparallel,W^\smallwedge}(s)\neq\lambda_{W^\shortparallel,W^\smallwedge}(t)$ and $W^\smallwedge\subset W_1^\smallwedge$, then $\lambda_{W^\shortparallel,W^\smallwedge_1}(s)\neq\lambda_{W^\shortparallel,W^\smallwedge_1}(t)$ too. In this line, we have the following straightforward lemma:
\begin{lemma}\label{lemma:parallelicity} Let $U,V\in \Power(n)$, let $(s,t)\in \partial_{U}\times \partial_V$, let $(W^\shortparallel,W^\smallwedge)$ be a $(s,t)$-good pair and suppose that $0\leq w\leq n$ with $w\notin W^\smallwedge$. 
\begin{enumerate}
\item\label{para:1} The pair $(W^\shortparallel,W^\smallwedge\cup \{w\})$ is $(s,t)$-good if and only if $w\notin W^\shortparallel$, and 
\item the pair $(W^\shortparallel\smallsetminus \{w\},W^\smallwedge\cup \{w\})$ is $(s,t)$-good if and only if $w\in W^\shortparallel$.
\end{enumerate}
\end{lemma}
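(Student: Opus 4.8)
The statement is essentially a bookkeeping consequence of Definition~\ref{df:good}, together with the remarks that precede the lemma about monotonicity of the conditions $\lambda_{W^\shortparallel}(s)=\lambda_{W^\shortparallel}(t)$ and $\lambda_{W^\shortparallel,W^\smallwedge}(s)\neq\lambda_{W^\shortparallel,W^\smallwedge}(t)$ under shrinking $W^\shortparallel$ and enlarging $W^\smallwedge$. So the plan is: first reduce being $(s,t)$-good for the modified pair to checking the two defining equalities/inequalities, then exploit that in each case only one of the two conditions actually changes, and the other is automatic from the monotonicity observations.

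For part~\ref{para:1}, I would argue as follows. We are modifying $(W^\shortparallel,W^\smallwedge)$ to $(W^\shortparallel,W^\smallwedge\cup\{w\})$, so $W^\shortparallel$ is unchanged and hence the first condition $\lambda_{W^\shortparallel}(s)=\lambda_{W^\shortparallel}(t)$ still holds. For the new pair to be a pair of \emph{disjoint} subsets of $U\cap V$ we need $w\in U\cap V$ (which I should note is implicit in the hypothesis that $(W^\shortparallel,W^\smallwedge\cup\{w\})$ is being considered as a candidate good pair) and, crucially, $w\notin W^\shortparallel$; this last condition is exactly what the equivalence claims on one side. Conversely, given $w\notin W^\shortparallel$, the only thing left to check is $\lambda_{W^\shortparallel,W^\smallwedge\cup\{w\}}(s)\neq\lambda_{W^\shortparallel,W^\smallwedge\cup\{w\}}(t)$, which follows from $\lambda_{W^\shortparallel,W^\smallwedge}(s)\neq\lambda_{W^\shortparallel,W^\smallwedge}(t)$ by the monotonicity remark (enlarging the ``wedge'' part of a distinguishing pair keeps it distinguishing). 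If instead $w\in W^\shortparallel$, then $W^\shortparallel$ and $W^\smallwedge\cup\{w\}$ are not disjoint, so the pair is not even a legitimate candidate, let alone good; this gives the reverse implication.

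For part~(2), the modification is $(W^\shortparallel\smallsetminus\{w\},W^\smallwedge\cup\{w\})$. If $w\notin W^\shortparallel$ then $W^\shortparallel\smallsetminus\{w\}=W^\shortparallel$, and we are simply adding $w$ to the wedge part, which is the situation already handled (and good, by monotonicity, \emph{provided} $w\notin W^\shortparallel$)—but wait, here the hypothesis is precisely $w\notin W^\shortparallel$ being the case we want to \emph{exclude}, so in that branch the modified pair equals $(W^\shortparallel,W^\smallwedge\cup\{w\})$ and I should instead observe that the claim of part~(2) is an ``if and only if'' whose content in this branch is that the pair is \emph{not} good unless $w\in W^\shortparallel$. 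Hmm — this is where I need to be careful: if $w\notin W^\shortparallel$, then $(W^\shortparallel\smallsetminus\{w\}, W^\smallwedge\cup\{w\}) = (W^\shortparallel, W^\smallwedge\cup\{w\})$, which \emph{is} good by part~\ref{para:1}; so actually the statement of part~(2) as ``good iff $w\in W^\shortparallel$'' would be false unless one reads the two parts as describing genuinely different modifications under the standing assumption that exactly one of $w\in W^\shortparallel$, $w\notin W^\shortparallel$ holds. I expect the intended reading is: part~\ref{para:1} is the useful direction when $w\notin W^\shortparallel$, part~(2) when $w\in W^\shortparallel$, and the ``if and only if'' in each merely records that the hypothesis on $w$ relative to $W^\shortparallel$ is exactly what makes the stated modification legitimate-and-good versus illegitimate. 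So for part~(2): if $w\in W^\shortparallel$, then $W^\shortparallel\smallsetminus\{w\}$ is a proper subset of $W^\shortparallel$, so $\lambda_{W^\shortparallel\smallsetminus\{w\}}(s)=\lambda_{W^\shortparallel\smallsetminus\{w\}}(t)$ still holds by the monotonicity remark (shrinking the ``parallel'' part of a non-distinguishing pair keeps it non-distinguishing), $w$ has been moved to the wedge part so disjointness is restored, and $\lambda_{W^\shortparallel\smallsetminus\{w\},W^\smallwedge\cup\{w\}}(s)\neq\lambda_{W^\shortparallel\smallsetminus\{w\},W^\smallwedge\cup\{w\}}(t)$ needs a direct check; conversely if $w\notin W^\shortparallel$ the modification does nothing to $W^\shortparallel$ but does add $w$ to the wedge, and one must then appeal to part~\ref{para:1}.

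\textbf{Main obstacle.} The genuine content — and the only step that is not pure set-theoretic bookkeeping — is verifying the distinguishing inequality $\lambda_{W^\shortparallel\smallsetminus\{w\},\,W^\smallwedge\cup\{w\}}(s)\neq\lambda_{W^\shortparallel\smallsetminus\{w\},\,W^\smallwedge\cup\{w\}}(t)$ in part~(2), i.e. that moving an element from the parallel block to the wedge block preserves (or, read the other way, that if the inequality failed we could derive a contradiction with $(W^\shortparallel,W^\smallwedge)$ being good). This requires unwinding the definition of the $\lambda$ maps from the notation block after Definition of \osob~(the composites $\mu^n_{V_1,V_2,V_3}$ followed by projections) and using the coherence square~\ref{sob:6} to relate $\lambda_{W^\shortparallel\smallsetminus\{w\},W^\smallwedge\cup\{w\}}$ to $\lambda_{W^\shortparallel,W^\smallwedge}$ composed with $\lambda$ applied to $\{w\}$. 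Concretely: applying $\lambda_{W^\shortparallel,W^\smallwedge}$ to $(s,t)$ lands in some $\partial_{\bullet}$ where $s,t$ differ; further applying $\lambda$ for the index $w$ either keeps them differing — giving what we want — or collapses them, but the latter would force $\lambda_{W^\shortparallel\cup\{w\}}(s)=\lambda_{W^\shortparallel\cup\{w\}}(t)$ via the same coherence square, which is consistent with $(W^\shortparallel,W^\smallwedge)$-goodness and is not in itself a contradiction — so I will need to argue instead directly from the structure of $\mu$ that, because $s$ and $t$ already differ \emph{before} forgetting the $w$-coordinate and $w\notin W^\smallwedge$, they still differ after. This is the one place where I would actually write out a short diagram chase with the fibrewise bijections rather than quoting a slogan. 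Everything else is immediate from disjointness and the two monotonicity remarks stated just above the lemma.
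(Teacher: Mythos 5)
The paper offers no written proof of this lemma: it is declared ``straightforward'' and is meant to follow from the two monotonicity observations displayed immediately above it (shrinking $W^\shortparallel$ preserves the equality, enlarging $W^\smallwedge$ preserves the inequality), together with disjointness for the ``only if'' directions. Your treatment of part \eqref{para:1} matches this exactly, and your observation that the literal ``only if'' of part (2) collides with part \eqref{para:1} when $w\notin W^\shortparallel$ (the two candidate pairs then coincide) is a legitimate catch: the intended reading is that the two items together say the ``consecutive'' move defined right after the lemma always yields a good pair, with (2) the relevant case when $w\in W^\shortparallel$.

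The gap is in the one step you call the main obstacle, and it comes from having the factorization of the $\lambda$'s backwards. You propose to reach $\lambda_{W^\shortparallel\smallsetminus\{w\},\,W^\smallwedge\cup\{w\}}$ by ``further applying $\lambda$ for the index $w$'' to $\lambda_{W^\shortparallel,W^\smallwedge}(s)$; but $w\notin W^\smallwedge$, so $w$ does not label a coordinate of $\partial_{\psi_{W^\shortparallel}(W^\smallwedge)}$ and the composite you describe does not make sense. The correct relation, obtained from the coherence condition \eqref{sob:6} applied (for $s$, and likewise for $t$) to the disjoint subsets $W^\shortparallel\smallsetminus\{w\}$, $\{w\}$ and $W^\smallwedge$, is the opposite one: $\lambda_{W^\shortparallel,W^\smallwedge}=\lambda'\circ\lambda_{W^\shortparallel\smallsetminus\{w\},\,W^\smallwedge\cup\{w\}}$, where $\lambda'$ is a further projection (a $\lambda$ at level $n-|W^\shortparallel|+1$ forgetting the $w$-block). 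With the arrow pointing this way the step is immediate: the images of $s$ and $t$ under the composite already differ, hence their images under the first map differ --- the same mechanism as the second displayed remark before the lemma, with no case analysis and no possible ``collapse''. In particular, the fallback principle you invoke --- that $s$ and $t$ ``differ before forgetting the $w$-coordinate, hence still differ after'' --- is false as a general principle (forgetting a coordinate can identify distinct elements) and is not what is needed; along the route you sketch the argument cannot be completed. Once the factorization is turned around, your plan closes up and the lemma really is the routine bookkeeping the paper claims it to be.
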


Two pairs $(W^\shortparallel_0,W^\smallwedge_0)$ and $(W^\shortparallel_1,W^\smallwedge_1)$ are \emph{consecutive} if $W^\smallwedge_1$ is obtained from $W^\smallwedge_0$ by adding an element and $W^\shortparallel_1$ is the result of removing that element (if it is there) from $W^\shortparallel_0$. We write $(W^\shortparallel_0,W^\smallwedge_0)\prec(W^\shortparallel_1,W^\smallwedge_1)$ if the two pairs are consecutive. Let $\cO_{s,t}(U\cap V)$ be the collection of maximal chains 
\[(W^\shortparallel_1,W^\smallwedge_1)\prec\ldots\prec (W^\shortparallel_r,W^\smallwedge_r),\quad r=|U\cap V|\]
of $(s,t)$-good pairs.  A maximal chain is \emph{positive} (resp.\ \emph{negative}) if all its entries are positive (resp.\ negative). A maximal chain is \emph{almost positive} (resp.\ \emph{almost negative}) if $(W^\shortparallel_i,W^\smallwedge_i)$ is positive (resp.\ negative) for all $i\neq r$. For every maximal chain $W$, necessarily $(W_r^\shortparallel,W_r^\smallwedge) = (\emptyset,U\cap V)$. We say that $(s,t)$ is \emph{positive} (\emph{negative}) if $(\emptyset,U\cap V)$ is positive (negative). We introduce the following subsets of $\cO_{s,t}(U\cap V)$:

\vspace{.3cm}

\begin{tabular}{l p{8cm}}
 $\cO_{s,t}(U\cap V)^\pm$ & the set of positive (negative) maximal chains,\\[.2cm]
 $\cO_{s,t}(U\cap V)^\pm_\diamond$& the set of almost positive (negative) maximal chains,\\[.2cm]
 $\cO_{s,t}(U\cap V)^\pm_\bdiamond$& the set of almost positive (negative) maximal chains such that $W^\shortparallel_{r-1}= \emptyset$,\\[.2cm]
 $\cO_{s,t}(U\cap V)^\pm_\udiamond$& the set of almost positive (negative) maximal chains such that $W^\shortparallel_{r-1}\neq \emptyset$,\\[.2cm]
 $\cO_{s,t}(U\cap V)^{\pm,x}_\bdiamond$& the set of almost positive (negative) maximal chains such that $W^\shortparallel_{r-1}= \emptyset$\phantom{\! } and~$U\cap V\smallsetminus W_{r-1}^\smallwedge = \{x\}$,\\[.2cm]
 $\cO_{s,t}(U\cap V)^{\pm,x}_\udiamond$& the set of almost positive (negative) maximal chains such that $W^\shortparallel_{r-1}\neq \emptyset$\phantom{\! } and~$U\cap V\smallsetminus W_{r-1}^\smallwedge = \{x\}$.
\end{tabular}

\begin{df} Let $U,V$ be sequences in $\Power(n)$ and let $x\in U\cap V$. If the symbol $\square$ denotes either $\lozenge,\vartriangle,\triangledown$ or the absence of a symbol, define the spans
\begin{align*}
\partial_U\wedge_\square\partial_V &\quad X_n\times X_n\longleftarrow \sum_{(s,t)\in \partial_U\times \partial_V} \cO_{s,t}(U\cap V)_\square^+ \longrightarrow X_{n-|U|}\times X_{n-|V|}\\
\partial_U\bwedge_\square\partial_V &\quad X_n\times X_n\longleftarrow \sum_{(s,t)\in \partial_U\times \partial_V} \cO_{s,t}(U\cap V)_\square^- \longrightarrow X_{n-|U|}\times X_{n-|V|} \\
\partial_U\wedge_\square^x\partial_V &\quad X_n\times X_n\longleftarrow \sum_{(s,t)\in \partial_U\times \partial_V} \cO_{s,t}(U\cap V)_\square^{+,x} \longrightarrow X_{n-|U|}\times X_{n-|V|}\\
\partial_U\bwedge_\square^x\partial_V &\quad X_n\times X_n\longleftarrow \sum_{(s,t)\in \partial_U\times \partial_V} \cO_{s,t}(U\cap V)_\square^{-,x} \longrightarrow X_{n-|U|}\times X_{n-|V|}, 
\end{align*}
where the value of the source and target maps at $\bW\in \cO_{s,t}(U\cap V)^\pm_\square$ is
\[
\source(\bW) = (\source(s),\source(t)),\quad \target(\bW) = (\target(s),\target(t)).
\]
Note that if $U\cap V=\emptyset$, then there is a single maximal chain of length $0$ that is both positive and negative, hence the first two spans are equivalent to $\partial_U\times\partial_V$, and the last two are the empty span. Note also that if $s=t$, then $\cO_{s,t}(\twoU)$ is always empty.
\end{df}


\subsection{Stable symmetric comultiplications} Let $(C_*,d)$ be a chain complex of $\bF_2$-modules, let $T\colon C_*\otimes C_*\to C_*\otimes C_*$ be the twist homomorphism $T(a\otimes b)=b\otimes a$, and let $1\colon C_*\otimes C_*\to C_*\otimes C_*$ denote the identity map. A \emph{stable symmetric comultiplication} on $(C_*,d)$ is a family of homomorphisms 
\begin{align*}\nabla_i\colon C_*&\lra C_*\otimes C_*
\end{align*}
with $i\in \bZ$, such that $\nabla_i$ has degree $i$ and 
\begin{equation}\label{eq:comult}
\nabla_i\circ d = d\circ\nabla_i + (1+T)\nabla_{i-1}.
\end{equation}
The homomorphisms $\smile_i\colon C^*\otimes C^*\lra C^*$ dual to $\nabla_i$ 
 endow the dual cochain complex with a stable symmetric multiplication..

 Let $X_\bullet$ be \ansob, and let $C_* := C_*(X_\bullet;\bF_2)$ be its $\bF_2$-realisation. Let $n\geq -1$ and $q\geq 0$. Define the span $\nabla^{\binom{n}{q}}$ from the set $X_n\times X_n$ to the set $\bigcup_{i+j=q}X_{n-i}\times X_{n-j}$ as \mnote{la notacion no es ideal: hay sumas y coproductos mezclados}
\begin{equation}\label{eq:nabla5}
\nabla^{\binom{n}{q}} = \sum_{U\in \APower_q(n)}  \partial_{U^-}\wedge \partial_{U^+}.
\end{equation}
The $\bF_2$-realisation of $\nabla^{\binom{n}{q}}$ is then a homomorphism $C_n\otimes C_n\to (C_*\otimes C_*)_{2n-q}$. Define the homomorphism $\nabla_k\colon C_*\lra C_*\otimes C_*$ of graded modules of degree~$k$ 
\[\nabla_k = \btoab{\bF_2}{\sum_{n} \nabla^{\binom{n}{n-k}}\circ\Delta_n}.\]
where $\Delta_n\colon X_n\to X_n\times X_n$ is the diagonal map. 

\begin{theorem}\label{thm:main} The chain complex with $\bF_2$ coefficients of an \osob $X_\bullet$ has a symmetric comultiplication whose $i$th operation is the homomorphism $\nabla_i$. 
\end{theorem}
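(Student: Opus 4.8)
My goal is to verify identity \eqref{eq:comult}, i.e. that $\nabla_i\circ d = d\circ\nabla_i + (1+T)\nabla_{i-1}$ as homomorphisms $C_n\to (C_*\otimes C_*)_{2n-i-1}$, and I will do this at the level of spans in the Burnside category, passing to $\bF_2$-coefficients only at the end. Writing $q = n-i$, the four terms in \eqref{eq:comult}, after precomposing with the diagonal $\Delta$, become (up to $\bF_2$-equivalence) sums over sequences $U \in \APower(n)$ or $U \in \APower(n-1)$ of wedge-products $\partial_{U^-}\wedge\partial_{U^+}$ composed with single face maps $\partial_u$ on either the source or the target side. The strategy is: first, expand each of the four terms as an explicit sum of spans indexed by pairs $(U,u)$ (a sequence together with an index of the extra face being inserted or contracted); second, organize all these index-pairs into a single combinatorial bookkeeping set; and third, exhibit an explicit fibrewise bijection (a $2$-morphism, or a sign-free pairing after $\bF_2$-reduction) that matches up the terms, so that everything cancels in pairs except for what is required. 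I expect the proof to follow the template of \cite{Anibal} for simplicial sets, with the face maps $\partial_u$ replaced by spans and the set-level equalities replaced by fibrewise bijections built from the structure maps $\mu^n_{V_1,V_2}$ and the $\lambda$-maps of Section \ref{ssection:sob}.

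**Key steps, in order.** First I would fix $n$ and a pair $(s,t)\in X_n\times X_n$ (really $(s,t)$ with $\source(s)=\source(t)$ after applying $\Delta$), and reduce the whole identity to a statement about the sets $\cO_{s,t}$ of maximal chains of good pairs and how they transform under the boundary maps. Second, I would analyze the effect of postcomposing $\partial_{U^-}\wedge\partial_{U^+}$ with a face map $\partial_u$ on the target: using the simplicial identity \eqref{eq:simplicial} and the coherence square \eqref{sob:6}, the composite $\partial_u\circ\partial_{U^\pm}$ rewrites as $\partial_{\xi}$ for an enlarged sequence, and one tracks how the chains of good pairs behave — this is where the refinement into $\cO^{\pm}_\diamond$, $\cO^{\pm}_\bdiamond$, $\cO^{\pm}_\udiamond$ and their pointed variants $\cO^{\pm,x}$ pays off, because the boundary of a wedge-product naturally decomposes along ``the last step of the chain'' and along ``which element of $U\cap V$ is contracted last''. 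Third, I would do the same on the source side, precomposing with $\partial_u$: here the sequence $U$ is pulled back via $\psi$ or $\gamma$, and a number $u$ either lies in $U$ (producing a repeated entry, changing $U^\pm$) or not. Fourth, I would assemble a global matching: the ``internal'' cancellations — terms appearing twice with opposite effect — pair off, and the surviving terms are exactly those producing the $(1+T)\nabla_{i-1}$ contribution, corresponding to chains whose top pair $(W^\shortparallel_{r-1},W^\smallwedge_{r-1})$ changes positivity, i.e. to the ``$m^w$''-type almost-positive-but-not-positive chains hinted at by the notation $\almaxweak{}$ and $\minweak{}$ introduced in the macros. Passing to $\bF_2$ kills all signs, so only the parity of the number of matched pairs matters.

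**The main obstacle.** The genuinely hard part is the bookkeeping in the fourth step: showing that the enormous sum of spans obtained by expanding all four terms of \eqref{eq:comult} cancels correctly. Unlike the simplicial case, where one manipulates formal sums of face maps, here each term is a span whose total set is a disjoint union of sets of maximal chains $\cO_{s,t}(U\cap V)^{\pm}_{\square}$, and the cancellation must be realized by an honest fibrewise bijection between these chain-sets — one must check that source and target maps are respected, which forces careful use of the coherence identities \eqref{sob:6}, \eqref{eq:sobmaps_lambda1}–\eqref{eq:sobmaps_lambda2} relating $\mu$, $\lambda$, and the fact that $\partial_\emptyset$ is the identity. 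The combinatorial heart is a sign-matching lemma: given a good pair and an allowed modification of $(U,u)$, the parities $n+|U^-|+|U^+|$ governing positivity shift in a predictable way, and one must verify that each ``interior'' chain is counted an even number of times while each ``boundary'' chain (those landing in the $m^w$/$\ell^w$ strata) is counted once on each relevant side. I would isolate this as one or two standalone combinatorial lemmas about $\cO_{s,t}$ before feeding them into the span-level computation, so that the final verification of \eqref{eq:comult} becomes a term-by-term bookkeeping check rather than a single monolithic argument.
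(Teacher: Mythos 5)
Your overall plan runs parallel to the paper's actual proof for its first half: fixing $(s,t)$, rewriting the composites of $\nabla^{\binom{n}{q}}$ with the boundary on the source and target sides via the coherence data $\mu$, $\lambda$, and pairing off the doubly-occurring summands (this is what Propositions \ref{prop:diagonalpart} and \ref{prop:nondiagonalpart} and Lemmas \ref{lemma:repeated} and \ref{lemma:non_repeated} do, the last by an explicit involution on the summands indexed by $x\in\oneU$). Up to that point your outline is a reasonable, if schematic, match.

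The genuine gap is in your fourth step, which is exactly where the paper's new idea lives. After all the formal rewriting one is \emph{not} left with a global bookkeeping of terms ``counted an even number of times''; one is left, for each fixed sequence $U$ and each fixed $(s,t)$, with the single identity
$\partial_{U^-}\bwedge_\diamond\partial_{U^+} + \partial_{U^-}\bwedge\partial_{U^+} + \partial_{U^-}\wedge\partial_{U^+}\igualdos\emptyset$
(Proposition \ref{prop:final}), i.e.\ the assertion that the number of positive maximal chains plus the number of negative and almost-negative maximal chains in $\cO_{s,t}(\twoU)$ is even. There is no natural free involution on these chain sets realizing this parity: the fibers over the ``all-positive'' and ``all-negative'' positivity patterns need not be in bijection, only equinumerous mod $2$. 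The paper proves this by putting an $(r-1)$-colored graph structure on the set of all maximal $(s,t)$-good chains (Lemma \ref{lemma:gammacolor}), mapping it color-preservingly to the $1$-skeleton of the $(r-1)$-cube by recording the positivity of each pair, and invoking a mod-$2$ degree argument (Lemma \ref{lemma:degree}: the preimage of an embedded path in a connected colored graph is a $1$-manifold with boundary, so fibers over any two vertices have the same parity). Your proposal defers this to ``one or two standalone combinatorial lemmas'' about sign-matching, but offers no mechanism that could prove it, and the involution-style cancellation you describe would not suffice. (A minor side remark: the $\almaxweak{}$, $\minweak{}$ notation you point to belongs to the Cartan-formula argument in Section \ref{section:well-defined}, not to this theorem, so the surviving terms are not the strata you name.) Without an argument for Proposition \ref{prop:final} or a substitute for the colored-graph parity lemma, the proof is incomplete at its central point.
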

If $U,V\in \Power_q(n)$ and $\partial_{U\cap V}$ is a free span, then the composition $(\partial_{U\cap V}\times \partial_{U\cap V})\circ \Delta$ is a free span as well. Hence, if $U\cap V\neq\emptyset$, and $(s,t)\in \partial^n_U\times \partial^n_V$ and $\source(s)=\source(t)$, then $\lambda_{U\cap V}(s) = \lambda_{U\cap V}(t)$, so $\cO_{s,t}(U\cap V)$ is empty and therefore the span $(\partial_{U}\wedge \partial_{V})\circ \Delta$ is empty. 
\begin{corollary}\label{cor:sset} Let $X_\bullet$ be an \osob. If $U\in \APower_q(n)$ with $\twoU\neq \emptyset$ and $\partial_{\twoU}$ is a free span, then $\partial_{U^-}\wedge\partial_{U^+}\circ\Delta$ is the empty span. As a consequence, if $X_\bullet$ is a semi-simplicial set, one may replace \eqref{eq:nabla5} by
\[\nabla^{\binom{n}{q}} = \sum_{U\in \APower^0_q(n)}  \partial_{U^-}\wedge \partial_{U^+},\]
which is the formula given in \cite{Anibal} to define the symmetric comultiplication on the chain complex of a simplicial set.
\end{corollary}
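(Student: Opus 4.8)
The plan is to combine a small piece of index-bookkeeping for the subsequences $U^+,U^-$ with the observation made just above the statement, namely that if $\partial_W$ is a free span then $\cO_{s,t}(W)$ is empty whenever $\source(s)=\source(t)$.

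First I would check that, as sub-multisets of $U$, one has $U^-\cap U^+=\twoU$. A number occurring once in $U$ carries a single index, even or odd, so it enters exactly one of $U^+,U^-$; a number occurring twice, at consecutive positions $i,i+1$, carries the index $\{u+i,u+i+1\}$, a pair of consecutive integers with one even and one odd member, so it contributes precisely one copy to $U^+$ and one to $U^-$. Hence the twice-occurring numbers, and only those, lie in $U^-\cap U^+$; in particular $\twoU\neq\emptyset$ forces $U^-\cap U^+\neq\emptyset$, and each of $U^\pm$ is itself repetition-free, so $U^\pm\in\Power(n)$ and $\lambda_{\twoU}$ is defined on both $\partial_{U^-}$ and $\partial_{U^+}$.

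Next, fix $(s,t)\in\partial_{U^-}\times\partial_{U^+}$ with $\source(s)=\source(t)$ (these are the pairs contributing to the composite with $\Delta$). Since $\twoU\subseteq U^-$ and $\twoU\subseteq U^+$, the elements $\lambda_{\twoU}(s)$ and $\lambda_{\twoU}(t)$ both lie in $\partial_{\twoU}$ and both sit over $\source(s)=\source(t)$; as $\partial_{\twoU}$ is free its source map is injective, so $\lambda_{\twoU}(s)=\lambda_{\twoU}(t)$. In any maximal chain in $\cO_{s,t}(\twoU)$ the top pair is $(W^\shortparallel_r,W^\smallwedge_r)=(\emptyset,\twoU)$, and $\lambda_{\emptyset,\twoU}=\lambda_{\psi_\emptyset(\twoU)}=\lambda_{\twoU}$, so this pair fails the inequality $\lambda_{\emptyset,\twoU}(s)\neq\lambda_{\emptyset,\twoU}(t)$ demanded by $(s,t)$-goodness; hence $\cO_{s,t}(\twoU)=\emptyset$ and a fortiori $\cO_{s,t}(U^-\cap U^+)^+=\emptyset$. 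Since the source set of $\partial_{U^-}\wedge\partial_{U^+}\circ\Delta$ is the disjoint union of the sets $\cO_{s,t}(U^-\cap U^+)^+$ over pairs $(s,t)$ with $\source(s)=\source(t)$, that span is empty, which is the first assertion. (Alternatively, one applies the observation above to the pair $(U^-,U^+)$; its argument nowhere uses that the two sequences have the same length.)

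For the consequence, a semi-simplicial set has every $\partial^n_U$ with bijective---hence injective---source map, so $\partial_{\twoU}$ is free for all $U$. By the first part, each summand $\partial_{U^-}\wedge\partial_{U^+}$ of $\nabla^{\binom{n}{q}}$ with $\twoU\neq\emptyset$ becomes the empty span after composition with $\Delta_n$; since $\nabla_k$ is assembled only from the composites $\nabla^{\binom{n}{n-k}}\circ\Delta_n$, deleting those summands changes nothing, so the sum in \eqref{eq:nabla5} may be restricted to $\APower^0_q(n)$. For $U\in\APower^0_q(n)$ one has $\twoU=\emptyset$, the sequences $U^\pm$ are the even- and odd-indexed subsequences with $U^-\cap U^+=\emptyset$, and the face spans are honest injections, so $\partial_{U^-}\wedge\partial_{U^+}$ specialises exactly to the term in the Alexander--Whitney-type formula of \cite{Anibal}. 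I do not expect a genuine obstacle; the only mildly delicate points are the index-parity count giving $U^-\cap U^+=\twoU$ and the identification $\lambda_{\emptyset,\twoU}=\lambda_{\twoU}$, both immediate from the definitions in Sections~\ref{ssection:sob} and~\ref{section:theorem}.
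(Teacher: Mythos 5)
Your argument is correct and is essentially the paper's own: the paper proves the first assertion in the paragraph immediately preceding the corollary, using exactly the observation that freeness of $\partial_{\twoU}$ forces $\lambda_{\twoU}(s)=\lambda_{\twoU}(t)$ when $\source(s)=\source(t)$, so the terminal pair $(\emptyset,\twoU)$ of any maximal chain cannot be $(s,t)$-good and $\cO_{s,t}(\twoU)$ is empty. Your extra bookkeeping (that $U^-\cap U^+=\twoU$, that equal lengths of the two sequences are never used, and that $\lambda_{\emptyset,\twoU}=\lambda_{\twoU}$ by strict unitality) just makes explicit what the paper leaves implicit, and the deduction for semi-simplicial sets matches the paper's.
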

\section{Proof}\label{section:proof}
For the family $\{\nabla_k\}_{k\in \bZ}$ to be a stable symmetric comultiplication on $C_*(X_\bullet;\bF_2)$, one has to verify \eqref{eq:comult}, which is a consequence of the following equation of spans, which we prove true along this section.
\begin{align}\label{eq:proof}
\partial^{2n-q}\circ\nabla^{\binom{n}{q}}\circ\Delta_n + \nabla^{\binom{n-1}{q-1}}\circ\Delta_{n-1}\circ\partial^{n} &\igualdos (1+T)\nabla^{\binom{n}{q+1}}\circ\Delta_n.
\end{align}
Here, $\partial^{n}$ denotes the span $\sum_{x=0}^{n}\partial_{x}$ from $X_n$ to $X_{n-1}$, whose \abelianisation{\bF_2} is the differential $C_n\to C_{n-1}$. On the other hand, $\partial^{2n-q}$ is the span from $\bigcup_{i+j=2n-q} X_i\times X_j$ to $\bigcup_{i+j=2n-q-1} X_i\times X_j$ given by the union of all spans
\[X_i\times X_j\lla \sum_{y=0}^{i}\partial^i_y\times \Id_{j} + \sum_{y=0}^{j}\Id_i\times \partial^j_y\lra X_{i-1}\times X_j\cup X_i\times X_{j-1},\]
whose \abelianisation{\bF_2} is the differential $(C_*\otimes C_*)_{2n-q}\to (C_*\otimes C_*)_{2n-q-1}$. Here and during the proof we write $\Id_n$ for the identity span $\partial^n_\emptyset$.
\subsection{Rewriting the formula in terms of wedge products} Recall from Section \ref{ssection:sequences} that for each $x\in \{0,\ldots,n\}$ there are functions  
\[\gamma_x\colon \Power_q(n)\overset{\cong}{\lra} \{V\in \Power_{q}(n+1)\mid x\notin V\}\subset \Power_{q}(n+1)\]
\[\xi_x\colon \Power_q(n)\overset{\cong}{\lra} \{V\in \Power_{q+1}(n+1)\mid x\in V\}\subset \Power_{q+1}(n+1)\]
with $\xi_x(U) = \gamma_x(U)\cup \{x\}$. Define a new function 
\[\xi_{xx}\colon \APower_{q-1}^{r-1}(n-1)\hookrightarrow \APower_{q+1}^{r}(n)\]
as $\xi_{xx}(U) = \gamma_x(U)\cup \{x,x\}$, and a new span
\begin{align*}
\partial_x\pwedge \partial_x &= \left\{(a,b)\in \partial_x\times \partial_x\mid a=b\right\}.
\end{align*}
\begin{lemma}\label{lemma:aux_diagonal} If $U\in \APower_{q-1}^{r-1}(n-1)$ and $x\in \{0,\ldots,n\}$, there are equivalences of spans
\begin{align*}
\Delta_{n-1}\circ \partial_x &\igual (\partial_x\pwedge \partial_x)\circ \Delta_n \\
\partial_{U^-}^{n-1}\wedge\partial_{U^+}^{n-1} \circ \partial_x\pwedge\partial_x &\igual \partial_{\xi_{xx}(U)^-}\bwedge_{\udiamond}^x\partial_{\xi_{xx}(U)^+}.
\end{align*}
\end{lemma}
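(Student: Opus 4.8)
I would prove the two equivalences separately; the first is a direct comparison of fibre products and the second carries the content.

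\emph{First equivalence.} By the definition of composition in $\cB$, the span $\Delta_{n-1}\circ\partial_x$ is a copy of $Q^n_x$ with source $\source$ and target $a\mapsto(\target(a),\target(a))$. On the other side, $\partial_x\pwedge\partial_x$ is the ``diagonal'' sub-span $\{(a,a)\}\subseteq Q^n_x\times Q^n_x$ of the product span $\partial_x\times\partial_x$, and its fibre product with $\Delta_n$ over $X_n\times X_n$ is again a copy of $Q^n_x$ with source $\source$ and the same target $a\mapsto(\target(a),\target(a))$. The identity bijection of $Q^n_x$ is the required fibrewise bijection.

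\emph{Second equivalence: combinatorial preliminaries.} Write $W=\xi_{xx}(U)\in\APower^{r}_{q+1}(n)$. Inspecting indices, I would first show that the inserted doubled letter $x$ acquires in $W$ a pair of consecutive indices $\{j,j+1\}$ (hence one even and one odd), while every surviving letter keeps the parity of its index, the underlying values having only been relabelled by $\gamma_x$. This yields $\xi_x(U^-)=W^-$, $\xi_x(U^+)=W^+$ and $W^-\cap W^+=\xi_x(U^-\cap U^+)=\xi_x(\twoU)$; I would also record the identity $\psi_{\{x\}\cup\gamma_x(A)}(\gamma_x(B))=\psi_A(B)$ for disjoint $A,B\subseteq\twoU$, which matches levels and face-labels on the two sides.

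\emph{Second equivalence: the bijection.} Since $\partial^{n-1}_{U^\pm}\circ\partial^n_x=\partial^n_{\xi_x(U^\pm)}=\partial^n_{W^\pm}$ in $\Deltainjaug$, the structure bijections $\mu^n_{\{x\},\gamma_x(U^\pm)}$ of the \osob identify $\partial^n_{W^\pm}$ with $\partial^n_x\times_{X_{n-1}}\partial^{n-1}_{U^\pm}$; writing $s'\leftrightarrow(a,s)$ and $t'\leftrightarrow(a',t)$ under these, one has $\lambda^n_x(s')=a$ and $\lambda^n_x(t')=a'$. Unwinding the fibre product defining the left-hand composition, its elements over $((p,p),(y,z))$ are the quadruples $(a,s,t,\bW)$ with $\source(a)=p$, $\target(a)=\source(s)=\source(t)$, $\target(s)=y$, $\target(t)=z$ and $\bW\in\cO_{s,t}(\twoU)^+$ (the single middle vertex is exactly what forces $s$ and $t$ to carry the same $a$). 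To such a quadruple I assign the triple $(s',t',\bW')$ where $s',t'$ correspond to $(a,s),(a,t)$ and $\bW'$ is the maximal chain in $W^-\cap W^+=\xi_x(\twoU)$ whose $i$-th pair for $i<r$ is $(\{x\}\cup\gamma_x(W^\shortparallel_i),\gamma_x(W^\smallwedge_i))$ (with $(W^\shortparallel_i,W^\smallwedge_i)$ the $i$-th pair of $\bW$) and whose last pair is $(\emptyset,\xi_x(\twoU))$; thus in $\bW'$ one has $W^\shortparallel_{r-1}=\{x\}$ and $W^\smallwedge_{r-1}=\gamma_x(\twoU)$, and consecutiveness and goodness of the pairs is readily checked from the corresponding facts for $\bW$ and the equality $\lambda^n_x(s')=\lambda^n_x(t')$. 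Using the coherence conditions of Section~\ref{ssection:sob} (notably condition \ref{sob:6} and its iterates) relating the $\mu$'s to the $\lambda$'s, together with $\psi_{\{x\}\cup\gamma_x(A)}(\gamma_x(B))=\psi_A(B)$, one checks that $\lambda^n_{\{x\}\cup\gamma_x(W^\shortparallel_i),\gamma_x(W^\smallwedge_i)}(s')$ and $\lambda^{n-1}_{W^\shortparallel_i,W^\smallwedge_i}(s)$ are the same element of the same ordered set $\partial_{\psi_{W^\shortparallel_i}(W^\smallwedge_i)}$ (at level $n-1-|W^\shortparallel_i|$), and likewise for $t',t$; hence goodness of the $i$-th pair is preserved. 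Since $n+|W^-|+|W^+|=n+q+1$ and $(n-1)+|U^-|+|U^+|=n+q-2$ have opposite parity, positivity at level $n-1$ corresponds verbatim to negativity at level $n$, so $\bW\in\cO_{s,t}(\twoU)^+$ maps to $\bW'\in\cO_{s',t'}(W^-\cap W^+)^{-,x}_\udiamond$. For the inverse, Lemma~\ref{lemma:parallelicity} forces any chain in $\cO_{s',t'}(W^-\cap W^+)^{-,x}_\udiamond$ to have $W^\smallwedge_{r-1}=\gamma_x(\twoU)$ and $W^\shortparallel_{r-1}=\{x\}$ (so $a=a'$), whence deleting $x$ from and applying $\psi_x$ to its first $r-1$ pairs recovers the quadruple. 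The resulting fibrewise bijection respects source and target, which gives the asserted equivalence of spans.

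\emph{Main obstacle.} Once the index computation for $\xi_{xx}$ is settled, the only genuine work is verifying that the coherence $2$-morphisms $\mu$ and $\lambda$ of the \osob intertwine with $\gamma_x$ and $\psi_x$, so that the inequalities $\lambda(s')\lessgtr\lambda(t')$ entering the definition of ``positive''/``negative'' at level $n$ become literally the inequalities $\lambda(s)\lessgtr\lambda(t)$ at level $n-1$. This is a diagram chase through condition \ref{sob:6} (and its iterates), made delicate only by the bookkeeping of which chain entries carry the letter $x$.
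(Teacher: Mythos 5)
Your proof is essentially the paper's own argument: after identifying $\partial^n_{\xi_x(U^\pm)}$ with $\partial^{n-1}_{U^\pm}\circ\partial^n_x$ via the structure bijections $\mu^n_{\{x\},\gamma_x(U^\pm)}$, you adjoin $x$ to the parallel parts of a positive chain, append the final pair $(\emptyset,\xi_x(\twoU))$, and use the parity shift between $n+|\xi_x(U^-)|+|\xi_x(U^+)|$ and $(n-1)+|U^-|+|U^+|$ to convert positive pairs into negative ones, with the inverse given by deleting $x$ and dropping the last pair --- exactly the bijection $\varphi_{a,s,t}$ of the paper. The only quibble is that goodness of the appended pair does not follow from the equality $\lambda_x(s')=\lambda_x(t')$ alone, but from goodness of the preceding pair $(\{x\},\gamma_x(\twoU))$ combined with Lemma \ref{lemma:parallelicity} (the paper compresses this to ``because $s\neq t$''), a fact you effectively invoke when constructing the inverse.
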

\begin{proof}
The first equation is clear. For the second, let $\ell = n-|U^-|-1$, let $m = n-|U^+|-1$ and let $V=\xi_{xx}(U)\in \APower_{q}^r(n)$, and observe that 
\begin{align*}
V^- &= \xi_{x}(U^-) & V^+ &= \xi_x(U^+) & \twoV=\twoU\cup \{x\}.
\end{align*} 
By definition, the left hand-side is
\begin{equation}\label{eq:72}
X_n\times X_n\longleftarrow \undersumthree{(a,a)\in \partial_x\pwedge \partial_x}{(s,t)\in \partial_{U^-}\times \partial_{U^+}}{\source((s,t)) = \target((a,a))}{\{(a,a)\}\times {\cO_{s,t}(\twoU)^+}} \longrightarrow X_{\ell}\times X_{m}
\end{equation}
and the right hand-side is
\[X_n\times X_n\longleftarrow \sum_{(s',t')\in \partial_{V^-}\times \partial_{V^+}}{\cO_{s',t'}(\twoV)^{-,x}_{\udiamond}} \longrightarrow X_{\ell}\times X_{m}.\]
Observe first that $\cO_{s',t'}(\twoV)^{+,x}_{\udiamond}$ is non-empty only if $\lambda_x(s')= \lambda_x(t')$. Moreover, $\mu_{x,V^{-}\smallsetminus \{x\}}\times \mu_{x,V^{+}\smallsetminus \{x\}}$ induces a bijection between the set of those $(s',t')$ such that $\lambda_x(s')= \lambda_x(t')$ and the indexing set of the summation in \eqref{eq:72}, given by sending $(s',t')$ to $((a,a),(s,t))$, where 
\begin{align*}
a&=\lambda_x(s')= \lambda_x(t'),& s&=\lambda_{x,U^{-}\smallsetminus x}(s') & t&=\lambda_{x,U^{+}\smallsetminus x}(t').
\end{align*}
To finish the proof, we construct, for each $s\neq t$, a bijection
\[\varphi_{a,s,t}\colon \{(a,a)\}\times \cO_{s,t}(\twoU)^+\longrightarrow \cO_{s',t'}(\twoV)_{\udiamond}^{-,x}\]
given by sending a maximal $(s,t)$-good chain $(W^\shortparallel_1,W^\smallwedge_1)\prec\ldots\prec (W^\shortparallel_{r-1},W^\smallwedge_{r-1})$ in $\cO_{s,t}(\twoU)^+$ to the maximal sequence
\[(\xi_x(W^\shortparallel_1),\gamma_x(W^\smallwedge_1))\prec\ldots\prec (\xi_x(W^\shortparallel_{r-1}),\gamma_x(W^\smallwedge_{r-1})) \prec (\emptyset,\twoV)\]
which is $(s',t')$-good because for each $i=1,\ldots,r-1$
\begin{align*}
\lambda_{\xi_x(W^\shortparallel_i)}(s') &= (a,\lambda_{W^\shortparallel_i}(s)), &\lambda_{\xi_x(W^\shortparallel_i),\gamma_x(W^\smallwedge_i)}(s') &= \lambda_{W^\shortparallel_i,W^\smallwedge_i}(s) \\
\lambda_{\xi_x(W^\shortparallel_i)}(t') &= (a,\lambda_{W^\shortparallel_i}(t)), &\lambda_{\xi_x(W^\shortparallel_i),\gamma_x(W^\smallwedge_i)}(t') &= \lambda_{W^\shortparallel_i,W^\smallwedge_i}(t).
\end{align*}
The left hand-side of the left column should be $\mu_{x,\gamma_x(W^\shortparallel)}(\lambda_{\xi_x(W^\shortparallel_i)}(s'))$ up and $\mu_{x,\gamma_x(W^\shortparallel)}(\lambda_{\xi_x(W^\shortparallel_i)}(t'))$ down, but we find the complete notation too burdened. The remaining pair $(\emptyset,\twoV)$ is also $(s',t')$-good because $s\neq t$.
Additionally, since all pairs in the original $(s,t)$-good chain were positive and 
\[n-1 + |U^-|+|U^+| = n-1+q-1,\quad n+|V^-| + |V^+| = n+q+1\]
have different parity, we have that all the new pairs are negative except possibly the last one, therefore the new chain is almost negative. Moreover $(\xi_x(W^\shortparallel_r),\gamma_x(W^\smallwedge_r)) = (\xi_x(\emptyset),\gamma_x(\twoU)) = (\{x\},\gamma_{x}(\twoU))$, so the new chain is in $\cO_{s',t'}(\twoV)_{\udiamond}^{-,x}$.

The maximal chains $(A^\shortparallel_1,A^\smallwedge_1)\prec\ldots\prec (A^\shortparallel_r,A^\smallwedge_r)$ in $\cO_{s',t'}(\twoV)_{\udiamond}^{-,x}$ are precisely the almost negative $(s,t)$-good maximal chains that satisfy that $x\in A^\shortparallel_i$ and $x\notin A^\smallwedge_i$ for all $i<r$. Hence, for every maximal chain $(A^\shortparallel_1,A^\smallwedge_1)\prec\ldots\prec (A^\shortparallel_{r},A^\smallwedge_{r})$ in $\cO_{s',t'}(\twoV)_{\udiamond}^{-,x}$ and every $0<i<r$, the inverse function $\gamma_x^{-1}$ is well-defined on $A^\smallwedge_i$ and the inverse function $\xi_x^{-1}$ is well-defined on $A^\shortparallel_i$. Hence
 we can define an inverse of $\varphi_{a,s,t}$ by sending a sequence $(A^\shortparallel_1,A^\smallwedge_1)\prec\ldots\prec (A^\shortparallel_{r},A^\smallwedge_{r})$ to the sequence $(\emptyset,\emptyset)\prec(\xi_x^{-1}(A^\shortparallel_1),\gamma_x^{-1}(A^\smallwedge_1))\prec\ldots\prec (\xi_x^{-1}(A^\shortparallel_{r-1}),\gamma_x^{-1}(A^\smallwedge_{r-1}))$.
\end{proof}
\begin{proposition}\label{prop:diagonalpart} The span $\nabla^{\binom{n-1}{q-1}}\circ \Delta_{n-1}\circ \partial^n$ is equivalent to the following spans
\begin{align*} 
 \sum_{x=0}^n\nabla^{\binom{n-1}{q-1}}\circ \Delta_{n-1}\circ \partial_x
&\igual \sum_{x=0}^n\left(\sum_{U\in \APower_{q-1}(n-1)} \partial_{U^-}\wedge\partial_{U^+}\right)\circ\Delta_{n-1}\circ\partial_x \\
&\igual \sum_{U\in \APower_{q-1}(n-1)}\sum_{x=0}^n \partial_{U^-}\wedge\partial_{U^+}\circ \partial_x\pwedge\partial_x\circ \Delta_n \\
&\igual \sum_{U\in \APower_{q-1}(n-1)}\sum_{x=0}^n \partial_{\xi_{xx}(U)^-}\bwedge_{\udiamond}^x\partial_{\xi_{xx}(U)^+}\circ\Delta_n \\
&\igual \sum_{U\in \APower_{q+1}(n+1)}\sum_{x\in \twoU} \partial_{U^-}\bwedge_{\udiamond}^x\partial_{U^+}\circ\Delta_n \\
&\igual \sum_{U\in \APower_{q+1}(n+1)}\partial_{U^-}\bwedge_{\udiamond}\partial_{U^+}\circ\Delta_n.
\end{align*}
\end{proposition}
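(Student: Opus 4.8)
The plan is to verify the displayed chain of equivalences one link at a time. By Lemma \ref{lemma:aux_diagonal} all the geometric content is already in place, so what remains is essentially bookkeeping with finite sums and with the index sets over which they run.

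First I would note that $\partial^n=\sum_{x=0}^n\partial_x$ by definition and that composition of spans distributes over the sum operation $+$ of Section \ref{ssection:burnside}; this gives the (honest) equality between $\nabla^{\binom{n-1}{q-1}}\circ\Delta_{n-1}\circ\partial^n$ and the first displayed span. Expanding $\nabla^{\binom{n-1}{q-1}}$ by its definition \eqref{eq:nabla5} gives the second line. Distributing composition over $+$ once more, exchanging the order of the two finite summations, and replacing $\Delta_{n-1}\circ\partial_x$ by $(\partial_x\pwedge\partial_x)\circ\Delta_n$ via the first equivalence of Lemma \ref{lemma:aux_diagonal} yields the third line. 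The fourth line then follows by applying the second equivalence of Lemma \ref{lemma:aux_diagonal} termwise: for each $r\geq 1$ and each $U\in\APower^{r-1}_{q-1}(n-1)$ one has $\partial^{n-1}_{U^-}\wedge\partial^{n-1}_{U^+}\circ\partial_x\pwedge\partial_x\igual\partial_{\xi_{xx}(U)^-}\bwedge^x_\udiamond\partial_{\xi_{xx}(U)^+}$ (in particular the positive wedge becomes a negative one, because of the parity shift recorded inside the proof of that lemma), and $\APower_{q-1}(n-1)$ is the disjoint union of the sets $\APower^{r-1}_{q-1}(n-1)$ over $r$.

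For the fifth line I would check that $(U,x)\mapsto(\xi_{xx}(U),x)$ is a bijection from the set of pairs $(U,x)$ with $U\in\APower_{q-1}(n-1)$ and $0\leq x\leq n$ onto the set of pairs $(V,x)$ with $V\in\APower_{q+1}(n)$ and $x\in\twoV$: by construction $\xi_{xx}(U)=\gamma_x(U)\cup\{x,x\}$ is a length-$(q+1)$ sequence with entries in $\{0,\ldots,n\}$ in which $x$ occurs exactly twice, and the inverse sends $(V,x)$ to $(\psi_x(V\smallsetminus\{x,x\}),x)$, using the identities $\psi_x(\gamma_x(U))=U$ and $\gamma_x(\psi_x(U))=U$ from Section \ref{ssection:sequences}. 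Relabelling the summation index by this bijection gives the fifth line. For the last line I would observe that, for a sequence $V$ with repeated part $\twoV$ and $r=|\twoV|$, a maximal chain $(W^\shortparallel_1,W^\smallwedge_1)\prec\ldots\prec(W^\shortparallel_r,W^\smallwedge_r)$ in $\cO_{s,t}(\twoV)^-_\udiamond$ has $|W^\smallwedge_{r-1}|=r-1$, so $\twoV\smallsetminus W^\smallwedge_{r-1}$ is a single element $x\in\twoV$; classifying the chains of $\cO_{s,t}(\twoV)^-_\udiamond$ according to this element gives $\cO_{s,t}(\twoV)^-_\udiamond=\coprod_{x\in\twoV}\cO_{s,t}(\twoV)^{-,x}_\udiamond$, hence $\partial_{V^-}\bwedge_\udiamond\partial_{V^+}=\sum_{x\in\twoV}\partial_{V^-}\bwedge^x_\udiamond\partial_{V^+}$, and the inner sum collapses.

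The only genuine obstacle here is organizational: one must track carefully which index set each summation runs over after each rewriting, verify that the reindexing in the fifth step is truly bijective, and verify that the singleton $\twoV\smallsetminus W^\smallwedge_{r-1}$ used in the last step is well defined. No input beyond Lemma \ref{lemma:aux_diagonal}, where the actual combinatorics of $(s,t)$-good chains is handled, is needed.
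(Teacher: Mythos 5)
Your argument is correct and is essentially the paper's own proof, just written out in more detail: the paper likewise gets the middle equivalences from Lemma \ref{lemma:aux_diagonal}, the penultimate one from the reindexing $(U,x)\mapsto(\xi_{xx}(U),x)$, and the last one from partitioning $\cO_{s,t}(\twoU)^{-}_{\udiamond}$ according to the unique element of $\twoU\smallsetminus W^\smallwedge_{r-1}$. One small remark: your bijection lands in $\APower_{q+1}(n)$, which is indeed the index set with which the proposition is used in \eqref{eq:proof2}, so the bound $n+1$ appearing in the displayed statement should be read as $n$.
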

\begin{proof}
The first equality is the definition, the second and third follow from Lemma \ref{lemma:aux_diagonal}, the fourth comes from sending the summand indexed by $U$ and $x$ to the summand indexed by $\xi_{xx}(U)$ and $x$, and the fifth holds by definition.
\end{proof}

\begin{lemma}\label{lemma:nondiagonalpart_aux} Let $U\in \APower_{q}^r(n)$ and let $\ell=|U^-|$ and $m=|U^+|$, and let $y\in \{0,\ldots,n-\ell\}$ or $y\in \{0,\ldots,n-m\}$ and let $x=\gamma_{U^-}(y)$ in the first case and $x=\gamma_{U^+}(y)$ in the second case. Then the following spans are equivalent
\begin{align*} 
\left(\partial^{n-\ell}_{y}\times \Id_{n-m}\right) \circ \left(\partial_{U^-}\wedge\partial_{U^+}\right)  &\igual  \begin{cases} 
\partial_{\xi_{U^-}(y)}\bwedge \partial_{U^+}  &\text{if $x\notin U^+$} \\
\partial_{\xi_{U^-}(y)}\bwedge_\bdiamond^x \partial_{U^+}  &\text{if $x\in U^+$}
\end{cases} \\
\left(\Id_{n-\ell}\times \partial^{n-m}_{y}\right) \circ \left(\partial_{U^-}\wedge\partial_{U^+}\right)  &\igual  \begin{cases}
\partial_{U^-}\bwedge \partial_{\xi_{U^+}(y)} &\text{if $x\notin U^-$} \\
  \partial_{U^-}\bwedge_\bdiamond^x \partial_{\xi_{U^+}(y)}  &\text{if $x\in U^-$}\rlap{.}
	\end{cases}
\end{align*}
\end{lemma}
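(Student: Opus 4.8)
The plan is to follow the blueprint of Lemma~\ref{lemma:aux_diagonal}: both displayed equivalences are proved by the same device, so I will treat only the first, the second being obtained by interchanging the two tensor factors (and the roles of $U^-$ and $U^+$) throughout. Throughout, write $V=\xi_{U^-}(y)=\{x\}\cup U^-$, so that $|V|=\ell+1$ and $\psi_{U^-}(\{x\})=\{y\}$.

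First I would rewrite the left‑hand side as a fibre product of spans. By the simplicial identity \eqref{eq:simplicial} one has $\partial^{n-\ell}_{\{y\}}\circ\partial^{n}_{U^-}=\partial^{n}_{V}$ in $\Deltainjaug$, and this equality is witnessed in $X_\bullet$ by the structure bijection $\mu^n_{U^-,\{x\}}\colon \partial^{n}_{V}\to \partial^{n-\ell}_{\{y\}}\circ\partial^{n}_{U^-}$, under which an element $s'\in\partial^n_V$ corresponds to a pair $(s,a)$ with $s=\lambda^n_{U^-}(s')\in\partial^n_{U^-}$, $a\in\partial^{n-\ell}_{y}$, $\source(a)=\target(s)$, and $\source(s')=\source(s)$, $\target(s')=\target(a)$. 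Plugging this into the fibre product defining $(\partial^{n-\ell}_{y}\times\Id_{n-m})\circ(\partial_{U^-}\wedge\partial_{U^+})$, one identifies its total space with the set of triples $(s',t,\bW)$ where $s'\in\partial_V$, $t\in\partial_{U^+}$ and $\bW\in\cO_{s,t}(\twoU)^+$ with $s=\lambda^n_{U^-}(s')$, the source and target of such a triple being $(\source(s'),\source(t))$ and $(\target(s'),\target(t))$. Since $\twoU=U^-\cap U^+\subseteq U^-$ and $x\notin U^-$, we have $V\cap U^+=\twoU$ when $x\notin U^+$ and $V\cap U^+=\{x\}\cup\twoU$ when $x\in U^+$, which already matches the two cases of the claim.

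The heart of the matter is then, for each fixed $(s',t)$, a bijection between $\cO_{s,t}(\twoU)^+$ and the relevant set of $(s',t)$-chains in $V\cap U^+$. The key input is that for disjoint $W^\shortparallel,W^\smallwedge\subseteq\twoU\subseteq U^-\subseteq V$ the coherence axiom~\ref{sob:6} (compatibility of the $\mu$'s, hence of the $\lambda$'s, under iterated projection along $W^\shortparallel\subseteq\twoU\subseteq U^-\subseteq V$) gives $\lambda^n_{W^\shortparallel}(s')=\lambda^n_{W^\shortparallel}(s)$ and $\lambda^n_{W^\shortparallel,W^\smallwedge}(s')=\lambda^n_{W^\shortparallel,W^\smallwedge}(s)$; hence a pair contained in $\twoU$ is $(s',t)$-good if and only if it is $(s,t)$-good. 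Moreover $n+|V|+|U^+|$ and $n+|U^-|+|U^+|$ have opposite parity, so such a pair is $(s',t)$-negative if and only if it is $(s,t)$-positive. When $x\notin U^+$ this already identifies $\cO_{s,t}(\twoU)^+$ with $\cO_{s',t}(\twoU)^-=\cO_{s',t}(V\cap U^+)^-$, so the composite span is $\partial_V\bwedge\partial_{U^+}$. When $x\in U^+$ we have $|V\cap U^+|=|\twoU|+1$, and I would define the bijection exactly as in Lemma~\ref{lemma:aux_diagonal}, by appending the pair $(\emptyset,\{x\}\cup\twoU)$ to a maximal $(s,t)$-good chain $\cdots\prec(\emptyset,\twoU)$: using $\source(s')=\source(s)=\source(t)$ and that $\lambda_{\twoU}(s)\neq\lambda_{\twoU}(t)$ survives projection up to $\lambda_{\{x\}\cup\twoU}$, the appended pair is $(s',t)$-good; the earlier pairs are $(s',t)$-negative by the parity flip; and since $x\notin W^\smallwedge_i$ and $x\notin W^\shortparallel_i$ for $i$ below the top, the resulting chain lies in $\cO_{s',t}(\{x\}\cup\twoU)^{-,x}_\bdiamond$ (penultimate parallel part $\emptyset$, last element added equal to $x$). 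The inverse simply deletes the top pair. Checking that the source and target maps agree on the nose then yields the claimed equivalence — in fact fibrewise bijection — of spans.

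The part I expect to be most delicate is purely organisational: keeping track of which level each $\lambda$-value lives on ($n$, $n-\ell$, $n-\ell-1$, $n-|W^\shortparallel|$, \dots), making precise the invocation of~\ref{sob:6} that yields the compatibility identities $\lambda_{W^\shortparallel}(s')=\lambda_{W^\shortparallel}(\lambda_{U^-}(s'))$ and its two‑step refinement, and checking the edge cases of very short chains (in particular $\twoU=\emptyset$). Once these are in place, the combinatorics of good, positive and negative pairs is literally the same as in the already‑completed proof of Lemma~\ref{lemma:aux_diagonal}.
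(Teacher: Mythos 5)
Your argument is essentially the paper's own proof: the same reindexing of the composed span via the structure bijection $\mu_{U^-,x}\colon\partial_{\xi_{U^-}(y)}\to\partial^{n-\ell}_{y}\circ\partial_{U^-}$, the same case split according to whether $x\in U^+$, and the same chain-level bijections (the identity on maximal chains when $x\notin U^+$, appending the pair $(\emptyset,\{x\}\cup\twoU)$ when $x\in U^+$), with positivity and negativity exchanged by the parity flip between $n+|U^-|+|U^+|$ and $n+|\xi_{U^-}(y)|+|U^+|$. The edge case you flag ($\twoU=\emptyset$ with $x\in U^+$, where goodness of the appended pair cannot be inferred from a preceding good pair) is treated no more explicitly in the paper's proof, so your write-up matches it in both substance and level of detail.
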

\begin{proof} We give the proof of the first equation, the other being completely analogous. 
By definition, the left hand-side is
\begin{align}\label{eq:73}
X_n\times X_n\longleftarrow \undersumthree{(s,t)\in \partial_{U^-}\times \partial_{U^+}}{a\in \partial^{n-\ell}_y}{\source(a) = \target(s)}{\cO_{s,t}(U^-\cap U^+)^+\times \{a\}} \longrightarrow X_{\ell-1}\times X_{m} 
\end{align}
and the right hand-side is, depending on whether $x\notin U^+$ or $x\in U^+$,
\begin{equation}\label{eq:74}X_n\times X_n\longleftarrow \sum_{(s',t')\in \partial_{\xi_{U^-}(y)}\times \partial_{U^+}}{\cO_{s',t'}\left(\xi_{U^-}(y)\cap U^+\right)^{-}} \longrightarrow X_{\ell-1}\times X_{m}\end{equation}
\begin{equation}\label{eq:75}X_n\times X_n\longleftarrow \sum_{(s',t')\in \partial_{\xi_{U^-}(y)}\times \partial_{U^+}}{\cO_{s',t'}\left(\xi_{U^-}(y)\cap U^+\right)^{-,x}_{\bdiamond}} \longrightarrow X_{\ell-1}\times X_{m}.\end{equation}
The bijection
\[\mu_{{U^-},x}\colon \partial_{\xi_{U^-}(y)}\overset{\cong}{\lra} \partial^{n-\ell}_y\circ \partial_{U^-}\]
induces a bijection between the indexing set of the summation in \eqref{eq:73} and the indexing set of the summation in \eqref{eq:74} (or in \eqref{eq:75}) given by sending $(s',t')$ to $((s,t),a)$, where 
\begin{align*}
s&=\lambda_{U^-}(s'),& t&=t' & a&=\lambda_{U^-,x}(s').
\end{align*}
Therefore it remains to build, in each case, bijections 
\begin{align}\label{eq:81}\cO_{s,t}(\twoU)^+&\longrightarrow \cO_{s',t'}\left(\xi_{U^-}(y)\cap U^+\right)^{-},\\ \label{eq:82} \cO_{s,t}(\twoU)^+ &\longrightarrow  \cO_{s',t'}\left(\xi_{U^-}(y)\cap U^+\right)^{-,x}_{\bdiamond}.
\end{align}

In the first case, as $x\notin U^+$, we have that 
\[\xi_{U^-}(y)\cap U^+ = (\{x\}\cup U^-)\cap U^+ = U^-\cap U^+ = \twoU.\]
Define \eqref{eq:81} by sending a maximal chain to itself. This sends $(s,t)$-good chains to $(s',t')$-good chains because
for any pair $(W^\shortparallel,W^\smallwedge)$ of disjoint subsets of $U^-\cap U^+$, we have that
\begin{align*}
\lambda_{W^\shortparallel}(s) &= \lambda_{W^\shortparallel}(s'), & \lambda_{W^\shortparallel,W^{\smallwedge}}(s) &= \lambda_{W^\shortparallel,W^\smallwedge}(s') \\
\lambda_{W^\shortparallel}(t) &= \lambda_{W^\shortparallel}(t'), & \lambda_{W^\shortparallel,W^{\smallwedge}}(t) &= \lambda_{W^\shortparallel,W^\smallwedge}(t'),
\end{align*}
and it sends positive pairs to negative pairs and vice versa because
\begin{align*}n+|U^+|+|U^-| &= n+q+r, & n+|\xi_{U^-}(x)|+|U^+| &= n+q+r+1\end{align*}
have different parity.

In the second case, as $x\in U^+$, we have that,
\[\xi_{U^-}(y)\cap U^+ = (\{x\}\cup U^-)\cap U^+ = \{x\} \cup (U^-\cap U^+) = \{x\}\cup\twoU.\]
Define \eqref{eq:82} by sending a maximal chain $(W^\shortparallel_1,W^\smallwedge_1)\prec\ldots\prec (W^\shortparallel_{r},W^\smallwedge_{r})$ in $\cO_{s,t}(\twoU)^+$ to the maximal chain 
\begin{equation}\label{eq:49}(W^\shortparallel_1,W^\smallwedge_1)\prec\ldots\prec (W^\shortparallel_{r},W^\smallwedge_{r})\prec (\emptyset,\{x\}\cup\twoU).\end{equation}
in $\cO_{s',t'}(\{x\}\cup \twoU)$. For $i=1,\ldots,r$, we have that 
\begin{align}\label{eq:50}
\lambda_{W^\shortparallel_i}(s) &= \lambda_{W^\shortparallel}(s'), & \lambda_{W^\shortparallel_i,W^\smallwedge_i}(s) &= \lambda_{W^\shortparallel_i,W^\smallwedge_i}(s') \\
\lambda_{W^\shortparallel_i}(t) &= \lambda_{W^\shortparallel_i}(t'), & \lambda_{W^\shortparallel_i,W^\smallwedge_i}(t) &= \lambda_{W^\shortparallel_i,W^\smallwedge_i}(t')
\end{align}
Therefore, as $(W^\shortparallel_i,W^\smallwedge_i)$ is $(s,t)$-good for $i=1,\ldots,r$, it is also $(s',t')$-good for $i=1,\ldots,r$. If $i=r+1$, we have that $\lambda_{W^\shortparallel_i}(s')\neq \lambda_{W^\shortparallel_i}(t')$ because of Lemma \ref{lemma:parallelicity} \eqref{para:1}, taking $W^\shortparallel= W^\shortparallel_r$ and $w=x$ so that $W^\shortparallel_{r+1} = W^\shortparallel\cup \{w\}$. Therefore, the maximal chain \eqref{eq:49} is $(s',t')$-good.

Since for each $i=1\ldots r$, $(W_i^\shortparallel,W_i^\smallwedge)$ is positive for $(s,t)$, we have that $(W_i^\shortparallel,W_i^\smallwedge)$ is negative for $(s',t')$ because of the right hand-side of \eqref{eq:50} and because
\begin{align*}n+|U^+|+|U^-| &= n+q+r, & n+|\xi_{U^-}(x)|+|U^+| &= n+q+r+1\end{align*}
have different parity. Finally, by construction $\{x\}\cup \twoU\smallsetminus W_r^\smallwedge = \{x\}$. Hence \eqref{eq:82} is well-defined. Its inverse sends a maximal chain $(A_1^\shortparallel,A_1^\smallwedge)\prec\ldots\prec(A_{r+1}^\shortparallel,A_{r+1}^\smallwedge)$ to the maximal chain $(A_1^\shortparallel,A_1^\smallwedge)\prec\ldots\prec(A_{r}^\shortparallel,A_{r}^\smallwedge)$.
\end{proof}

If $U\in \APower_{q+1}(n)$ and $x\in U$, let us write $U(x)\in \APower_{q}(n)$ for the result of removing one instance of $x$ from $U$. For example, if $U=(1,2,2,3)$, then $U(2) = (1,2,3)$ and $U(3) = (1,2,2)$.
\begin{proposition}\label{prop:nondiagonalpart} The span $\partial^{2n-q}\circ\nabla^{\binom{n}{q}}$ is equivalent to the following spans
\begin{align} \nonumber
&\sum_{U\in \APower_{q}(n)}
\left(\sum_{y=0}^{n-|U^-|} \left(\partial^{n-|U^-|}_{y}\times \Id_{n-|U^+|}\right) \circ \left(\partial_{U^-}\wedge\partial_{U^+}\right) \right. 
\\ \nonumber
&+\left.\sum_{y=0}^{n-|U^+|} \Id_{n-|U^-|}\times \partial^{n-|U^+|}_{y}\circ \partial_{U^-}\wedge\partial_{U^+} \right) 
\\ \nonumber
\igual& \sum_{U\in \APower_{q}(n)}\left(
\sum_{x\notin U} \partial_{\{x\}\cup U^-}\bwedge \partial_{U^+} + \partial_{U^-}\bwedge \partial_{\{x\}\cup U^+} \right.
\\ \nonumber
&+ 
\sum_{x\in U^+\smallsetminus U^-}  \partial_{\{x\}\cup U^-}\bwedge_\bdiamond^x \partial_{U^+} 
\\ \nonumber
&+ 
\left.\sum_{x\in U^-\smallsetminus U^+} \partial_{U^-}\bwedge_\bdiamond^x \partial_{\{x\}\cup U^+}\right) \\ \label{eq:non_repeated}
\igual& \sum_{U\in \APower_{q+1}(n)}\left(
\sum_{x\in \oneU} {\partial_{\{x\}\cup U(x)^-}\bwedge \partial_{U(x)^+} + \partial_{U(x)^-}\bwedge \partial_{\{x\}\cup U(x)^+}} \right.
\\ \label{eq:repeated_even}
&+ 
\sum_{x\in \twoU, \ind_{U(x)}(x)~\mathrm{even}}  \partial_{\{x\}\cup U(x)^-}\bwedge_\bdiamond^x \partial_{U(x)^+} 
\\ \label{eq:repeated_odd}
&+ 
\left.\sum_{x\in \twoU, \ind_{U(x)}(x)~\mathrm{odd}} \partial_{U(x)^-}\bwedge_\bdiamond^x \partial_{\{x\}\cup U(x)^+}\right)
\end{align}
\end{proposition}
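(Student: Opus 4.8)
The plan is to establish the three displayed equivalences in turn: the first unwinds the definitions, the second applies Lemma~\ref{lemma:nondiagonalpart_aux} termwise, and the third is a reindexing of the summation. For the first equivalence, recall that $\nabla^{\binom{n}{q}}=\sum_{U\in\APower_q(n)}\partial_{U^-}\wedge\partial_{U^+}$ and that the target of the $U$-summand is $X_{n-|U^-|}\times X_{n-|U^+|}$. By construction $\partial^{2n-q}$ restricts on this component to the span $\sum_{y=0}^{n-|U^-|}\partial^{n-|U^-|}_{y}\times\Id_{n-|U^+|}+\sum_{y=0}^{n-|U^+|}\Id_{n-|U^-|}\times\partial^{n-|U^+|}_{y}$. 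Since composition of spans in $\cB$ is computed by pullback and therefore distributes over the operation ``$+$'' on spans, composing these expressions reproduces the first displayed sum verbatim.

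For the second equivalence I apply Lemma~\ref{lemma:nondiagonalpart_aux} to each summand. Fix $U\in\APower_q(n)$ and put $\ell=|U^-|$, $m=|U^+|$. As $y$ ranges over $\{0,\dots,n-\ell\}$, the sequence $\xi_{U^-}(y)=\{x\}\cup U^-$ with $x=\gamma_{U^-}(y)$ ranges, by the bijectivity of $\xi_{U^-}$ recorded in Section~\ref{ssection:sequences}, over all $(\ell+1)$-element subsets of $\{0,\dots,n\}$ containing $U^-$; equivalently $x$ runs over $\{0,\dots,n\}\smallsetminus U^-$. Splitting this set into the $x$ lying outside $U=U^-\cup U^+$ and the $x\in U^+\smallsetminus U^-$, the first alternative of the lemma turns $\sum_{y}(\partial^{n-\ell}_{y}\times\Id_{n-m})\circ(\partial_{U^-}\wedge\partial_{U^+})$ into $\sum_{x\notin U}\partial_{\{x\}\cup U^-}\bwedge\partial_{U^+}+\sum_{x\in U^+\smallsetminus U^-}\partial_{\{x\}\cup U^-}\bwedge_\bdiamond^{x}\partial_{U^+}$, and the second alternative does the symmetric thing for the $\Id\times\partial_y$ terms. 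Summing over $U\in\APower_q(n)$ produces the second displayed expression.

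For the third equivalence I reindex by the assignment $(U,x)\mapsto(U',x)$ with $U'=\{x\}\cup U\in\APower_{q+1}(n)$, so that $U=U'(x)$; this is a bijection on each of the three families of summands because removing one copy of $x$ from $U'$ recovers $U$ uniquely. It remains to check that the three case conditions transport correctly. The condition ``$x\notin U$'' becomes ``$x$ is a non-repeated entry of $U'$'', since inserting $x$ into a sequence not already containing it creates a non-repeated entry; thus these summands land in \eqref{eq:non_repeated}. The condition ``$x\in U^+\smallsetminus U^-$'', i.e.\ $x$ is a non-repeated entry of $U$ with even index, becomes ``$x$ is a repeated entry of $U'$ with $\ind_{U'(x)}(x)$ even'', because $U'(x)=U$ and the index of a non-repeated entry of $U$ is exactly the one computed in $\bar U$ that defines membership in $U^{\pm}$; so these land in \eqref{eq:repeated_even}. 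Symmetrically ``$x\in U^-\smallsetminus U^+$'' becomes ``$x$ repeated in $U'$ with $\ind_{U'(x)}(x)$ odd'', landing in \eqref{eq:repeated_odd}. Since $U=U'(x)$ as sequences, the decorations are literally unchanged, $U^{\pm}=U'(x)^{\pm}$ and $\{x\}\cup U^{\mp}=\{x\}\cup U'(x)^{\mp}$, so each summand of the second expression is identical to the matching summand of \eqref{eq:non_repeated}, \eqref{eq:repeated_even}, or \eqref{eq:repeated_odd}.

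The only step that is not purely formal is the index matching in the last paragraph: one must unwind the definition of $\ind$ on sequences with repeated entries and verify that duplicating $x$ preserves the parity that decides membership in $U^+$ versus $U^-$, all while keeping track of the fact that inserting $x$ shifts the indices of the entries of $U$ larger than $x$. Everything else reduces to distributivity of span composition over sums and to Lemma~\ref{lemma:nondiagonalpart_aux}.
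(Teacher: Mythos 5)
Your proposal is correct and follows essentially the same route as the paper: the first expression is the definition (distributing $\partial^{2n-q}$ over the summands of $\nabla^{\binom{n}{q}}$), the first equality is Lemma~\ref{lemma:nondiagonalpart_aux} applied summand by summand via the reindexing $y\mapsto x=\gamma_{U^\pm}(y)$ with $\xi_{U^\pm}(y)=\{x\}\cup U^\pm$, and the second equality is the reindexing $(U,x)\mapsto(U\cup\{x\},x)$, with the parity bookkeeping you spell out (repeated entries lying in both $U^+$ and $U^-$, and $\ind_{U'(x)}(x)$ computed in $U'(x)=U$) matching the paper's intended case split. You simply fill in details that the paper's three-sentence proof leaves implicit.
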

\begin{proof} The first term is the definition of $\partial^{2n-q}\circ\nabla^{\binom{n}{q}}$. The first equality holds by Lemma \ref{lemma:nondiagonalpart_aux}, by sending the summand indexed by $U$ and $y$ to the summand indexed by $U$ and either $x=\gamma_{U^-}(y)$ or $x=\gamma_{U^+}(y)$, depending on the case (recall that $\xi_{U^\pm}(y)=\{x\}\cup U^\pm$). The second equality is given by sending the summand indexed by $U$ and $x$ to the summand indexed by $U\cup \{x\}$ and $x$.
\end{proof}
We immediately have:
\begin{lemma}\label{lemma:repeated} The summands \eqref{eq:repeated_even} and \eqref{eq:repeated_odd} add up to:
\[\sum_{U\in \APower_{q+1}(n)} \partial_{U^-}\bwedge_\bdiamond\partial_{U^+}.\]
\end{lemma}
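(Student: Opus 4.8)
The plan is to match the two sums in \eqref{eq:repeated_even} and \eqref{eq:repeated_odd} against the sum in the claim term by term, reindexed over pairs $(U,x)$ with $U\in\APower_{q+1}(n)$ and $x\in\twoU$. First I would note that for such a pair the sequence $U(x)\in\APower_q(n)$ has $x\in U(x)$ appearing once, so exactly one of $x\in U(x)^-$ or $x\in U(x)^+$ holds, and in either case $\{x\}\cup U(x)^\pm$ reconstitutes the sequence $U$ with $x$ now repeated. The index parity condition $\ind_{U(x)}(x)$ even versus odd is precisely the condition determining whether $x\in U(x)^-$ (even index, the ``$-$'' slots) or $x\in U(x)^+$ (odd index). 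So in the even case the term is $\partial_{\{x\}\cup U(x)^-}\bwedge_\bdiamond^x\partial_{U(x)^+}$, and since $\{x\}\cup U(x)^- = U^-$ and $U(x)^+ = U^+$ (the repeated $x$ goes to $U^-$), this is exactly $\partial_{U^-}\bwedge_\bdiamond^x\partial_{U^+}$; in the odd case, symmetrically, the term equals $\partial_{U^-}\bwedge_\bdiamond^x\partial_{U^+}$ again since then $x\in U(x)^+$ and $\{x\}\cup U(x)^+ = U^+$, $U(x)^- = U^-$.

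So the combined sum \eqref{eq:repeated_even}$+$\eqref{eq:repeated_odd} is
\[
\sum_{U\in\APower_{q+1}(n)}\ \sum_{x\in\twoU} \partial_{U^-}\bwedge_\bdiamond^x\partial_{U^+}.
\]
The final step is to observe that, by the definition of $\partial_{U^-}\bwedge_\bdiamond\partial_{U^+}$ as the span indexed by $\coprod_{(s,t)}\cO_{s,t}(\twoU)^-_\bdiamond$, and of $\partial_{U^-}\bwedge_\bdiamond^x\partial_{U^+}$ as the span indexed by $\coprod_{(s,t)}\cO_{s,t}(\twoU)^{-,x}_\bdiamond$, the sets $\cO_{s,t}(\twoU)^{-,x}_\bdiamond$ for the various $x\in\twoU$ partition $\cO_{s,t}(\twoU)^-_\bdiamond$: an almost negative maximal chain with $W^\shortparallel_{r-1}=\emptyset$ determines a unique element $x$ by $\twoU\smallsetminus W^\smallwedge_{r-1}=\{x\}$ (since $|W^\smallwedge_{r-1}| = r-1$ and $|\twoU| = r$). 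Hence $\sum_{x\in\twoU}\partial_{U^-}\bwedge_\bdiamond^x\partial_{U^+}$ is equivalent to $\partial_{U^-}\bwedge_\bdiamond\partial_{U^+}$, giving the claim.

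There is no real obstacle here; the only point requiring a little care is the bookkeeping of which of $U(x)^-,U(x)^+$ the element $x$ lands in as a function of the parity of its index, and checking that in both cases the resulting span is literally $\partial_{U^-}\bwedge_\bdiamond^x\partial_{U^+}$ — this is a direct unwinding of the definitions of $U^\pm$ in Section \ref{section:theorem} together with the fact that inserting a second copy of $x$ shifts all indices beyond $x$ by one and assigns the new pair of equal indices to $\twoU$, split between $U^-$ and $U^+$. The partition statement $\cO_{s,t}(\twoU)^-_\bdiamond = \coprod_{x\in\twoU}\cO_{s,t}(\twoU)^{-,x}_\bdiamond$ is immediate from the definitions of these subsets, so the lemma follows at once — which is why the excerpt introduces it with ``We immediately have''.
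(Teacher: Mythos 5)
Your proof is correct and is exactly the unwinding the paper has in mind when it introduces the lemma with ``We immediately have'': reindex over pairs $(U,x)$ with $x\in\twoU$, observe that in both parity cases the summand is literally $\partial_{U^-}\bwedge^x_\bdiamond\partial_{U^+}$ (since $\{x\}\cup U(x)^-=U^-$, $U(x)^+=U^+$ in one case and $U(x)^-=U^-$, $\{x\}\cup U(x)^+=U^+$ in the other), and then use that the sets $\cO_{s,t}(\twoU)^{-,x}_\bdiamond$ partition $\cO_{s,t}(\twoU)^{-}_\bdiamond$ because $W^\smallwedge_{r-1}$ omits exactly one element of $\twoU$. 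One small slip: you state the parity convention backwards (by the paper's definition, even index means $x\in U(x)^+$ and odd means $x\in U(x)^-$), but since both branches produce the identical term this mislabeling has no effect on the argument.
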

\begin{lemma}\label{lemma:non_repeated} The summand \eqref{eq:non_repeated} is $\bF_2$-equivalent to:
\[\sum_{U\in \APower_{q+1}(n)} \partial_{U^-}\bwedge\partial_{U^+} + \partial_{U^+}\bwedge\partial_{U^-}.\]
\end{lemma}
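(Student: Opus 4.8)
The plan is to fix $U\in\APower_{q+1}(n)$ and show that the part of \eqref{eq:non_repeated} indexed by the pairs $(U,x)$ with $x\in\oneU$ is $\bF_2$-equivalent to $\partial_{U^-}\bwedge\partial_{U^+}+\partial_{U^+}\bwedge\partial_{U^-}$; summing over $U$ then yields the lemma.

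First I would record two elementary facts about the summands on the left. For $x\in\oneU$, write the two summands $\partial_{\{x\}\cup U(x)^-}\bwedge\partial_{U(x)^+}$ and $\partial_{U(x)^-}\bwedge\partial_{\{x\}\cup U(x)^+}$ as $\partial_A\bwedge\partial_B$ for the corresponding ordered pair of subsequences of $U$. In both cases $(A,B)$ is a \emph{splitting} of $U$: the union of $A$ and $B$ with multiplicity is $U$, each element of $\twoU$ lying once in $A$ and once in $B$ and each element of $\oneU$ lying in exactly one of them (here one uses $x\in\oneU$, so $x\notin U(x)$). Moreover $A\cap B=\twoU$ and $|A|+|B|=|U|$ for every such splitting, so the intersection appearing in the definition of $\bwedge$ and the parity of $n+|A|+|B|$ that decides positivity in Definition~\ref{df:good} are the same for all of them; hence $\partial_A\bwedge\partial_B$ depends only on the splitting $(A,B)$, and two of our summands producing the same splitting are literally the same span.

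The combinatorial core is to understand which splitting a given pair $(x,\varepsilon)$ produces, where $\varepsilon\in\{1,2\}$ records whether $x$ is placed in the first or the second factor. Comparing indices, deleting the single occurrence of $x$ (say at position $j$ of $U$) leaves the index of every element before $x$ unchanged and decreases the index of every element after $x$ by one; hence the canonical splitting $(U(x)^-,U(x)^+)$ of $U(x)$ agrees with $(U^-,U^+)$ on the elements of $\oneU$ smaller than $x$ and is opposite to it on the elements of $\oneU$ larger than $x$ (elements of $\twoU$ sit in both halves either way). So the splitting produced by $(x,\varepsilon)$ is: canonical on $\{v\in\oneU:v<x\}$, anticanonical on $\{v\in\oneU:v>x\}$, and $x$ placed according to $\varepsilon$. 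From this description one reads off that $(U^-,U^+)$ is produced by exactly one pair, namely $x=\max\oneU$ with $\varepsilon$ matching the parity of $\ind_U(\max\oneU)$; that $(U^+,U^-)$ is produced by exactly one pair, namely $x=\min\oneU$ with the opposite $\varepsilon$; and that every other splitting that occurs is produced by exactly two pairs. Explicitly, a pair $(x,\varepsilon)$ that places $x$ on its canonical side with $x\neq\max\oneU$ produces the same splitting as the pair that singles out the successor of $x$ in $\oneU$ and places it on its anticanonical side; this is a fixed-point-free involution on the remaining pairs. Working over $\bF_2$ the doubly-produced spans cancel and only $\partial_{U^-}\bwedge\partial_{U^+}+\partial_{U^+}\bwedge\partial_{U^-}$ survives; the degenerate case $\oneU=\emptyset$ is consistent, since then the left side is empty and $U^-=U^+$, so the right side is $2\,\partial_{U^-}\bwedge\partial_{U^-}\igualdos 0$.

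The only real obstacle is the index bookkeeping in the third paragraph: checking precisely how the index — and hence the canonical half — of each element changes when a single non-repeated element is removed, and then verifying that the resulting description of "which splitting comes from $(x,\varepsilon)$" makes the stated involution well-defined, fixed-point free, and with exactly the two canonical splittings as its unpaired elements. Everything else — in particular the reduction of the span $\partial_A\bwedge\partial_B$ to a function of the splitting $(A,B)$ alone — is formal once these combinatorial facts are in place.
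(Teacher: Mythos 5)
Your argument is correct and is essentially the paper's own proof: the bookkeeping of how the $\pm$-splitting changes when the single occurrence of $x\in\oneU$ is deleted is exactly \eqref{eq:85}, your two unpaired pairs $(x,\varepsilon)$ are the special cases \eqref{eq:mult1}--\eqref{eq:mult2} ($x$ the last, resp.\ first, element of $\oneU$, placed according to the parity of $\ind_U(x)$), and your successor/predecessor involution on the remaining pairs is the paper's pairing $(a)$--$(d)$. Your ``canonical/anticanonical placement'' formulation is just a cleaner repackaging of that same case analysis, so there is nothing to add.
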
 
\begin{proof} The summand \eqref{eq:non_repeated} is empty unless it is indexed by a sequence $U\in \APower_{q+1}(n)$ such that $\oneU\neq \emptyset$. Let $U$ be such a sequence and let $x\in \oneU$. In order to lighten the notation, write $U[x]^\pm = \{x\}\cup U(x)^{\pm}$ and define 
\begin{align*}
\oneU^\pm_{< x} &= \{u\in \oneU^\pm\mid u<x\} & \oneU^\pm_{> x} &= \{u\in \oneU^\pm\mid u>x\},
\end{align*} 
and note that
\begin{gather}\label{eq:85}
\begin{aligned}
U(x)^+ &= \oneU^+_{<x}\cup \oneU^-_{>x}\cup \twoU, &  U[x]^+ &= \oneU^+_{<x}\cup\{x\}\cup \oneU^-_{>x}\cup \twoU, \\
U(x)^- &= \oneU^-_{<x}\cup \oneU^+_{>x}\cup \twoU, &  U[x]^- &= \oneU^-_{<x}\cup\{x\}\cup \oneU^+_{>x}\cup \twoU. 
\end{aligned}\end{gather}
Assume first that $x$ is either the first or the last number in $\oneU$ (which need not to be different). Then
\begin{align}\label{eq:mult1}
\partial_{U^-}\bwedge \partial_{U^+} &\igual \begin{cases}
\partial_{U[x]^-}\bwedge \partial_{U(x)^+} &\text{ if $\ind_U(x)$ is odd and the last number in $\oneU$}\\
\partial_{U(x)^-}\bwedge \partial_{U[x]^+} &\text{ if $\ind_U(x)$ is even and the last number in $\oneU$}
\end{cases} \\ \label{eq:mult2}
\partial_{U^+}\bwedge \partial_{U^-} &\igual \begin{cases}
\partial_{U[x]^-}\bwedge \partial_{U(x)^+} &\text{ if $\ind_U(x)$ is even and the first number in $\oneU$}\\
\partial_{U(x)^-}\bwedge \partial_{U[x]^+} &\text{ if $\ind_U(x)$ is odd and the first number in $\oneU$}
\end{cases}
\end{align}
In every other case not treated in \eqref{eq:mult1} or \eqref{eq:mult2}, write $l(x)$ for the element in $\oneU$ that precedes $x$ and $r(x)$ for the element in $\oneU$ that succeeds $x$. Then we have that 
\begin{align*}
\oneU^+_{<l(x)}\cup \oneU^-_{>x}&\subset U(l(x))^+ & \oneU^+_{<l(x)}\cup \oneU^-_{>x}&\subset U(x)^+\\
\oneU^-_{<l(x)}\cup \oneU^+_{>x}&\subset U(l(x))^- & \oneU^-_{<l(x)}\cup \oneU^+_{>x}&\subset U(x)^-
\end{align*}
 and from \eqref{eq:85} it follows that
\begin{align*}
U(l(x))^+\smallsetminus (\oneU^+_{<l(x)}\cup \oneU^-_{>x}) &=\begin{cases}
x & \text{if $x\in U^+$} \\
\emptyset & \text{if $x\in U^-$}
\end{cases} \\
U(x)^+\smallsetminus (\oneU^+_{<l(x)}\cup \oneU^-_{>x})&=\begin{cases}
\emptyset & \text{if $l(x)\in U^+$} \\
l(x) & \text{if $l(x)\in U^-$}
\end{cases} \\
U(l(x))^-\smallsetminus (\oneU^-_{<l(x)}\cup \oneU^+_{>x})&=\begin{cases}
\emptyset & \text{if $x\in U^+$} \\
x & \text{if $x\in U^-$}
\end{cases} \\
U(x)^-\smallsetminus (\oneU^-_{<l(x)}\cup \oneU^+_{>x})&=\begin{cases}
l(x) & \text{if $l(x)\in U^+$} \\
\emptyset & \text{if $l(x)\in U^-$}.
\end{cases}
\end{align*}
As a consequence,
\begin{align*}
(a)&& U[l(x)]^- &= U[x]^-& U(l(x))^+ &=U(x)^+&\text{ if $l(x)\in U^-$ and $x\in U^+$} \\
(b)&& U(l(x))^- &=U[x]^- & U[l(x)]^+ &= U(x)^+&\text{ if $l(x)\in U^+$ and $x\in U^+$} \\
(c)&& U[l(x)]^- &=U(x)^- & U(l(x))^+ &= U[x]^+&\text{ if $l(x)\in U^-$ and $x\in U^-$} \\ 
(d)&& U(l(x))^- &=U(x)^- & U[l(x)]^+ &= U[x]^+&\text{ if $l(x)\in U^+$ and $x\in U^-$} 
\end{align*}
and therefore, all the terms of \eqref{eq:non_repeated} indexed by $U$ that do not appear in \eqref{eq:mult1} or \eqref{eq:mult2} are paired as follows:
\begin{align*}
\partial_{U[x]^-}\bwedge \partial_{U(x)^+} =
\begin{cases}
\partial_{U[l(x)]^-}\bwedge \partial_{U(l(x))^+} & \text{if $l(x)\in U^-$ and $x\in U^+$ (by $(a)$)}\\
\partial_{U[r(x)]^-}\bwedge \partial_{U(r(x))^+} & \text{if $x\in U^-$ and $r(x)\in U^+$ (by $(a)$)} \\
\partial_{U(l(x))^-}\bwedge \partial_{U[l(x)]^+} & \text{if $l(x)\in U^+$ and $x\in U^+$ (by $(b)$)} \\
\partial_{U(r(x))^-}\bwedge \partial_{U[r(x)]^+} & \text{if $x\in U^-$ and $r(x)\in U^-$ (by $(c)$)}
\end{cases}
\end{align*}
and 
\begin{align*}
\partial_{U(x)^-}\bwedge \partial_{U[x]^+} = 
\begin{cases}
\partial_{U[l(x)]^-}\bwedge \partial_{U(l(x))^+} & \text{if $l(x)\in U^-$ and $x\in U^-$ (by $(c)$)} \\
\partial_{U[r(x)]^-}\bwedge \partial_{U(r(x))^+} & \text{if $x\in U^+$ and $r(x)\in U^+$ (by $(b)$)} \\
\partial_{U(l(x))^-}\bwedge \partial_{U[l(x)]^+} & \text{if $l(x)\in U^+$ and $x\in U^-$ (by $(d)$)} \\
\partial_{U(r(x))^-}\bwedge \partial_{U[r(x)]^+} & \text{if $x\in U^+$ and $r(x)\in U^-$ (by $(d)$).}
\end{cases}
\end{align*}\normalsize 
Iterating twice these equalities returns the same summand, so every summand except the ones treated in \eqref{eq:mult1} and \eqref{eq:mult2} appears twice. As a consequence, $\eqref{eq:non_repeated}$ is $\bF_2$-equivalent to the sum of \eqref{eq:mult1} and \eqref{eq:mult2}.
\end{proof}

\subsection{Summing everything up} For each $U\in \APower_{q+1}(n)$, let $\cS(U)$ be the sum of those summands labeled by $U$ in Propositions \ref{prop:diagonalpart} and \ref{prop:nondiagonalpart}. Then the equality \eqref{eq:proof} that we want to prove translates into
\begin{equation}\label{eq:proof2}
\sum_{U\in \APower_{q+1}(n)}{\cS(U)} \igualdos \sum_{U\in \APower_{q+1}(n)}\partial_{U^-}\wedge\partial_{U^+} + \partial_{U^+}\bwedge\partial_{U^-}.
\end{equation}
Using the simplifications of Lemmas \ref{lemma:repeated} and \ref{lemma:non_repeated}, together with the identity 
\[\partial_U\bwedge_\udiamond\partial_V + \partial_U\bwedge_\bdiamond\partial_V = \partial_U\bwedge_\diamond\partial_V,\]
the summand indexed by $U$ in the left hand-side of \eqref{eq:proof2} becomes
\begin{align*}
\label{eq:Anibal}\cS(U) &\igualdos \partial_{U^-}\bwedge\partial_{U^+} + \partial_{U^+}\bwedge\partial_{U^-} &\text{if $\twoU=\emptyset$}\\
\cS(U) &\igual \partial_{U^-}\bwedge_\diamond\partial_{U^+} &\text{if $\oneU=\emptyset$} \\
\cS(U) &\igualdos  \partial_{U^-}\bwedge_\diamond\partial_{U^+} + \partial_{U^-}\bwedge\partial_{U^+} + \partial_{U^+}\bwedge\partial_{U^-} &\text{if $\twoU\neq\emptyset$ and $\oneU\neq\emptyset$}.
\end{align*}

%
%
If $\twoU=\emptyset$, then $\partial_{U^-}\bwedge\partial_{U^+} = \partial_{U^-}\times\partial_{U^+} = \partial_{U^-}\wedge\partial_{U^+}$ so \eqref{eq:proof2} holds. Otherwise, \eqref{eq:proof2} becomes
\begin{align*}
\partial_{U^-}\bwedge_\diamond\partial_{U^+} &\igualdos \partial_{U^-}\wedge\partial_{U^+} + \partial_{U^+}\bwedge\partial_{U^-} & \text{if $\oneU=\emptyset$}
\\
\partial_{U^-}\bwedge_\diamond\partial_{U^+} + 
\partial_{U^-}\bwedge\partial_{U^+} + \partial_{U^+}\bwedge\partial_{U^-} &\igualdos \partial_{U^-}\wedge\partial_{U^+} + \partial_{U^+}\bwedge\partial_{U^-} & \text{if $\oneU\neq\emptyset$} 
\end{align*}
We may now cancel both appearances of $\partial_{U^+}\bwedge\partial_{U^-}$ in the second line, whereas in the first line,  $U^- = U = U^+$, so $\partial_{U^+}\bwedge\partial_{U^-} = \partial_{U^-}\bwedge\partial_{U^+}$. Therefore we are left with the same equation in both cases:
\begin{align*}
\partial_{U^-}\bwedge_\diamond\partial_{U^+} + 
\partial_{U^-}\bwedge\partial_{U^+} + \partial_{U^-}\wedge\partial_{U^+} &\igualdos\emptyset.
\end{align*}
We will prove that this equation holds in Proposition \ref{prop:final}, finishing the proof of the theorem.

\subsection{Colored graphs}\label{ssection:coloredgraphs}
Let $\Gamma$ be a graph (i.e., a simplicial complex of dimension $1$) with set of vertices $V(\Gamma)$ and set of edges $E(\Gamma)$.

\begin{df}\label{df:coloredgraph} A \emph{$k$-coloring} on $\Gamma$ is a function $\epsilon\colon E(\Gamma)\to \{1,\ldots,k\}$ such that 
 each vertex has $k$ incident edges, all labeled differently.
If $v\in V(\Gamma)$ and $i=1,\ldots,k$, let $\edge{v}{i}$ be the unique edge incident to $v$ colored by $i$. If $(\Gamma,\epsilon)$ and $(\Gamma',\epsilon')$ are $k$-colored graphs, a simplicial map $f\colon \Gamma\to \Gamma'$ is \emph{color-preserving} if for each edge $e$ of $\Gamma$ such that $f(e)$ is an edge, $\epsilon'(f(e))=\epsilon(e)$.
\end{df}
\begin{lemma}\label{lemma:degree} Let $f\colon \Gamma\to \Gamma'$ be a color-preserving map between two $k$-colored graphs $(\Gamma,\epsilon)$ and $(\Gamma',\epsilon')$ and let $v,v'$ be vertices of $\Gamma'$. If $\Gamma'$ is connected, then 
\[|V(f^{-1}(v))|\equiv |V(f^{-1}(v'))|\mod 2.\]
\end{lemma}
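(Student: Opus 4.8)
Let $f\colon \Gamma\to \Gamma'$ be a color-preserving map between two $k$-colored graphs, $\Gamma'$ connected, and $v,v'$ vertices of $\Gamma'$. Then $|V(f^{-1}(v))|\equiv|V(f^{-1}(v'))|\pmod 2$.

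**Proof strategy.**

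The plan is to reduce to the case where $v$ and $v'$ are joined by a single edge of $\Gamma'$, and then exhibit an explicit involution (with controlled fixed-point set) on $V(f^{-1}(v))\sqcup V(f^{-1}(v'))$ whose fixed points are exactly paired off by the chosen edge. Since $\Gamma'$ is connected, any two vertices are joined by a path, and congruence mod $2$ is transitive, so it suffices to treat adjacent $v,v'$; fix an edge $e$ of $\Gamma'$ joining them and let $i=\epsilon'(e)$ be its color.

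The key observation is that color $i$ gives a canonical way to move between fibers. For a vertex $w\in V(f^{-1}(v))\cup V(f^{-1}(v'))$, consider the edge $\edge{w}{i}$ of $\Gamma$, which exists and is unique by the definition of a $k$-coloring. I would first argue that because $f$ is color-preserving and $f(w)\in\{v,v'\}$, the image $f(\edge{w}{i})$ is forced: it cannot be a vertex (a simplicial map collapsing $\edge{w}{i}$ to a vertex is compatible with color-preservation only vacuously, but then $w$ would have no $i$-colored edge mapping to the $i$-colored edge at $f(w)$ — here one needs to handle the degenerate case carefully, see below), so generically $f(\edge{w}{i})$ is an edge of $\Gamma'$ of color $i$ incident to $f(w)\in\{v,v'\}$. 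The edges of color $i$ incident to $v$ (resp.\ $v'$) are unique, namely $\edge{v}{i}$ and $\edge{v'}{i}$; if these coincide with $e$ we are in good shape. Define $\phi(w)$ to be the other endpoint of $\edge{w}{i}$. Then $\phi$ is an involution on the set of those $w$ for which $f(\edge{w}{i})=e$, and it interchanges the two fibers (since $\phi(w)$ lies over the other endpoint of $e$), so those vertices cancel in pairs mod $2$. The remaining vertices — those $w$ with $f(\edge{w}{i})\neq e$, or with $f$ collapsing $\edge{w}{i}$ — must be handled to show they also occur an even number of times, ideally by a second involution sending $w$ to the other endpoint of $\edge{w}{i}$ within the \emph{same} fiber.

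The cleaner route, which I would actually carry out, is this: restrict attention to the subgraph $\Gamma_i\subset\Gamma$ consisting of all vertices and only the $i$-colored edges. Every vertex has exactly one incident $i$-colored edge, so $\Gamma_i$ is a perfect matching on $V(\Gamma)$; in particular $V(\Gamma)$ splits into pairs $\{w,\phi(w)\}$. Likewise $\Gamma'_i$ is a perfect matching on $V(\Gamma')$, and $f$ being color-preserving sends each matched pair $\{w,\phi(w)\}$ either to a matched pair of $\Gamma'_i$ or (in the degenerate case) to a single vertex. Now $\{v,v'\}=\{v,\phi'(v)\}$ is a matched pair of $\Gamma'_i$ with matching edge $e$. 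The preimage $f^{-1}(\{v,v'\})$ is a union of $\phi$-orbits, each of size $2$ mapping onto $\{v,v'\}$ or onto a single one of $v,v'$ (if a pair $\{w,\phi(w)\}$ maps to the single vertex $v$, both $w,\phi(w)$ lie in $f^{-1}(v)$; this pair contributes $2$ to $|V(f^{-1}(v))|$ and $0$ to $|V(f^{-1}(v'))|$, consistent with the claimed parity after we note it contributes $0\equiv 0$ to the \emph{difference}). Summing over all $\phi$-orbits inside $f^{-1}(\{v,v'\})$: the orbits mapping onto $\{v,v'\}$ contribute $(1,1)$ to $(|f^{-1}(v)|,|f^{-1}(v')|)$; the orbits collapsing to $v$ contribute $(2,0)$; those collapsing to $v'$ contribute $(0,2)$. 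In every case the two coordinates are congruent mod $2$, hence $|V(f^{-1}(v))|\equiv|V(f^{-1}(v'))|\pmod 2$, as desired. The main obstacle is pinning down the degenerate/collapsing case precisely — ensuring the definition of color-preserving simplicial map handles an $i$-colored edge collapsing to a vertex correctly — but once the matching-$\Gamma_i$ viewpoint is adopted this is just a short case check. $\qquad\blacksquare$
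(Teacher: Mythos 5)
Your ``cleaner route'' is a correct and complete proof, but it is genuinely different from the one in the paper. You reduce to the case of adjacent vertices $v,v'$ (connectivity plus transitivity of congruence mod $2$), and then use the color $i=\epsilon'(e)$ of the joining edge to view the $i$-colored edges as a perfect matching: the fixed-point-free involution $\phi$ sending $w$ to the other endpoint of $\edge{w}{i}$ preserves $f^{-1}(\{v,v'\})$, because by color-preservation either $f(\edge{w}{i})$ is an $i$-colored edge at $f(w)$, hence equals $e$ and $\phi(w)$ lies in the other fiber, or the edge is collapsed and $\phi(w)$ lies in the same fiber; each $2$-element orbit then contributes $(1,1)$, $(2,0)$ or $(0,2)$ to the pair of fiber cardinalities, which gives the parity statement. (Your opening paragraph is hedged and partly off-track --- a collapsed $i$-colored edge is perfectly allowed --- but the matching argument you actually carry out disposes of that case correctly.) The paper instead passes to barycentric subdivisions, uses the coloring to extend $f$ to a map $g\colon \sd\Gamma\to\sd\Gamma'$ that restricts to an isomorphism on vertex stars (the color data tells a collapsed edge's barycenter where to go), and then shows that the preimage of an embedded path from $v$ to $v'$ is a compact $1$-manifold whose boundary is the union of the two fibers, so the parity follows from evenness of the boundary of a $1$-manifold. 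Your argument is more elementary and purely combinatorial, avoiding subdivision and manifold language; the paper's is more geometric and treats a whole path at once, but both rest on the same mechanism --- color-preservation forces each $i$-colored edge upstairs either to ``flip'' between the two fibers or to collapse within one of them.
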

\begin{proof}
Let $\sd \Gamma$ be the barycentric subdivision of $\Gamma$. Its vertices are the vertices of $\Gamma$ together with a new vertex $\flat_e$ at the barycenter of each edge $e$ of $\Gamma$. There is an edge $e_v$ between an old vertex $v$ and a barycenter $\flat_{e}$ whenever the edge $e$ is incident to $v$ in $\Gamma$.

Define the following simplicial map $g\colon \sd \Gamma\to \sd \Gamma'$:
\begin{align*}
g(v) &= f(v) \\
g(\flat_e) &= \begin{cases}
\flat_{f(e)} & \text{if $f(e)$ is an edge} \\
\flat_{\edge{f(e)}{\epsilon(e)}} & \text{if $f(e)$ is a vertex}
\end{cases}
\end{align*}
whose action on edges is $g(e_v) = \edge{f(v)}{\epsilon(e)}_{f(v)}$. It is straightforward to check that this map is well-defined and that
\begin{enumerate}
\item for each $v\in \Gamma$, $g(v) = f(v)$ and the image of a barycenter is a barycenter.
\item\label{itg:2} for each $v'\in \Gamma'$, $V(g^{-1}(v')) = V(f^{-1}(v'))$,
\item\label{itg:3} for each $v\in \Gamma$, the restriction $g_{|\operatorname{star}(v)}\colon \operatorname{star}(v)\to \operatorname{star}(g(v))$ is a simplicial isomorphism.
\end{enumerate}
Let $v'_0,v'_1$ be two vertices in $\Gamma'$. Since $\Gamma'$ is connected, there is an embedded simplicial path $\gamma\subset \Gamma'$ from $v'_0$ to $v'_1$. We claim that the inverse image $g^{-1}(\gamma)$ is also a simplicial manifold of dimension $1$ whose boundary is $g^{-1}(v'_0)\cup g^{-1}(v'_1)$. Once this is proven, the lemma follows from \eqref{itg:2} and the fact that the boundary of a manifold of dimension $1$ has even cardinality. 

By \eqref{itg:3}, the star of an old vertex $v$ in $g^{-1}(\gamma)$ is isomorphic to the star of $g(v)$ in $\gamma$, therefore $g^{-1}(\gamma)$ is locally the boundary or the interior of a $1$-manifold at its old vertices depending on whether they belong or not to $g^{-1}(v)\cup g^{-1}(v')$. 

Note now that if a barycenter belongs to $\gamma$, then its star in $\sd\Gamma'$ belongs to $\gamma$ as well. Therefore, if a barycenter belongs to $g^{-1}(\gamma)$, then its star in $\sd\Gamma$ belongs to $g^{-1}(\gamma)$ as well. Since $\sd\Gamma$ is already locally the interior of a $1$-manifold at its barycenters, the same holds for $g^{-1}(\gamma)$.
\end{proof}

\subsection{Application}\label{ssection:application} Take $k=r-1$, and, for each $(s,t)\in \partial_{U^-}\times \partial_{U^+}$ construct a $(r-1)$-colored graph $\Gamma(s,t)$ whose vertices are the maximal chains of $(s,t)$-good pairs and there is an edge between two different maximal chains $\{(W_j^\shortparallel,W_j^\smallwedge)\}_{j=1}^r$ and $\{(V_j^\shortparallel,V_j^\smallwedge)\}_{j=1}^r$ if there is an $i\in 1\ldots	r-1$ such that $(W_j^\shortparallel,W_j^\smallwedge)=(V_j^\shortparallel,V_j^\smallwedge)$ for all $j\neq i$. We label such an edge with the color $i$.

\begin{lemma}\label{lemma:gammacolor} $\Gamma(s,t)$ is a $(r-1)$-colored graph.
\end{lemma}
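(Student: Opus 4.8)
The statement to prove is Lemma~\ref{lemma:gammacolor}: that $\Gamma(s,t)$ as just defined really is an $(r-1)$-colored graph in the sense of Definition~\ref{df:coloredgraph}. Unwinding that definition, there are two things to check: (i) that the coloring function $\epsilon$ is well-defined, i.e.\ that any edge joining two maximal chains $\{(W^\shortparallel_j,W^\smallwedge_j)\}$ and $\{(V^\shortparallel_j,V^\smallwedge_j)\}$ receives a \emph{unique} color $i\in\{1,\ldots,r-1\}$; and (ii) that every vertex has exactly $r-1$ incident edges, one of each color. I would state (i) as the observation that if two distinct maximal chains agree in all entries except position $i$, then $i$ is uniquely determined by the pair. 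If they agreed outside two distinct indices $i<i'$, then in particular $(W^\shortparallel_{i'-1},W^\smallwedge_{i'-1})=(V^\shortparallel_{i'-1},V^\smallwedge_{i'-1})$ and $(W^\shortparallel_{i'+1},W^\smallwedge_{i'+1})=(V^\shortparallel_{i'+1},V^\smallwedge_{i'+1})$ (reading $r+1$-indexed entries as absent); but in a maximal chain each $W^\smallwedge_j$ has cardinality exactly $j$, so $W^\smallwedge_{i'}$ is forced to be the unique set of cardinality $i'$ with $W^\smallwedge_{i'-1}\subset W^\smallwedge_{i'}\subset W^\smallwedge_{i'+1}$, hence $W^\smallwedge_{i'}=V^\smallwedge_{i'}$, and then $W^\shortparallel_{i'}=V^\shortparallel_{i'}$ too by the consecutiveness relation ($W^\shortparallel_{i'}$ is $W^\shortparallel_{i'-1}$ with the new element removed if present). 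So the two chains agree in position $i'$, contradicting our assumption. Thus the color is unique, establishing (i).

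For (ii), fix a maximal chain $\bW=\{(W^\shortparallel_j,W^\smallwedge_j)\}_{j=1}^r$ and an index $i\in\{1,\ldots,r-1\}$. I want to show there is exactly one other maximal $(s,t)$-good chain agreeing with $\bW$ outside position $i$. Such a chain is determined by a choice of replacement pair $(W'^\shortparallel_i,W'^\smallwedge_i)$ with $(W^\shortparallel_{i-1},W^\smallwedge_{i-1})\prec(W'^\shortparallel_i,W'^\smallwedge_i)\prec(W^\shortparallel_{i+1},W^\smallwedge_{i+1})$, and it must be $(s,t)$-good. Since $|W^\smallwedge_{i-1}|=i-1$ and $|W^\smallwedge_{i+1}|=i+1$, writing $W^\smallwedge_{i+1}\smallsetminus W^\smallwedge_{i-1}=\{a,b\}$ (two elements of $U\cap V$), the consecutiveness constraint forces $W^\smallwedge_i\in\{W^\smallwedge_{i-1}\cup\{a\},\,W^\smallwedge_{i-1}\cup\{b\}\}$ — exactly two choices, one of which is the original. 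So I need to show the \emph{other} choice, say $W'^\smallwedge_i=W^\smallwedge_{i-1}\cup\{b\}$ when $W^\smallwedge_i=W^\smallwedge_{i-1}\cup\{a\}$, together with the forced $W'^\shortparallel_i$ (namely $W^\shortparallel_{i-1}$ with $b$ removed if present), is again $(s,t)$-good. This is where Lemma~\ref{lemma:parallelicity} does the work: going from $(W^\shortparallel_{i-1},W^\smallwedge_{i-1})$ by adjoining $b$ to the wedge part (and deleting it from the parallel part) yields a good pair by part~(2) if $b\in W^\shortparallel_{i-1}$ and by part~(1) if $b\notin W^\shortparallel_{i-1}$ — but in the latter case I still need $b\notin W'^\shortparallel_{i-1}=W^\shortparallel_{i-1}$, which holds, so the hypothesis of part~(1) is met. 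Hence $(W'^\shortparallel_i,W'^\smallwedge_i)$ is good, and then $(W'^\shortparallel_i,W'^\smallwedge_i)\prec(W^\shortparallel_{i+1},W^\smallwedge_{i+1})$ holds because $W^\smallwedge_{i+1}=W'^\smallwedge_i\cup\{a\}$ and $W^\shortparallel_{i+1}$ is the deletion of $a$ from $W'^\shortparallel_i$ up to the element $a$ — one checks the parallel-part bookkeeping is symmetric in $a,b$. This produces exactly one new maximal good chain for each $i$, giving $r-1$ incident edges with distinct colors; conversely every incident edge arises this way by (i).

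\textbf{The main obstacle.} The genuinely delicate point is not the counting but verifying that the swapped pair stays $(s,t)$-\emph{good} — i.e.\ that $\lambda_{W'^\shortparallel_i}(s)=\lambda_{W'^\shortparallel_i}(t)$ while $\lambda_{W'^\shortparallel_i,W'^\smallwedge_i}(s)\neq\lambda_{W'^\shortparallel_i,W'^\smallwedge_i}(t)$. The first (parallel) equality is the easy direction since $W'^\shortparallel_i\subseteq W^\shortparallel_{i-1}$ and goodness of the chain gives $\lambda_{W^\shortparallel_{i-1}}(s)=\lambda_{W^\shortparallel_{i-1}}(t)$, so the remark just before Lemma~\ref{lemma:parallelicity} (restricting the parallel set preserves the equality) applies. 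The second (wedge) inequality is the one requiring care: one must use that the pair $(W^\shortparallel_{i+1},W^\smallwedge_{i+1})$ is good (hence the $\lambda$-values there differ) and propagate this down to $(W'^\shortparallel_i,W'^\smallwedge_i)$ via the monotonicity statement ``enlarging $W^\smallwedge$ preserves the inequality'' noted after Definition~\ref{df:good}, together with Lemma~\ref{lemma:parallelicity}\eqref{para:1} applied with $w=a$. I would organize the argument so that $W'^\smallwedge_i\subset W^\smallwedge_{i+1}$ is exploited directly, avoiding any explicit manipulation of the $\mu$-bijections. Modulo this, the lemma is bookkeeping, and I expect it to occupy only a short paragraph once Lemma~\ref{lemma:parallelicity} is invoked.
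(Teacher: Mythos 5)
Your overall route is the same as the paper's: the other endpoint of the color-$i$ edge is pinned down by swapping the element added at step $i$ with the one added at step $i+1$, and the goodness of the swapped pair is supplied by Lemma~\ref{lemma:parallelicity} applied to the good pair $(W^\shortparallel_{i-1},W^\smallwedge_{i-1})$; the upward compatibility with $(W^\shortparallel_{i+1},W^\smallwedge_{i+1})$, which you verify explicitly, is the same bookkeeping the paper leaves implicit. Two local repairs are needed, though. In step (i), the assertion that $W^\smallwedge_{i'}$ is \emph{the unique} set of cardinality $i'$ between $W^\smallwedge_{i'-1}$ and $W^\smallwedge_{i'+1}$ is false (there are two such sets); fortunately the conclusion there is immediate without it, since two distinct chains agreeing at all $j\neq i$ and at all $j\neq i'$ with $i\neq i'$ would agree everywhere. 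In your last paragraph, the plan to obtain the wedge inequality at the swapped pair by propagating the inequality \emph{down} from $(W^\shortparallel_{i+1},W^\smallwedge_{i+1})$ does not work: the monotonicity noted after Definition~\ref{df:good} transfers the inequality from a smaller wedge set to a larger one, not the other way around. The correct source is the goodness of $(W^\shortparallel_{i-1},W^\smallwedge_{i-1})$ via Lemma~\ref{lemma:parallelicity}, which is exactly the step you already state in the preceding paragraph, so the downward argument should simply be dropped.

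The one substantive point your count misses (and which the paper's own proof glosses in the same way, its recursion referring to $V^\shortparallel_{i-1}$ also when $i=1$) is the color $i=1$. Your claim ``exactly two candidate replacement pairs, one of which is the original'' uses the lower neighbour to force the parallel part, hence tacitly assumes $i\geq 2$. For $i=1$ the only constraint is $(W'^\shortparallel_1,W'^\smallwedge_1)\prec(W^\shortparallel_2,W^\smallwedge_2)$, which allows two parallel parts for each of the two possible singleton wedge parts, i.e.\ four candidates, and two maximal chains can a priori differ only in $W^\shortparallel_1$. The lemma still holds because exactly two of the four candidates are $(s,t)$-good: writing $W^\smallwedge_2=\{a,b\}$ and $P=W^\shortparallel_2$, the goodness of $(P,\{a,b\})$ together with the same decompositions used in Lemma~\ref{lemma:parallelicity} shows that $(P,\{a\})$ is good precisely when $\lambda_{P,\{a\}}(s)\neq\lambda_{P,\{a\}}(t)$, that $(P\cup\{b\},\{a\})$ is good precisely when $\lambda_{P,\{b\}}(s)=\lambda_{P,\{b\}}(t)$, and symmetrically with $a$ and $b$ exchanged; so the number of good candidates is always two, and each vertex has exactly one incident edge of color $1$. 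With this boundary case added (and the two repairs above), your proof is complete and coincides with the paper's.
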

\begin{proof}
We need to prove that every chain $\{(W_j^\shortparallel,W_j^\smallwedge)\}_{j=1}^r$ has a unique incident edge labeled by $i$. Assume first that this edge exists and let the other end be the chain $\{(V_j^\shortparallel,V_j^\smallwedge)\}_{i=1}^r$. Then, we have that 
\begin{align}\label{eq:998}
V_j &= W_j \quad \text{for all $j\neq i$}.
\end{align} Write 
\begin{equation*}
\begin{aligned}[c]
w_{i+1} &= W_{i+1}^\smallwedge\smallsetminus W_{i}^\smallwedge \\
v_{i+1} &=  V_{i+1}^\smallwedge\smallsetminus V_{i}^\smallwedge 
\end{aligned}
\quad 
\begin{aligned}[c]
w_i &= W_{i}^\smallwedge\smallsetminus W_{i-1}^\smallwedge \\
 v_i &= V_i^\smallwedge\smallsetminus  V_{i-1}^\smallwedge.
\end{aligned}
\end{equation*}
Then, because of \eqref{eq:998} and because we assume that both chains are different, we have necessarily that $v_i = w_{i+1}$ and $w_{i} = v_{i+1}$. Therefore, we have that 
\[
V_i^\smallwedge = V_{i-1}^\smallwedge\cup \{v_i\},\quad V_i^\shortparallel = \begin{cases} V_{i-1}^\shortparallel & \text{ if $v_i\notin V_{i-1}$} \\
V_{i-1}^\shortparallel\smallsetminus \{v_i\} & \text{ if $v_i\in V_{i-1}$.}
\end{cases}
\] 
Therefore, if it exists, then it is uniquely given by this formula. To see that it exists, we have to see that all $V_j$'s are $(s,t)$-good. That is clear for all $j\neq i$ because $W_j$ is $(s,t)$-good, and also for $j=i$, because of Lemma \ref{lemma:parallelicity}.
\end{proof}

Let $I(k)$ be the $1$-skeleton of the $k$-dimensional cube, whose set of vertices is $\map(\{1,\ldots,k\},\{0,1\})$ and there is an edge from a vertex $v$ to a vertex $u$ if there is an $i\in 1\ldots r$ such that $v[j]=u[j]$ for all $j\neq i$. Such an edge is labeled with the color $i$. Let $f\colon \Gamma(s,t)\to I(r-1)$ be the color-preserving map that sends a vertex $(W_j^\shortparallel,W_j^\smallwedge)\}_{j=1}^r$ to its positiveness function:
\begin{equation*}
f(\{(W_j^\shortparallel,W_j^\smallwedge)\}_{j=1}^r)[i] = \begin{cases}
0 & \text{if the pair $(W_i^\shortparallel,W_i^\smallwedge)$ is positive} \\
1 & \text{if the pair $(W_i^\shortparallel,W_i^\smallwedge)$ is negative}
\end{cases},\quad i=1,\ldots,r-1.
\end{equation*}

\begin{proposition}\label{prop:final} Let $U\in \APower_q(n)$ be a sequence with $\oneU\neq U$. Then 
\begin{align*}
\partial_{U^-}\bwedge_\diamond\partial_{U^+} + 
\partial_{U^-}\bwedge\partial_{U^+} + \partial_{U^-}\wedge\partial_{U^+} &\igualdos\emptyset. 
\end{align*}
\end{proposition}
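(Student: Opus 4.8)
\end{proposition}

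\begin{proof}
The plan is to convert the asserted $\bF_2$-equivalence of spans into a parity statement about numbers of maximal chains, one statement for each pair $(s,t)\in\partial_{U^-}\times\partial_{U^+}$, and then to read that parity statement off the colored-graph machinery set up in Sections \ref{ssection:coloredgraphs} and \ref{ssection:application}. On the part indexed by a fixed $(s,t)$, all three spans $\partial_{U^-}\bwedge_\diamond\partial_{U^+}$, $\partial_{U^-}\bwedge\partial_{U^+}$ and $\partial_{U^-}\wedge\partial_{U^+}$ have the same source map (a maximal chain is sent to $(\source(s),\source(t))$) and the same target map (a maximal chain is sent to $(\target(s),\target(t))$); hence the $\bF_2$-linearization of their sum vanishes, i.e.\ the sum is $\bF_2$-equivalent to the empty span, as soon as for every $(s,t)$ the integer
\[|\cO_{s,t}(\twoU)^-_\diamond| + |\cO_{s,t}(\twoU)^-| + |\cO_{s,t}(\twoU)^+|\]
is even. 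So I would fix $(s,t)$, set $r=|\twoU|\geq 1$, and prove this.

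The first step is to clear away the degenerate situations. Every maximal chain of $(s,t)$-good pairs has last entry $(\emptyset,\twoU)$, so if $(\emptyset,\twoU)$ is not $(s,t)$-good --- in particular whenever $s=t$ --- then $\cO_{s,t}(\twoU)$ is empty and all three terms vanish. Otherwise $(\emptyset,\twoU)$ is good, hence positive or negative, which by definition means that $(s,t)$ is positive or negative. If $(s,t)$ is negative, then $\cO_{s,t}(\twoU)^+=\emptyset$ because a fully positive chain would need a positive last entry; and since the last entry of every maximal chain is then negative, ``almost negative'' coincides with ``negative'', so $\cO_{s,t}(\twoU)^-_\diamond=\cO_{s,t}(\twoU)^-$. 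The integer above becomes $2|\cO_{s,t}(\twoU)^-|$, which is even.

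The substantive case is $(s,t)$ positive. Here $\cO_{s,t}(\twoU)^-=\emptyset$, and, because the last entry $(\emptyset,\twoU)$ of every maximal chain is positive, a chain is fully positive exactly when its first $r-1$ entries are positive; that is, $\cO_{s,t}(\twoU)^+=\cO_{s,t}(\twoU)^+_\diamond$. So the integer above reduces to $|\cO_{s,t}(\twoU)^+_\diamond|+|\cO_{s,t}(\twoU)^-_\diamond|$, and it suffices to show that these two cardinalities are congruent modulo $2$. For this I would use the $(r-1)$-colored graph $\Gamma(s,t)$ of Section \ref{ssection:application} and the color-preserving map $f\colon\Gamma(s,t)\to I(r-1)$ recording the positiveness function: by construction $f^{-1}(\cero)$ is precisely the set of maximal chains whose first $r-1$ pairs are all positive, namely $\cO_{s,t}(\twoU)^+_\diamond$, and likewise $f^{-1}(\uno)=\cO_{s,t}(\twoU)^-_\diamond$. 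Since $\Gamma(s,t)$ is $(r-1)$-colored by Lemma \ref{lemma:gammacolor} and $I(r-1)$ is connected (a point when $r=1$, and the connected $1$-skeleton of a cube otherwise), Lemma \ref{lemma:degree} gives $|f^{-1}(\cero)|\equiv|f^{-1}(\uno)|\pmod 2$, which is what we wanted.

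The only real obstacle is the bookkeeping rather than any single hard step: one must keep straight, in each of the two non-degenerate cases, exactly which of the four families $\cO_{s,t}(\twoU)^{\pm}$ and $\cO_{s,t}(\twoU)^{\pm}_\diamond$ coincide and which are empty, and then observe that the genuinely combinatorial content --- that $\Gamma(s,t)$ is a regular colored graph and that color-preserving maps into a connected graph have fibres of constant parity --- has already been isolated as Lemmas \ref{lemma:gammacolor} and \ref{lemma:degree}.
\end{proof}
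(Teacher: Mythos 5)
Your proof is correct and follows essentially the same route as the paper: reduce to a parity count for each fixed $(s,t)$, identify $\cO_{s,t}(\twoU)^{-}_\diamond$, $\cO_{s,t}(\twoU)^{-}$ and $\cO_{s,t}(\twoU)^{+}$ with fibres of the colour-preserving map $f\colon\Gamma(s,t)\to I(r-1)$ over the all-negative and all-positive vertices, and conclude evenness from Lemmas \ref{lemma:gammacolor} and \ref{lemma:degree}. Your explicit case split on the sign of $(s,t)$ (giving $2|\cO_{s,t}(\twoU)^-|$ in the negative case) is just a reorganisation of the paper's case table for $\cO^{\pm}$, so the arguments coincide.
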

\begin{proof}

The left hand-side is, by definition, 
\begin{equation}\label{eq:4}
\sum_{(s,t)\in \partial_{U^-}\times \partial_{U^+}} \cO_{(s,t)}(\twoU)_\diamond^- + \cO_{(s,t)}(\twoU)^- + \cO_{(s,t)}(\twoU)^+
\end{equation}
and using the $k$-coloring of Lemma \ref{lemma:gammacolor}, 
\begin{align*}
\cO_{(s,t)}(\twoU)_\diamond^- &= V(f^{-1}(1,1,\ldots,1,1)) \\
\cO_{(s,t)}(\twoU)^+ &= \begin{cases} V(f^{-1}(0,0,\ldots,0,0)) &\text{ if $(s,t)$ is positive}, \\
\emptyset &\text{ if $(s,t)$ is negative},
\end{cases} \\
\cO_{(s,t)}(\twoU)^- &= \begin{cases} V(f^{-1}(1,1,\ldots,1,1)) &\text{ if $(s,t)$ is negative}, \\
\emptyset & \text{ if $(s,t)$ is positive}
\end{cases}
\end{align*}
Hence the sum
\begin{equation}\label{eq:3}
\cO_{(s,t)}(\twoU)_\diamond^- + \cO_{(s,t)}(\twoU)^- + \cO_{(s,t)}(\twoU)^+
\end{equation}
is the union of the inverse image under $f$ of two vertices of $I(r-1)$. Therefore, by Lemma \ref{lemma:degree}, we have that the cardinality of both preimages have the same parity, hence \eqref{eq:3} is even, and so \eqref{eq:4} is $\bF_2$-equivalent to the empty span.
\end{proof}

\section{Steenrod squares}\label{section:well-defined}

If $\alpha$ is a cocycle in a cochain complex $C^*$ of $\bF_2$-modules with a symmetric multiplication $\smile_i\colon C^*\otimes C^*\to C^*$, then it follows from Condition \eqref{eq:2} of the definition of stable symmetric multiplication in page \pageref{eq:2} that $\alpha\smile_i\alpha$ is also a cocycle and that if $\alpha$ and $\beta$ are cohomologous, then $\alpha\smile_i\alpha$ is cohomologous to $\beta\smile_i\beta$. Therefore, we obtain for each $i\geq 0$ a well-defined operation
\begin{equation}\label{eq:squares}
\Sq^i\colon  H^n(C^*)\lra H^{n+i}(C^*),\quad [\alpha]\mapsto [\alpha\smile_{n-i}\alpha]
\end{equation}
which is called the \emph{$i$-th Steenrod square} of $C^*$. 

Recall now from Definition \ref{df:suspension}, the suspension of an \sob. 


\begin{proposition}\label{prop:suspension} If $\alpha,\beta\in C^*(X_\bullet;\bF_2)$ and $\Sigma (\alpha),\Sigma (\beta)\in C^*(\Sigma X_\bullet;\bF_2)$ denote their suspensions, then $\Sigma(\alpha\smile_i \beta) = \Sigma(\beta)\smile_{i+1}\Sigma(\alpha)$. As a consequence, if $[\alpha]\in H^*(X_\bullet;\bF_2)$, then $\Sigma\Sq^i([\alpha]) = \Sq^i\Sigma([\alpha])$.
\end{proposition}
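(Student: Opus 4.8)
I plan to prove the identity first at the level of the stable symmetric comultiplication $\{\nabla_k\}$ of Theorem~\ref{thm:main} and only afterwards pass to cochains. Concretely, under the tautological identification $C_*(\Sigma X_\bullet;\bF_2)=\Sigma C_*(X_\bullet;\bF_2)$ recorded in Definition~\ref{df:suspension}, I will show that $\nabla_k^{\Sigma X_\bullet}$ corresponds to $T\circ\nabla_{k-1}^{X_\bullet}$, where $T$ is the twist. Taking transposes — recalling that $\smile_i$ is by definition the transpose of $\nabla_i$ and that $T$ is self-transpose — this reads $\Sigma\beta\smile_{i+1}\Sigma\alpha=\Sigma(\alpha\smile_i\beta)$, which is the first assertion. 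The second assertion then falls out by setting $\beta=\alpha$ and replacing $i$ by $n-i$: for a cocycle $\alpha\in C^n(X_\bullet;\bF_2)$ one gets $\Sigma(\alpha\smile_{n-i}\alpha)=\Sigma\alpha\smile_{(n+1)-i}\Sigma\alpha$, and passing to cohomology the right-hand side is $\Sq^i(\Sigma[\alpha])$ while the left-hand side is $\Sigma\Sq^i([\alpha])$.

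To obtain the comultiplication identity I will unwind $\nabla^{\binom{n}{n-k}}_{\Sigma X_\bullet}=\sum_{U\in\APower_{n-k}(n)}\bar\partial_{U^-}\wedge\bar\partial_{U^+}$ using Definition~\ref{df:suspension}. First I observe that the summand indexed by $U$ is empty unless $0\notin U$: any occurrence of $0$ in $U$ lies in $U^-$ (and in $U^+$ as well if $0$ is repeated), while $\bar\partial_V$ is the empty span whenever $0\in V$, so one of $\bar\partial_{U^\pm}$ is empty. For $0\notin U$ we have $\bar\partial_{U^\pm}=\partial^{n-1}_{\psi_0(U^\pm)}=\partial^{n-1}_{U^\pm-1}$, and — a point I will need to pin down carefully — the structural bijections $\bar\mu$ of $\Sigma X_\bullet$ are the $\mu$'s of $X_\bullet$ transported along $\psi_0$, so that all the $\lambda$-maps, hence the sets $\cO_{s,t}(-)$ of good maximal chains, are matched under $\psi_0$. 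Since subtracting $1$ from every entry is a bijection $\{U\in\APower_{n-k}(n)\mid 0\notin U\}\to\APower_{n-k}(n-1)$ that drops every index by $1$ and hence flips its parity, it exchanges the even- and odd-index subsequences, i.e. $(U-1)^\pm=U^\mp-1$; this is the source of the twist $T$.

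There is one remaining parity bookkeeping point. In $\partial_A\wedge_\square\partial_B$ the notion of positive maximal chain (Definition~\ref{df:good}) uses the parity of $\text{(ambient degree)}+|A|+|B|$; for $\bar\partial_{U^-},\bar\partial_{U^+}$ inside $\Sigma X_\bullet$ the ambient degree is $n$, whereas for the identified spans $\partial_{U^--1},\partial_{U^+-1}$ in $X_\bullet$ it is $n-1$, so ``positive'' becomes ``negative'' and $\bar\partial_{U^-}\wedge\bar\partial_{U^+}\igual\partial_{U^--1}\bwedge\partial_{U^+-1}$. Assembling the three observations, reindexing by $U'=U-1$, and using that $\Delta_n$ on $(\Sigma X_\bullet)_n=X_{n-1}$ equals $\Delta_{n-1}$, I get
\[
\nabla^{\binom{n}{n-k}}_{\Sigma X_\bullet}\circ\Delta_n\ \igual\ \sum_{U'\in\APower_{n-k}(n-1)}\bigl(\partial_{(U')^+}\bwedge\partial_{(U')^-}\bigr)\circ\Delta_{n-1}.
\]
On the other hand, a direct inspection of Definition~\ref{df:good} gives $T\circ(\partial_A\wedge\partial_B)\circ\Delta=(\partial_B\bwedge\partial_A)\circ\Delta$: the twist swaps the two factors, simultaneously interchanging $A$ and $B$ and turning the inequality $\lambda(s)<\lambda(t)$ into $\lambda(t)<\lambda(s)$, i.e. negativity, while precomposing with the diagonal also makes the source maps agree. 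Applying this termwise to $\nabla^{\binom{n-1}{n-k}}_{X_\bullet}=\sum_{U'}\partial_{(U')^-}\wedge\partial_{(U')^+}$ reproduces the right-hand side above, and since $n-k=(n-1)-(k-1)$, summing over $n$ and applying $\btoab{\bF_2}{-}$ yields $\nabla_k^{\Sigma X_\bullet}=T\circ\nabla_{k-1}^{X_\bullet}$ under the identification; then I dualise and specialise as in the first paragraph.

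The hardest part will not be any single computation but keeping the three ``reversals'' straight and mutually consistent: the degree shift $C_n(\Sigma X_\bullet)=C_{n-1}(X_\bullet)$, the twist $T$ forced by the index-parity exchange $(U-1)^\pm=U^\mp-1$, and the $\wedge\leftrightarrow\bwedge$ exchange forced by the drop of ambient degree from $n$ to $n-1$. One has to check that these combine to exactly $T\circ\nabla_{k-1}$ — the index shifted by precisely $1$ and with $T$ actually present. A subsidiary but genuinely necessary point is the verification that the pseudo-functor structure $2$-morphisms $\bar\mu$ of the suspended object are literally the $\mu$'s of $X_\bullet$ transported along $\psi_0$; this is what legitimises the bijection between good maximal chains on the two sides, and hence the equivalences of spans above.
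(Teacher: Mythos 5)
Your proposal is correct and follows essentially the same route as the paper: discard the summands with $0\in U$, reindex by shifting entries by one so that index parities flip (exchanging $U^+$ and $U^-$, hence the twist $T$) and the ambient parity flips (exchanging $\wedge$ with $\bwedge$), yielding $\bar{\nabla}^{\binom{n+1}{q}}\circ\Sigma = T\circ(\Sigma\otimes\Sigma)\circ\nabla^{\binom{n}{q}}$, and then dualise. You merely make explicit two steps the paper leaves implicit, namely the identity $T\circ(\partial_A\wedge\partial_B)\circ\Delta\igual(\partial_B\bwedge\partial_A)\circ\Delta$ and the compatibility of the structural bijections $\bar{\mu}$ with $\psi_0$, both of which are straightforward from Definitions \ref{df:good} and \ref{df:suspension}.
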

\begin{proof} Let $\nabla_i$ denote the symmetric comultiplication on $C^*(X_\bullet;\bF_2)$ and let $\bar{\nabla}_i$ denote the symmetric comultiplication on $C^*(\Sigma X_\bullet;\bF_2) = \Sigma C^*(X_\bullet;\bF_2)$. Then
\[(\Sigma\otimes \Sigma)\circ\nabla^{\binom{n}{q}}=T\cdot \bar{\nabla}^{\binom{n+1}{q}}\circ \Sigma\]
because, writing $\bpartial_U$ for the $U$th generalised face map of $\Sigma X_\bullet$,
\begin{align*}
\sum_{U\in \APower_q(n+1)}\bpartial_{U^-}\wedge \bpartial_{U^+} &= 
\sum_{U\in \APower_q(n)}\bpartial_{\psi_0(U)^-}\wedge \bpartial_{\psi_0(U)^+}= 
\sum_{U\in \APower_q(n)}\partial_{U^+}\bwedge \partial_{U^-},
\end{align*}
where the first equality holds because the span $\bpartial_U$ is non-empty only if $U$ is in the image of $\psi_0$, and the second equality holds because if $U=\{u_1,\ldots,u_q\}$, then $\psi_0(U)=\{u_1+1,\ldots,u_q+1\}$, therefore the index of every entry changes by one. Additionally, the sum $n+|U^+|+|U^-|$ used to define the positivity of a $(s,t)$-good pair in Definition \ref{df:good} also changes by one.
\end{proof}
\begin{proposition} The first Steenrod square is the Bockstein homomorphism.
\end{proposition}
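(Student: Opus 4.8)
The plan is to evaluate both operations on an arbitrary basis element $x\in X_{n+1}$ and to check that the two resulting cochains of $C^{n+1}(X_\bullet;\bF_2)$ agree. On one side, since $C_*(X_\bullet;\bZ)$ is free in each degree, the exact sequence $0\to\bF_2\to\bZ/4\to\bF_2\to 0$ computes the Bockstein of a cocycle $\alpha\in C^n(X_\bullet;\bF_2)$ as $\beta[\alpha]=[\tfrac12\delta\tilde\alpha]$, where $\tilde\alpha\colon X_n\to\{0,1\}\subset\bZ$ is the set-theoretic lift of $\alpha$ and $\delta$ is the differential of $C^*(X_\bullet;\bZ)$; since $d$ is the alternating sum of face maps, for $x\in X_{n+1}$ one gets $\delta\tilde\alpha(x)=\sum_{i=0}^{n+1}(-1)^i a_i$, where $a_i$ is the number of points $q$ of the fibre $(\partial^{n+1}_i)_x$ with $\tilde\alpha(\target(q))=1$. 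On the other side $\Sq^1[\alpha]=[\alpha\smile_{n-1}\alpha]$, and $\smile_{n-1}$ is dual to $\nabla_{n-1}$, whose restriction to $C_{n+1}$ is $\btoab{\bF_2}{\nabla^{\binom{n+1}{2}}\circ\Delta_{n+1}}$; hence $(\alpha\smile_{n-1}\alpha)(x)=(\alpha\otimes\alpha)\bigl(\btoab{\bF_2}{\nabla^{\binom{n+1}{2}}\circ\Delta_{n+1}}(x)\bigr)$.

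First I would unwind $\nabla^{\binom{n+1}{2}}=\sum_{U\in\APower_2(n+1)}\partial_{U^-}\wedge\partial_{U^+}$. A sequence $U\in\APower_2(n+1)$ is either $U=(u_1,u_2)$ with $u_1<u_2$ (so $\twoU=\emptyset$ and $\partial_{U^-}\wedge\partial_{U^+}=\partial_{U^-}\times\partial_{U^+}$) or $U=(u,u)$ (so $\partial_{U^-}\wedge\partial_{U^+}=\partial_u\wedge\partial_u$). Computing indices, when $u_1\not\equiv u_2\pmod 2$ one of $U^{\pm}$ is empty, the span lands in $X_{n+1}\times X_{n-1}$ or $X_{n-1}\times X_{n+1}$, and it is annihilated by $\alpha\otimes\alpha$; when $u_1\equiv u_2\pmod 2$ the term is $\partial_{u_1}\times\partial_{u_2}$ up to transposition, contributing $a_{u_1}a_{u_2}\bmod 2$; and the diagonal terms $\partial_u\wedge\partial_u$ land in $X_n\times X_n$. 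For these, since $U\cap V=\{u\}$ the only maximal chain is $(\emptyset,\{u\})$, which is $(s,t)$-good precisely when $s\ne t$, and the positivity convention of Definition \ref{df:good} singles out exactly one of $(s,t),(t,s)$; hence the part of the fibre of $(\partial_u\wedge\partial_u)\circ\Delta_{n+1}$ over $x$ whose two targets lie in the support of $\alpha$ has cardinality $\binom{a_u}{2}$. Collecting the contributions, $(\alpha\smile_{n-1}\alpha)(x)\equiv\sum_{u}\binom{a_u}{2}+\sum_{\{i,j\},\,i\ne j,\,i\equiv j\,(2)}a_ia_j\pmod 2$.

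Second I would expand the Bockstein side. Writing $P=\sum_{i\,\mathrm{even}}a_i$ and $M=\sum_{i\,\mathrm{odd}}a_i$, the cocycle condition on $\alpha$ forces $P\equiv M\pmod 2$, and from $\binom{P}{2}-\binom{M}{2}=\tfrac{P-M}{2}(P+M-1)$ with $P+M-1$ odd we get $\tfrac12\delta\tilde\alpha(x)=\tfrac{P-M}{2}\equiv\binom{P}{2}+\binom{M}{2}\pmod 2$; expanding $\binom{P}{2}=\sum_{i\,\mathrm{even}}\binom{a_i}{2}+\sum_{i<j\,\mathrm{even}}a_ia_j$ and likewise for $M$ gives exactly $\sum_i\binom{a_i}{2}+\sum_{\{i,j\},\,i\ne j,\,i\equiv j\,(2)}a_ia_j$, which is the expression found in the previous step. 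As this equality of values holds for every $x\in X_{n+1}$, the cochains $\alpha\smile_{n-1}\alpha$ and $\tfrac12\delta\tilde\alpha$ coincide, so $\Sq^1[\alpha]=\beta[\alpha]$.

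The index bookkeeping for $\APower_2(n+1)$ and the elementary arithmetic identity are routine. The main obstacle is the diagonal contribution $\partial_u\wedge\partial_u$: it has no semi-simplicial-set analogue — there the fibres $(\partial^{n+1}_u)_x$ are singletons, so $a_u\in\{0,1\}$, $\binom{a_u}{2}=0$, and one recovers the classical cup-$(n-1)$ formula — and one must verify that the positivity rule of Definition \ref{df:good} really selects one ordering of each unordered pair $\{s,t\}$, so that this term genuinely reproduces the ``carry'' term $\sum_u\binom{a_u}{2}$ that appears on the Bockstein side precisely because the differential of $C_*(X_\bullet;\bZ)$ is an alternating, rather than plain, sum of face maps.
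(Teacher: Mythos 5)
Your proof is correct and follows essentially the same route as the paper: expand $\nabla^{\binom{n+1}{2}}$ restricted to the $X_n\otimes X_n$ component, observe that mixed-parity sequences are annihilated, that distinct same-parity pairs contribute $a_ia_j$ while the repeated sequences $(u,u)$ contribute the carry terms $\binom{a_u}{2}$ (your check that the positivity rule picks exactly one ordering of each pair $\{s,t\}$ is exactly what makes this work), and compare with $\tfrac12\delta\tilde\alpha$. The only divergence is cosmetic, in the final arithmetic step: you use $\binom{P}{2}-\binom{M}{2}=\tfrac{P-M}{2}(P+M-1)$ with $P+M-1$ odd, whereas the paper reduces to $\binom{d^{\mathrm{even}}}{2}+\binom{d^{\mathrm{odd}}}{2}$ and invokes binary digits and Lucas' theorem.
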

\begin{proof} Let $X_\bullet$ be an \osob and let $d^i$ be the \abelianisation{\bZ} of $\partial^{n+1}_i$, whose coefficients are 
\[d^i(y) = \sum_{x\in X_n} d^i_{x,y} x.\] 
Define 
\begin{align*}
d_{x,y} &= \sum_{i} d^i_{x,y},&
d^\even_{x,y} &= \sum_{i~\even} d^i_{x,y}, &
d^\odd_{x,y} &= \sum_{i~\odd} d^i_{x,y}, &
\end{align*}
so that if $\bar{\alpha}\in C^n(X_\bullet;\bZ)$ is a cocycle, then
\[\delta(\bar{\alpha}) = \sum_{x\in X_n} \bar{\alpha}(x)d_{x,y}\cdot y^*.\]

The Bockstein homomorphism
\[\beta\colon H^n(X_\bullet;\bF_2)\lra H^{n+1}(X_\bullet;\bF_2)\]
is defined on a cohomology class represented by a cocycle $\alpha$ as follows: define a cochain $\bar{\alpha}$ in $C^n(X_\bullet;\bZ)$ by $\bar{\alpha}(x) = \alpha(x)$ and observe that, as $\alpha$ is a cocycle with $\bF_2$-coefficients, the coboundary of $\bar{\alpha}$ is divisible by $2$, i.e., for every $y\in X_{n+1}$, the number 
\[d_{\alpha,y}=\sum_{\alpha(x)=1} d_{x,y}\]
is divisible by $2$. Define the class $\beta([\alpha])\in H^{n+1}(X_\bullet;\bF_2)$ as
\begin{equation}\label{eq:bockstein}
\beta([\alpha]) = \left[\sum_y\frac{d_{\alpha,y}}{2} y^*\right].
\end{equation}

On the other hand, the first Steenrod square of $\alpha$ is
\[\Sq^1(\alpha) = \alpha\smile_{n-1} \alpha.\]
The product $\smile_{n-1}$ is dual to $\nabla_{n-1}$, and its restriction to $C^n(X_\bullet;\bF_2)\otimes C^n(X_\bullet;\bF_2)$ is dual to the homomorphism $\nabla^{\binom{n+1}{2}}[1,1]$ defined as 
\[\bF_2\langle X_{n+1}\rangle \overset{\nabla^{\binom{n+1}{2}}}{\lra} 
\begin{array}{c}
\bF_2\langle X_{n+1}\rangle\otimes \bF_2\langle X_{n-1}\rangle \\
\oplus\\
 \bF_2\langle X_{n}\rangle\otimes \bF_2\langle X_{n}\rangle \\
\oplus \\
 \bF_2\langle X_{n-1}\rangle\otimes \bF_2\langle X_{n+1}\rangle
\end{array}\lra  \bF_2\langle X_{n}\rangle\otimes \bF_2\langle X_{n}\rangle,\]
This homomorphism is given by
\begin{align*}
\nabla_{n-2}[1,1](y) &= \btoab{\bF_2}{\undersumtwo{U\in \APower_{2}(n+1)}{U^+\neq\emptyset\neq U^-}{\partial_{U^-}\wedge\partial_{U^+}}}(y)  \\
&= \btoab{\bF_2}{\undersumtwo{U\in \APower_{2}^0(n+1)}{U^+\neq\emptyset\neq U^-}{\partial_{U^-}\wedge \partial_{U^+}} + \sum_{U\in \APower_{2}^1(n+1)} \partial_{U^-}\wedge\partial_{U^+}}(y) \\
&= \btoab{\bF_2}{\undersumtwo{0\leq u<v\leq n+1}{u,v \text{ even}} \partial_{u}\times \partial_{v} + \undersumtwo{ 0\leq u<v\leq n+1}{u,v \text{ odd}}{\partial_{v}\times \partial_{u}} + \sum_{0\leq u\leq n+1} \partial_u\wedge \partial_u}(y)\\
&=  \undersumtwo{0\leq u<v\leq n+1}{u,v \text{ even}}{\sum_{x,x'\in X_n} (d^u_{x,y}d^v_{x',y})\cdot x\otimes x'} +\\
&+ \undersumtwo{ 0\leq u<v\leq n+1}{u,v \text{ odd}}{\sum_{x,x'\in X_n}(d^u_{x,y}d^v_{x',y})\cdot x\otimes x'} +\\ &+ \sum_{0\leq u\leq n+1}{\sum_{x<x'}(d^u_{x,y}d^u_{x',y})\cdot x\otimes x'} + \sum_{0\leq u\leq n+1}\sum_x\binom{d^u_{x,y}}{2}x\otimes x
\end{align*}
whose dual takes $\alpha$ to
\begin{align*}
&\sum_y\left(\undersumtwo{0\leq u<v\leq n+1}{u,v \text{ even}}{\undersumtwo{x,x'\in X_n}{\alpha(x)=\alpha(x')=1}{(d^u_{x,y}d^v_{x',y})}} + \undersumtwo{ 0\leq u<v\leq n+1}{u,v \text{ odd}}{\undersumtwo{x,x'\in X_n}{\alpha(x)=\alpha(x')=1}{(d^u_{x,y}d^v_{x',y})}}\right. +\\
 &+ \left.\sum_{0\leq u\leq n+1}{\undersumtwo{x<x'}{\alpha(x)=\alpha(x')=1}{(d^u_{x,y}d^u_{x',y})}} + \sum_{0\leq u\leq n+1}\sum_{\alpha(x)=1}\binom{d^u_{x,y}}{2}\right)\cdot y^*
\end{align*}
which, rearranging
\begin{align*}
&\sum_y\left(\undersumtwo{x,x'\in X_n}{\alpha(x)=\alpha(x')=1}{\undersumtwo{0\leq u\leq v\leq n+1}{u,v \text{ even}}{d^u_{x,y}d^v_{x',y}}}  + \sum_{\alpha(x)=1} \undersumtwo{0\leq u\leq n+1}{u~\even}{\binom{d^u_{x,y}}{2}}\right.+
\\
&+\left.\undersumtwo{x,x'\in X_n}{\alpha(x)=\alpha(x')=1}{\undersumtwo{ 0\leq u\leq v\leq n+1}{u,v \text{ odd}}{d^u_{x,y}d^v_{x',y}}} + \sum_{\alpha(x)=1} \undersumtwo{0\leq u\leq n+1}{u~\odd}{\binom{d^u_{x,y}}{2}}\right)\cdot y^*
\end{align*}
and using that $\binom{a+b}{2} = ab+\binom{a}{2}+\binom{b}{2}$, this becomes
\begin{align*}
& \sum_y\left(\binom{d^\even_{\alpha,y}}{2} + \binom{d^\odd_{\alpha,y}}{2}\right)\cdot y^*
\end{align*}
Let $(d^\even_{\alpha,y})_0$ and $(d^\even_{\alpha,y})_1$ be the first two digits in the binary expansion of $d^\even_{\alpha,y}$ and and let $(d^\odd_{\alpha,y})_0$ and $(d^\odd_{\alpha,y})_1$ be the first two digits in the binary expansion of $d^\odd_{\alpha,y}$. As $\alpha$ is a cocycle with $\bF_2$ coefficients, $\delta(\alpha)$ is divisible by $2$, and therefore $(d^\even_{\alpha,y})_0 = (d^\odd_{\alpha,y})_0$ for all $y\in X_{n+1}$, hence the coefficient in \eqref{eq:bockstein} is
\[\frac{d_{\alpha,y}}{2} = \frac{d^\even_{\alpha,y} - d^{\odd}_{\alpha,y}}{2} \equiv (d^\even_{\alpha,y})_1 + (d^\odd_{\alpha,y})_1\mod 2.\] 
Finally, by Lucas' Theorem, we have that 
\begin{align*}
(d^\even_{\alpha,y})_1 &= \binom{d^\even_{\alpha,y}}{2} 
&
(d^\odd_{\alpha,y})_1 &= \binom{d^\odd_{\alpha,y}}{2}.
\end{align*}
\end{proof}

In the remaining of this section we prove a Cartan formula for these Steenrod squares. The natural product operation on \sobs is the join, that we introduce now. If $U\in \APower(n)$, and $m\leq n$, define the following sequences in $\APower(m)$ and $\APower(n-m-1)$:
\begin{align*}
U_{\leq m} &= \{u\in U\mid u\leq m\} & U_{> m} &= \{u\geq 0\mid u+m+1\in U\}.
\end{align*}
If $n=n_1+n_2+1$, then there is a bijection $P(n)\to P(n_1)\times P(n_2)$ given by sending a sequence $U$ to the pair of sequences $(U_{\leq \m},U_{>\m})$. If $u\in U_{\leq \m}$, then its index in $U_{\leq \m}$ coincides with its index in $U$, whereas if $u\in U_{> \m}$, then its index in $U_{> \m}$ and its index in $U$ coincide if $\m$ and $|U_{\leq \m}|$ have different parity, and are opposite if $\m$ and $|U_{\leq \m}|$ have the same parity, therefore, writing $U_{\leq \m}^\pm$ for $\left(U_{\leq \m}\right)^\pm$ and $U_{> \m}^\pm$ for $\left(U_{> \m}\right)^\pm$, we have
\begin{equation}\label{eq:cases}
\begin{aligned}
U_{\leq \m}^-&= (U^-)_{\leq \m } &  U_{>\m}^- &= \begin{cases} (U^-)_{>\m} & \text{if $\m+ |U_{\leq \m}|$ is odd} \\ (U^+)_{>\m} & \text{if $\m+ |U_{\leq \m}|$ is even} \end{cases} 
\\
U_{\leq \m}^+&= (U^+)_{\leq \m} & U_{>\m}^+ &= \begin{cases} (U^+)_{>\m}& \text{if $\m+ |U_{\leq \m}|$ is odd} \\ (U^-)_{>\m} & \text{if $\m+ |U_{\leq \m}|$ is even.}  \end{cases}
\end{aligned}
\end{equation}

\begin{df}\label{df:join} The join product  of two \sobs $X_\bullet,Y_\bullet$ is the \sob $(X* Y)_{\bullet}$ given by
\begin{align*}
(X* Y)_n &= \coprod_{n_1+n_2=n-1}X_{n_1}\times Y_{n_2}\\
\bpartial_U^n &= \coprod_{n_1+n_2=n-1}\partial_{U_{\leq \m}}^{n_1}\times \partial^{n_2}_{U_{>\m}},\\
\bmu^{n}_{V,W} &= \coprod_{n_1+n_2=n-1} \mu^{n_1}_{V_{\leq \m},W_{\leq \m}}\times \mu^{n_2}_{V_{>\m},W_{> \m}}.
\end{align*}
\end{df}
\begin{notation} During the rest of this section, we will write $\partial^n_U$ and $\mu^n_{V,W}$ for the structural maps of $X_\bullet$ and $Y_\bullet$ and $\bpartial^n_U$ and $\bmu^n_{V,W}$ for the structural maps of the join $X_\bullet*Y_\bullet$.
\end{notation}
There is a canonical isomorphism 
\begin{equation}\label{eq:chainjoin}
C^*((X*Y)_\bullet;R)\cong \Sigma \left( C^*(X_\bullet;R)\otimes C^*(Y_\bullet;R)\right),
\end{equation}
therefore, when $R=\bF_2$, the K\"unneth formula gives an isomorphism
\begin{equation}\label{eq:cohjoin}H^*((X*Y)_\bullet;\bF_2)
\cong \Sigma \left(H^*(X_\bullet;\bF_2)\otimes H^*(Y_\bullet;\bF_2)\right).\end{equation} 
\begin{remark} The product of two cubes in the Burnside category (\cite[Definition 5.4]{LLS-cube}, \cite[Def.~4.20]{LLS2015}) 
\begin{align*}
F\colon& \cube{n_1}\to \cB,&G\colon& \cube{n_2}\to \cB
\end{align*} 
is another cube $F\times G\colon \cube{n_1+n_2}\to \cB$ in the Burnside category. If $$|\cdot|\colon \preshcube{\cube{n}}{\cB_{\mathrm{fin}}}\to \Sp$$ denotes the realisation functor \cite{LLS2015} from the finite Burnside category (see Warning in page \pageref{warning-burnside}) to the category of spectra, then \cite[Prop. 4.23]{LLS2015}
\[|F\times G| \simeq |F|\wedge |G|.\]
It is easy to check that $\Lambda(F\times G) = \Lambda(F)* \Lambda(G)$, and one could prove that, writing again $|\cdot |_{\bS}\colon \presh{\Deltainjaug}{\cB}\to \Sp$ for the functor $*$ in Diagram \ref{eq:picture}.
\begin{equation}\label{eq:smash}
|(X*Y)_\bullet|_{\bS} \simeq \Sigma\left(|X_\bullet|_{\bS}\wedge |Y_\bullet|_{\bS}\right),
\end{equation}
though this computation is beyond the scope of this paper.
\end{remark}
\begin{df} If $X_\bullet$ and $Y_\bullet$ are ordered, then we order $(X*Y)_\bullet$ lexicographically, i.e., if $(s_1,s_2)$ and $(t_1,t_2)$ are elements in $\partial^n_U$, and 
\begin{align*}
(s_1,s_2)&\in \partial^{n_1}_{U_{\leq \m}}\times  \partial^{n_2}_{U_{>\m}} & (t_1,t_2)&\in \partial^{n'_1}_{U_{\leq\m'}}\times  \partial^{n'_2}_{U_{>\m'}},
\end{align*}
 then $(s_1,s_2)<(t_1,t_2)$ if and only if one of the following holds:
\begin{align*}
n_1&<n'_1 & &\text{or} & n_1&= n'_1, s_1<t_1 & &\text{or} & n_1&= n'_1, s_1=t_1, s_2<t_2. 
\end{align*}
Elements of $\partial^{n_1}_{U_{\leq \m}}$ are called \emph{strong} and elements of $\partial^{n_2}_{U_{> \m}}$ are called \emph{weak}.
\end{df}

Let us write $\{\bnabla_k\}$ for the stable symmetric comultiplication on $X_\bullet\times Y_\bullet$ and $\{\nabla_k\}$ for the stable symmetric comultiplications on $X_\bullet$ and $Y_\bullet$.

\begin{proposition}\label{prop:cartan} Let $n=n_1+n_2+1$ and let $q\geq 0$. The span 
\begin{equation*}
\bnabla^{\binom{n}{q}}|_{X_{n_1}\times Y_{n_2}} = \sum_{U\in \APower_q(n)} \bpartial^n_{U^-}\wedge\bpartial^n_{U^+}
\end{equation*} 
is $\bF_2$-equivalent to:
\begin{align*}
\sum_{q_1+q_2=q}\sum_{r_1,r_2}\sum_{U_1\in \APower^{r_1}_{q_1}(n_1)}\sum_{U_2\in \APower^{r_2}_{q_2}(n_2)} &\left(\partial^{n_1}_{U^-_{1}}\wedge \partial^{n_1}_{U^+_{1}}\right)\times T^{q_1+n_1+r_1+1}\left(\partial^{n_2}_{U^-_2}\wedge \partial^{n_2}_{U^+_{2}}\right)
\end{align*}
\end{proposition}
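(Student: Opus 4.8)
The plan is to prove the $\bF_2$-equivalence term by term along the bijection
\[
\APower_q(n)\ \overset{\cong}{\lra}\ \coprod_{q_1+q_2=q}\APower_{q_1}(n_1)\times\APower_{q_2}(n_2),\qquad U\longmapsto (U_1,U_2):=(U_{\leq\m},U_{>\m}),\quad \m=n_1,
\]
which, as recorded before Definition \ref{df:join}, carries the number of repeated entries additively: $r=r_1+r_2$. Under this bijection the triple sum on the right of the Proposition is a regrouping of $\sum_{U\in\APower_q(n)}$, so it suffices to fix $U$ (hence $U_1,U_2,q_1,q_2,r_1,r_2$) and prove
\[
\bigl(\bpartial^n_{U^-}\wedge\bpartial^n_{U^+}\bigr)\big|_{X_{n_1}\times Y_{n_2}}\ \igualdos\ \bigl(\partial^{n_1}_{U_1^-}\wedge \partial^{n_1}_{U_1^+}\bigr)\times T^{\,q_1+n_1+r_1+1}\bigl(\partial^{n_2}_{U_2^-}\wedge \partial^{n_2}_{U_2^+}\bigr).
\]

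First I would unpack the left-hand span. By Definition \ref{df:join}, the restriction of $\bpartial^n_{U^\pm}$ to the summand $X_{n_1}\times Y_{n_2}$ of $(X*Y)_n$ is the product span $\partial^{n_1}_{(U^\pm)_{\leq\m}}\times\partial^{n_2}_{(U^\pm)_{>\m}}$, and by the case analysis \eqref{eq:cases} we have $(U^\pm)_{\leq\m}=U_1^\pm$ always, whereas $(U^-)_{>\m}=U_2^-,\ (U^+)_{>\m}=U_2^+$ when $\m+q_1$ is odd, and $(U^-)_{>\m}=U_2^+,\ (U^+)_{>\m}=U_2^-$ when $\m+q_1$ is even. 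Thus a point $(s,t)\in\bpartial^n_{U^-}\times\bpartial^n_{U^+}$ over $X_{n_1}\times Y_{n_2}$ is a pair $s=(s_1,s_2)$, $t=(t_1,t_2)$ with $(s_1,t_1)\in\partial^{n_1}_{U_1^-}\times\partial^{n_1}_{U_1^+}$, and with $(s_2,t_2)$ ranging over $\partial^{n_2}_{U_2^-}\times\partial^{n_2}_{U_2^+}$ in the odd case and over $\partial^{n_2}_{U_2^+}\times\partial^{n_2}_{U_2^-}$ in the even case; this swap accounts for one factor of $T$ whose parity is exactly $\m+q_1+1$, the residual $T^{r_1}$ being produced in the next step when the positivity conventions are reconciled.

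The technical heart is the claim that, for such $(s,t)$, the set $\cO_{(s_1,s_2),(t_1,t_2)}(\twoU)^{+}$ of positive maximal chains of $(s,t)$-good pairs for the join coincides (as the relevant summand of the span, up to $\bF_2$-equivalence) with a product $\cO_{s_1,t_1}(\ddot U_1)^{\varepsilon_1}\times\cO_{s_2,t_2}(\ddot U_2)^{\varepsilon_2}$, where $\twoU=\ddot U_1\sqcup\ddot U_2$ under the obvious shift and $\varepsilon_1,\varepsilon_2\in\{+,-\}$ are the signs dictated by the parities $n_i+q_i$ against $n+q=(n_1+q_1)+(n_2+q_2)+1$. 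To establish this I would exploit that, because the structure maps $\bmu,\bpartial$ of the join are coordinatewise products (Definition \ref{df:join}), the collapse maps split as $\bar\lambda_{W}=\lambda_{W_{\leq\m}}\times\lambda_{W_{>\m}}$, and that the order on the join is lexicographic: a pair is \emph{parallel} for the join exactly when it is parallel in each block, it is \emph{wedge} for the join when it is wedge in at least one block, and the positivity of a join pair is decided by whichever block is first to be wedge. Projecting a maximal chain to the two blocks and deleting repetitions produces candidate maximal chains for $\ddot U_1$ and $\ddot U_2$ together with the lattice path recording their interleaving, and one must then check: (i) using Lemma \ref{lemma:parallelicity}, that each projected pair is genuinely good — the alternative "wedge only in the opposite block" being incompatible with maximality; (ii) that positivity at every step pins down the interleaving lattice path, so the correspondence is sharp and not overcounted by the shuffle coefficient $\binom{r_1+r_2}{r_1}$ — and, should the path fail to be fully determined, that the surplus chains cancel mod $2$ by organising them into a colored graph (colors indexing admissible transpositions of an adjacent block-$1$ with a block-$2$ step) and applying Lemma \ref{lemma:degree}, exactly as in the proof of Proposition \ref{prop:final}; and (iii) that reading off block-by-block positivity relative to $n_i+q_i$ yields the signs $\varepsilon_1,\varepsilon_2$ and, after comparison with the positive-chain spans $\partial^{n_i}_{U_i^-}\wedge\partial^{n_i}_{U_i^+}$, exactly the exponent $q_1+n_1+r_1+1$. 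Assembling the source, target and chain identifications gives the displayed equivalence for each $U$, and summing over $U$ concludes.

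I expect step three — and within it parts (i)–(ii) — to be the main obstacle: the interaction of the \emph{disjunctive} join wedge condition with strict lexicographic positivity is precisely where one must decide whether the correspondence is an honest bijection (the interleaving being forced by positivity) or only a mod-$2$ identity requiring the colored-graph cancellation of Lemma \ref{lemma:degree}. The remainder — the reindexing, the use of \eqref{eq:cases}, and the parity arithmetic between $n_i+q_i$ and $n+q$ — is bookkeeping whose only subtlety is tracking which block's comparison dominates; together with the canonical isomorphism \eqref{eq:chainjoin} and the K\"unneth formula \eqref{eq:cohjoin} it packages the resulting identity of spans into the Cartan formula.
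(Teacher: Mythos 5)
Your outline coincides with the paper's: the same reindexing $U\mapsto (U_{\leq n_1},U_{>n_1})$, the same reduction to a per-$U$, per-$(s,t)$ parity statement about counts of positive maximal chains, the same observation that the lexicographic order makes the strong block dominate positivity, and the same appeal to Lemma \ref{lemma:degree} for mod-$2$ cancellation. But the two points you flag as "to be checked" are exactly where the proof lives, and one of them you resolve incorrectly. Your claim (i) — that a pair which is wedge only in the block opposite to the newly added element is "incompatible with maximality" — is false. Such pairs (the paper calls them \emph{\semiparallel}) do occur in positive maximal chains for the join: the pair can be good because the other block is wedge, while its projection to the block of $w_i$ is parallel, hence \emph{not} good there; so your projection-to-blocks construction simply breaks on these chains, and they cannot be ignored. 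The paper removes them by a separate argument: the subset $\cO_{s,t}(\twoU)^+_{=}$ of positive chains containing a \semiparallel pair carries a free involution (toggle $w_i$ in or out of $W_j^\shortparallel$ for all $j$ below the first \semiparallel index), hence has even cardinality. Nothing in your proposal plays this role.

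On your point (ii), the first alternative fails: positivity does \emph{not} pin down the interleaving, so the correspondence is genuinely only a mod-$2$ identity, and the cancellation is more structured than "transpositions of adjacent block-$1$/block-$2$ steps plus Lemma \ref{lemma:degree}". The paper builds an auxiliary $(r-1)$-colored graph $\Theta_{r_1,r_2}$ whose vertices record, for each step of a chain, which block it lies in and whether it is \semipositive or \seminegative, maps $\Gamma(s,t)_{\neq}$ to it color-preservingly, and then constructs explicit involutions on the set $\Theta^+_{r_1,r_2}$ of admissible patterns having a \emph{unique} fixed point; which vertex is fixed depends on the parity of $r_1$, and the fiber over that vertex is the product $\cO_{s_1,t_1}(\twoU_{\leq n_1})^+\times \cO_{s_2,t_2}(\twoU_{>n_1})^{\pm}$. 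This is precisely where the $r_1$ in the exponent $q_1+n_1+r_1+1$ comes from; your sketch asserts that exponent will "come out" but supplies no mechanism producing it. So the proposal identifies the right skeleton but leaves the two decisive combinatorial steps either unproved or settled the wrong way.
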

So, in general, it is not true that if $\alpha,\alpha'\in C^*(X_\bullet;\bF_2)$ and $\beta,\beta'\in C^*(Y_\bullet;\bF_2)$, then
\[\alpha\otimes\beta\smile_i \alpha'\otimes\beta' = \sum_{j=0}^i (\alpha\smile_j\alpha')\otimes(\beta\smile_{i-j}\beta'),\]
so the symmetric comultiplication of Theorem \ref{thm:main} does not satisfy a Cartan formula. But if we forget the twist $T^{\m+q_1+r_1+1}$, Proposition \ref{prop:cartan} becomes
\begin{equation}\label{eq:cartan1}
\bnabla^{\binom{n}{q}}|_{X_{n_1}\times Y_{n_2}}\igualdos \sum_{q_1+q_2= q}{\nabla^{\binom{n_1}{q_1}}\times \nabla^{\binom{n_2}{q_2}}},\end{equation}
and since the twist is irrelevant if $\alpha=\alpha'$ and $\beta=\beta'$, we do have a Cartan formula for the squaring operations:
\[\alpha\otimes\beta\smile_i \alpha\otimes\beta = \sum_{j=0}^i (\alpha\smile_j\alpha)\otimes(\beta\smile_{i-j}\beta).\]

\begin{corollary}\label{cor:cartan} If $\alpha\otimes\beta$ is a cohomology class in $H^*((X*Y)_\bullet;\bF_2)$ via the isomorphism \eqref{eq:cohjoin}, then $$\Sq^i(\alpha\otimes\beta) = \sum_{j=0}^i \Sq^j(\alpha)\otimes \Sq^{i-j}(\beta).$$
\end{corollary}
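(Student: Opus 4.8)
The plan is to read off the statement from Proposition \ref{prop:cartan} (in its twist-free incarnation \eqref{eq:cartan1}) together with the definition \eqref{eq:squares} of the Steenrod squares and the K\"unneth identification \eqref{eq:cohjoin}.

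First I would choose cocycle representatives, still denoted $\alpha\in C^{n_1}(X_\bullet;\bF_2)$ and $\beta\in C^{n_2}(Y_\bullet;\bF_2)$, of the two given classes, and let $\gamma$ be the cocycle of $C^{n_1+n_2+1}((X*Y)_\bullet;\bF_2)$ corresponding to $\alpha\otimes\beta$ under the chain isomorphism \eqref{eq:chainjoin}; set $n=n_1+n_2+1=\deg\gamma$. By \eqref{eq:squares} we have $\Sq^{i}(\alpha\otimes\beta)=[\gamma\smile_{n-i}\gamma]$, so the whole argument reduces to decomposing the cochain $\gamma\smile_{n-i}\gamma$.

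To decompose it I would restrict the comultiplication $\bnabla$ of $(X*Y)_\bullet$ to the summand $X_{n_1}\times Y_{n_2}$ of $(X*Y)_n$. By Proposition \ref{prop:cartan} the span $\bnabla^{\binom{n}{q}}|_{X_{n_1}\times Y_{n_2}}$ is $\bF_2$-equivalent to the stated sum of products of face spans of $X_\bullet$ and $Y_\bullet$, twisted by $T^{q_1+n_1+r_1+1}$ in the $Y$-direction; because the squaring operation feeds the same cochain into both tensor slots and $T(\beta\otimes\beta)=\beta\otimes\beta$, this twist is invisible and \eqref{eq:cartan1} applies verbatim. Identifying, for each $q_1$, the piece $\nabla^{\binom{n_1}{q_1}}\circ\Delta_{n_1}$ with the restriction of $\nabla_{n_1-q_1}$ to $C_{n_1}(X_\bullet)$ (and likewise for $Y_\bullet$), I would dualise to obtain
\[
\gamma\smile_{n-i}\gamma=\sum(\alpha\smile_{k_1}\alpha)\otimes(\beta\smile_{k_2}\beta),
\]
the sum being over all $k_1\leq n_1$ and $k_2\leq n_2$ with $k_1+k_2=(n-i)-1=n_1+n_2-i$; the inequalities $k_1\leq n_1$, $k_2\leq n_2$ are exactly the non-negativity of the cross indices $q_1=n_1-k_1$, $q_2=n_2-k_2$, and combined with $k_1+k_2=n_1+n_2-i$ they force $k_1\geq n_1-i$ and $k_2\geq n_2-i$ as well.

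Finally I would pass to cohomology. By \eqref{eq:squares} applied to $X_\bullet$ and to $Y_\bullet$, each summand becomes $\Sq^{n_1-k_1}(\alpha)\otimes\Sq^{n_2-k_2}(\beta)$; writing $j:=n_1-k_1$, the constraints on $k_1,k_2$ say precisely that $j$ runs over $\{0,\dots,i\}$ and that $(n_1-k_1)+(n_2-k_2)=i$, so
\[
\Sq^{i}(\alpha\otimes\beta)=\sum_{j=0}^{i}\Sq^{j}(\alpha)\otimes\Sq^{i-j}(\beta),
\]
which is the assertion. The substantive input, the compatibility of the three comultiplications up to the harmless twist, is already contained in Proposition \ref{prop:cartan}; I expect the only genuine, albeit minor, obstacle to be the index bookkeeping through the degree shift in \eqref{eq:chainjoin}, and in particular the check that the summation range is exactly $\{0,\dots,i\}$ — this is where the non-negativity of $q_1$ and $q_2$ is used, and it is what makes the formula come out with no spurious Steenrod squares of negative or excessive degree.
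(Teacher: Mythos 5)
Your proposal is correct and is essentially the paper's own argument: the corollary is obtained by applying Proposition \ref{prop:cartan}, noting that the twist $T^{q_1+n_1+r_1+1}$ is irrelevant when the same cochain occupies both tensor slots (this is \eqref{eq:cartan1}), dualising, and passing to cohomology via \eqref{eq:squares}, \eqref{eq:chainjoin} and \eqref{eq:cohjoin}. One notational slip does not affect the argument: since $\gamma\smile_{n-i}\gamma$ is evaluated on chains of degree $n+i$, Proposition \ref{prop:cartan} has to be invoked on the summands $X_{m_1}\times Y_{m_2}$ of $(X*Y)_{n+i}$ rather than on $X_{n_1}\times Y_{n_2}\subset (X*Y)_n$ (so the cross index is $q_1=2(n_1-k_1)$, not $n_1-k_1$), but the proposition holds for arbitrary bidegrees and your final bookkeeping — the range $k_1+k_2=(n-i)-1=n_1+n_2-i$ with $k_1\leq n_1$, $k_2\leq n_2$, i.e.\ $0\leq j\leq i$ after the suspension shift — is exactly right.
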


\subsection{Proof of Proposition \ref{prop:cartan}}

There is a bijection between the indexing sets of both summations of the proposition given by sending $U\in \APower_q(n)$ to 
\begin{align*}
U_1 &= U_{\leq \m} &  q_1 &= |U_1| & r_1 &= |\twoU_1|,
U_2 &= U_{>\m}     &  q_2 &= |U_2| & r_2 &= |\twoU_2|.
\end{align*}
Therefore the proposition will follow if for every $U\in \APower^r_q(n)$ with $|\twoU_{\leq\m}|=r_1$,
\begin{equation}\label{eq:cartan2}
\bpartial^n_{U^-}\wedge \bpartial^n_{U^+} \igualdos \left(\partial^{n_1}_{U^-_{\leq \m}}\wedge \partial^{n_1}_{U^+_{\leq \m}}\right)\times T^{q_1+n_1+r_1+1}\left(\partial^{n_2}_{U^-_{> \m}}\wedge \partial^{n_2}_{U^+_{> \m}}\right),\end{equation}
which, developing each term, is the same as
\begin{eqnarray*} 
\sum_{(s,t)\in \partial^n_{U^-}\times \partial^n_{U^+}}\cO_{s,t}(\twoU)^+ \igualdos
\undersumtwo{(s_1,t_1)\in \partial^{n_1}_{U^-_{\leq \m}}\times \partial^{n_1}_{U^+_{\leq \m}}}{(s_2,t_2)\in \partial^{n_2}_{U^-_{> \m}}\times \partial^{n_2}_{U^+_{> \m}}}{\cO_{s_1,t_1}(\twoU_{\leq \m})^+\times \cO_{s_2,t_2}(\twoU_{> \m})^\pm,} 
\end{eqnarray*}
where the sign $\pm$ is positive if $q_1+n_1+r_1$ is odd and negative otherwise. Therefore, it will be enough to prove that 
 for each $s,t\in \partial^n_{U^-}\times \partial^n_{U^+}$,
\begin{equation}\label{eq:cartan3}|\cO_{s,t}(\twoU)^+|\equiv |\cO_{s_1,t_1}(\twoU_{\leq \m})^+\times \cO_{s_2,t_2}(\twoU_{> \m})^\pm|\mod 2.\end{equation}
From now on we assume that $q_1+n_1$ is odd, so that \eqref{eq:cases} gives
\begin{align*}
U_{\leq \m}^-&= (U^-)_{\leq \m } &  U_{>\m}^- &= (U^-)_{>\m} 
\\
U_{\leq \m}^+&= (U^+)_{\leq \m} & U_{>\m}^+ &= (U^+)_{>\m}.
\end{align*} 
The proof of the other case is formally the same.
\begin{df} Let $X_\bullet$ be an \osob, let $U\in \APower_q(n)$, and let $(s,t)\in \partial^n_{U^-}\times \partial^n_{U^+}$. Say that a pair $\W = (W^\shortparallel,W^\smallwedge)$ of disjoint subsets of $\twoU$ is \emph{$(s,t)$-parallel} if 
\begin{align*}
\lambda^n_{W^\shortparallel}(s) &= \lambda^n_{W^\shortparallel}(t) &
\lambda^n_{W^\shortparallel,W^\smallwedge}(s) &= \lambda^n_{W^\shortparallel,W^\smallwedge}(t).
\end{align*}
Note that being $(s,t)$-parallel is downwards closed: if $\W\prec\W'$ and $\W'$ is $(s,t)$-parallel, then $\W$ is $(s,t)$-parallel too. Note also that if $\lambda^n_{W^\shortparallel}(s) = \lambda^n_{W^\shortparallel}(t)$ and $W^\circ$ is empty, then $\W$ is $(s,t)$-parallel. 
\end{df}

Let $(s,t)\in \partial^n_{U^-}\times \partial^n_{U^+}$ with $s=(s_1,s_2)$ and $t=(t_1,t_2)$. An $(s,t)$-good pair $\W=(W^\shortparallel,W^\smallwedge)$ decomposes as
\begin{align*}
W^\shortparallel &= (W^\shortparallel)_{\leq \m}\cup (W^\shortparallel)_{> \m} & W^\circ &= (W^\circ)_{\leq \m}\cup (W^\circ)_{> \m},
\end{align*}
and we refer to the pairs 
\begin{align*}
\W_{\leq \m} &:= ( (W^\shortparallel)_{\leq \m},(W^\circ)_{\leq \m}) & \W_{> \m} &:= ( (W^\shortparallel)_{> \m},(W^\circ)_{> \m})
\end{align*}
as the \emph{strong} and \emph{weak} parts of the pair $\W$. Since the spans of $(X*Y)_\bullet$ are ordered lexicographically, a pair $\W=(W^\shortparallel,W^\smallwedge)$ is $(s,t)$-positive if either of the following holds:
\begin{equation}\label{1}
\begin{array}{l}
\text{$\W_{\leq \m}$ is $(s_1,t_1)$-positive and $\W_{> \m}$ is either $(s_2,t_2)$-positive,}\\ \text{or $(s_2,t_2)$-negative or $(s_2,t_2)$-parallel.} \\[.15cm]
\text{$\W_{\leq \m}$ is $(s_1,t_1)$-parallel and $\W_{> \m}$ is $(s_2,t_2)$-positive.}
\end{array}
\end{equation}

Let $r=|\twoU|, r_1 = |\twoU_{\leq \m}|, r_2= |\twoU_{>\m}|$, so in particular $r=r_1+r_2$. If $\WW$ is a maximal chain of $(s,t)$-good pairs
$$\W_1\prec\ldots\prec \W_r,$$  then for each $i\leq r$, the set $W_i^\smallwedge\smallsetminus W_{i-1}^\smallwedge$ has a single element (see Lemma \ref{lemma:parallelicity} and comment afterward), which we call $w_i$. Say that the pair $\W_i$ is \emph{strong in $\WW$} if $w_i\leq \m$ and that it is \emph{weak in $\WW$} if $w_i>\m$. Say that the pair $\W_i$ is \emph{\semiparallel, \semipositive} or \emph{\seminegative} if $\W_i$ is strong (weak) and the strong (weak) part of $\W_i$ is parallel, positive or negative.

Let $\cO_{s,t}(\twoU)^+_{=}\subset \cO_{s,t}(\twoU)^+$ be the subset of those maximal chains of $(s,t)$-positive pairs that contain a \semiparallel pair, and let $\cO_{s,t}(\twoU)^+_{\neq}$ be its complement.
\begin{lemma}
The subset $\cO_{s,t}(\twoU)^+_{=}\subset \cO_{s,t}(\twoU)^+$ has even cardinality.
\end{lemma}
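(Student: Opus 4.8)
The plan is to produce a fixed-point-free involution on $\cO_{s,t}(\twoU)^+_=$; its existence immediately forces the cardinality to be even. First I would record the combinatorial content of a maximal chain $\WW=(\W_1\prec\cdots\prec\W_r)$ as the ordering $(w_1,\ldots,w_r)$ of $\twoU$ in which the elements are successively added to the $\smallwedge$-part, together with the initial set $W_1^\shortparallel$: by Lemma \ref{lemma:parallelicity} every such pair of data produces a maximal chain of $(s,t)$-good pairs, all later $\shortparallel$-parts being then forced, and transposing two consecutive insertions $w_j\leftrightarrow w_{j+1}$ again yields a maximal chain of good pairs, since only the intermediate pair $\W_j$ is altered and it remains good by the same lemma.

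Next I would track, for each pair $\W_j$ in a chain belonging to $\cO^+_=$, the ordered pair (state of its strong part, state of its weak part), each entry being $\mathrm{P}$, $\mathrm{N}$ or $\shortparallel$ according as that part is positive, negative or parallel; goodness of $\W_j$ excludes the value $(\shortparallel,\shortparallel)$, positivity of $\W_j$ is equivalent to the state lying in $\{(\mathrm{P},\mathrm{P}),(\mathrm{P},\mathrm{N}),(\mathrm{P},\shortparallel),(\shortparallel,\mathrm{P})\}$, and $\W_j$ is \semiparallel precisely when the coordinate corresponding to the type of $w_j$ (strong if $w_j\leq\m$, weak otherwise) equals $\shortparallel$. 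Taking $i=i(\WW)$ to be the least index with $\W_i$ \semiparallel, the downward-closedness of being parallel shows that the insertion $w_{i-1}$ has the type opposite to that of $w_i$ (otherwise $\W_{i-1}$ would already be \semiparallel), so that there is a canonical adjacent pair of opposite-type insertions to transpose at the distinguished position; the involution $\sigma$ is defined by performing that transposition, with the boundary case $i=1$ requiring a separate but analogous argument. One then checks that $\sigma(\WW)$ again lies in $\cO^+_=$ — every pair stays $(s,t)$-positive and a \semiparallel pair survives, shifted by one slot — that the distinguished position of $\sigma(\WW)$ recovers the same two insertions so that $\sigma^2=\Id$, and that $\sigma(\WW)\neq\WW$ because the two transposed insertions, being of opposite type, are distinct.

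The main obstacle is the verification that the transposition preserves positivity: a naive rule such as ``always swap with the left neighbour at the first \semiparallel pair'' fails, because the local state pattern $(\mathrm{P},\mathrm{N})\to(\mathrm{P},\mathrm{P})\to(\shortparallel,\mathrm{P})$ around the distinguished position is carried to a chain passing through the negative state $(\shortparallel,\mathrm{N})$. So the precise recipe for which neighbour to transpose, and in which direction, must be dictated by the local state pattern so as to avoid these degenerate configurations, and one must confirm that the refined rule is genuinely symmetric. This bookkeeping over the finitely many local state patterns is the substance of the proof; once it is in place, the even cardinality of $\cO_{s,t}(\twoU)^+_=$ follows at once.
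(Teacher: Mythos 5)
Your overall strategy (a fixed-point-free involution on $\cO_{s,t}(\twoU)^+_{=}$) is the same as the paper's, but the involution itself is never actually constructed, and this is a genuine gap rather than routine bookkeeping. Your move is to transpose the insertion $w_i$ at the first \semiparallel pair with an adjacent insertion of the opposite type. The basic difficulty is not only positivity: the naive ``swap with the left neighbour'' rule fails to be an involution at all. After transposing $w_{i-1}$ and $w_i$, the modified pair at position $i-1$ inherits the parallel part of $\W_i$ and becomes the \emph{new} first \semiparallel pair, so applying the same rule again transposes $w_{i-2}$ with $w_i$ instead of undoing the move; the distinguished pair keeps sliding left. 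You acknowledge that a ``refined rule dictated by the local state pattern'' is needed and that one must ``confirm that the refined rule is genuinely symmetric'', but that refined rule is exactly the missing content of the proof, and it is not clear it exists along these lines. Two further points: the claim that \emph{every} choice of initial set $W_1^\shortparallel$ and insertion order yields a maximal chain of $(s,t)$-good pairs is false as stated (goodness is a nontrivial condition on $(s,t)$; only the transposition of an existing good chain is justified, essentially by the argument of Lemma \ref{lemma:gammacolor}); and the boundary case $i=1$, which you defer to ``a separate but analogous argument'', is genuinely problematic for a transposition rule since there is no left neighbour --- what is true, and what you would need to prove, is that $i=1$ cannot occur because $\W_1$ would then fail to be $(s,t)$-positive.

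For comparison, the paper's involution avoids the sliding problem by not permuting the insertions at all: it keeps every $W_j^\smallwedge$ fixed and simply toggles whether the element $w_i$ (inserted at the first \semiparallel pair) belongs to $W_j^\shortparallel$ for all $j<i$, leaving $\W_i$ and everything after it untouched. This is manifestly an involution, it is fixed-point-free because $i\geq 2$, and the new chain stays in $\cO_{s,t}(\twoU)^+_{=}$ with the same distinguished index: since the coordinate of type equal to that of $w_i$ is parallel at stage $i$, it is parallel at all earlier stages whether or not $w_i$ is included in the $\shortparallel$-sets (downward closure of parallelism plus Lemma \ref{lemma:parallelicity}), so the toggle alters only a parallel coordinate and the goodness and positivity of the pairs $\W_j$, $j<i$, are unaffected. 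If you want to salvage your approach you would have to either produce the symmetric transposition rule explicitly or switch to a toggle of this kind; as written, the proposal stops short of a proof.
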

\begin{proof}
If $\WW\in \cO_{s,t}(\twoU)^+_{=}$, let $i\leq r$ be the smallest index such that 
$\W_i$ is \semiparallel. This index cannot be $1$ because in that case $\W_1$ would not be $(s,t)$-positive. On the other hand, if $\W_i$ is strong (alternatively, weak), then $i$ is also the smallest index such that $\W_i$ is strong (alternatively, weak), because the property of being parallel is downwards closed. Observe now that the following are equivalent,
\begin{enumerate}
\item $w_i\in W^\shortparallel_j$ for all $j<i$,
\item $w_i\in W^\shortparallel_j$ for some $j<i$,
\end{enumerate}
and define another maximal chain $\overline{\WW}\in \cO_{s,t}(\twoU)^+_{=}$ as follows:
\begin{align*}
\overline{W}_j^\smallwedge &= W_j^\smallwedge &\text{ for all $j$} 
\\
\overline{W}_j^\shortparallel &= W_j^\shortparallel &\text{ for all $j\geq i$}
\\
\overline{W}_j^\shortparallel &= \begin{cases} 
W_j^\shortparallel\smallsetminus \{w_i\} &\text{if $w_i\in W_{1}^\shortparallel$} \\
W_j^\shortparallel\cup \{w_i\} &\text{if $w_i\notin W_{1}^\shortparallel$}
\end{cases}
&\text{ for all $j< i$}
\end{align*}
This defines a free involution on $\cO_{s,t}(\twoU)^+_{=}$, hence this set has even cardinality.
\end{proof}
\begin{lemma} The cardinality of the subset $\cO_{s,t}(\twoU)^+_{\neq}\subset \cO_{s,t}(\twoU)^+$ has the same parity as the cardinality of $\cO_{s_1,t_1}(\twoU_{\leq \m})^+\times \cO_{s_2,t_2}(\twoU_{> \m})^\pm$, where the sign $\pm$ is positive if $r_1$ is even and negative if $r_1$ is odd.
\end{lemma}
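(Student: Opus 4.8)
The strategy is to compare both sides through the restriction map
\[\rho\colon \cO_{s,t}(\twoU)^+_{\neq}\lra \cO_{s_1,t_1}(\twoU_{\leq \m})^+\times \cO_{s_2,t_2}(\twoU_{> \m})\]
that sends a maximal chain $\WW$ to the pair $(\WW_{\leq \m},\WW_{> \m})$, where $\WW_{\leq \m}$ is obtained from $\WW$ by keeping only those pairs $\W_i$ which are strong in $\WW$ and replacing each by its strong part, and $\WW_{> \m}$ is defined symmetrically using the weak pairs. The first task is to check that $\rho$ is well defined. Using Lemma \ref{lemma:parallelicity} and the fact that the structure maps of the join split as products (Definition \ref{df:join}), one sees that $\WW_{\leq \m}$ and $\WW_{> \m}$ are again maximal chains of $(s_1,t_1)$- respectively $(s_2,t_2)$-good pairs. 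Moreover, since $\WW$ contains no \semiparallel pair, at each strong step of $\WW$ the strong part of $\W_i$ is not parallel; as $\W_i$ is $(s,t)$-positive, equation \eqref{1} then forces that strong part to be $(s_1,t_1)$-positive, so indeed $\WW_{\leq \m}\in \cO_{s_1,t_1}(\twoU_{\leq \m})^+$.

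Next I would describe the fibres of $\rho$. A chain $\WW$ restricting to a fixed pair $(\WW_1,\WW_2)$ is determined by the interleaving of its strong and weak steps --- recorded by a word of length $r$ in two letters --- together with a compatible choice of parallel parts; the basic observation, read off from the first line of \eqref{1}, is that once the strong part of $\W_i$ is positive the whole pair $\W_i$ is automatically $(s,t)$-positive no matter what its weak part is. Hence the conditions singling out a valid interleaving are mild and localized at the beginning of the chain, where the behaviour of the empty strong part depends on whether $s_1=t_1$; this gives a uniform description of the fibre.

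The remaining and principal step is to count the fibre modulo $2$. Here the parity of $r_1$ enters through the identity $n+q=(n_1+q_1)+(n_2+q_2)+1$: combined with the standing hypothesis that $q_1+n_1$ is odd, it shifts the notion of positivity on the weak factor relative to the one on $\twoU$. Tracking this shift through the interleaving, one cancels in pairs --- by a free involution on words, of the same kind as the involution used for $\cO_{s,t}(\twoU)^+_{=}$ in the preceding lemma --- those interleavings whose weak factor does not have the prescribed sign; what survives is in bijection with $\cO_{s_1,t_1}(\twoU_{\leq \m})^+\times \cO_{s_2,t_2}(\twoU_{> \m})^\pm$ for the sign $\pm$ of the statement (positive if $r_1$ is even, negative if $r_1$ is odd), while the $\cO_{s_2,t_2}(\twoU_{> \m})$-chains that are discarded contribute an even number by Lemma \ref{lemma:degree} applied to the colored graph $\Gamma(s_2,t_2)$. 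Summing over $(\WW_1,\WW_2)$ then yields $|\cO_{s,t}(\twoU)^+_{\neq}|\equiv |\cO_{s_1,t_1}(\twoU_{\leq \m})^+\times \cO_{s_2,t_2}(\twoU_{> \m})^\pm|\pmod 2$.

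I expect the hard part to be exactly this last count: disentangling the competing sign conventions and verifying that precisely the interleavings with the ``wrong'' relative sign pair up. This seems to require a short case analysis according to whether $s_1=t_1$ and whether $s_2=t_2$ --- these govern whether the empty pair on each factor counts as parallel --- together with the colored-graph parity lemma on the weak factor.
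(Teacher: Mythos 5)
Your route is genuinely different from the paper's: the paper never fibres over the pair of restricted chains, but instead records only the type data of a chain (which steps are strong/weak and their semi-signs) via a color-preserving map $f\colon\Gamma(s,t)_{\neq}\to\Theta_{r_1,r_2}$ into an auxiliary colored graph, identifies $\cO_{s,t}(\twoU)^+_{\neq}$ with $V(f^{-1}(\Theta^+_{r_1,r_2}))$ using \eqref{1}, and then collapses $\Theta^+_{r_1,r_2}$ by involutions inside connected components (where Lemma \ref{lemma:degree} applies) down to a single vertex whose preimage is the product $\cO_{s_1,t_1}(\twoU_{\leq\m})^+\times\cO_{s_2,t_2}(\twoU_{>\m})^\pm$. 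Your restriction map $\rho$ is well defined for essentially the reasons you give, and the fibre of $\rho$ over $(\WW_1,\WW_2)$ is indeed parametrised by interleaving words $\varphi$ whose leading weak letters all hit positive pairs of $\WW_2$ (by \eqref{1}; the relevant condition at the start is positivity of those leading weak pairs, not whether $s_1=t_1$). But two things you assert are not established: (i) that the chain is \emph{uniquely} reconstructible from $(\WW_1,\WW_2,\varphi)$ --- the only freedom is the strong (resp.\ weak) parallel part before the first strong (resp.\ weak) step, and one must argue it is forced because adjoining the extra element would violate the parallel condition, using that the first pair of the restricted chain is good; and (ii) that every admissible triple assembles into a good, all-positive, non-\semiparallel chain. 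Both are true but need the product structure of the join and Lemma \ref{lemma:parallelicity}, and your ``compatible choice of parallel parts'' leaves exactly this open.

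The genuine gap is the final count, which you defer and for which the mechanism you sketch does not work as stated. Once the fibre description is in place, one gets
\[
|\cO_{s,t}(\twoU)^+_{\neq}| \;=\; |\cO_{s_1,t_1}(\twoU_{\leq\m})^+|\cdot\sum_{j=0}^{r_2}\binom{r-1-j}{r_1-1}\,P_j,
\]
where $P_j$ is the number of weak chains whose first $j$ pairs are positive (the binomial counts interleavings with exactly $j$ leading weak letters). This is a \emph{weighted} sum over weak chains, so there is no free involution on words alone that cancels ``interleavings whose weak factor has the wrong sign'': whether a word is admissible depends on $\WW_2$, and the coefficients $\binom{r-1-j}{r_1-1}$ are frequently odd for $j<r_2-1$ (e.g.\ $r_1=2$, $r_2=3$, $j=1$ gives $\binom{3}{1}$), so those terms must be killed by showing $P_j$ is even, which is exactly where Lemma \ref{lemma:degree} for the map $\Gamma(s_2,t_2)\to I(r_2-1)$ enters ($P_j$ is a sum of $2^{r_2-1-j}$ fibres of equal parity); the surviving terms $j=r_2-1,r_2$ contribute $r_1P_{r_2-1}+P_{r_2}$, which one then checks equals $|\cO_{s_2,t_2}(\twoU_{>\m})^+|$ or $|\cO_{s_2,t_2}(\twoU_{>\m})^-|$ mod $2$ according to the parity of $r_1$ and the sign of $(s_2,t_2)$, with the degenerate cases $r_1=0$, $r_2\le 1$, $s_1=t_1$ treated separately. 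So your strategy can be completed, but the decisive parity computation is absent and the cancellation you describe (involution on words, discarded weak chains even) is not the argument that actually closes it.
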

\begin{proof}
Let us construct first an $(r-1)$-colored graph $\Theta_{r_1,r_2}$ (see Definition \ref{df:coloredgraph}). Its vertices are pairs of functions $(\varphi,\nu)$, where
\begin{align*}
\varphi\colon \{1,\ldots,r\}&\lra \{\weak,\strong\} &
\nu\colon \{1,\ldots,r\}&\lra \{0,1\}
\end{align*}
and $|\varphi^{-1}(\strong)|=r_1$ and $|\varphi^{-1}(\weak)|=r_2$. The $i$th incident edge to the vertex $(\varphi,\nu)$ connects $(\varphi,\nu)$ with $(\bar{\varphi},\bar{\nu})$, where
\begin{enumerate}
\item If $\varphi(i)=\varphi(i+1)$, then 
\begin{align*}
\bar{\varphi}(j)&=\varphi(j)  \text{ for all $j$},\\ \bar{\nu}(i)&\neq \nu(i),  \\ \bar{\nu}(j)&=\nu(j) \text{ if $j\neq i$}.
\end{align*}
\item If $\varphi(i)\neq \varphi(i+1)$, then  
\begin{align*}
\bar{\varphi}(i) &= \varphi(i+1),& \bar{\varphi}(i+1)&= \varphi(i),& \bar{\varphi}(j)&=\varphi(j)\text{ if $j\neq i,i+1$},\\
\bar{\nu}(i)&=\nu(i+1),& \bar{\nu}(i+1)&=\nu(i), & \bar{\nu}(j)&=\nu(j) \text{ if $j\neq i,i+1$}.
\end{align*}
\end{enumerate}
Observe that $\Theta_{r_1,r_2}$ has $4$ connected components: if we write 
\begin{align*}
\maxstrong{\varphi} &= \max \{i\mid \varphi(i)=\strong\}, & \maxweak{\varphi} &= \max\{i\mid \varphi(i)=\weak\},\\
\minstrong{\varphi} &= \min \{i\mid \varphi(i)=\strong\}, & \minweak{\varphi} &= \min\{i\mid \varphi(i)=\weak\},
\end{align*}
then the vertices of $(\varphi,\nu)$ and $(\bar{\varphi},\bar{\nu})$ are in the same component if and only if 
\begin{align*}
\nu(\maxstrong{\varphi}) &= \bar{\nu}(\maxstrong{\bar{\varphi}}) &
\nu(\maxweak{\varphi}) &= \bar{\nu}(\maxweak{\bar{\varphi}}).
\end{align*}
Recall the definition of the graph $\Gamma(s,t)$ from Section \ref{ssection:application}. The vertices of $\Gamma(s,t)$ that are in $\cO_{s,t}(\twoU)_{\neq}$ span a connected component of $\Gamma(s,t)$ that we call $\Gamma(s,t)_{\neq}$. There is a color-preserving graph map
\[f\colon \Gamma(s,t)_{\neq}\lra \Theta_{r_1,r_2}\]
that sends a non-\semiparallel $(s,t)$-good maximal chain $\WW$ to the vertex $(\varphi,\nu)$ with 
\begin{align*}
\varphi(i)&= \begin{cases}
\strong & \text{if $\W_i$ is strong} \\
\weak & \text{if $\W_i$ is weak}
\end{cases}
&
\nu(i)&= \begin{cases}
0 & \text{if $\W_i$ is \semipositive} \\
1 & \text{if $\W_i$ is \seminegative}
\end{cases}
\end{align*}
Let $\Theta^+_{r_1,r_2}$ be the set of those vertices of $\Theta_{r_1,r_2}$ such that
\begin{enumerate}
\item $\nu(j)=0$ if $\varphi(j)=\strong$, 
\item $\nu(j)=0$ for all $j<\minstrong{\varphi}$.
\end{enumerate} 
Then, using \eqref{1}, we have that
\begin{equation}\label{eq:sol1}
V\left(f^{-1}(\Theta^+_{r_1,r_2})\right) = \cO_{s,t}(\twoU)^+.
\end{equation}

If we let 
\[\Theta^{+,\ell}_{r_1,r_2} = \{(\varphi,\nu)\in \Theta^+_{r_1,r_2}\mid \minstrong{\varphi}=\ell\},\]
and we let 
\begin{align*}
\almaxweak{\varphi} &= \max_i\{\varphi(i)=\weak,i\neq \maxweak{\varphi}\}
\end{align*}
then $\almaxweak{\varphi} > \minstrong{\varphi}$ if and only if $\minstrong{\varphi}<r_2$. If $\ell>r_2+1$, then $\Theta^{+,\ell}_{r_1,r_2}$ is empty. When $\ell<r_2$, construct a free involution of $\Theta^{+,\ell}_{r_1,r_2}$ by sending a vertex $(\varphi,\nu)$ to the vertex $(\bar{\varphi},\bar{\nu})$ with
\begin{align*}
\bar{\varphi}(j)&= \varphi(j) \text{ for all $j$}
\\
\bar{\nu}(j) &\neq \nu(j) \text{ if $j= \almaxweak{\varphi}$}
\\
\bar{\nu}(j) &= \nu(j)\text{ if $j\neq \almaxweak{\varphi}$.}
\end{align*}
When $\ell=r_2$, the set $\Theta^{+,r_2}_{r_1,r_2}$ has $2r_1$ elements, classified by the pair $(\maxweak{\varphi}-r_2,\nu(\maxweak{\varphi}))\in \{1,\ldots,r_1\}\times \{0,1\}$, and given by
\begin{align*}
\varphi(i)&=\begin{cases}
\weak &\text{if $i<r_2$ or $i=\maxweak{\varphi}$} \\
\strong &\text{if $i\geq r_2$ and $i\neq \maxweak{\varphi}$}
\end{cases} \\
\nu(i) &= \begin{cases} 
0 &\text{if $i\neq \maxweak{\varphi}$} \\
0\text{ or }1 & \text{if $i=\maxweak{\varphi}$}.
\end{cases}
\end{align*}
On the other hand, $\Theta^{+,r_2+1}_{r_1,r_2}$ has a single element given by 
\begin{align*}
\varphi(i)&=\begin{cases}
\weak &\text{if $i\leq r_2$} \\
\strong &\text{if $i> r_2$}
\end{cases} &
\nu(i) &= 0 \text{ for all $i$.}
\end{align*}
Define an involution on $\Theta^{+,r_2}_{r_1,r_2}\cup \Theta^{+,r_2+1}_{r_1,r_2}$ as follows: if $r_1$ is even, then send an element $(\varphi,\nu)$ of $\Theta^{+,r_2}_{r_1,r_2}$ classified by $(\maxweak{\varphi}-r_2,\nu(\maxweak{\varphi}))$ to the element $(\bar{\varphi},\bar{\nu})$ of $\Theta^{+,r_2}_{r_1,r_2}$ with 
\begin{align*}
\maxweak{\bar{\varphi}} &= 2\cdot \left\lceil\frac{n_\varphi}{2}\right\rceil
&
\nu(\maxweak{\bar{\varphi}}) &= \nu(n_{\varphi})
\end{align*}
and send the unique element of $\Theta^{+,r_2+1}_{r_1,r_2}$ to itself. Let us denote this latter element by $a$. If $r_1$ is odd, then send an element $(\varphi,\nu)$ of $\Theta^{+,r_2}_{r_1,r_2}$ classified by $(\maxweak{\varphi}-r_2,\nu(\maxweak{\varphi}))$ with $\maxweak{\varphi}>r_2+1$ to the element $(\bar{\varphi},\bar{\nu})$ of $\Theta^{+,r_2}_{r_1,r_2}$ with 
\begin{align*}
\maxweak{\bar{\varphi}} &= 2\cdot \left\lfloor \frac{n_\varphi}{2}\right\rfloor
&
\nu(\maxweak{\bar{\varphi}}) &= \nu(n_{\varphi})
\end{align*}
and send the element classified by $(r_2+1,0)$ to the unique element of $\Theta^{+,r_2+1}_{r_1,r_2}$, and send the element classified by $(r_2+1,1)$ to itself. Let us denote this latter element by $b$.

Altogether, we have defined an involution on $\Theta_{r_1,r_2}^+$ with a unique fixed point, and such that $(\varphi,\nu)$ and $(\bar{\varphi},\bar{\nu})$ lie in the same connected component of $\Theta_{r_1,r_2}$. 

Hence, by Lemma \ref{lemma:degree}, we have that if $(\varphi,\nu)\in \Theta_{r_1,r_2}^+$, then
\[\left|V\left(f^{-1}(\varphi,\nu)\right)\right|\equiv \left|V\left(f^{-1}(\bar{\varphi},\bar{\nu})\right)\right|\mod 2,\]
therefore 
\begin{equation}\label{eq:sol2}
\left|V\left(f^{-1}(\Theta^{+}_{r_1,r_2})\right)\right|\equiv \begin{cases}
\left|V\left(f^{-1}(a)\right)\right|\mod 2 \text{  if $r_1$ is even}\\
\left|V\left(f^{-1}(b)\right)\right|\mod 2  \text{  if $r_1$ is odd.}\end{cases}
\end{equation}
Moreover, the vertex $b$ lies in the same connected component of $\Theta_{r_1,r_2}$ as the vertex $c$ defined as
\begin{align*}
\varphi(i)&=\begin{cases}
\weak &\text{if $i\leq r_2$} \\
\strong &\text{if $i> r_2$}
\end{cases} &
\nu(i) &= 1 \text{ for all $i$}
\end{align*}
and therefore 
\begin{equation}\label{eq:sol2b}
\left|f^{-1}(b)\right|\equiv\left|f^{-1}(c)\right|\mod 2.
\end{equation} Finally, there are bijections
\begin{equation}\label{eq:sol3}
\begin{aligned}
f^{-1}(a) &\lra   \cO_{s_1,t_1}(\twoU_{\leq \m})^+\times \cO_{s_2,t_2}(\twoU_{> \m})^+\\
f^{-1}(c) &\lra   \cO_{s_1,t_1}(\twoU_{\leq \m})^+\times \cO_{s_2,t_2}(\twoU_{> \m})^-
\end{aligned}
\end{equation}
given by sending a maximal chain $\W_1\prec\ldots\prec\W_r$ to the pair of maximal chains
\begin{align*}
\W_{r_2+1}&\prec\ldots\prec\W_{r} & \W_{1}&\prec\ldots\prec\W_{r_2}.
\end{align*}
The statement now follows from \eqref{eq:sol1}, \eqref{eq:sol2}, \eqref{eq:sol2b} and \eqref{eq:sol3}.
\end{proof}

\section{Naturality}\label{section:functoriality}

The \cupi{i}products constructed are not natural under maps of \sobs. This is no surprise, because the realisation functor $|\cdot|_{\bF_2}\colon \cB^{\Deltainjaug^{\op}}\to \Ch(\bF_2)$ is not faithful (compare to the case of semi-simplicial sets, where the functor is faithful). The following is a simple situation in which naturality fails:
\begin{example}
Let $X_\bullet$ be given by $X_{-1}=\{a\}, X_0=\{b\}$ and $X_i=\emptyset$ otherwise, and let $\partial^0_0$ be the span $\{b\}\la Q\to \{a\}$ with $Q=\{q_1,q_2\}$ and $q_1<q_2$. Let $Y_\bullet$ be given by $Y_{-1}=\{c\}, Y_0=\{d\}$ and $Y_i=\emptyset$ otherwise, and let $\bpartial^0_0$ be the span $\{d\}\la P \to \{c\}$ with $P=\{p_1,p_2,p_3\}$ and $p_1<p_2<p_3$. We claim that $\smile_{-2}$ is not natural for any non trivial map from $X_\bullet$ to $Y_\bullet$.

Any such map $f\colon X_\bullet\to Y_\bullet$ will consist on spans
\begin{align*}
\{b\}&\la S\to \{d\} & \{a\}&\la T\to \{c\}
\end{align*}
and some bijection between $S\times P$ and $Q\times T$. If $f$ is non-trivial (i.e., $S$ and $T$ are non-empty), then $S$ has cardinality $2$ and $T$ has cardinality $3$. 
To compute $\smile_{-2}$ in $X_\bullet$ and $Y_\bullet$ we start with their duals:
\begin{align*}
\nabla_{-2}(b) &= \btoab{\bF_2}{\partial^0_0\wedge\partial^0_0\circ \Delta}(b) &
\nabla_{-2}(d) &= \btoab{\bF_2}{\bpartial^0_0\wedge\bpartial^0_0\circ\Delta}(d) 
\end{align*}
and as the spans $\partial_0^0\wedge\partial_0^0$ and $\bpartial_0^0\wedge\bpartial_0^0$ are
\begin{align*}
\{b\}&\longleftarrow \{(q_1,q_2)\}\lra \{a\} &
\{d\}&\longleftarrow \{(p_1,p_2),(p_1,p_3),(p_2,p_3)\}\lra \{c\}
\end{align*}
we have that
\begin{align*}
\nabla_{-2}(b) &= |\{(q_1,q_2)\}|\cdot a = 1\cdot a = a \\
 \nabla_{-2}(d) &= |\{(p_1,p_2),(p_1,p_3),(p_2,p_3)\}|\cdot c = 3\cdot c = c
\end{align*}
and therefore, writing $a^*,b^*,c^*,d^*$ for the duals of $a,b,c,d$, we have
\begin{align*}
a^*\smile_{-2} a^* &= b^* & c^*\smile_{-2} c^* &=  d
\end{align*}
and hence 
\begin{align*}
f^*(c^*\smile_{-2} c^*) &= f^*(d^*)= 2b^*=0\\
f^*(c^*)\smile_{-2}f^*(c^*)&=a^*\smile_{-2}a^* = b^*.
\end{align*}
\end{example}

\subsection{Naturality of the \cupi{i}products}
Let $X_\bullet,Y_\bullet$ be a pair of \sobs and let $f\colon X_\bullet\to Y_\bullet$ be a map as in Section \ref{ssection:sobmaps}, so the face maps in $X_\bullet$ will be denoted $\partial^n_U$ and the face maps in $Y_\bullet$ will be denoted $\bpartial^n_U$.
\begin{df}\label{def:freemap}  The map $f$ is \emph{free} if for each $n\geq -1$, $f_n\colon X_n\to Y_n$ is a free span.
\end{df}

Now, if $X\overset{\source}{\lla} Q\overset{\target}{\lra} Y$ is a locally finite span $Q$ and $x\in X$ and $y\in Y$, define the \emph{restriction set $Q|_{x,y}$} as $\source^{-1}(x)\cap \target^{-1}(y)$. In order to check whether two spans $Q,Q'$ from $X$ to $Y$ are equivalent, it is enough to check that for each $x\in X$ and each $y\in Y$, the restriction sets $Q_{x,y}$ and $Q'_{x,y}$ are in bijection.

Let us now endow both $X_\bullet$ and $Y_\bullet$ with an order, and endow the spans $ f_{n-q}\circ\partial^n_U$ and $\bpartial^n_U\circ f_n $ with the partial order induced by the projection maps
\begin{align*}
f_{n-q}\circ\partial^n_U&\lra \partial^n_U &
\bpartial^n_U\circ f_n&\lra \bpartial^n_U.
\end{align*}
If $x\in X_n$ and $y\in Y_{n-q}$, then this partial order restricts to a total order on the restriction sets
\begin{align}\label{eq:alex}
&f_{n-q}\circ\partial^n_U|_{x,y} & &\bpartial^n_U\circ f_n|_{x,y}.
\end{align}
\begin{df} 
A free map from $X_\bullet$ to $Y_\bullet$ is \emph{order-preserving} if for each $n\geq -1$ and for each $U\in \Power_q(n)$, the $2$-morphism $f^n_U\colon f_{n-q}\circ\partial^n_U\to\bpartial^n_U\circ f_n $ is the unique fibrewise bijection that restricts to an order-preserving bijection on each restriction set of \eqref{eq:alex}. 
\end{df}
\begin{proposition}\label{prop:naturality-cup} The \cupi{i}products are natural with respect to order-preserving free maps.
\end{proposition}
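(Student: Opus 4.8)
The plan is to prove the equivalent statement at chain level: for a free order‑preserving map $f\colon X_\bullet\to Y_\bullet$ the chain map $C_*(f;\bF_2)$ intertwines the stable symmetric comultiplications, that is $(C_*(f)\otimes C_*(f))\circ\nabla_i=\nabla_i\circ C_*(f)$ for every $i$; transposing this identity gives $f^*(\alpha\smile_i\beta)=f^*(\alpha)\smile_i f^*(\beta)$. Since $\btoab{\bF_2}{-}$ is symmetric monoidal and carries composition to composition, and composition of spans distributes over finite coproducts, it suffices to prove, for each $n$, each $q\geq 0$ and each $U\in\APower_q(n)$, the equivalence of spans
\[
(f_{n-|U^-|}\times f_{n-|U^+|})\circ(\partial_{U^-}\wedge\partial_{U^+})\circ\Delta^X_n\ \igual\ (\bpartial_{U^-}\wedge\bpartial_{U^+})\circ\Delta^Y_n\circ f_n,
\]
where, following Section~\ref{ssection:sobmaps}, unbarred symbols refer to $X_\bullet$, barred symbols to $Y_\bullet$, and $\Delta^X_n,\Delta^Y_n$ are the diagonal spans. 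We shall in fact exhibit bijections of the underlying sets, so the equivalence holds already with $\bZ$ coefficients.

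Fix $x\in X_n$. An element of the left span lying over $x$ is a tuple $(s,t,\bW,\phi_-,\phi_+)$ with $s\in\partial_{U^-}$, $t\in\partial_{U^+}$, $\source(s)=\source(t)=x$, $\bW\in\cO^X_{s,t}(\twoU)^+$, and $\phi_-\in F_{n-|U^-|}$, $\phi_+\in F_{n-|U^+|}$ with $\source(\phi_-)=\target(s)$, $\source(\phi_+)=\target(t)$; its target is $(\target(\phi_-),\target(\phi_+))$. Because $f$ is free, $\phi_-$ (resp.\ $\phi_+$) is uniquely determined by $s$ (resp.\ $t$), and exists exactly when $\target(s)$ (resp.\ $\target(t)$) lies in the image of $F_{n-|U^-|}$ (resp.\ $F_{n-|U^+|}$). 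Feeding $(s,\phi_-)$ and $(t,\phi_+)$ through the structural $2$‑morphisms $f^n_{U^-}$ and $f^n_{U^+}$, and invoking freeness once more (injectivity of the source map of $f_n$), produces a single $\psi\in F_n$ with $\source(\psi)=x$ together with $s'\in\bpartial_{U^-}$, $t'\in\bpartial_{U^+}$ such that $\source(s')=\source(t')=\target(\psi)=:y$, $\target(s')=\target(\phi_-)$ and $\target(t')=\target(\phi_+)$. This sets up mutually inverse bijections $s\leftrightarrow s'$ and $t\leftrightarrow t'$ between $\{s\in\partial_{U^-}\mid\source(s)=x,\ \target(s)\in\operatorname{im} F_{n-|U^-|}\}$ and $\{s'\in\bpartial_{U^-}\mid\source(s')=y\}$ (and likewise for $t$), compatible with the target maps. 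An element of the right span over $x$ is a tuple $(\psi,s',t',\bW')$ with $\source(\psi)=x$; so it only remains to construct, for each matched pair $(s,t)\leftrightarrow(s',t')$, a bijection $\cO^X_{s,t}(\twoU)^+\cong\cO^Y_{s',t'}(\twoU)^+$.

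This is the heart of the argument. Since $\twoU$ is the same combinatorial object on both sides, a maximal chain of pairs of disjoint subsets of $\twoU$ makes sense for $X_\bullet$ and for $Y_\bullet$ alike, and the claim amounts to: such a chain is $(s,t)$‑good (resp.\ $(s,t)$‑positive) if and only if it is $(s',t')$‑good (resp.\ $(s',t')$‑positive). For goodness, the coherence diagrams \eqref{eq:sobmaps_lambda1} and \eqref{eq:sobmaps_lambda2} — which express that $f$ is a map of \sobs — show that the correspondence above intertwines the operations $\lambda_{W^\shortparallel}$ and $\lambda_{W^\shortparallel,W^\smallwedge}$ with the operations $\bar\lambda_{W^\shortparallel}$ and $\bar\lambda_{W^\shortparallel,W^\smallwedge}$ on $Y_\bullet$, and that the ``lies in the image of $f$'' conditions propagate correctly through these operations; since the resulting correspondences on the face sets $\partial_{W^\shortparallel}$ and $\partial_{\psi_{W^\shortparallel}(W^\smallwedge)}$ are honest bijections on the fibres over $x$, resp.\ $y$, carrying $\lambda_{W^\shortparallel}(s),\lambda_{W^\shortparallel}(t)$ to $\bar\lambda_{W^\shortparallel}(s'),\bar\lambda_{W^\shortparallel}(t')$, they preserve and reflect equality, whence $\lambda_{W^\shortparallel}(s)=\lambda_{W^\shortparallel}(t)\iff\bar\lambda_{W^\shortparallel}(s')=\bar\lambda_{W^\shortparallel}(t')$ and, likewise, $\lambda_{W^\shortparallel,W^\smallwedge}(s)\neq\lambda_{W^\shortparallel,W^\smallwedge}(t)\iff\bar\lambda_{W^\shortparallel,W^\smallwedge}(s')\neq\bar\lambda_{W^\shortparallel,W^\smallwedge}(t')$. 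For positivity, the parity of $n+|U^-|+|U^+|$ is the same on both sides, so it is enough that these correspondences be order‑preserving — which is exactly the order‑preserving hypothesis on $f$: the order on each composite span $f_{m-|V|}\circ\partial^m_V$ and $\bpartial^m_V\circ f_m$ is induced from the projection to $\partial^m_V$, resp.\ $\bpartial^m_V$, and $f^m_V$ restricts to an order‑preserving bijection on restriction sets. Assembling these bijections over all $x\in X_n$ gives the required span equivalence, and Proposition~\ref{prop:naturality-cup} follows.

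The main obstacle is the bookkeeping in the last step: one has to fix, once and for all, the family of correspondences $\partial_V\leftrightarrow\bpartial_V$ (for generalised face indices $V$ at all relevant levels $m\le n$) so that they are simultaneously compatible with every $\lambda$‑operation, via \eqref{eq:sobmaps_lambda1} and \eqref{eq:sobmaps_lambda2}, and order‑preserving, and one has to check that the ``image of $f$'' constraints behave well under iterated $\lambda$'s. Conceptually, freeness is what collapses the fibre products to genuine subsets of face sets — thereby avoiding the multiplicity mismatch illustrated in the example above — while order‑preservation is precisely what makes the notion ``positive'' transportable across $f$; both hypotheses are used in an essential way.
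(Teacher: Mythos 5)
Your proposal is correct and follows essentially the same route as the paper's proof: you reduce naturality to the termwise equivalence of spans $(f\times f)\circ(\partial_{U^-}\wedge\partial_{U^+})\circ\Delta \igual (\bpartial_{U^-}\wedge\bpartial_{U^+})\circ\bar\Delta\circ f_n$, use freeness to identify the restriction sets of the two composites via the structural $2$-morphisms, and then transport goodness through the diagrams \eqref{eq:sobmaps_lambda1} and \eqref{eq:sobmaps_lambda2} and positivity through the order-preserving hypothesis. The "bookkeeping" you flag at the end is exactly what those two diagrams resolve (they also show the image-of-$f$ constraints propagate under the $\lambda$'s), so no genuine gap remains.
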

\begin{proof}
Let $f\colon X_\bullet\to Y_\bullet$ be such order-preserving free map and let $\alpha,\beta$ be a $p$-cochain and a $q$-cochain in $Y_\bullet$, and let $\sigma$ be a $(p+q-i)$ simplex in $X_\bullet$. Then
\begin{align*}
(f^*(\alpha\smile_i\beta))(\sigma) &=
(\alpha\otimes\beta)(\bar{\nabla}_i(f_*(\sigma))) \\
&\overset{*}{=} (\alpha\otimes\beta)\left((f_*\otimes f_*)(\nabla_i(\sigma))\right) \\
&= (f^*\alpha\otimes f^*\beta)\left(\nabla_i(\sigma)\right) \\
&= (f^*\alpha\smile_i f^*\beta)(\sigma),
\end{align*}
where every equality is formal except for $*$, which we prove true now: We claim that if $f$ is free and order-preserving, there is an equivalence of spans
\[\bar{\nabla}^{\binom{n}{q}}\circ \bar{\Delta}\circ f_n\igual(f\times f)\circ {\nabla}^{\binom{n}{q}}\circ\Delta.\]
Note first that if $f$ is a free map, then
\[\bar{\Delta}\circ f_n  = (f_n\times f_n)\circ \Delta,\]
so we have to prove,
\begin{align*}
\bar{\nabla}^{\binom{n}{q}}\circ (f_n\times f_n)\circ \Delta  &= (f\times f)\circ {\nabla}^{\binom{n}{q}}\circ\Delta
\end{align*}
that is, 
\begin{align*}
\sum_{U\in \APower_q(n)}\bpartial_{U^-}\wedge\bpartial_{U^+}\circ (f_n\times f_n)\circ \Delta  &= \sum_{U\in \APower_q(n)}(f\times f)\circ (\partial_{U^-}\wedge \partial_{U^+})\circ \Delta.
\end{align*}
 Hence, it is enough to check that for each $U\in \APower_q(n)$ with $n-|U^-|=m$ and $n-|U^+|=\ell$, each $x\in X_n$ and each $(y,y')\in Y_{m}\times Y_{\ell}$, the following sets are in bijection
\begin{align}\label{eq:f1}
 &\left.\bpartial_{U^-}\wedge\bpartial_{U^+}\circ (f_n\times f_n)\right|_{x,(y,y')} & &\left.(f_m\times f_\ell)\circ (\partial_{U^-}\wedge \partial_{U^+})\right|_{x,(y,y')}.
\end{align}
If $f$ is a free map, $U\in \Power_q(n), x\in X_n$ and $y\in Y_{n-q}$, either the projections
\begin{align}\label{eq:f}
\begin{aligned}
 \bpartial^n_U\circ \left.f_n\right|_{x,y}&\lra\  \left.\bpartial^n_U\right|_{f(x),y}
\\
f_{n-q}\circ \left.\partial^n_U\right|_{x,y}&\lra \coprod_{f(z)=(y)}\left.\partial^n_U\right|_{x,z} 
\end{aligned}
\end{align}
that forget the component of $f$ are bijections, or the domain of each projection is empty. In the second situation both sides of \eqref{eq:f} are empty, so we assume the first situation, in which case there are bijections
\begin{align}\label{eq:f2}
\varphi\colon
\left.\bpartial^n_U\right|_{f(x),y} 
\cong 
\bpartial^n_U\circ \left.f_n\right|_{x,y} 
\cong 
f_{n-q}\circ \left.\partial^n_U\right|_{x,y}
\cong
\coprod_{f(z)=(y)} \left.\partial^n_U\right|_{x,z}.
\end{align}
Under the isomorphisms \eqref{eq:f}, each side of \eqref{eq:f1} becomes isomorphic to 
\begin{align*}&\undercoprodthree{(s,t)\in \bpartial^n_{U^-}\times \bpartial^n_{U^+}}{\source(s)=\source(t)=f(x)}{\target(s)=y, \target(t)=y'}{\cO_{s,t}(\twoU)}& &\undercoprodthree{(s,t)\in \partial^n_{U^-}\times \partial^n_{U^+}}{\source(s)=\source(t)=x}{f_m(\target(s))=y, f_\ell(\target(t))=y'}{\cO_{s,t}(\twoU)} \end{align*}
Now, the bijection $\varphi$ induces an isomorphism between these two sets because 
\begin{itemize}
\item a pair $(W^\shortparallel,W^\smallwedge)$ is $(s,t)$-good if and only if it is $(\varphi(s),\varphi(t))$-good. 
\item a pair $(W^\shortparallel,W^\smallwedge)$ is $(s,t)$-positive if and only if it is $(\varphi(s),\varphi(t))$-positive. 
\end{itemize}
These two assertions follow from diagrams \eqref{eq:sobmaps_lambda1} and \eqref{eq:sobmaps_lambda2}, using that $f$ is order-preserving and noting that, after taking restriction sets, the upper and bottom rows in those diagrams are the bijections \eqref{eq:f2}.
\end{proof}
\begin{corollary}\label{cor:nat_free} If $f\colon X_\bullet\to Y_\bullet$ is an order-preserving free map of \osobs, then $f^*\Sq^i = \Sq^if^*$.
\end{corollary}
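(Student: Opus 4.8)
The plan is to deduce this directly from the cochain-level naturality of Proposition \ref{prop:naturality-cup}, once we observe that an order-preserving free map induces a cochain map on $\bF_2$-realisations.

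First I would note that, by functoriality of the $\bF_2$-realisation functor, the map $f\colon X_\bullet\to Y_\bullet$ induces a homomorphism of cochain complexes $f^*\colon C^*(Y_\bullet;\bF_2)\to C^*(X_\bullet;\bF_2)$ that commutes with the differential and preserves the cochain degree. In particular $f^*$ carries $n$-cocycles to $n$-cocycles and coboundaries to coboundaries, hence descends to a map $f^*\colon H^*(Y_\bullet;\bF_2)\to H^*(X_\bullet;\bF_2)$ on cohomology.

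Next, fix $i\geq 0$ and a class $[\alpha]\in H^n(Y_\bullet;\bF_2)$ represented by a cocycle $\alpha\in C^n(Y_\bullet;\bF_2)$; then $f^*[\alpha]$ is represented by the cocycle $f^*\alpha\in C^n(X_\bullet;\bF_2)$. Using the definition \eqref{eq:squares} of the Steenrod squares and the naturality of the $\smile_i$ products from Proposition \ref{prop:naturality-cup} applied with $\beta=\alpha$, we get
\begin{align*}
f^*\Sq^i([\alpha]) &= f^*\big([\alpha\smile_{n-i}\alpha]\big) = \big[f^*(\alpha\smile_{n-i}\alpha)\big] \\
&= \big[f^*\alpha\smile_{n-i}f^*\alpha\big] = \Sq^i\big([f^*\alpha]\big) = \Sq^i\big(f^*[\alpha]\big),
\end{align*}
which is the assertion.

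I do not expect any genuine obstacle: all the substantive work is already contained in Proposition \ref{prop:naturality-cup} (whose proof is the combinatorial heart, verifying via diagrams \eqref{eq:sobmaps_lambda1} and \eqref{eq:sobmaps_lambda2} that the bijection $\varphi$ there respects $(s,t)$-goodness and $(s,t)$-positivity) and in the well-definedness of $\Sq^i$ discussed at the beginning of Section \ref{section:well-defined}. The only things to check are bookkeeping points: that $f^*$ preserves cochain degree — so that the shift $n-i$ agrees on both sides — and that it is compatible with the diagonal, both immediate from freeness of $f$ and from the realisation functor being degree-preserving.
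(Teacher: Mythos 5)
Your argument is correct and is exactly the route the paper intends: the corollary is stated there without separate proof, as an immediate consequence of Proposition \ref{prop:naturality-cup} (applied with $\beta=\alpha$) together with the definition \eqref{eq:squares} of $\Sq^i$, since $f^*$ is a degree-preserving cochain map. Nothing further is needed.
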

\subsection{A mapping cylinder construction} Recall that if $f\colon C_*\to C'_*$ is a homomorphism of chain complexes of $R$-modules, the mapping cylinder of $f$ is the chain complex $M(f)_*$ with $M(f)_n = C_{n-1}\oplus C_n\oplus C'_n$ and differential $d''$ given by 
\[d''(x,y,z) = (-d(x),d(y)+x,d'(c)-f(x)).\]
This chain complex comes with maps
\[C_*\overset{i_f}{\lra} M(f)_*\overset{g_f}{\lra} C'_*\]
given by including $C_*$ as the second summand and by sending an element $(x,y,z)$ to $f(y)+z$. Moreover, the last map has a homotopy inverse $h_f\colon C'_*\to M(f)_*$ given by including $C'_*$ as the third summand, and therefore $g_f$ is a chain homotopy equivalence. Additionally, $f=g_f\circ i_f$.

This construction has the following counterpart $M(\Sigma^2 f)_\bullet$ for the double suspension of a map $f\colon X_\bullet\to Y_\bullet$ of \sobs(again, we use the notation of Section \ref{ssection:sobmaps}): $M(\Sigma^2 f)_\bullet$ is the \sob whose set of $n$-simplices is 
\[M(\Sigma^2 f)_n = X_{n-3}\cup X_{n-2}\cup Y_{n-2}. \]
Write $\psi_\cerouno$ and $\psi_\cerodos$ for $\psi_{\{0,1\}}$ and $\psi_{\{0,1,2\}}$. If $U\in \Power_q(n)$, the generalised face map $\bbpartial^n_U$ of $M(\Sigma^2 f)_\bullet$ is the sum of the following three spans (note that the second and third spans are precisely the generalised face maps of $\Sigma^2 X_\bullet$ and $\Sigma^2 Y_\bullet$):
\begin{align*}
\bbpartial^n_U|_{X_{n-3}} &= \begin{cases}
\partial^{n-3}_{\psi_\cerodos(U)} & \text{if $\{0,1,2\}\cap U=\emptyset$} \\
\emptyset & \text{if $2\in U$} \\
\partial^{n-3}_{\psi_{\cerodos}(U\smallsetminus \{0\})} & \text{if $0\in U$ and $\{1,2\}\cap U=\emptyset$} \\
\bpartial^{n-3}_{\psi_{\cerodos}(U\smallsetminus \{1\})}\circ f_{n-3} &\text{if $1\in U$ and $\{0,2\}\cap U=\emptyset$}\\
\emptyset & \text{if $\{0,1\}\subset U$}
\end{cases} 
\\
\bbpartial^n_U|_{X_{n-2}} &= \begin{cases}
\partial^{n-2}_{\psi_\cerouno(U)} & \text{if $\{0,1\}\cap U=\emptyset$} \\
\emptyset & \text{if $0\in U$ or $1\in U$}
\end{cases}
\\
\bbpartial^n_U|_{Y_{n-2}} &= \begin{cases}
\bpartial^{n-2}_{\psi_\cerouno(U)} & \text{if $\{0,1\}\cap U=\emptyset$} \\
\emptyset & \text{if $0\in U$ or $1\in U$}
\end{cases}
\end{align*}

Let $U=V_1\cup V_2$ with $U\in \Power_q(n)$ and $V_1\in \Power_{p}(n)$, and write as usual $W_2=\psi_{V_1}(V_2)$ and observe that
\begin{align}\label{pescadilla1}
\psi_{\psi_A(B)}(\psi_A(C)) &= \psi_A(\psi_B(C)),
\end{align}
and if $i\in \{0,1\}$ and $U\cap \{0,1\}=\{i\}$, then, depending on whether $i\in V_1$ or $i\in V_2$, we have:
\begin{align}\label{pescadilla2}
\psi_{\cerouno}(\psi_{V_1}(V_2)) = \psi_{\cerodos}(\psi_{V_1\smallsetminus \{i\}}(V_2))
\\ \label{pescadilla3}
\psi_{\cerodos}(\psi_{V_1}(V_2)\smallsetminus\{i\}) = \psi_{\cerodos}(\psi_{V_1}(V_2\smallsetminus \{i\})).
\end{align}
Define the structural $2$-morphisms 
\begin{equation}\label{eq:mu}
\bar{\bar{\mu}}_{V_1,V_2}\colon \bbpartial^n_U\to \bbpartial^{n-p}_{W_2}\circ \bbpartial^n_{V_1}
\end{equation}
of $M(\Sigma^2 f)_\bullet$ as follows:
\begin{enumerate}
\item if $2\in U$ and $\{0,1\}\cap U\neq \emptyset$, then at least two of the three spans are empty, therefore both sides of \eqref{eq:mu} are empty, so $\bar{\bar{\mu}}^n_{V_1,V_2}$ is the unique function from the empty set to itself.
\item if $2\in U$ and $\{0,1\}\cap U = \emptyset$, then
\begin{align*}
\bar{\bar{\mu}}^n_{V_1,V_2} &= \mu^{n-2}_{\psi_\cerouno(V_1),\psi_\cerouno(V_2)} \cup \bmu^{n-2}_{\psi_\cerouno(V_1),\psi_\cerouno(V_2)}
\end{align*}
\item if $2\notin U$, and $\{0,1\}\cap U=\emptyset$ then
\begin{align*}
\bar{\bar{\mu}}^n_{V_1,V_2} &= \mu^{n-3}_{\psi_\cerodos(V_1),\psi_\cerodos(V_2)} \cup \mu^{n-2}_{\psi_\cerouno(V_1),\psi_\cerouno(V_2)} \cup \bmu^{n-2}_{\psi_\cerouno(V_1),\psi_\cerouno(V_2)}
\end{align*}
\item if $2\notin U$ and $1\notin U$ and $0\in V_1$, then
\begin{align*}
\bbpartial^n_U &= \partial^{n-3}_{\psi_\cerodos(U\smallsetminus \{0\})}
\\
\bbpartial^n_{V_1} &= \partial^{n-3}_{\psi_{\cerodos}(V_1\smallsetminus \{0\})} 
\\
\bbpartial^{n-p}_{W_2} &= \partial^{n-p-3}_{\psi_\cerodos(W_2)} \cup \partial^{n-p-2}_{\psi_\cerouno(W_2)} \cup \bpartial^{n-p-2}_{\psi_\cerouno(W_2)}
\end{align*}
and since the target of the span $\bbpartial^n_{V_1}$ is contained in $X_{n-p-2}$ (the second factor of $M(\Sigma^2 f)_{n-p}$), we have that
\[\bbpartial^{n-p}_{W_2}\circ \bbpartial^n_{V_1}=\partial^{n-p-2}_{\psi_\cerouno(W_2)}\circ \partial^{n-3}_{\psi_{\cerodos}(V_1\smallsetminus\{0\})}\]
and setting $U'=\psi_\cerodos(U\smallsetminus \{0\}), V_1' = \psi_{\cerodos}(V_1\smallsetminus \{0\}), V_2'=\psi_\cerodos(V_2)$, define \eqref{eq:mu} as follows, using the identities \eqref{pescadilla1} and \eqref{pescadilla2}: 
\[\mu^{n}_{V_1',V_2'}\colon \partial^{n-3}_{U'}\lra \partial^{n-p-2}_{W_2'}\circ \partial^{n-3}_{V_1'}\]

\item if $2\notin U$, $1\notin U$ and $0\in V_2$, then
\begin{align*}
\bbpartial^n_U &= \partial^{n-3}_{\psi_\cerodos(U\smallsetminus \{0\})}
\\
 \bbpartial^n_{V_1} &=\partial^{n-3}_{\psi_\cerodos(V_1)} \cup \partial^{n-2}_{\psi_\cerouno(V_1)} \cup \bpartial^{n-2}_{\psi_\cerouno(V_1)}  \\
 \bbpartial^{n-p}_{W_2} &= \partial^{n-p-3}_{\psi_\cerodos(W_2\smallsetminus \{0\})}
\end{align*}
and since the source of the span $\bbpartial^{n-p}_{W_2}$ is contained in $X_{n-p-3}$ (the first factor of $M(\Sigma^2 f)_{n-p}$), we have that
\[\bbpartial^{n-p}_{W_2}\circ \bbpartial^n_{V_1}=\partial^{n-p-3}_{\psi_\cerodos(W_2\smallsetminus \{0\})}\circ \partial^{n-3}_{\psi_\cerodos(V_1)}\]
and setting $U'=\psi_\cerodos(U\smallsetminus \{0\}), V_1' = \psi_\cerodos(V_1), V_2'=\psi_\cerodos(V_2\smallsetminus\{0\})$, define \eqref{eq:mu} as follows, using the identities \eqref{pescadilla1} and \eqref{pescadilla3}:
\[\mu^{n}_{V_1',V_2'}\colon \partial^{n-3}_{U'}\lra \partial^{n-p-2}_{W_2'}\circ \partial^{n-3}_{V_1'}\]

\item if $2\notin U$, $0\notin U$ and $1\in V_1$, then
\begin{align*}
\bbpartial^n_U &= \bpartial^{n-3}_{\psi_\cerodos(U\smallsetminus \{1\})}\circ f_{n-3}      \\ 
\bbpartial^n_{V_1} &= \bpartial^{n-3}_{\psi_\cerodos(V_1\smallsetminus \{1\})}\circ f_{n-3} \\
 \bbpartial^{n-p}_{W_2} &= \partial^{n-p-3}_{\psi_\cerodos(W_2)}\cup \partial^{n-p-2}_{\psi_\cerouno(W_2)} \cup \bpartial^{n-p-2}_{\psi_\cerouno(W_2)}
\end{align*}
and since the target of the span $\bbpartial^n_{V_1}$ is contained in $Y_{n-p-2}$ (the third factor of $M(\Sigma^2 f)_{n-p}$), we have that
\[\bbpartial^{n-p}_{W_2}\circ \bbpartial^n_{V_1} = 
\bpartial^{n-p-2}_{\psi_\cerouno(W_2)} \circ \bpartial^{n-3}_{\psi_\cerodos(V_1\smallsetminus \{1\})}\circ f_{n-3} 
\]
and setting $U'=\psi_\cerodos(U\smallsetminus \{1\}), V_1' = \psi_\cerodos(V_1\smallsetminus\{1\}), V_2'=\psi_\cerodos(V_2)$, define \eqref{eq:mu} as follows, using the identities \eqref{pescadilla1} and \eqref{pescadilla2}:
\begin{align*}
\bmu^{n-2}_{V_1',V_2'}\circ_2 \Id\colon \bpartial^{n-3}_{U'}\circ f_{n-3}&\lra \bpartial^{n-p-2}_{W_2'}\circ \bpartial^{n-3}_{V_1'}\circ f_{n-3}
\end{align*}

\item if $2\notin U$, $0\notin U$ and $1\in V_2$, then
\begin{align*}
\bbpartial^n_U         &= \bpartial^{n-3}_{\psi_\cerodos(U\smallsetminus\{1\})}\circ f_{n-3}     \\ 
\bbpartial^n_{V_1}     &= \partial^{n-3}_{\psi_\cerodos(V_1)}                \cup \partial^{n-2}_{\psi_\cerouno(V_1)}   \cup \bpartial^{n-2}_{\psi_\cerouno(V_1)} 
\\
\bbpartial^{n-p}_{W_2} &= \bpartial^{n-p-3}_{\psi_\cerodos(W_2\smallsetminus \{1\})}\circ f_{n-p-3}
\end{align*}
and since the source of the span $\bbpartial^{n-p}_{W_2}$ is contained in $X_{n-p-3}$ (the first factor of $M(\Sigma^2 f)_{n-p}$), we have that
\begin{align*}
\bbpartial^{n-p}_{W_2}\circ \bbpartial^n_{V_1} &= \bpartial^{n-p-3}_{\psi_\cerodos(W_2\smallsetminus\{1\})}\circ f_{n-p-3}\circ \partial^{n-3}_{\psi_\cerodos(V_1)}
\end{align*}
and setting $U'=\psi_\cerodos(U\smallsetminus\{1\}), V_1' = \psi_\cerodos(V_1), V_2'=\psi_\cerodos(V_2\smallsetminus\{1\})$, define \eqref{eq:mu} as follows, using the identities \eqref{pescadilla1} and \eqref{pescadilla3}:
\[
\xymatrixcolsep{1.4cm}\xymatrix{
\bpartial^{n-3}_{U'}\circ f_{n-3}\ar[r]^-{\mu_{V_1',V_2'}}& \bpartial^{n-p-3}_{W_2'}\circ \bpartial^{n-3}_{V_1'}\circ  f_{n-3} &\ar[l]_-{f^{n-3}_{V_1'}}  \bpartial^{n-p-3}_{W_2'}\circ f_{n-p-3}\circ \partial^{n-3}_{V_1'} 
}\]

\item If $2\notin U$ and either $\{0,1\}\subset V_1$ or $\{0,1\}\subset V_2$, then at least two of the three spans in \eqref{eq:mu} are the empty span, and therefore both sides are empty.
\item If $0\in V_1$ and $1\in V_2$
\begin{align*}
\bbpartial^n_U &= \emptyset & \bbpartial^n_{V_1} &= \partial^{n-3}_{\psi_\cerodos(V_1\smallsetminus \{0\})} &\bbpartial^{n-p}_{W_2} &= \bpartial^{n-p-3}_{\psi_\cerodos(W_2\smallsetminus \{1\})}\circ f_{n-p-3}
\end{align*}
and since the source of the span $\bbpartial^{n-p}_{W_2}$ is contained in $X_{n-p-3}$ while the target of the span $\bbpartial^n_{V_1}$ is contained in $X_{n-p-2}$, 
\[\bbpartial^{n-p}_{W_2}\circ\bbpartial^n_{V_1} = \emptyset\]
so $\eqref{eq:mu}$ is the unique bijection between the empty spans.
\item If $2\notin U$, $1\in V_1$ and $0\in V_2$
\begin{align*}
\bbpartial^n_U &= \emptyset & \bbpartial^n_{V_1} &= \bpartial^{n-3}_{\psi_\cerodos(V_1\smallsetminus\{1\})}\circ f_{n-3} &\bbpartial^{n-p}_{W_2} &= \partial^{n-p-3}_{\psi_{\cerodos}(W_2\smallsetminus\{0\})}
\end{align*}
and since the source of the span $\bbpartial^{n-p}_{W_2}$ is contained in $X_{n-p-3}$ while the target of the span $\bbpartial^n_{V_1}$ is contained in $Y_{n-p-2}$, 
\[\bbpartial^{n-p}_{W_2}\circ\bbpartial^n_{V_1} = \emptyset\]
so $\eqref{eq:mu}$ is the unique bijection between the empty spans.
\end{enumerate}
The verification of Conditions \eqref{sob:4}, \eqref{sob:5} and \eqref{sob:6} in the definition of \sob are not made explicit, because in each case they will be compositions of $2$-morphisms $\mu_{V_1,V_2},\bmu_{V_1,V_2}$ and $f_V$, and the diagrams will commute because $f$ satisfies \eqref{eq:sobmaps2} and $X_\bullet$ and $Y_\bullet$ satisfy the aforementioned Condition \eqref{sob:6}.

The \sobs $\Sigma^2 X_\bullet$ and $\Sigma^2 Y_\bullet$ are included in $M(\Sigma^2 f)_\bullet$ as the second and third factors. Let us denote by
\[\xymatrix{\Sigma^2 X_\bullet\ar[r]^-{i_{\Sigma^2 f}}& M(\Sigma^2 f)_\bullet& \ar[l]_-{h_{\Sigma^2 f}} Y_\bullet}\]
these inclusions. Then, by construction, the $R$-realisation of $M(\Sigma^2f)_\bullet$ will be the mapping cylinder of the $R$-realisation of $f$:
\begin{align*}
|M(\Sigma^2 f)_\bullet|_R &= M(|\Sigma^2 f|_R)_* & |i_{\Sigma^2 f}|_R &= i_{|\Sigma^2 f|_R} & |h_{\Sigma^2 f}|_R &= h_{|\Sigma^2 f|_R}
\end{align*}
in particular $h_{\Sigma^2 f}$ is an equivalence of \sobs.

\subsection{Naturality of Steenrod squares}
\begin{theorem}\label{thm:naturality-Steenrod}
If $f\colon X_\bullet\to Y_\bullet$ is a map of \sobs, and both $X_\bullet$ and $Y_\bullet$ are ordered, then $f^*\Sq^i = \Sq^if^*$.
\end{theorem}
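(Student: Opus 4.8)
The plan is to reduce the statement to the case of free order-preserving maps, which is already handled by Corollary \ref{cor:nat_free}, using the mapping cylinder construction $M(\Sigma^2 f)_\bullet$ just introduced. First I would note that Proposition \ref{prop:suspension} gives $\Sigma^2\Sq^i = \Sq^i\Sigma^2$ on cohomology, so it suffices to prove naturality for the double suspension $\Sigma^2 f\colon \Sigma^2 X_\bullet\to \Sigma^2 Y_\bullet$. The reason for passing to the double suspension is precisely that it admits the factorization
\[
\Sigma^2 X_\bullet \xrightarrow{\ i_{\Sigma^2 f}\ } M(\Sigma^2 f)_\bullet \xleftarrow{\ h_{\Sigma^2 f}\ } \Sigma^2 Y_\bullet,
\]
where on $R$-realisations $|h_{\Sigma^2 f}|_R$ is the standard homotopy inverse $h_{|\Sigma^2 f|_R}$ of the mapping cylinder projection, hence an equivalence of \sobs, and where $|g_{\Sigma^2 f}|_R \circ |i_{\Sigma^2 f}|_R = |\Sigma^2 f|_R$ with $g_{\Sigma^2 f}$ the projection. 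Since $h_{\Sigma^2 f}$ is an equivalence, it induces an isomorphism on cohomology, and on cohomology $(h_{\Sigma^2 f}^*)^{-1} = g_{\Sigma^2 f}^*$; so $(\Sigma^2 f)^* = i_{\Sigma^2 f}^* \circ (h_{\Sigma^2 f}^*)^{-1}$ as maps $H^*(\Sigma^2 Y_\bullet;\bF_2)\to H^*(\Sigma^2 X_\bullet;\bF_2)$.

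Next I would check that both $i_{\Sigma^2 f}$ and $h_{\Sigma^2 f}$ are order-preserving free maps, once $X_\bullet$ and $Y_\bullet$ are equipped with orders and $M(\Sigma^2 f)_\bullet$ is given the induced order coming from the decomposition $M(\Sigma^2 f)_n = X_{n-3}\cup X_{n-2}\cup Y_{n-2}$ and the chosen orders on those summands; here one uses Definition \ref{def:freemap} and the fact that the spans $i_{\Sigma^2 f,n}$ and $h_{\Sigma^2 f,n}$ are just inclusions of a summand, so the source maps are injections. With this in place, Corollary \ref{cor:nat_free} applies to each of the two maps, giving $i_{\Sigma^2 f}^*\Sq^i = \Sq^i i_{\Sigma^2 f}^*$ and $h_{\Sigma^2 f}^*\Sq^i = \Sq^i h_{\Sigma^2 f}^*$; since $h_{\Sigma^2 f}^*$ is invertible, the latter also gives $(h_{\Sigma^2 f}^*)^{-1}\Sq^i = \Sq^i (h_{\Sigma^2 f}^*)^{-1}$. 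Composing, $(\Sigma^2 f)^*\Sq^i = i_{\Sigma^2 f}^*(h_{\Sigma^2 f}^*)^{-1}\Sq^i = \Sq^i i_{\Sigma^2 f}^*(h_{\Sigma^2 f}^*)^{-1} = \Sq^i(\Sigma^2 f)^*$, and then Proposition \ref{prop:suspension} transports this back to $f$ itself.

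The main obstacle I expect is not the formal homotopy-theoretic bookkeeping but the verification, already flagged in the construction, that $M(\Sigma^2 f)_\bullet$ is genuinely an \sob: one must check Conditions \eqref{sob:4}, \eqref{sob:5} and \eqref{sob:6} for the structural $2$-morphisms $\bar{\bar\mu}_{V_1,V_2}$ defined by the long case analysis, and that $i_{\Sigma^2 f}$ and $h_{\Sigma^2 f}$ are maps of \sobs in the sense of Section \ref{ssection:sobmaps} (i.e.\ that \eqref{eq:sobmaps2} holds). These reduce, in each case, to instances of Condition \eqref{sob:6} for $X_\bullet$ and $Y_\bullet$ together with the compatibility \eqref{eq:sobmaps2} satisfied by $f$, plus the combinatorial identities \eqref{pescadilla1}, \eqref{pescadilla2}, \eqref{pescadilla3}; it is routine but lengthy, and is the only genuinely technical point. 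The last subtlety to record is that the order on $M(\Sigma^2 f)_\bullet$ need not be compatible with the orders on $\Sigma^2 X_\bullet$ or $\Sigma^2 Y_\bullet$ under $i_{\Sigma^2 f}$ and $h_{\Sigma^2 f}$ as inclusions—but since those maps are inclusions of a single summand, one may simply define the order on $M(\Sigma^2 f)_\bullet$ so that it restricts to the given orders, which is possible precisely because the three summands are disjoint; then order-preservation of both structure maps is automatic.
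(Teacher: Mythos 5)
Your proposal is correct and follows essentially the same route as the paper: factor $\Sigma^2 f$ through the mapping cylinder $M(\Sigma^2 f)_\bullet$, observe that both $i_{\Sigma^2 f}$ and $h_{\Sigma^2 f}$ are free and order-preserving for any order on $M(\Sigma^2 f)_\bullet$ extending the given ones, apply Corollary \ref{cor:nat_free} to each, invert $h_{\Sigma^2 f}^*$ (equivalently, use the chain-level projection $g$ with $|\Sigma^2 f|\simeq g\circ|i_{\Sigma^2 f}|$), and transport back to $f$ via Proposition \ref{prop:suspension}. This is exactly the paper's argument.
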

\begin{proof} 
Let $|\cdot|$ denote the $\bF_2$-realisation $|\cdot|_{\bF_2}\colon \presh{\Deltainjaug}{\cB}\to \Ch(\bF_2)$. The diagram 
\[\xymatrix{
&M(\Sigma^2 f)_\bullet & \\
\Sigma^2 X_\bullet\ar[ur]^{i_{\Sigma^2 f}}\ar[rr]^{\Sigma^2 f} && \Sigma^2 Y_\bullet\ar[ul]_{h_{\Sigma^2 f}}
}\]
does not commute, but after taking $\bF_2$-realisations, the diagram of chain complexes
\[\xymatrix{
&|M(\Sigma^2 f)_\bullet| & \\
|\Sigma^2 X_\bullet|\ar[ur]^{|i_{\Sigma^2 f}|}\ar[rr]^{|\Sigma^2 f|} && |\Sigma^2 Y_\bullet|\ar[ul]_{|h_{\Sigma^2 f}|}
}\]
does commute up to homotopy because, by construction, this is the mapping cylinder diagram of $|\Sigma^2 f|$. Moreover, there is a chain map $g\colon |M(\Sigma^2 f)_\bullet|\to |\Sigma^2 Y_\bullet|$ that is a homotopy inverse of $|h_{\Sigma^2 f}|$, and therefore
$|\Sigma^2 f|\simeq g\circ |i_{\Sigma^2 f}|$. Let $M(\Sigma^2 f_\bullet)$ be endowed with any order extending the order of its subobjects $\Sigma^2X_\bullet$ and $\Sigma^2Y_\bullet$. Then, the inclusion $h_{\Sigma^2 f}$ is free and order-preserving, so $h_{\Sigma^2 f}$ preserves Steenrod squares, and therefore $g$ preserves Steenrod squares: for every cohomology class $\alpha\in H^*(\Sigma^2 Y_\bullet;\bF_2)$,
\begin{align*}
\Sq^i(|h_f|^*g^*\alpha) = |h_f|^*\Sq^i(g^*\alpha)\quad &\Rightarrow\quad g^*\Sq^i(|h_f|^*g^*\alpha) = g^*|h_f|^*\Sq^i(g^*\alpha)\\
&\Rightarrow\quad g^*\Sq^i(\alpha) = \Sq^i(g^*\alpha).
\end{align*}
On the other hand, since $i_{\Sigma^2 f}$ is free and order-preserving we have that $|i_{\Sigma^2 f}|^*\circ\Sq^i = \Sq^i\circ|i_{\Sigma^2 f}|^*$, and since $|\Sigma^2 f|$ is homotopy equivalent to $g\circ |i_{\Sigma^2 f}|$, we deduce that $|\Sigma^2 f|^*\circ\Sq^i = \Sq^i\circ|\Sigma^2 f|^*$. As by Proposition \ref{prop:suspension} Steenrod squares commute with suspension, we have $|f|^*\circ\Sq^i = \Sq^i\circ|f|^*$.
\end{proof}
As a consequence, if $X_\bullet$ is \ansob, the Steenrod squares $\Sq^i\colon H^n(X_\bullet;\bF_2)\to H^{n+i}(X_\bullet;\bF_2)$ are independent of the order on $X_\bullet$ used to define them, hence are well-defined natural operations on the cohomology of any \sob.

%

\section{Cubes and Khovanov homology}\label{section:khovanov}
Using the construction of Lawson, Lipshitz and Sarkar and the functor $\Lambda$, the \cupi{i}products of Theorem \ref{thm:main} may be defined on the Khovanov cochain complex associated to an oriented link diagram $D$ with ordered crossings. In this section we prove that the Steenrod squares $\Sq^i$ do not depend on the order of the crossings and are invariant under Reidemeister moves.

\subsection{Stable equivalences of stable functors} We start by reviewing the concept of stable functor \cite[Section 5]{LLS-cube}, and proving that stable equivalences of stable functors induce, after applying $\Lambda$, zig-zags of equivalences between iterated suspensions of \sobs.

\begin{df} A \emph{face inclusion of degree $r$} is a functor $\iota\colon \cube{n}\to \cube{n'}$ that is injective on objects and for every $A\subset \{1,\ldots,n\}$, $|\iota(A)|=|A|+r$. 
A face inclusion is \emph{sequential} if, writing $\bar{a}$ for the unique element of the singleton $\iota(\{a\})\smallsetminus \iota(\emptyset)$, we have that $\bar{a}<\bar{b}$ if $a<b$, and $b<\bar{1}$ if $b\in \iota(\emptyset)$. If $n=n'$, then all face inclusions have degree $0$ and are induced by permutations of $\{1,\ldots,n\}$, so we will refer to them by the associated permutation.
\end{df}

\begin{df}
If $\iota\colon \cube{n}\to \cube{n'}$ is a face inclusion, and $F\colon \cube{n}\to\cB$, then there is a unique functor $\iota_*F$ such that $F = (\iota_*F)\circ \iota$ and $\iota_*F(A)=\emptyset$ if $A\in \cube{n'}\smallsetminus \cube{n}$. This assignment is natural and defines a functor $\iota_*\colon \preshcube{\cube{n}}{\cB}\to \preshcube{\cube{n'}}{\cB}$.
\end{df}
\begin{lemma}\label{lemma:multiple} Let $\iota\colon \cube{n}\to \cube{n'}$ be a face inclusion of degree $r$ and let $F,G\colon \cube{n}\to \cB$ be cubes in the Burnside category.
\begin{enumerate} 
\item\label{cond:reordering} There is a permutation $\omega$ of $\{1,\ldots,n'\}$ such that the composition $\omega\circ \iota$ is sequential.
\item\label{cond:extra} If $\omega$ is a permutation of $\{1,\ldots,n'\}$, then $(\iota_*F)\circ\omega=(\omega\circ\iota)_*(F)$.
\item\label{cond:seqface} If $\iota$ is sequential, then $\Lambda(\iota_*F)=\Sigma^r\Lambda(F)$.
\item\label{cond:totreal} If $f\colon F\to G$ is an equivalence of cubes in the Burnside category, then $\Lambda(f)\colon \Lambda(F)\to \Lambda(G)$ is an equivalence of \sobs.
\end{enumerate}
\end{lemma}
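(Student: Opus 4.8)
The plan is to establish the four assertions of Lemma~\ref{lemma:multiple} more or less independently: (i) is an elementary manipulation of permutations, (ii) and (iii) are direct unwindings of the definitions of $\iota_*$ and of $\Lambda$ from Section~\ref{ssection:lambda}, and (iv) is a formal consequence of the identity $C_*(\Lambda(-);\bZ)=\Sigma^{-1}C_*(-;\bZ)$ recorded at the end of Section~\ref{ssection:realisations}. For (i), note first that since $\iota$ is order-preserving, $\iota(\emptyset)\subseteq\iota(\{a\})$ for every $a$, and a cardinality count gives $\iota(\{a\})=\iota(\emptyset)\sqcup\{\bar a\}$; injectivity of $\iota$ on objects makes $a\mapsto\bar a$ injective, so the $r$ elements of $\iota(\emptyset)$ together with $\bar 1,\ldots,\bar n$ are $n+r$ distinct elements of $\{1,\ldots,n'\}$. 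Let $\omega$ be any permutation of $\{1,\ldots,n'\}$ that maps $\iota(\emptyset)$ bijectively onto $\{1,\ldots,r\}$, sends $\bar a$ to $r+a$, and sends the remaining $n'-n-r$ elements onto $\{n+r+1,\ldots,n'\}$. Then $(\omega\circ\iota)(\emptyset)=\{1,\ldots,r\}$ and the distinguished element of $(\omega\circ\iota)(\{a\})$ is $\omega(\bar a)=r+a$, which is strictly increasing in $a$ and strictly larger than every element of $(\omega\circ\iota)(\emptyset)$; hence $\omega\circ\iota$ is sequential.

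For (ii), recall that $\iota_*F$ is characterised as the unique functor $\cube{n'}\to\cB$ supported on the subposet $\iota(\cube{n})$ with $(\iota_*F)\circ\iota=F$. Precomposing with the poset automorphism of $\cube{n'}$ induced by $\omega$ carries this subposet to $\omega\circ\iota(\cube{n})$ and transports the values and the structural $2$-morphisms so that they are read off $F$ along $\omega\circ\iota$; this is exactly the characterisation of $(\omega\circ\iota)_*F$, and the equality is then checked by evaluating both sides on objects and on the generating inclusions of $\cube{n'}$. For (iii), using the explicit formulas of Section~\ref{ssection:lambda} one has $\Lambda(\iota_*F)(k)=\coprod_{|A|=k+1}\iota_*F(A)$, and only the $A$ of the form $\iota(C)$ contribute; writing $A=\iota(C)$ with $|C|=|A|-r=k+1-r$ gives $\Lambda(\iota_*F)(k)=\coprod_{|C|=k+1-r}F(C)=\Lambda(F)(k-r)$, both sides being empty for $k<r-1$. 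When $\iota$ is sequential, $\iota(\emptyset)=\{1,\ldots,r\}$ and, for $A=\iota(C)$, the bijection $\varphi_A$ sends $\{1,\ldots,r\}$ to $\{0,\ldots,r-1\}$ and the elements $\bar a$ ($a\in C$, in increasing order) to $\{r,\ldots,k\}$. Substituting this into the formulas for $\Lambda(\iota_*F)$ on the generalised face maps and on the $\mu$'s, a generalised face map indexed by $U$ is sent to the empty span as soon as $U$ meets $\{0,\ldots,r-1\}$, and otherwise agrees with the corresponding structure map of $\Lambda(F)$ reindexed by subtracting $r$; comparing with Definition~\ref{df:suspension} applied $r$ times, this is precisely the statement $\Lambda(\iota_*F)=\Sigma^{r}\Lambda(F)$.

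For (iv), commutativity of the upper part of Diagram~\eqref{eq:picture} is an equality of functors, so applying it to the morphism $f$ gives $C_*(\Lambda(f);\bZ)=\Sigma^{-1}C_*(f;\bZ)$. If $C_*(f;\bZ)$ is a chain homotopy equivalence then so is any (de)suspension of it, and hence $\Lambda(f)$ is an equivalence of \sobs by definition.

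The only genuinely laborious point is (iii): beyond matching the sets of $k$-simplices and the face maps, one must verify that the structural $2$-morphisms $\mu$ produced by $\Lambda(\iota_*F)$ coincide with those dictated by the $r$-fold suspension of $\Lambda(F)$. This is a mechanical but somewhat lengthy bookkeeping with the functions $\varphi_A$, $\psi$ and $\gamma$ of Sections~\ref{ssection:sequences} and~\ref{ssection:lambda}, and carries no conceptual difficulty; everything else in the lemma is essentially formal.
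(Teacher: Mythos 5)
Your proposal is correct and follows essentially the same route as the paper: the paper dismisses (1) and (2) as clear, proves (3) by exactly the identification of simplices, face maps and $\mu$'s that you spell out, and deduces (4) from the commutativity of diagram \eqref{eq:picture}, i.e.\ $C_*(\Lambda(f);\bZ)=\Sigma^{-1}C_*(f;\bZ)$, just as you do. One small caveat in your (3): the definition of sequential does not literally force $\iota(\emptyset)=\{1,\ldots,r\}$ (it only says every element of $\iota(\emptyset)$ is smaller than $\bar{1}$), but since your computation only uses that $\varphi_A$ carries $\iota(\emptyset)$ to $\{0,\ldots,r-1\}$ for each $A=\iota(C)$ --- which does follow from sequentiality --- the argument is unaffected.
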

\begin{proof}
The first two assertions are clear. The third follows because 
\[\Lambda(\iota_*F)_n = 	\Lambda(F)_{n-r}\]
and, if $\bpartial_U$ denotes the generalised face map on the left and $\partial_{U}$ the generalised face map on the right, then 
\begin{align*}
\bpartial^n_U &=
\begin{cases} 
\partial^{n-r}_{\psi_{\{0,\ldots,r-1\}}(U)} &\text{if $\{0,\ldots,r-1\}\cap U=\emptyset$} \\
\emptyset &\text{otherwise} 
\end{cases}
\\
\bmu^{n}_{V,W} &= \begin{cases}
\bmu^{n-r}_{\psi_{\{0,\ldots,r-1\}}(V),\psi_{\{0,\ldots,r-1\}}(W)} &\text{if $\{0,\ldots,r-1\}\cap (V\cup W)=\emptyset$}\\
\emptyset&\text{otherwise}
\end{cases}
\end{align*}
The fourth is a consequence of the commutativity of the right square in \eqref{eq:picture} explained in Section \ref{ssection:realisations}.
\end{proof}
The proof of the following proposition is given in Section \ref{ssection:reordering} and is based on \cite[Section 7]{Steenrod}.
\begin{proposition}\label{prop:zigzag} If $\omega$ is a permutation of $\{1,\ldots,n\}$, there are sequential face inclusions $\Phi,\Psi^\omega\colon \cube{n}\to \cube{2n}$ of degree $0$ such that for every $F\colon \cube{n}\to \cB$ there is a $G\colon \cube{2n}\to \cB$ and a zig-zag of equivalences of cubes in the Burnside category
\[\Phi_*F\lra G\lla \Psi^\omega_*(F\circ \omega).\]
%
\end{proposition}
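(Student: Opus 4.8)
The plan is to carry over to cubes in the Burnside category the prismatic subdivision argument of Steenrod \cite[Section 7]{Steenrod}, which proves the independence of the cup-$i$ products on the ordering. The cube $\cube{2n}$ arises because the subdivision of an $n$-cube that interpolates between two orderings cuts each of the $n$ coordinate directions in two, so it naturally lives on a $2n$-dimensional cube. First I would fix the data: identify $\{1,\dots,2n\}$ with $n$ pairs of coordinates $\{i',i''\}$, and let $\Phi$ and $\Psi^\omega$ be the two sequential face inclusions $\cube n\to\cube{2n}$ hitting the ``first-half'' coordinates $\{1',\dots,n'\}$ and the ``second-half'' coordinates $\{1'',\dots,n''\}$ respectively, the labelling of the pairs being chosen so that both inclusions are sequential and so that the bridging cube $G$ below is twisted by $\omega$ in a coherent way. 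Both $\Phi$ and $\Psi^\omega$ have degree $0$ and satisfy the sequentiality conditions vacuously along $\iota(\emptyset)=\emptyset$, and by Lemma \ref{lemma:multiple} \eqref{cond:seqface} we get $C_*(\Phi_* F;\bZ)=C_*(F;\bZ)$ and $C_*(\Psi^\omega_*(F\circ\omega);\bZ)\cong C_*(F;\bZ)$; so the content of the proposition is the cube $G$ together with the two equivalences into it.

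The heart of the construction is $G\colon\cube{2n}\to\cB$, a combinatorial ``cube cylinder'' twisted by $\omega$. I would declare $G(S)$ to be the empty set unless $S\subseteq\{1,\dots,2n\}$ is \emph{admissible}, i.e., the pattern of $S$ on each coordinate pair $\{i',i''\}$ is one that occurs in the prismatic subdivision, and on an admissible $S$ set $G(S):=F(C)$, where $C\subseteq\{1,\dots,n\}$ is obtained by collapsing each pair back to the $i$-th coordinate, the collapse being twisted by $\omega$ so that the ``all first halves'' face recovers $F$ and the ``all second halves'' face recovers $F\circ\omega$. The generalised face maps $\bpartial_U$ and the structural $2$-morphisms $\mu$ of $G$ are assembled from those of $F$, taking empty spans wherever admissibility fails --- exactly the kind of layered assembly used for the structure maps of $M(\Sigma^2 f)_\bullet$ in Section \ref{section:functoriality} --- and the coherence Conditions \eqref{sob:4}, \eqref{sob:5} and \eqref{sob:6} for $G$ hold because the relevant squares are built from $\mu$'s of $F$ and empty spans. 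Restricting $G$ along $\Phi$ and along $\Psi^\omega$ then produces maps of cubes $\Phi_* F\to G$ and $\Psi^\omega_*(F\circ\omega)\to G$ whose compatibility with the $2$-morphisms is the analogue of \eqref{eq:sobmaps2}.

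It then remains to show that both maps are equivalences of cubes, i.e., chain homotopy equivalences on $\bZ$-totalisations. I would do this by transcribing Steenrod's explicit homotopy that collapses a prism onto one of its end faces: there is a retraction $C_*(G;\bZ)\to C_*(F;\bZ)$ sending an admissible cell to its ``first halves'' face, together with a chain homotopy built by sliding an admissible cell one step at a time towards that face; the symmetric slide towards the ``second halves'' face treats $\Psi^\omega_*(F\circ\omega)\to G$. Putting the two equivalences together yields the required zig-zag $\Phi_* F\to G\lla\Psi^\omega_*(F\circ\omega)$ of equivalences of cubes in the Burnside category.

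The main obstacle is not the homotopy-theoretic content, which is Steenrod's, but the bookkeeping proper to lax functors into $\cB$: pinning down the admissibility condition and the $\omega$-twisted collapse so that $G$ is an honest strictly unital pseudo-functor, verifying that the restrictions along $\Phi$ and $\Psi^\omega$ are genuine maps of cubes, and checking that the sliding homotopy commutes with the differentials over $\bZ$. This is parallel to, but combinatorially heavier than, the verification already done for $M(\Sigma^2 f)_\bullet$ in Section \ref{section:functoriality}, the extra difficulty being that here the two ends of the cylinder are identified through the permutation $\omega$.
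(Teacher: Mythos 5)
Your route is the same as the paper's (Section \ref{ssection:reordering}, itself modelled on \cite[Section 7]{Steenrod}): a bridge cube $G$ on $\cube{2n}$ obtained by pulling $F$ back along an $\omega$-twisted collapse $\Gamma\colon\{1,\ldots,2n\}\to\{1,\ldots,n\}$ over part of the cube and taking empty sets elsewhere, maps $\Phi_*F\to G$ and $\Psi^\omega_*(F\circ\omega)\to G$ given by restriction, and equivalences proved by an explicit retraction together with a sliding chain homotopy. The genuine gap is that the one definition the whole argument turns on --- which vertices of $\cube{2n}$ carry a copy of $F$ --- is left open, and the shape you suggest for it (a condition imposed on each coordinate pair $\{i',i''\}$ separately, as in a prismatic subdivision) is not the right one.

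The condition that works is global, not local: with $\Phi(i)=i$, $\Psi(i)=\omega^{-1}(i)+n$ and $\Gamma(i)=i$ for $i\le n$, $\Gamma(i)=\omega(i-n)$ for $i>n$, the paper keeps a vertex $A\subset\{1,\ldots,2n\}$ precisely when $\Phi^{-1}(A)\le\Psi^{-1}(A)$, i.e.\ every selected first-half coordinate is $\le$ every selected second-half coordinate after transporting the latter through $\omega$; this compares the two halves of $A$ with each other and cannot be detected one pair at a time. Moreover, vertices in which a pair is doubly occupied are exactly what the contraction needs: the homotopy is a signed sum of operators $D_j$, where $D_j$ moves a generator sitting over a very well-ordered $A=(a_1,\ldots,a_m)$ to the same generator over $\Phi(\Gamma(\{a_1,\ldots,a_j\}))\cup\Psi(\Gamma(\{a_j,\ldots,a_m\}))$, a well-ordered vertex in which both halves of the pair over $\Gamma(a_j)$ occur. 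So if your admissibility forbade doubly occupied pairs (as ``cells of a subdivision'' suggests), it would delete precisely the interpolating cells the slide passes through; and if it allows them, a purely pairwise rule cannot encode the ordering constraint between different pairs on which the cancellation in the identity $\partial\circ D+D\circ\partial=f\circ g-\Id$ (and its $\omega$-twin) depends. Until the support condition is fixed as above and the homotopy is written out and checked, the reduction to ``bookkeeping'' hides the actual content of the proposition; with those two ingredients supplied, your outline becomes the paper's proof.
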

\begin{corollary}\label{cor:seqgood} If $\iota\colon \cube{n}\to \cube{n'}$ is a face inclusion of degree $r$ and $F\colon \cube{n}\to \cB$ is a cube in the Burnside category, then there is a zig-zag of equivalences of \sobs
\[\Lambda(\iota_*F)\lra \Lambda(G)\lla \Sigma^r\Lambda(F).\]
\end{corollary}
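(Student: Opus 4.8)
The plan is to deduce this from Lemma \ref{lemma:multiple} and Proposition \ref{prop:zigzag} by a short chain of identifications, so that the only genuine work is hidden in the (deferred) proof of Proposition \ref{prop:zigzag}. First I would use Lemma \ref{lemma:multiple}\eqref{cond:reordering} to choose a permutation $\omega$ of $\{1,\ldots,n'\}$ making $\omega\circ\iota\colon\cube{n}\to\cube{n'}$ a sequential face inclusion; since $\omega$ has degree $0$, the composite $\omega\circ\iota$ still has degree $r$. Then Lemma \ref{lemma:multiple}\eqref{cond:seqface} gives $\Lambda((\omega\circ\iota)_*F)=\Sigma^r\Lambda(F)$, while Lemma \ref{lemma:multiple}\eqref{cond:extra} identifies $(\omega\circ\iota)_*F$ with $(\iota_*F)\circ\omega$. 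Combining, $\Lambda\big((\iota_*F)\circ\omega\big)=\Sigma^r\Lambda(F)$.

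Next I would apply Proposition \ref{prop:zigzag}, not to $F$ but to the cube $\iota_*F\colon\cube{n'}\to\cB$ together with the permutation $\omega$ of $\{1,\ldots,n'\}$ just produced. It yields sequential face inclusions $\Phi,\Psi^\omega\colon\cube{n'}\to\cube{2n'}$ of degree $0$, a cube $G\colon\cube{2n'}\to\cB$, and a zig-zag of equivalences of cubes in the Burnside category
\[\Phi_*(\iota_*F)\lra G\lla \Psi^\omega_*\big((\iota_*F)\circ\omega\big).\]

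Finally I would apply $\Lambda$ to this zig-zag. By Lemma \ref{lemma:multiple}\eqref{cond:totreal} it becomes a zig-zag of equivalences of \sobs, and since $\Phi$ and $\Psi^\omega$ are sequential of degree $0$, Lemma \ref{lemma:multiple}\eqref{cond:seqface} gives $\Lambda(\Phi_*(\iota_*F))=\Lambda(\iota_*F)$ and $\Lambda(\Psi^\omega_*((\iota_*F)\circ\omega))=\Lambda((\iota_*F)\circ\omega)$, which equals $\Sigma^r\Lambda(F)$ by the first step. Hence $\Lambda(\iota_*F)\to\Lambda(G)\la\Sigma^r\Lambda(F)$ is the desired zig-zag. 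I do not expect a real obstacle here: the statement is pure bookkeeping with the four parts of Lemma \ref{lemma:multiple}, all the analytic content being in Proposition \ref{prop:zigzag} (proved in Section \ref{ssection:reordering}); the one point requiring care is that Proposition \ref{prop:zigzag} must be invoked with ambient cube $\cube{n'}$ and permutation $\omega$, rather than naively with $\cube{n}$.
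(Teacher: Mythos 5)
Your proposal is correct and follows essentially the same route as the paper: choose $\omega$ via Lemma \ref{lemma:multiple}\eqref{cond:reordering}, apply Proposition \ref{prop:zigzag} to $\iota_*F$ with that $\omega$, then use parts \eqref{cond:extra}, \eqref{cond:seqface} and \eqref{cond:totreal} of Lemma \ref{lemma:multiple} to identify the two ends after applying $\Lambda$. Your remark that Proposition \ref{prop:zigzag} must be invoked with ambient cube $\cube{n'}$ and a permutation of $\{1,\ldots,n'\}$ is exactly right, and in fact tidies up the paper's own write-up, which loosely writes $n$ where $n'$ is meant.
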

\begin{proof} By Lemma \ref{lemma:multiple} \eqref{cond:reordering}, there is a permutation $\omega$ of $\{1,\ldots,n\}$ such that $\omega\circ\iota$ is a sequential face inclusion, and by Proposition \ref{prop:zigzag}, there is a pair of sequential inclusions $\Phi_*,\Psi^\omega_*\colon \cube{n}\to \cube{2n}$ and a $G\colon \cube{2n}\to \cB$ and a zig-zag of equivalences between cubes in the Burnside category
\[\Phi_*(\iota_* F_i)\lra G\lla \Psi^\omega_*((\iota_*F)\circ \omega).\]
Therefore, by Lemma \ref{lemma:multiple}, we have equivalences of \sobs
\begin{align*}
\Lambda(\iota_* F_i)&=\Lambda(\Phi_*(\iota_* F_i))\simeq \Lambda(G)\simeq \Lambda(\Psi^\omega_*((\iota_*F)\circ \omega)) =\\ &= \Lambda((\iota_*F)\circ \omega)=  \Lambda((\omega\circ\iota)_*F) =  \Sigma^r\Lambda(F).
\end{align*}

\end{proof}
\begin{df} A \emph{stable functor} is a triple $(F,n,r)$ where $F\colon \cube{n}\to \cB$ for some $n\geq 0$ and $r\in \bZ$. A \emph{stable map} from a stable functor $(F,n,r)$ to a stable functor $(F',n',r')$ is a pair $(f,\iota)$, where $\iota\colon \cube{n}\to \cube{n'}$ is a face inclusion of degree $r-r'$ and $f\colon \iota_*F\to F'$ is a map of cubes. The composition $(f',\iota')\circ (f,\iota)$ is $(f'\circ (\iota'_*f),\iota'\circ\iota)$. A stable map $(f,\iota)\colon (F,n,r)\to (F',n',r')$ is a \emph{stable equivalence} if $f\colon \iota_*F\to F'$ is an equivalence of cubes in the Burnside category. Two cubes in the Burnside category are \emph{stably equivalent} if there is a zig-zag of stable equivalences between them.
\end{df}
From Corollary \ref{cor:seqgood} and Lemma \ref{lemma:multiple} \eqref{cond:totreal}, we deduce
\begin{corollary}\label{cor:steq} If $(F,r)$ and $(F',r')$ are stable functors that are stably equivalent, then there exists some $k\geq 0$ such that $\Sigma^{r+k}\Lambda(F)$ and $\Sigma^{r'+k}\Lambda(F')$ are equivalent.
\end{corollary}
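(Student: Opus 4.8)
The plan is to reduce the statement to a finite concatenation of zig-zags of equivalences of \sobs, one for each edge of a zig-zag of stable equivalences between $(F,n,r)$ and $(F',n',r')$, after normalizing all suspension shifts by a single global constant. The entire content is already packaged in Corollary~\ref{cor:seqgood} and Lemma~\ref{lemma:multiple}\eqref{cond:totreal}; what remains is bookkeeping.

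First I would fix such a zig-zag, writing it as a chain of stable equivalences $(f_i,\iota_i)$ between consecutive stable functors $(F_i,n_i,r_i)$ and $(F_{i+1},n_{i+1},r_{i+1})$ for $i=0,\dots,m-1$, with $(F_0,n_0,r_0)=(F,n,r)$ and $(F_m,n_m,r_m)=(F',n',r')$, each edge pointing in one of the two directions. Consider an edge pointing from $(F_i,n_i,r_i)$ to $(F_{i+1},n_{i+1},r_{i+1})$: by definition $\iota_i\colon \cube{n_i}\to\cube{n_{i+1}}$ is a face inclusion of degree $r_i-r_{i+1}$ (in particular $r_i\geq r_{i+1}$, since face inclusions have non-negative degree) and $f_i\colon (\iota_i)_*F_i\to F_{i+1}$ is an equivalence of cubes in the Burnside category. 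By Lemma~\ref{lemma:multiple}\eqref{cond:totreal}, $\Lambda(f_i)\colon \Lambda((\iota_i)_*F_i)\to\Lambda(F_{i+1})$ is then an equivalence of \sobs, and by Corollary~\ref{cor:seqgood} there is a zig-zag of equivalences of \sobs $\Lambda((\iota_i)_*F_i)\to\Lambda(G_i)\leftarrow \Sigma^{r_i-r_{i+1}}\Lambda(F_i)$. Concatenating these two zig-zags gives $\Sigma^{r_i-r_{i+1}}\Lambda(F_i)\simeq\Lambda(F_{i+1})$; an edge pointing the other way symmetrically gives $\Sigma^{r_{i+1}-r_i}\Lambda(F_{i+1})\simeq\Lambda(F_i)$.

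Next I would choose $k\geq 0$ with $r_i+k\geq 0$ for every $i=0,\dots,m$ (possible since the $r_i$ form a finite set). By Definition~\ref{df:suspension} the suspension functor on \sobs satisfies $C_*(\Sigma X_\bullet;\bZ)=\Sigma C_*(X_\bullet;\bZ)$, hence $C_*(\Sigma^j X_\bullet;\bZ)=\Sigma^j C_*(X_\bullet;\bZ)$ for all $j\geq 0$; since the shift of a chain homotopy equivalence is again a chain homotopy equivalence, $\Sigma^j$ sends equivalences of \sobs, and therefore zig-zags of such, to equivalences of \sobs. Applying $\Sigma^{r_{i+1}+k}$ (respectively $\Sigma^{r_i+k}$) to the equivalence produced by the $i$-th edge, and using $(r_i-r_{i+1})+(r_{i+1}+k)=r_i+k$, yields in both cases $\Sigma^{r_i+k}\Lambda(F_i)\simeq\Sigma^{r_{i+1}+k}\Lambda(F_{i+1})$. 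Composing these equivalences for $i=0,\dots,m-1$ gives $\Sigma^{r+k}\Lambda(F)\simeq\Sigma^{r'+k}\Lambda(F')$, as claimed.

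I do not expect a genuine obstacle here: the argument is formal once Corollary~\ref{cor:seqgood} is available. The only point requiring a little care is the management of the suspension exponents — each edge of the zig-zag only produces an equivalence of \sobs after suspending one side by the (non-negative) degree of the relevant face inclusion, so these shifts, distributed arbitrarily along the zig-zag, must be absorbed into a single global $k$, which simultaneously compensates for the fact that the individual stabilization parameters $r_i$ may be negative.
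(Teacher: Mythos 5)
Your proposal is correct and follows essentially the same route as the paper, which deduces the corollary directly from Corollary \ref{cor:seqgood} and Lemma \ref{lemma:multiple}\eqref{cond:totreal}; you have simply made explicit the bookkeeping (one application of Corollary \ref{cor:seqgood} per edge of the zig-zag, plus a common suspension $k$ chosen so that all exponents $r_i+k$ are non-negative) that the paper leaves implicit. The points you flag — non-negativity of the degrees $r_i-r_{i+1}$ and the fact that $\Sigma$ preserves equivalences of augmented semi-simplicial objects because $C_*(\Sigma X_\bullet;\bZ)=\Sigma C_*(X_\bullet;\bZ)$ — are exactly the right ones and are handled correctly.
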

\subsection{Proof of Proposition \ref{prop:zigzag}}\label{ssection:reordering}


Let $F\colon \cube{n}\to \cB$ be a cube in the Burnside category, with morphisms $F(B\subset A)\colon F(A)\to F(B)$ for each $B\subset A\subset \{1,\ldots,n\}$ and higher morphisms $F(B\subset A,C\subset B)\colon F(C\subset A)\to F(C\subset B)\circ F(B\subset A)$. Let $\omega$ be a permutation of $\{1,\ldots,n\}$, which induces a maximal face inclusion $\omega\colon \cube{n}\to \cube{n}$ and let $F^\omega = F\circ\omega$. Let 
\begin{align*}
\Phi\colon \{1,\ldots,n\}&\lra \{1,\ldots,2n\} & \Phi(i)&= i \\
\Psi\colon \{1,\ldots,n\}&\lra \{1,\ldots,2n\} &\Psi(i)&=\omega^{-1}(i)+n\\
\Gamma\colon \{1,\ldots,2n\}&\lra \{1,\ldots,n\} & \Gamma(i)&= \begin{cases}
i &\text{if $i\leq n$} \\
\omega(i-n)&\text{if $i> n$}
\end{cases}
\end{align*}
and observe that $\Gamma\circ \Phi = \Id$ and $\Gamma\circ \Psi= \Id$ and that both $\Phi$ and $\Psi^\omega:=\Psi\circ\omega$ induce sequential face inclusions of degree $0$ and that $\Gamma$ induces a functor from $\cube{2n}$ to $\cube{n}$. If $A,B\subset \{1,\ldots,2n\}$, write $A<B$ (resp.\ $A\leq B$) if for every $a\in A$ and every $b\in B$, $a<b$ (resp.\ $a\leq b$). A vertex $A\subset \{0,\ldots,2n\}$ of $\cube{2n}$ is \emph{\wellordered} if $\Phi^{-1}(A)\leq \Psi^{-1}(A)$ and it is \emph{\very \wellordered} if $\Phi^{-1}(A)<\Psi^{-1}(A)$. Note that these properties are hereditary: if $B\subset A$ and $A$ is (\very) \wellordered, then $B$ is (\very) \wellordered too. Define a new cube $G\colon \cube{2n}\to \cB$ as 
\[G(A)= \begin{cases}
F\circ \Gamma(A) & \text{if $A$ is \wellordered} \\
\emptyset &\text{otherwise.}
\end{cases}
\]
with 
\begin{align*}
G(B\subset A) &= F\circ \Gamma(B\subset A) & G(B\subset A,C\subset B)&= F\circ \Gamma(B\subset A,C\subset B)
\end{align*}
whenever $A$ is \wellordered, and the empty span and the unique 2-morphism between empty spans otherwise. Since $F=G\circ \Phi$ and $F^\omega=G\circ \Psi^\omega$, we have that the sequential face inclusions $\Phi$ and $\Psi^\omega$ induce maps of cubes
\begin{align*}
f\colon \Phi_*F&\lra G & f^\omega\colon (\Psi^\omega)_*F&\lra G.
\end{align*}
The induced maps $f_*\colon C_*(F;R)\to C_*(G;R)$ and $f^\omega_*\colon C_*(F^\omega;R)\to C_*(G;R)$ have left inverses 
\begin{align*}
g\colon C_*(G;R)&\lra C_*(F;R) & g^\omega\colon C_*(G;R)&\lra C_*(F^\omega;R)
\end{align*} whose value on a generator $\sigma\in G(A)$ is
\begin{align*}
g(\sigma) &= \begin{cases}
\sigma\in F(\Gamma(A)) & \text{if $A$ is \very \wellordered} \\
0 & \text{otherwise}
\end{cases}\\
g^\omega(\sigma) &= \begin{cases}
\sigma\in F^{\omega}(\omega^{-1}\circ\Gamma(A)) & \text{if $A$ is \very \wellordered} \\
0 & \text{otherwise}
\end{cases}
\end{align*}
We now construct a chain homotopy $D$ from the identity on $C_*(G;\bF_2)$ to $f_*\circ g_*$ and a chain homotopy $D^\omega$ from the identity to $f^\omega_*\circ g^\omega_*$. As a consequence, both $f$ and $f^\omega$ induce isomorphisms on homology, so $f$ and $f^\omega$ are equivalences of cubes.

If $A=(a_1,\ldots,a_m)\subset \{1,\ldots,2n\}$ is a vertex of $\cube{2n}$ and $j\in \{1,\ldots, m\}$, define $A_{\leq j} = (a_1,\ldots,a_j)$ and $A_{\geq j} = (a_j,\ldots,a_m)$. If $A$ is \very \wellordered, then the value of $G$ at $A$ and at $\Theta_j(A)=\Phi\circ \Gamma(A_{\leq j})\cup \Psi\circ\Gamma(A_{\geq j})$ coincides, hence we may define a homomorphism
\begin{align*}
D_j\colon C_*(G;R)&\lra C_{*+1}(G;R)
\end{align*}
whose value on a generator $\sigma\in G(A)$ is $\sigma\in G(\Theta_j(A))$ if $A$ is \very \wellordered and $0$ otherwise. Finally, define the chain homotopies
\begin{align*}
D\colon C_*(G;R)&\lra C_{*+1}(G;R) & D^\omega\colon C_*(G;R)&\lra C_{*+1}(G;R) 
\end{align*}
whose value on a generator $\sigma\in G(A)$ with $A=\{a_1,\ldots,a_m\}$ is
\begin{align*}
D(\sigma)&=\sum_{j=1}^{\ell}(-1)^{j}D_j(\sigma)&
D^\omega(\sigma)&=\sum_{j=\ell+1}^{m}(-1)^{j+1}D_j(\sigma),
\end{align*}
where $\ell$ is the number such that $a_\ell\leq n$ and $a_{\ell+1}>n$. We have
\begin{align*}
\partial\circ D + D\circ\partial  &= f\circ g-\Id &\partial\circ D^\omega+ D^\omega\circ\partial  &=  f^\omega\circ g^{\omega}-\Id.
\end{align*}
The first equation holds because the summands in both sides are paired as follows
\begin{align*}
(-1)^{k+j} \partial_k\circ D_j &= -(-1)^{j+k-1}D_{j-1}\circ\partial_k &\text{if $k< j,j+1$} \\
(-1)^{k+j} \partial_k\circ D_j &= -(-1)^{j+k-1}D_{j}\circ\partial_{k-1} &\text{if $k> j,j+1$} \\
(-1)^{2j} \partial_j\circ D_j &= -(-1)^{2j-1}\partial_j\circ D_{j-1} &\text{if $j\neq 1$}\\
(-1)^{2}\partial_1\circ D_1 &= f\circ g&\\
(-1)^{2\ell+1}\partial_{\ell+1}\circ D_\ell &= -\Id,&
\end{align*}
and the second equation is obtained similarly.

\subsection{Khovanov homology}\label{ssection:khovanov-khovanov}

Let $D$ be an oriented knot diagram with $c$ crossings and $n_-$ negative crossings whose crossings have been ordered. Let $\cF_{D}\colon \cube{c}\to \cB$ be the Khovanov functor of Lawson, Lipshitz and Sarkar \cite{LLS2015,LLS-cube}, from which one obtains the stable functor $(\cF_D,-n_-)$. If we write $C_*(D;\bF_2)$ for the dual of the Khovanov cochain complex of $D$ with $\bF_2$ coefficients, then 
\[C_*(D;\bF_2) = \Sigma^{-n_-}C_*(\cF_D;\bF_2).\] 
Applying the functor $\Lambda$ of Section \ref{ssection:lambda} to $\cF$, we obtain \ansob $X_\bullet$ and the following chain complexes are equal (see Section \ref{ssection:realisations})
\[C_*(X_\bullet;\bF_2) = \Sigma^{-1}C_*(\cF_D;\bF_2).\]
Now, after choosing and order on $X_\bullet$, the cochain complex $C^*(X_\bullet;\bF_2)$ becomes endowed with a symmetric multiplication via the \cupi{i}-products of Theorem \ref{thm:main}, and therefore so does the Khovanov cochain complex of $D$:
\[\smile_i\colon C^{p-n_-+1}(D;\bF_2)\otimes C^{q-n_-+1}(D;\bF_2)\lra C^{p+q-i-n_-+1}(D;\bF_2)\quad i\in \bZ.\]
As a consequence, the Khovanov homology of $D$ is enhanced with the Steenrod squares associated to this symmetric multiplication
\[\Sq^i\colon \Kh^{n-n_-+1}(D;\bF_2)\lra \Kh^{n+i-n_-+1}(D;\bF_2), \quad \Sq^i([\alpha]) = [\alpha\smile_{n-i} \alpha], \quad i\geq 0.\]

\begin{proposition}\cite[p.\ 14]{LLS-cube} If $D$ and $D'$ are two oriented link diagrams with ordered crossings and $n_-$ and $n'_-$ negative crossings, and $D$ and $D'$ are related by a Reidemeister move, then $(\cF_D,-n_-)$ and $(\cF_{D'},-n_-')$ are stably equivalent. 
\end{proposition}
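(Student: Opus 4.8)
The plan is to derive this statement by transcribing the Reidemeister invariance of the Burnside-categorical Khovanov functor of Lawson, Lipshitz and Sarkar into the language of stable functors reviewed above. The notions of stable functor, face inclusion, stable map and stable equivalence used here are those of \cite[Section~5]{LLS-cube}, so the argument mostly consists of checking that the assertion recorded in \cite[p.~14]{LLS-cube} is precisely the statement that $(\cF_D,-n_-)$ and $(\cF_{D'},-n'_-)$ are stably equivalent in the present sense. I would verify this move by move.

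For a Reidemeister III move one has $c=c'$ and $n_-=n'_-$, and \cite{LLS-cube} constructs a zig-zag of maps of cubes between $\cF_D$ and $\cF_{D'}$ in $\preshcube{\cube{c}}{\cB}$, each inducing a chain homotopy equivalence on $\Tot_\bZ$; this is a zig-zag of equivalences of cubes, hence of stable equivalences with identity face inclusion, of degree $0=r-r'$ where $r=-n_-$ and $r'=-n'_-$. For a Reidemeister I move the number of crossings changes by one and $|n_--n'_-|\in\{0,1\}$, and for a Reidemeister II move it changes by two and $|n_--n'_-|=1$; in both cases \cite{LLS-cube} identifies the Khovanov functor on the cube with more crossings, up to a zig-zag of equivalences of cubes and possibly through an auxiliary cube, with a pushforward of the Khovanov functor on the cube with fewer crossings along a face inclusion of the latter into the former of degree $|n_--n'_-|$. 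Since $|n_--n'_-|=|r-r'|$, this face inclusion has exactly the degree demanded of the face inclusion in a stable map between the correspondingly shifted stable functors, so assembling the zig-zag and recording the face inclusions yields a zig-zag of stable equivalences between $(\cF_D,-n_-)$ and $(\cF_{D'},-n'_-)$, which is the claim. By Corollary \ref{cor:steq}, this in turn guarantees that suitable common suspensions of $\Lambda(\cF_D)$ and $\Lambda(\cF_{D'})$ are equivalent \sobs, which is how the proposition feeds into the Reidemeister invariance of the Steenrod squares on $\Kh^*(L;\bF_2)$.

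The one point that genuinely requires care --- and which I expect to be the main obstacle --- is to confirm that the notion of equivalence tacitly used in \cite[p.~14]{LLS-cube} (a map of cubes whose realisation is a weak equivalence of spectra) agrees with ours (a map inducing a chain homotopy equivalence on $C_*(-;\bZ)$). This holds because the Khovanov cochain complex is a bounded complex of free abelian groups, so over $\bZ$ any homology isomorphism between such complexes is automatically a chain homotopy equivalence, and because the realisation functor realises $\Tot_\bZ$ on homology, as recalled in Section \ref{ssection:Khovanov-intro}: the weak equivalences of the bounded cellular spectra that occur here are exactly the homology isomorphisms. Granting this dictionary, the proposition is a direct restatement of \cite[p.~14]{LLS-cube}.
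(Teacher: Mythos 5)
The paper offers no proof of this proposition: it is imported verbatim from \cite[p.~14]{LLS-cube}, with the notions of stable functor, face inclusion and stable equivalence having been set up in the preceding subsection (following \cite[Section 5]{LLS-cube}) precisely so that the citation applies. Your move-by-move unpacking, together with the check that the equivalence notion of \cite{LLS-cube} matches the paper's chain-homotopy-equivalence notion on $C_*(-;\bZ)$ (legitimate, since the totalisations are bounded complexes of free abelian groups, so quasi-isomorphism implies chain homotopy equivalence), is therefore essentially the same approach as the paper, just with the compatibility of definitions made explicit rather than left implicit in the citation.
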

Note also that if $D$ is a link diagram with $c$ ordered crossings and $n_-$ negative crossings, and $D'$ is the same diagram but whose crossings have been ordered differently, then there is a permutation $\omega$ of $\{1,\ldots,c\}$ such that $\cF_{D'} = \omega_*\cF_{D'}$, hence $(\cF_{D'},n_-)$ and $(\cF_{D'},n_-)$ are stably equivalent. Therefore, by Corollary \ref{cor:steq}:
\begin{corollary} If $D$ and $D'$ are two oriented link diagrams with ordered crossings and $n_-$ and $n'_-$ negative crossings, and $D$ and $D'$ are related by a Reidemeister move or by a reordering of the crossings, then there is some $k\geq 0$ such that $\Sigma^{k-n_-}\Lambda(\cF_D)$ and $\Sigma^{k-n_-'}\Lambda(\cF_{D'})$ are equivalent.
\end{corollary}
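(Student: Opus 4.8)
The plan is to assemble the statement from three ingredients already in place: the Proposition of Lawson--Lipshitz--Sarkar quoted above, the remark on reordering crossings that immediately precedes it, and Corollary \ref{cor:steq}.

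First I would split into the two cases. If $D$ and $D'$ are related by a Reidemeister move, then \cite[p.~14]{LLS-cube} gives directly that the stable functors $(\cF_D,-n_-)$ and $(\cF_{D'},-n'_-)$ are stably equivalent. If instead $D'$ is $D$ with its crossings reordered, then reordering changes neither the underlying diagram nor the set of negative crossings, so $n_-=n'_-$, and as recalled above there is a permutation $\omega$ of $\{1,\ldots,c\}$ with $\cF_{D'}=\omega_*\cF_D$. Since $\omega$ is a face inclusion of degree $0$ whose pushforward $\omega_*$ is invertible (with inverse $(\omega^{-1})_*$), the identity is an equivalence of cubes $\omega_*\cF_D\to\cF_{D'}$, so $(\omega,\Id)$ is a stable equivalence $(\cF_D,-n_-)\to(\cF_{D'},-n_-)$. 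In either case $(\cF_D,-n_-)$ and $(\cF_{D'},-n'_-)$ are stably equivalent (via a zig-zag of stable equivalences, which is what Corollary \ref{cor:steq} takes as input).

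Then I would invoke Corollary \ref{cor:steq} with $r=-n_-$ and $r'=-n'_-$: there is some $k\geq 0$ such that $\Sigma^{-n_-+k}\Lambda(\cF_D)$ and $\Sigma^{-n'_-+k}\Lambda(\cF_{D'})$ are equivalent \sobs, which is precisely the assertion. There is no genuine obstacle here: the only point deserving a line of justification is checking that a reordering yields a stable \emph{equivalence} rather than merely a stable map, which follows from the invertibility of $\omega_*$; everything else is a direct concatenation of the cited results, with the bookkeeping of the suspension shifts being handled internally by Corollary \ref{cor:steq}.
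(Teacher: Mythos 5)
Your proposal is correct and follows essentially the same route as the paper, which derives the corollary by combining the quoted Lawson--Lipshitz--Sarkar proposition for Reidemeister moves, the observation that a reordering of crossings gives $\cF_{D'}=\omega_*\cF_D$ and hence a stable equivalence, and Corollary \ref{cor:steq} with $r=-n_-$, $r'=-n'_-$. Your extra line justifying that the reordering yields a genuine stable equivalence (via the identity map of cubes $\omega_*\cF_D\to\cF_{D'}$) is a harmless and slightly more careful spelling-out of the paper's remark.
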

\begin{corollary}\label{cor:citable} If $D$ is a link diagram with ordered crossings and $n_-$ negative crossings, then the Steenrod squares $\Sq^i$ applied to the \sob $\Lambda(\cF_D)$ give operations
\[\Sq^i\colon \Kh^{n-n_-+1}(D;\bF_2)\lra \Kh^{n+i-n_-+1}(D;\bF_2), \quad \Sq^i([\alpha]) = [\alpha\smile_{n-i} \alpha]\]
that are independent of the chosen diagram and the ordering of the crossings. 
\end{corollary}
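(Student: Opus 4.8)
The statement will follow by combining three ingredients already established: the equivalence of iterated suspensions of Khovanov \sobs from the Corollary preceding this one, the naturality of Steenrod squares under maps of \sobs (Theorem~\ref{thm:naturality-Steenrod}), and their stability under suspension (Proposition~\ref{prop:suspension}). The plan is as follows.

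First I would fix two oriented link diagrams $D,D'$ with ordered crossings and $n_-,n_-'$ negative crossings, related by a Reidemeister move or a reordering of the crossings, and fix arbitrary orders on the \sobs $\Lambda(\cF_D)$ and $\Lambda(\cF_{D'})$ (this is legitimate because, by Theorem~\ref{thm:naturality-Steenrod}, the Steenrod squares do not depend on the chosen order, so the ``ordering of $X_\bullet$'' ambiguity is already disposed of). By the preceding Corollary there is some $k\geq 0$ and an equivalence of \sobs between $\Sigma^{k-n_-}\Lambda(\cF_D)$ and $\Sigma^{k-n_-'}\Lambda(\cF_{D'})$, that is, a zig-zag of maps of \sobs each of which induces a chain homotopy equivalence on $\bF_2$-realisations. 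I would extend the chosen orders to all the intermediate \sobs in this zig-zag.

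Next, I would apply Theorem~\ref{thm:naturality-Steenrod} to each map $f$ in the zig-zag: one gets $f^*\Sq^i=\Sq^i f^*$ on $\bF_2$-cohomology, and $f^*$ is an isomorphism because $f$ is a chain homotopy equivalence. Composing the resulting isomorphisms along the zig-zag yields an isomorphism $H^*(\Sigma^{k-n_-'}\Lambda(\cF_{D'});\bF_2)\to H^*(\Sigma^{k-n_-}\Lambda(\cF_D);\bF_2)$ that intertwines the operations $\Sq^i$. Then I would invoke Proposition~\ref{prop:suspension}, which says the suspension isomorphism commutes with $\Sq^i$, to strip off the $k$ common suspensions and conclude that the Steenrod squares on $H^*(\Sigma^{-n_-}\Lambda(\cF_D);\bF_2)$ and on $H^*(\Sigma^{-n_-'}\Lambda(\cF_{D'});\bF_2)$ correspond under the induced isomorphism. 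Unwinding the identifications of Section~\ref{ssection:realisations} between $C_*(\Lambda(\cF_D);\bF_2)$, the (de)suspended Khovanov complex, and $C_*(D;\bF_2)$, this is exactly the asserted agreement of the operations $\Sq^i\colon \Kh^{n-n_-+1}(D;\bF_2)\to \Kh^{n+i-n_-+1}(D;\bF_2)$.

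The genuine content of the corollary has by this point been carried out in Theorem~\ref{thm:naturality-Steenrod}, Proposition~\ref{prop:suspension}, and the preceding Corollary; the only point requiring care is bookkeeping the two independent grading shifts — the $\Sigma^{-1}$ coming from $\Lambda$ versus the $\Sigma^{-n_-}$ coming from the orientation (and the auxiliary $\Sigma^k$) — so that the equivalence between the suspended auxiliary objects is transported to the precise cohomological degree in which Khovanov homology is defined. I expect this bookkeeping, rather than any new homotopical or combinatorial argument, to be the main thing to get right.
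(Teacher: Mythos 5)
Your proposal is correct and follows exactly the route the paper intends: Corollary \ref{cor:citable} is deduced from the preceding corollary (equivalence of $\Sigma^{k-n_-}\Lambda(\cF_D)$ and $\Sigma^{k-n_-'}\Lambda(\cF_{D'})$) together with Theorem \ref{thm:naturality-Steenrod} (naturality under maps of \sobs, hence invariance along the zig-zag and independence of the chosen order) and Proposition \ref{prop:suspension} (compatibility with suspension), followed by the degree bookkeeping of Section \ref{ssection:khovanov-khovanov}. Nothing essential differs from the paper's argument.
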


\section{Examples}\label{section:examples}
Write $|\cdot|_\bS$ for the functor labeled * in diagram \eqref{eq:picture}. We will use this functor together with the commutativity of the bottom square of diagram \eqref{eq:picture} to be able to discuss the expected values of the Steenrod squares in spectra (though the computations are independent of these discussions). 

\begin{example}\label{example:rp2} Consider the ordered span $f$ given by $\{x\}\la \{a,b\}\to\{y\}$, and construct the \osob $X_\bullet$ by declaring
\begin{align*}
X_{-1} &= \{y\} & X_0 &= \{x\} & X_i&=\emptyset\quad \text{for all } i>0\end{align*}
and that
\[\partial_0^0 =f\colon X_0\lra X_{-1}.\]
Then, we have that $C_*(X_\bullet;\bZ)$ is the chain complex
\[\ldots\lra 0\lra \bZ\langle x\rangle \overset{\cdot 2}{\lra} \bZ\langle y\rangle\lra 0\]
where $x$ is in degree $0$ and $y$ is in degree $-1$. The homology of this complex is concentrated in degree $-1$ and $H_{-1}(X_\bullet;\bZ) = \bZ_2$, therefore $|X_\bullet|_\bS$ is a Moore spectrum $M(\bZ_2,-1)$ for the group $\bZ_2$ in degree $-1$, and, by the uniqueness of Moore spectra, we have determined the homotopy type of $|X_\bullet|_\bS$. A model of $M(\bZ_2,-1)$ is, for example, $\Sigma^{-2}\Sigma^\infty\bR P^2$. 

The complex of cochains with $\bF_2$ coefficients is
\[0\lra \bF_2\langle y^*\rangle \overset{\cdot 2}{\lra} \bF_2\langle x^*\rangle\lra 0\lra\ldots\]
Let us compute $\Sq^1(y^*)$. As $y^*$ has degree $-1$, we have by definition that
\[\Sq^1(y^*) = y^*\smile_{-2}y^*\]
and 
\[\smile_{-2}\colon \bF_2\langle y^*\rangle\otimes \bF_2\langle y^*\rangle\lra \bF_2\langle x^*x\rangle\] is the dual of
\[\nabla_{-2}\colon \bF_2\langle x\rangle\to \bF_2\langle y\rangle\otimes \bF_2\langle y\rangle.\]
Now, $\nabla_{-2}$ restricted to $X_0$ is the \abelianisation{\bF_2} of 
\[\nabla^{\binom{0}{2}}=\sum_{U\in \APower_2(0)}\partial_{U^-}\wedge\partial_{U^+}\circ \Delta\]
Since $\APower_2(0)$ contains only the sequence $(0,0)$, we have 
\[\nabla_{-2} = \btoabfunctor{\bF_2}\left(\partial_0\wedge\partial_0\circ \Delta\right)\]
and, since $a<b$ and $n+|U^+|+|U^-|=0+1+1$ is even, we have that $\cO_{a,b}(\twoU)^+$ has a single element $(\emptyset,\{0\})$ and that $\cO_{b,a}(\twoU)^+$ is empty, therefore $\partial_0\wedge\partial_0$ is equivalent to the span 
\[\{x\}\times \{x\}\lla \{\star\}\lra \{y\}\times\{y\},\]
whose $\bF_2$-realisation is the homomorphism $(x,x)\mapsto (y,y)$. Therefore
\begin{align*}
\nabla_{-2}(x) &= \btoabfunctor{\bF_2}\left(\partial_0\wedge\partial_0\circ \Delta\right)(x) \\
&= \btoabfunctor{\bF_2}\left(\partial_0\wedge\partial_0\right)(x,x) \\
&= (y,y)
\end{align*}
so $y^*\smile_{-2}y^* = x^*$ and $\Sq^1(y^*) = x^*$, as it should be.
\end{example}
\begin{example}\label{example:join} Let us take now another copy $X'_\bullet$ of the object $X_\bullet$ studied in Example \ref{example:rp2} and take the join product $X_\bullet* X'_\bullet$, which is (Definition \ref{df:join}):
\[X_{1} = \{xx'\},\quad\quad X_{0} = \{yx',xy'\},\quad\quad  X_{-1} = \{yy'\}\]
with generalised face maps
\[
 \xymatrixrowsep{-.2cm}\xymatrix{
\partial^1_0 & xx' & \ar@{<->}[l]\{a,b\}\ar@{<->}[r]& yx' \\
 & & & \\
\partial^1_1 & xx' & \ar@{<->}[l]\{a',b'\}\ar@{<->}[r]& xy' \\
&&&\\
&&a'\ar@{|->}[dddr] \ar@{|->}[dl] & \\
&yx' && \\
&&b'\ar@{|->}[ul] \ar@{|->}[dr] & \\
\partial^0_0 &&&yy' \\
&&a\ar@{|->}[ur]\ar@{|->}[dl] & \\
&xy' & & \\
&&b\ar@{|->}[ul]\ar@{|->}[uuur] \\
\partial^1_{\{0,1\}} & xx' & \{aa',ab',ba',bb'\}\ar@{|->}[l]\ar@{|->}[r]& yy'}
\]
and we endow each span with the order that results from reading the entries left to right or up to down. This is summarised in the following diagram:

\[\xymatrix{
1&&xx'\ar@{<->}[dl]|-{\caja{\{a,b\}}}\ar@{<->}[dr]|-{\{a',b'\}} & & xx'\ar@{<->}[dd]|-{\{aa',ab',ba',bb'\}}\\
0&yx'\ar@{<->}[dr]|-{\{a',b'\}}&&xy'\ar@{<->}[dl]|-{\{a,b\}} &\\
-1&&yy'&& yy'.}
\]
The $2$-morphisms are:
\[\xymatrixrowsep{.15cm}\xymatrixcolsep{1.5cm}\xymatrix{\partial^1_0\times_{X_0}\partial^0_0 &\ar[l]_-{\mu^1_{0,1}} \partial_{\{0,1\}}\ar[r]^-{\mu^1_{1,0}} & \partial^1_1\times_{X_0}\partial^0_0 \\
(a,a')& aa' \ar@{|->}[l]\ar@{|->}[r]& (a',a) \\
(a,b')& ab' \ar@{|->}[l]\ar@{|->}[r]& (b',a) \\
(b,a')& ba' \ar@{|->}[l]\ar@{|->}[r]& (a',b) \\
(b,b')& bb' \ar@{|->}[l]\ar@{|->}[r]& (b',b)
}\]
By \eqref{eq:smash}, $|X_\bullet* X_\bullet'|_\bS\simeq \Sigma^{-3}\Sigma^\infty \bR P^2\wedge \bR P^2$, so we expect that $\Sq^2((yy')^*) = (xx')^*$. By definition, we have that \eqref{eq:squares}
\[\Sq^2((yy')^*) = (yy')^*\smile_{-3}(yy')^*\]
and that the operation $\smile_{-3}$ is dual to the operation $\nabla_{-3}$, whose restriction to $\bF_2\langle X_1\rangle$ is the \abelianisation{\bF_2} of 
\[\nabla^{\binom{1}{4}} = \sum_{U\in \APower_4(1)} \partial_{U^-}\wedge\partial_{U^+}.\]
Since $\APower_4(1)$ has only one element $U=(0,0,1,1)$, writing $\partial_{01}$ for $\partial^1_{\{0,1\}}$
\[\nabla_{-3} = \btoabfunctor{\bF_2}\left(\partial_{01}\wedge \partial_{01}\circ \Delta\right).\]
now, to compute this wedge product, 
\[\partial_{01}\wedge \partial_{01} = \sum_{(s,t)\in \partial_{01}\times\partial_{01}} \cO_{s,t}(\twoU)^+\]
we need to compute for each $(s,t)\in \partial_{01}\times\partial_{01}$
\[\cO_{s,t}(\twoU)^+ = \{\text{positive maximal chains that are $(s,t)$-good}\}\]
All maximal chains have length $2$, so they take the form $(W_1^\shortparallel, W_1^\smallwedge)\prec(\emptyset,\{0,1\})$. Every element $(s,t)$ with $s\neq t$ has two $(s,t)$-good maximal chains: one in which either $W_1^\shortparallel = \{0\}$ or $W^\shortparallel_1 = \emptyset$ and $W_1^\smallwedge = \{0\}$; and another in which either $W_1^\shortparallel = \{1\}$ or $W^\shortparallel_1 = \emptyset$ and $W_1^\smallwedge = \{1\}$. We call the first maximal chain ``left'' and the second ``right''. By definition, since $n+|U^-|+|U^+| = 1+2+2 =5$, we have that $(W^\shortparallel,W^\smallwedge)$ is a positive $(s,t)$-good pair if
\begin{align*}
\lambda_{W^\shortparallel}(s) &= \lambda_{W^\shortparallel}(t) & \lambda_{W^\shortparallel, W^\smallwedge}(s) &> \lambda_{W^\shortparallel,W^\smallwedge}(t)
\end{align*}
Now, $\partial_{01}\times \partial_{01}$ has $4^2$ elements. The elements in the diagonal have no maximal chains, so $\cO_{s,s}(\twoU)^+=\emptyset$ for them. For the remaining $12$, we have that for half of them (those $(s,t)$ such that $s<t$) the pair $(\emptyset,\{0,1\})$ is negative, so $\cO_{s,t}(\twoU)^+=\emptyset$ for them too. For the remaining $6$ elements, the pair $(\emptyset,\{0,1\})$ is positive, and in the following table we give the positivity of the middle pair $(W_1^\shortparallel, W_1^\smallwedge)$ of the left and right maximal chains of the remaining $6$ elements. As each maximal chain is determined by its first pair $(W_1^\shortparallel,W_1^\smallwedge)$, we give only this datum.
\[\begin{array}{|cccc|} \hline
(s,t) & \text{max. chain (left)} & \text{max. chain (right)}&\text{positivity}  \\ \hline
(bb',ba')& (\{0\},\{1\}) & (\emptyset,\{1\}) & +,+ \\
(bb',ab')& (\emptyset, \{0\}) & (\{1\},\{0\}) & +,+ \\
(bb',aa')& (\emptyset, \{0\}) & (\emptyset, \{1\}) & +,+ \\
(ba',ab')& (\emptyset, \{0\}) & (\emptyset, \{1\}) & +,- \\
(ba',aa')& (\emptyset, \{0\}) & (\{1\}, \{0\}) & +,+ \\
(ab',aa')& (\{0\},\{1\}) & (\emptyset, \{1\}) & +,+ \\ \hline
\end{array}\]
For example, for $(s,t) = (bb',ab')$, we have that
\begin{align*}
\lambda_0(bb') &= b >a =\lambda_0(ab') & &\Rightarrow\quad (\emptyset,\{0\}) \text{ is good} \\
\lambda_1(bb') &= b'=b'=\lambda_1(ab'), \lambda_{1,0}(bb') = b>a=\lambda_{1,0}(ab')& &\Rightarrow \quad (\{1\},\{0\})\text{ is good}
\end{align*}
and both are positive, and if $(s,t) = (ba',ab')$, we have that
\begin{align*}
\lambda_0(ba') &= b >a =\lambda_0(ab') & &\Rightarrow\quad (\emptyset,\{0\}) \text{ is good and positive} \\
\lambda_1(ba') &= a'<b'=\lambda_1(ab'), &&\Rightarrow \quad (\emptyset,\{1\})\text{ is good and negative}.
\end{align*}
As a consequence, there are $11$ positive maximal chains, so $\partial_{01}\wedge\partial_{01}$ has odd cardinality, so $\btoab{\bF_2}{\partial_{01}\wedge\partial_{01}\circ \Delta}$ is the homomorphism that sends $xx'$ to $yy'\otimes yy'$. Therefore, 
\begin{align*}
\nabla_{-3}(xx') &= yy', & (yy')^*\smile_{-3}(yy')^* &= (xx')^*, & \Sq^2((yy')^*) &= (xx')^*.\end{align*}
\end{example}
\begin{example}
Iterating the construction of the previous example, we obtain models of $\Sigma^{-k-1}\Sigma^\infty\underbrace{\bR P^2\wedge \bR P^2\wedge\ldots \bR P^2}_{k}$, for which the operation $\Sq^k$ applied to the element in degree $-1$ is non-trivial. This operation comes from a \cupi{i}product of degree $-2k$, so these iterations give examples of non-trivial \cupi{i}operations of all negative degrees.
\end{example}

\begin{example} Let us consider now an ordered \sob $X_\bullet$ with
\begin{align*}X_{-1}&=\{a\} &X_{0} &= \{b_1,\ldots,b_k\}& X_{1}&=\{c\}& X_i&=\emptyset \quad i>1\end{align*}
Endow $\partial^0_0\circ \partial^1_0$ and $\partial^0_0\circ \partial^1_1$ with the lexicographic order. Let $N$ be the cardinal of $\partial^1_{\{0,1\}}$. Using the ordering, we can identify $\mu^1_{0,1}$ and $\mu^1_{1,0}$ as elements of the symmetric group on $N$ letters, and we let $\sign(\mu^1_{0,1})$ and $\sign(\mu^1_{1,0})$ be the sign of these permutations. We claim that
\begin{equation}\label{eq:last}\nabla_{-3}(c) =  \left( \sign(\mu^1_{0,1}) + \sign(\mu^1_{1,0})\right)\cdot a\otimes a.\end{equation}
so
\begin{equation}\Sq^2(a^*) =  \left( \sign(\mu^1_{0,1}) + \sign(\mu^1_{1,0})\right)\cdot c^*.\end{equation}
To prove \eqref{eq:last}, note first that, as in Example \ref{example:join}, $\APower_4(1)$ has only one sequence $U = (0,0,1,1)$, so
\[\nabla_{-3} = \btoab{\bF_2}{\nabla^{\binom{1}{4}}\circ \Delta_1} = \btoab{\bF_2}{\partial^1_{\{0,1\}}\wedge\partial^1_{\{0,1\}}\circ\Delta_1}\]
To compute $\partial^1_{\{0,1\}}\wedge\partial^1_{\{0,1\}}$, note that, for each $(s,t)\in \partial^1_{\{0,1\}}\times \partial^1_{\{0,1\}}$ the set $\cO_{s,t}(\twoU)$ has exactly two elements (cf.\ Example \ref{example:join}): the left and the right. The left element is positive if $s>t$ and $\mu^1_{0,1}(s)>\mu^1_{0,1}(t)$ in the lexicographic order. The right element is positive if $s>t$ and $\mu^1_{1,0}(s)>\mu^1_{1,0}(t)$. 

Assume first that $\mu^1_{0,1}$ and $\mu^1_{1,0}$ are the identity permutations. Then, for each $(s,t)$, either $s>t$, in which case both the left and right chains are positive, or $s<t$, in which case none of them is positive. Therefore, $\cO_{s,t}(\twoU)^+$ has even cardinality and $\partial^1_{\{0,1\}}\wedge\partial^1_{\{0,1\}}\igualdos \emptyset$, so \eqref{eq:last} holds in this case.

Assume now that \eqref{eq:last} holds for some permutations $\mu^1_{0,1}$ and $\mu^1_{1,0}$, and let $\bmu^1_{0,1}$ be the result of changing $\mu^1_{0,1}$ by a transposition of two consecutive elements $s,t\in \partial_{\{0,1\}}$ with $s<t$, i.e.,
\begin{align*}
\bmu^1_{0,1}(s) &= \mu^1_{0,1}(t) &
\bmu^1_{0,1}(t) &= \mu^1_{0,1}(s) &
\bmu^1_{0,1}(r) &= \mu^1_{0,1}(r)\quad r\neq s,t.
\end{align*}
Then, for any $(r,r')$ different from $(s,t)$ or $(t,s)$, the positivity of the maximal $(r,r')$-good chains remains the same. Additionally, no $(t,s)$-good maximal chain in $\cO_{t,s}(\twoU)^+$ was positive for, and none of them becomes positive, because $t>s$. On the other hand, the right $(s,t)$-good maximal chain is as positive as before whereas the left $(s,t)$-good maximal chain changes its positivity. Therefore the parity of $\partial^1_{\{0,1\}}\wedge\partial^1_{\{0,1\}}$ changes by one, so the cup-$i$ product in the new semi-simplicial object changed by one, as does the sign of the permutation $\bmu^1_{0,1}$. A symmetric argument shows that the same holds for $\mu^1_{1,0}$, and since the symmetric group is generated by transpositions of consecutive elements, we conclude that \eqref{eq:last} holds always.

This simple formula in terms of the sign of the permutations raises the following question:
\begin{quote} \emph{Is it possible to give an interpretation of the products $\partial_U\wedge\partial_V$ in terms of the homology of the permutation groups $\Sigma_k$ with $\bF_2$ coefficients?}
\end{quote}

\end{example}

\begin{example} We know compute a second Steenrod square in the Khovanov homology of the disjoint union of two right-handed trefoils $T \amalg T$ (cf.\ \cite[p.\ 60]{LLS2015}). In this example and the next one we assume that the reader is familiar with Khovanov homology and with the Khovanov functor of Lawson, Lipshitz and Sarkar \cite{LLS2015,LLS-cube}. We will use the following knot diagram $D$ of $T\amalg T$:

\bigskip

\begin{center}
\def\svgwidth{.4\columnwidth}
\begingroup%
  \makeatletter%
  \providecommand\color[2][]{%
    \errmessage{(Inkscape) Color is used for the text in Inkscape, but the package 'color.sty' is not loaded}%
    \renewcommand\color[2][]{}%
  }%
  \providecommand\transparent[1]{%
    \errmessage{(Inkscape) Transparency is used (non-zero) for the text in Inkscape, but the package 'transparent.sty' is not loaded}%
    \renewcommand\transparent[1]{}%
  }%
  \providecommand\rotatebox[2]{#2}%
  \newcommand*\fsize{\dimexpr\f@size pt\relax}%
  \newcommand*\lineheight[1]{\fontsize{\fsize}{#1\fsize}\selectfont}%
  \ifx\svgwidth\undefined%
    \setlength{\unitlength}{201.2421118bp}%
    \ifx\svgscale\undefined%
      \relax%
    \else%
      \setlength{\unitlength}{\unitlength * \real{\svgscale}}%
    \fi%
  \else%
    \setlength{\unitlength}{\svgwidth}%
  \fi%
  \global\let\svgwidth\undefined%
  \global\let\svgscale\undefined%
  \makeatother%
  \begin{picture}(1,0.46919232)%
    \lineheight{1}%
    \setlength\tabcolsep{0pt}%
    \put(0,0){\includegraphics[width=\unitlength,page=1]{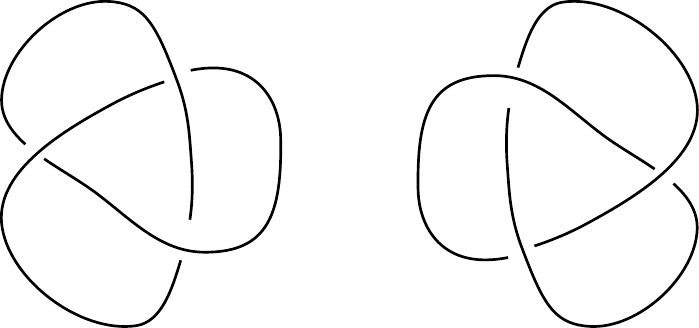}}%
  \end{picture}%
\endgroup%

\end{center}

\bigskip

\noindent 
After choosing an ordering of the crossings, its $1$-resolution is 

\bigskip

\begin{center}
\centering
\def\svgwidth{.4\columnwidth}
\begingroup%
  \makeatletter%
  \providecommand\color[2][]{%
    \errmessage{(Inkscape) Color is used for the text in Inkscape, but the package 'color.sty' is not loaded}%
    \renewcommand\color[2][]{}%
  }%
  \providecommand\transparent[1]{%
    \errmessage{(Inkscape) Transparency is used (non-zero) for the text in Inkscape, but the package 'transparent.sty' is not loaded}%
    \renewcommand\transparent[1]{}%
  }%
  \providecommand\rotatebox[2]{#2}%
  \newcommand*\fsize{\dimexpr\f@size pt\relax}%
  \newcommand*\lineheight[1]{\fontsize{\fsize}{#1\fsize}\selectfont}%
  \ifx\svgwidth\undefined%
    \setlength{\unitlength}{868.38847657bp}%
    \ifx\svgscale\undefined%
      \relax%
    \else%
      \setlength{\unitlength}{\unitlength * \real{\svgscale}}%
    \fi%
  \else%
    \setlength{\unitlength}{\svgwidth}%
  \fi%
  \global\let\svgwidth\undefined%
  \global\let\svgscale\undefined%
  \makeatother%
  \begin{picture}(1,0.36892177)%
    \lineheight{1}%
    \setlength\tabcolsep{0pt}%
    \put(0,0){\includegraphics[width=\unitlength,page=1]{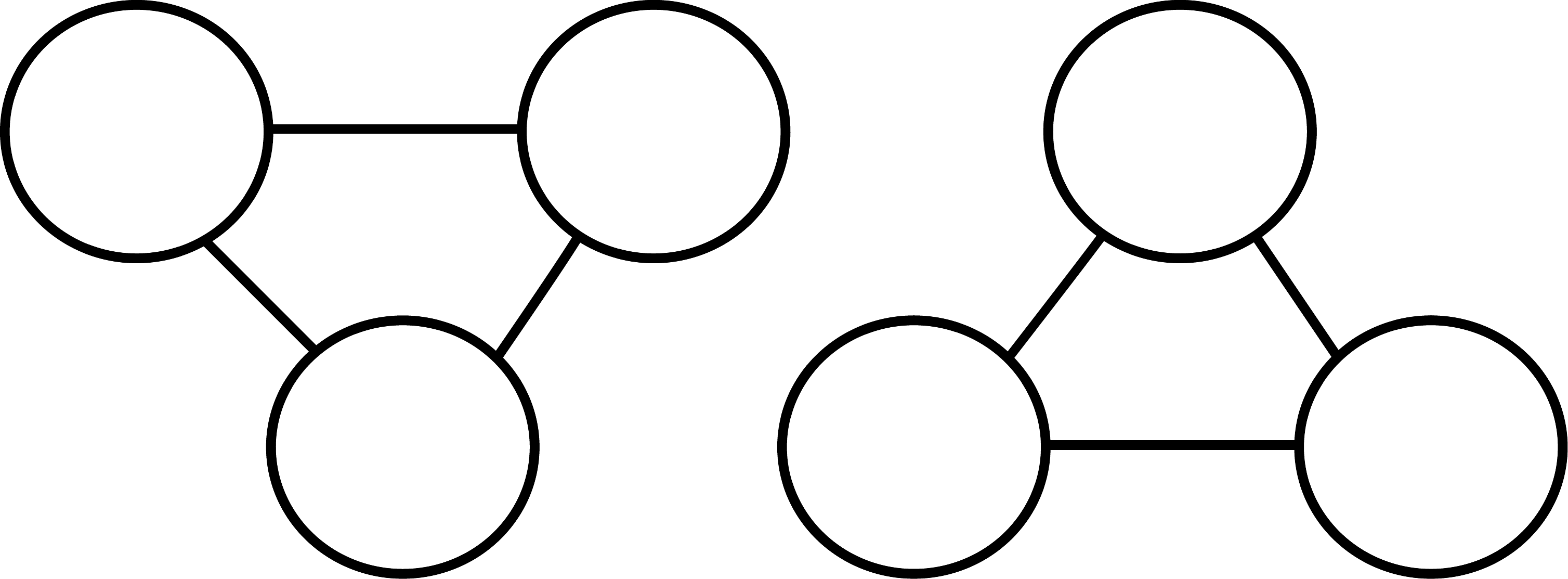}}%
    \put(0.23645619,0.31004205){\color[rgb]{0,0,0}\makebox(0,0)[lt]{\lineheight{12.375}\smash{\begin{tabular}[t]{l}$0$\end{tabular}}}}%
    \put(0.10954382,0.14563285){\color[rgb]{0,0,0}\makebox(0,0)[lt]{\lineheight{12.375}\smash{\begin{tabular}[t]{l}$1$\end{tabular}}}}%
    \put(0.36116884,0.14565915){\color[rgb]{0,0,0}\makebox(0,0)[lt]{\lineheight{12.375}\smash{\begin{tabular}[t]{l}$2$\end{tabular}}}}%
    \put(0.72563703,0.02647227){\color[rgb]{0,0,0}\makebox(0,0)[lt]{\lineheight{12.375}\smash{\begin{tabular}[t]{l}$3$\end{tabular}}}}%
    \put(0.6237241,0.1844318){\color[rgb]{0,0,0}\makebox(0,0)[lt]{\lineheight{12.375}\smash{\begin{tabular}[t]{l}$4$\end{tabular}}}}%
    \put(0.84136863,0.1844318){\color[rgb]{0,0,0}\makebox(0,0)[lt]{\lineheight{12.375}\smash{\begin{tabular}[t]{l}$5$\end{tabular}}}}%
    \put(0.06406672,0.27093469){\color[rgb]{0,0,0}\makebox(0,0)[lt]{\lineheight{12.375}\smash{\begin{tabular}[t]{l}$a$\end{tabular}}}}%
    \put(0.23766417,0.0646663){\color[rgb]{0,0,0}\makebox(0,0)[lt]{\lineheight{12.375}\smash{\begin{tabular}[t]{l}$b$\end{tabular}}}}%
    \put(0.40089757,0.27093469){\color[rgb]{0,0,0}\makebox(0,0)[lt]{\lineheight{12.375}\smash{\begin{tabular}[t]{l}$c$\end{tabular}}}}%
    \put(0.73772838,0.27093469){\color[rgb]{0,0,0}\makebox(0,0)[lt]{\lineheight{12.375}\smash{\begin{tabular}[t]{l}$e$\end{tabular}}}}%
    \put(0.55635794,0.06514605){\color[rgb]{0,0,0}\makebox(0,0)[lt]{\lineheight{12.375}\smash{\begin{tabular}[t]{l}$d$\end{tabular}}}}%
    \put(0.89318876,0.06495997){\color[rgb]{0,0,0}\makebox(0,0)[lt]{\lineheight{12.375}\smash{\begin{tabular}[t]{l}$f$\end{tabular}}}}%
  \end{picture}%
\endgroup%

\end{center}

\bigskip

Let $C^{*,*}(D;\bF_2)$ be the Khovanov complex with $\bF_2$ coefficients associated to this diagram, which is concentrated in degrees $0,1,2,3,4,5$ and $6$ because the number of negative crossings $n_-$ of this diagram is $0$. Figure \ref{figure:2trefoil} shows the cube of resolutions in degrees $4,5$ and $6$, and below are the ranks of the subcomplex $C^{14,*}(D;\bF_2)$ generated by the generators of quantum grading $14$ and its homology:
\[\begin{array}{|r|llll|}
\hline
& 6 & 5 & 4 & 3 \\ \hline 
&&&&\\   [-.3cm]
C^{14,*}(D;\bF_2) &\bF_2^{15}&\bF_2^{30}&\bF_2^{15}&\bF_2^2\\
\Kh^{14,*}(D;\bF_2)&\bF_2&\bF_2^4&\bF_2&0 \\ \hline
\end{array}
\]
 Consider now the Khovanov functor $\cF^{14}\colon \cube{6}\to \cB$ for $D$ in quantum grading $14$, and let $X_\bullet = \Lambda(\cF^{14})$ be the associated \sob (see Section \ref{ssection:lambda}). Let $C_*$ be the $\bF_2$-realisation of $X_\bullet$, which by construction is the one-fold desuspension of the chain complex $\Tot_{\bF_2}(\cF^{14})$, whose $n_-$-desuspension is the Khovanov complex. Since $n_-=0$, we have that the dual complex of $C_*$ is: 
\[C^* \cong \Sigma^{-1}C^{14,*}(D;\bF_2).\]

\begin{center}
\begin{figure}
\footnotesize
\def\svgwidth{.84\columnwidth}
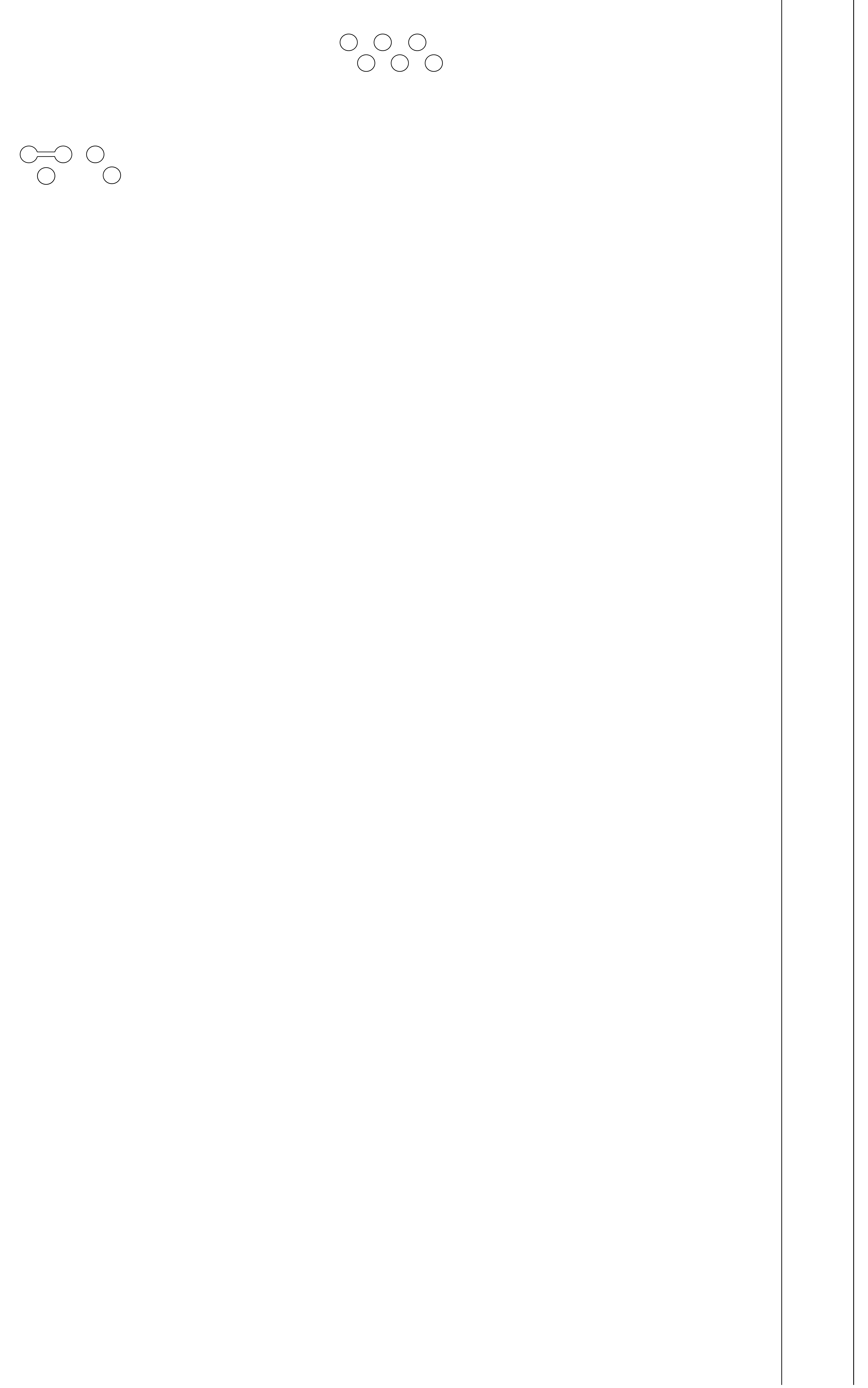
\caption{Cube of resolutions of $D$ in homological degrees $4,5$ and $6$ (column $h$), which under the semi-simplicial convention are degrees $3,4$ and $5$ (column $n$). Each arrow $a<b$ is labeled with the face map $\Lambda(a<b)$. The cube is splitted in four rows to avoid cluttering. The edges are oriented downwards instead of upwards, as in \cite{LLS2015} and \cite{LLS-cube}. We name the six circles in $D$ as $a,b,c,d,e,f$ as shown in the bottom left part of the figure.}
\label{figure:2trefoil}
\end{figure}
\end{center}

The generators of $C^{14,*}(D;\bF_2)$ are as follows:
\begin{itemize}
\item a generator in homological degree $6$ (semi-simplicial degree $5$) enhances two circles with $x_-$ and four circles with $x_+$.
\item a generator in homological degree $5$ (semi-simplicial degree $4$) enhances one circle with $x_-$ and four circles with $x_+$.
\item a generator in homological degree $4$ (semi-simplicial degree $3$) enhances each of the four circles with $x_+$.
\end{itemize}
Hence, there is a unique generator $\z_{u}$ in quantum grading $14$ in each vertex $u$ with semi-simplicial grading $3$, which enhances every circle with $x_+$. Let us compute the second Steenrod square of the following cocycle
\[\alpha= \z_{011110}^* + \z_{011011}^* + \z_{110011}^*+ \z_{110110}^*.\]
As $\alpha$ has semi-simplicial degree $3$, by definition
\[\Sq^2([\alpha]) = [\alpha\smile_{3-2}\alpha].\]
Now, the $\smile_1$ product is dual to $\nabla_{1}$, and we want to compute it on the generators of $C_*$ of semi-simplicial degree $5$ (which is where the second Steenrod square of $\alpha$ lives), and again, by definition,
\[\nabla_1|_{C_5} = \btoab{\bF_2}{\nabla^{\binom{5}{4}}\circ \Delta_5}.\]
and
\[\nabla^{\binom{5}{4}} = \sum_{U\in \APower_4(5)}{\partial^5_{U^-}\wedge\partial^5_{U^+}}.\]
We only need to look at those $U$'s such that $|U^-|=|U^+|=2$ because we are feeding both sides of $\smile_1$ with the cochain $\alpha$ of semi-simplicial degree $3$. Moreover, every face map in the cube is a merging, so it defines a span that is of the form $A\overset{=}{\leftarrow}A\to B$, i.e., a function of sets. Therefore, by Corollary \ref{cor:sset}, every summand indexed by a $U\in \APower_4(5)$ with $\twoU\neq\emptyset$ is trivial. There are exactly three sequences in $\APower_4(5)$ for which $|U^-|=|U^+|=2$ and $\twoU=\emptyset$, which are:
\begin{align*}
U\quad && U^- && U^+ \\
(0,1,3,4)&& (0,1) && (3,4) \\
(0,2,3,5)&& (0,5) && (2,3) \\
(1,2,4,5)&& (4,5) && (1,2).
\end{align*}
Write $\partial^5_{ij}$ for $\partial^5_{\{i,j\}}$. First, note that the span $\partial^5_{01}$ is non-trivial on a generator $\z$ if and only if $\z$ enhances two of the circles $a,b,c$ with a $x_-$. Similarly, the span $\partial^5_{34}$ is non-trivial on a generator $\z$ if and only if $\z$ enhances two of the circles $d,e,f$ with a $x_-$. Since these two conditions are mutually excluding, we have that $\left(\partial^5_{01}\wedge\partial^5_{34}\right)\circ\Delta_5$ is the empty span. A similar argument shows that $\partial^5_{45}\wedge\partial^5_{12}\circ\Delta_5$ is trivial as well. The remaining span is $\partial^5_{05}\wedge\partial^5_{23}\circ\Delta_5$. The span $\partial^5_{05}$ is non-trivial on a generator $\z$ if and only if $\z$ enhances one of the circles $a,c$ and one of the circles $e,f$ with a $x_-$. The span $\partial^5_{23}$ is non-trivial on a generator $\z$ if and only if $\z$ enhances two of the circles $b,c$ and one of the circles $d,f$ with an $x_-$. As a consequence, the span $\partial^5_{05}\wedge\partial^5_{23}\circ\Delta_5$ is non-trivial only on the generator $\x_{c,f}$ that enhances the circles $c,f$ with an $x_-$ and every other circle with an $x_+$. The target of the span is precisely the singleton $\{(\z_{u}, \z_v)\}$, with $u = 011110$ and $v= 110011$. Therefore we have that for each generator $\x$ of $C_5$,
\[\nabla_{1}(\x) = \begin{cases}
0& \text{ if $\x\neq \x_{c,f}$} \\ \z_u\otimes \z_v&\text{ if $\x=\x_{c,f}$.}\end{cases}\]
Since $\alpha(\z_u) = \alpha(\z_v) = 1$, we have that
\[\left(\alpha\smile_1\alpha\right)(\z) = \begin{cases}
0 &\text{ if $\x\neq \x_{c,f}$} \\
1 &\text{ if $\x= \x_{c,f}$} 
\end{cases}\]
and therefore letting $\beta$ be the dual of $\x_{c,f}$, we have
\[\Sq^2([\alpha]) = [\beta].\]
Since $[\beta]$ is a non-zero generator of $\Kh^{14,6}(D;\bF_2)$, we additionally deduce that $\Sq^2$ is non-trivial on $[\alpha]$.

\end{example}

\begin{example} In the previous example, every map involved was merging. That simplified drastically the computations because the span induced by a merging is free. In this example we introduce several splittings, which will give rise to non-free spans. Consider the following diagram $D$ of the unlink:

\begin{center}
\footnotesize
\def\svgwidth{.2\columnwidth}
\begingroup%
  \makeatletter%
  \providecommand\color[2][]{%
    \errmessage{(Inkscape) Color is used for the text in Inkscape, but the package 'color.sty' is not loaded}%
    \renewcommand\color[2][]{}%
  }%
  \providecommand\transparent[1]{%
    \errmessage{(Inkscape) Transparency is used (non-zero) for the text in Inkscape, but the package 'transparent.sty' is not loaded}%
    \renewcommand\transparent[1]{}%
  }%
  \providecommand\rotatebox[2]{#2}%
  \newcommand*\fsize{\dimexpr\f@size pt\relax}%
  \newcommand*\lineheight[1]{\fontsize{\fsize}{#1\fsize}\selectfont}%
  \ifx\svgwidth\undefined%
    \setlength{\unitlength}{327.55474011bp}%
    \ifx\svgscale\undefined%
      \relax%
    \else%
      \setlength{\unitlength}{\unitlength * \real{\svgscale}}%
    \fi%
  \else%
    \setlength{\unitlength}{\svgwidth}%
  \fi%
  \global\let\svgwidth\undefined%
  \global\let\svgscale\undefined%
  \makeatother%
  \begin{picture}(1,1.12482503)%
    \lineheight{1}%
    \setlength\tabcolsep{0pt}%
    \put(0,0){\includegraphics[width=\unitlength,page=1]{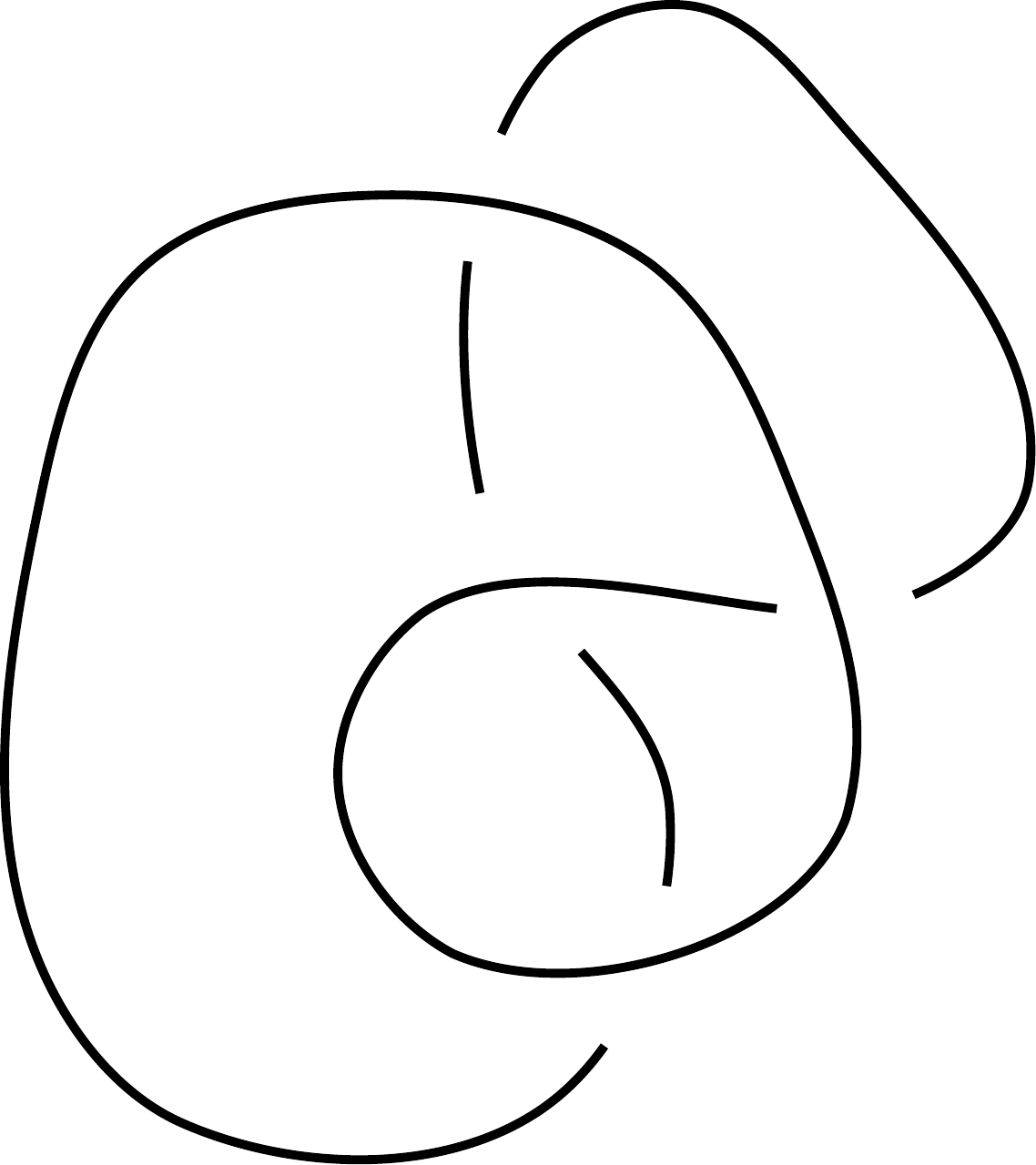}}%
  \end{picture}%
\endgroup%

\end{center}
whose $1$-resolution is
\begin{center}
\footnotesize
\def\svgwidth{.1\columnwidth}
\begingroup%
  \makeatletter%
  \providecommand\color[2][]{%
    \errmessage{(Inkscape) Color is used for the text in Inkscape, but the package 'color.sty' is not loaded}%
    \renewcommand\color[2][]{}%
  }%
  \providecommand\transparent[1]{%
    \errmessage{(Inkscape) Transparency is used (non-zero) for the text in Inkscape, but the package 'transparent.sty' is not loaded}%
    \renewcommand\transparent[1]{}%
  }%
  \providecommand\rotatebox[2]{#2}%
  \newcommand*\fsize{\dimexpr\f@size pt\relax}%
  \newcommand*\lineheight[1]{\fontsize{\fsize}{#1\fsize}\selectfont}%
  \ifx\svgwidth\undefined%
    \setlength{\unitlength}{37.76276501bp}%
    \ifx\svgscale\undefined%
      \relax%
    \else%
      \setlength{\unitlength}{\unitlength * \real{\svgscale}}%
    \fi%
  \else%
    \setlength{\unitlength}{\svgwidth}%
  \fi%
  \global\let\svgwidth\undefined%
  \global\let\svgscale\undefined%
  \makeatother%
  \begin{picture}(1,1.35661388)%
    \lineheight{1}%
    \setlength\tabcolsep{0pt}%
    \put(0,0){\includegraphics[width=\unitlength,page=1]{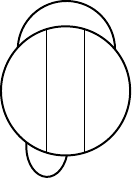}}%
  \end{picture}%
\endgroup%

\end{center}
We take the following order on the chords of the $1$-resolution:
\begin{center}
\footnotesize
\def\svgwidth{.1\columnwidth}
\begingroup%
  \makeatletter%
  \providecommand\color[2][]{%
    \errmessage{(Inkscape) Color is used for the text in Inkscape, but the package 'color.sty' is not loaded}%
    \renewcommand\color[2][]{}%
  }%
  \providecommand\transparent[1]{%
    \errmessage{(Inkscape) Transparency is used (non-zero) for the text in Inkscape, but the package 'transparent.sty' is not loaded}%
    \renewcommand\transparent[1]{}%
  }%
  \providecommand\rotatebox[2]{#2}%
  \newcommand*\fsize{\dimexpr\f@size pt\relax}%
  \newcommand*\lineheight[1]{\fontsize{\fsize}{#1\fsize}\selectfont}%
  \ifx\svgwidth\undefined%
    \setlength{\unitlength}{44.19055529bp}%
    \ifx\svgscale\undefined%
      \relax%
    \else%
      \setlength{\unitlength}{\unitlength * \real{\svgscale}}%
    \fi%
  \else%
    \setlength{\unitlength}{\svgwidth}%
  \fi%
  \global\let\svgwidth\undefined%
  \global\let\svgscale\undefined%
  \makeatother%
  \begin{picture}(1,1.58390712)%
    \lineheight{1}%
    \setlength\tabcolsep{0pt}%
    \put(0,0){\includegraphics[width=\unitlength,page=1]{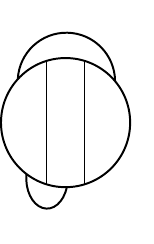}}%
    \put(0.14238561,0.68239837){\color[rgb]{0,0,0}\makebox(0,0)[lt]{\lineheight{12.375}\smash{\begin{tabular}[t]{l}$0$\end{tabular}}}}%
    \put(0.5836568,0.68239837){\color[rgb]{0,0,0}\makebox(0,0)[lt]{\lineheight{12.375}\smash{\begin{tabular}[t]{l}$1$\end{tabular}}}}%
    \put(0.4090877,1.40976782){\color[rgb]{0,0,0}\makebox(0,0)[lt]{\lineheight{12.375}\smash{\begin{tabular}[t]{l}$3$\end{tabular}}}}%
    \put(0.21209171,0.03988851){\color[rgb]{0,0,0}\makebox(0,0)[lt]{\lineheight{12.375}\smash{\begin{tabular}[t]{l}$2$\end{tabular}}}}%
  \end{picture}%
\endgroup%

\end{center}
Let $C^{*,*}(D;\bF_2)$ be the Khovanov complex with $\bF_2$ coefficients associated to this diagram, which is concentrated in degrees $-2,-1,0,1$ and $2$ because the number of negative crossings $n_-$ of this diagram is $2$. Figure \ref{figure:unlink} shows its cibe of resolutions. Let $C^{3,*}(D;\bF_2)$ be the subcomplex generated by the generators of quantum grading $1$, which is concentrated in homological degrees $-1,0,1,2$ where it attains the following ranks:
\begin{align*}
C^{1,2}(D;\bF_2) &\cong \bF_2& C^{1,1}(D;\bF_2) &\cong \bF_2^{8}& C^{1,0}(D;\bF_2) &\cong \bF_2^{12} & C^{1,-1}(D;\bF_2) &\cong \bF_2^{4}.
\end{align*}
The homology of this subcomplex is the Khovanov homology of the unlink in quantum grading $1$, which takes the following values:
\begin{align*}
\Kh^{1,i}(D;\bF_2) &\cong \begin{cases}
\bF_2 & \text{if $i=0$}\\
0 & \text{if $i\neq 0$}
\end{cases} 
\end{align*}
Consider now the Khovanov functor $\cF^{1}\colon \cube{4}\to \cB$ for $D$ in quantum grading $1$, and let $X_\bullet = \Lambda(\cF^{1})$ be the associated \sob (see Section \ref{ssection:lambda}). Let $C_*$ be the $\bF_2$-realisation of $X_\bullet$, which by construction is the one-fold desuspension of the chain complex $\Tot_{\bF_2}(\cF^{1})$, whose $n_-$-desuspension is the Khovanov complex. Since $n_-=2$, we have that the dual complex of $C_*$ is: 
\[C^* \cong \Sigma C^{1,*}(D;\bF_2).\]

Figure \ref{figure:unlink} shows the cube of resolutions of $D$. We order the circles at each vertex of the cube as follows: if the circles are not nested, order them left-to-right, if the circles are nested, order them outside-to-inside. This rule is well-defined except at the vertices $1001$ and $1100$, where we choose any order, as it will be irrelevant in the computation. 

\begin{center}
\begin{figure}
\footnotesize
\def\svgwidth{\columnwidth}
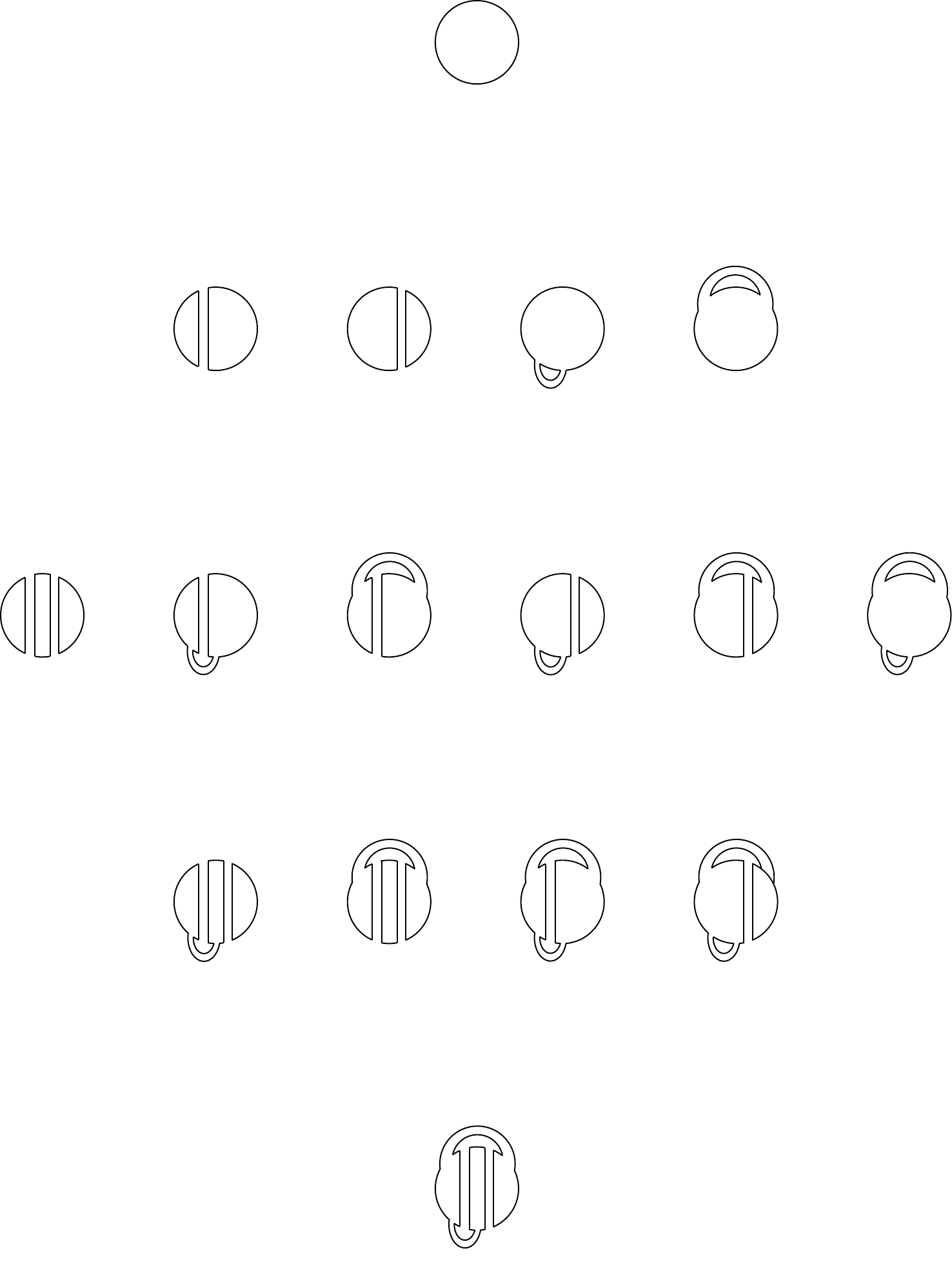
\caption{Cube of resolutions of $D$. We label each edge $u<v$ with the face map $\Lambda(u<v)$ and with a number $k$ that indicates which coordinate of the cube is changed along each edge. Note that we write the cube following the convention of \cite{LLS2015} and \cite{LLS-cube}, so the face maps go downwards instead of upwards.}
\label{figure:unlink}
\end{figure}

\end{center}

The generators of $C^{1,*}(D;\bF_2)$ are as follows:
\begin{itemize}
\item there is a single generator $\gen{x}$ in homological degree $2$ (semi-simplicial degree $3$) that enhances the only circle with $x_-$. 
\item Each vertex $u$ in homological degree $1$ (semi-simplicial degree $2$) contributes with two generators, each of them enhances one circle with $x_-$ and the other circle with $x_+$. For the $j$th vertex (read left to right, $j=1\ldots 4$), let $\gen{y}_{j,1}$ be the generator that enhances the first circle with $x_-$ and let $\gen{y}_{j,2}$ be the other generator.
\item Read left to right, the first, fourth and sixth vertices in homological degree $0$ (semi-simplicial degree $1$) contribute with three generators, each of them enhances one circle with $x_-$ and the remaining two circles with $x_+$. For the $j$th vertex ($j=1,4,6$), let $\gen{z}_{j,1}, \gen{z}_{j,2}$ and $\gen{z}_{j,3}$ be the generators that enhance the first, second and third circles with an $x_-$, respectively.
\item Read left to right, the second, third and fifth vertices in homological degree $0$ (semi-simplicial degree $1$) contribute with one generator, that enhances the only circle with $x_+$. For the $j$th vertex ($j=2,3,5$), let $\gen{z}_j$ denote that generator.
\item Each vertex in homological degree $-1$ (semi-simplicial degree $0$) contributes with a single generator, that labels every circle with $x_+$.
\end{itemize}
Let $\alpha = \gen{z}_{1,1}^* + \gen{z}_{1,3}^* + \gen{z}_{3}^* + \gen{z}_5^*\in C^1$, which is a cocycle because
\begin{align*}
\delta(\gen{z}_{1,1}^*) &= \gen{y}_{1,1}^* + \gen{y}_{2,1}^* & \delta(\gen{z}_{3}^*) &= \gen{y}_{1,1}^*+\gen{y}_{1,2}^* + \gen{y}_{4,1}^*+\gen{y}_{4,2}^*\\
\delta(\gen{z}_{1,3}^*) &= \gen{y}_{1,2}^* + \gen{y}_{2,2}^* & \delta(\gen{z}_{5}^*) &= \gen{y}_{2,1}^*+\gen{y}_{2,2}^* + \gen{y}_{4,1}^*+\gen{y}_{4,2}^*.
\end{align*}
Let us compute the second Steenrod square of the class $[\alpha]$ of degree $1$, using the formula
\[\Sq^2([\alpha]) = [\alpha\smile_{1-2}\alpha] = [\alpha\smile_{-1}\alpha].\]
The $\smile_{-1}$ product is dual to $\nabla_{-1}$, and we want to compute it on the generators of $C_*$ of semi-simplicial degree $3$ (which is where the second Steenrod square of $\alpha$ lives), so we have:
\[\nabla_{-1}|_{C_3} = \btoab{\bF_2}{\nabla^{\binom{3}{4}}\circ \Delta_3}.\]
and, by definition,
\[\nabla^{\binom{3}{4}} = \sum_{U\in \APower_4(3)}{\partial^3_{U^-}\wedge\partial^3_{U^+}}.\]
The target of this span is the union of $X_{-1}\times X_3$, $X_0\times X_2$, $X_1\times X_1$, $X_2\times X_0$ and $X_3\times X_{-1}$. We are interested only on its restriction to $X_1\times X_1$, so we only need to consider the summands indexed by the following $U\in \APower_4(3)$:
\begin{align*}
(0,0,1,1)&&
(0,0,2,2)&&
(0,0,3,3)&&
(1,1,2,2)&&
(1,1,3,3)\\
(2,2,3,3)&&
(0,0,1,3)&&
(0,1,1,3)&&
(0,2,2,3)&&
(0,2,3,3)
\end{align*}
which are characterised by the property that $|U^-| = |U^+| = 2$:
\begin{align*}
(0,0,1,1)^- &= (0,1) & (2,2,3,3)^- &= (2,3) \\
(0,0,1,1)^+ &= (0,1) & (2,2,3,3)^+ &= (2,3) \\
(0,0,2,2)^- &= (0,2) & (0,0,1,3)^- &= (0,1) \\
(0,0,2,2)^+ &= (0,2) & (0,0,1,3)^+ &= (0,3) \\
(0,0,3,3)^- &= (0,3) & (0,1,1,3)^- &= (0,1) \\
(0,0,3,3)^+ &= (0,3) & (0,1,1,3)^+ &= (1,3) \\
(1,1,2,2)^- &= (1,2) & (0,2,2,3)^- &= (0,2) \\
(1,1,2,2)^+ &= (1,2) & (0,2,2,3)^+ &= (2,3) \\
(1,1,3,3)^- &= (1,3) & (0,2,3,3)^- &= (0,3) \\
(1,1,3,3)^+ &= (1,3) & (0,2,3,3)^+ &= (2,3)
\end{align*}
Since the class $\alpha$ is supported at the first, third and fifth vertices, the span $\partial^3_{U^-}\wedge\partial^3_{U^+}$ is irrelevant unless both $U^-$ and $U^+$ belong to $\{(0,1),(0,3),(1,3)\}$. Therefore the only relevant $U$'s are
\begin{align*}
(0,0,1,1)&&
(0,0,3,3)&&
(1,1,3,3)&&
(0,0,1,3)&&
(0,1,1,3)&
\end{align*}
and $\partial_{U^-}$ and $\partial_{U^+}$ are one of the following three spans, where we write $\partial^k_{ij}$ for $\partial^k_{\{i,j\}}$:
\begin{align*}\xymatrixrowsep{0cm}\xymatrix{
&& \gen{z}_{1,1} \ar@{|->}[dl]\ar@{|->}[r] & \gen{z}_{1,1} \\
\partial^3_{01}&\gen{x}& \gen{z}_{1,2} \ar@{|->}[l]\ar@{|->}[r] & \gen{z}_{1,2} \\
&& \gen{z}_{1,3} \ar@{|->}[ul]\ar@{|->}[r] & \gen{z}_{1,3}
}
\\
\xymatrixrowsep{-.2cm}\xymatrixcolsep{1.06cm}\xymatrix{
&& a \ar@{|->}[dl] \ar@{|->}[dr] &  \\
\partial^3_{03}&\gen{x}& & \gen{z}_{3} \\
&& b \ar@{|->}[ul] \ar@{|->}[ur] & 
}
\\
\xymatrixrowsep{-.2cm}\xymatrixcolsep{1.06cm}\xymatrix{
&& c \ar@{|->}[dl] \ar@{|->}[dr] &  \\
\partial^3_{13}&\gen{x}& & \gen{z}_{5} \\
&& d \ar@{|->}[ul] \ar@{|->}[ur] & 
}
\end{align*}
We order the elements of these spans up-to-bottom, i.e.: $\z_{1,1}<\z_{1,2}<\z_{1,3}$ and $a<b$ and $c<d$. In order to understand the $2$-morphisms involved, we first make explicit the following spans. We also pick an ordering of each span, which is indicated on the right (recall that, at the end, the Steenrod squares will not depend on the chosen order). The orders indicated may be extended arbitrarily to the whole spans $\partial^2_0,\partial^2_1$ and $\partial^2_2$. Since the computations do not depend on this extended order, we do not indicate it, in order to keep the example short.
\begin{align*}\xymatrixrowsep{-.2cm}\xymatrixcolsep{1.1cm}\xymatrix{
&& \gen{y}_{1,1} \ar@{|->}[dl]\ar@{|->}[r] & \gen{y}_{1,1} & 2 \\
\partial^3_0 &\gen{x}&&  \\
&& \gen{y}_{1,2} \ar@{|->}[ul]\ar@{|->}[r] & \gen{y}_{1,2} & 1
}&
\\
\xymatrixrowsep{-.2cm}\xymatrixcolsep{1.1cm}\xymatrix{
&& \gen{y}_{2,1} \ar@{|->}[dl]\ar@{|->}[r] & \gen{y}_{2,1} & 1 \\
\partial^3_1  &\gen{x}& & \\
&& \gen{y}_{2,2} \ar@{|->}[ul]\ar@{|->}[r] & \gen{y}_{2,2} & 2
}&
\\
\xymatrixrowsep{-.2cm}\xymatrixcolsep{1.1cm}\xymatrix{
&& \gen{y}_{4,1} \ar@{|->}[dl]\ar@{|->}[r] & \gen{y}_{4,1}&2 \\
\partial^3_3 &\gen{x}&  &\\
&& \gen{y}_{4,2} \ar@{|->}[ul]\ar@{|->}[r] & \gen{y}_{4,2}&1
}&
\end{align*}

\begin{align*}
\xymatrixrowsep{-.2cm}\xymatrix{
&\gen{y}_{1,1}& \gen{z}_{1,1} \ar@{|->}[l]\ar@{|->}[r] & \gen{z}_{1,1}&1 \\
 \\
\partial^2_0|_{0111} &&\gen{z}_{1,2} \ar@{|->}[dl]\ar@{|->}[r]& \gen{z}_{1,2}&2  \\
&\gen{y}_{1,2}&&& \\
&& \gen{z}_{1,3} \ar@{|->}[ul]\ar@{|->}[r] & \gen{z}_{1,3}&3
}&
\\
\xymatrixrowsep{-.2cm}\xymatrix{
&& \gen{z}_{1,1} \ar@{|->}[dl]\ar@{|->}[r] & \gen{z}_{1,1} & 1 \\
 &\gen{y}_{2,1}&&& \\
\partial^2_0|_{1011} &&\gen{z}_{1,2} \ar@{|->}[ul]\ar@{|->}[r]& \gen{z}_{1,2}& 2 \\
&\gen{y}_{2,2}& \gen{z}_{1,3} \ar@{|->}[l]\ar@{|->}[r] & \gen{z}_{1,3}& 3
}&
\\
\xymatrixrowsep{-.2cm}\xymatrix{
&\gen{y}_{4,1}& \gen{y}_{4,1} \ar@{|->}[l]\ar@{|->}[dr] &&1  \\
\partial^2_0|_{1110} &&& \gen{z}_3  &\\
&\gen{y}_{4,2}& \gen{y}_{4,2} \ar@{|->}[l]\ar@{|->}[ur] && 2
}&
\\
\xymatrixrowsep{-.2cm}\xymatrix{
&\gen{y}_{1,1}& \gen{y}_{1,1} \ar@{|->}[l]\ar@{|->}[dr] &&1  \\
\partial^2_2|_{0111} &&& \gen{z}_3 & \\
&\gen{y}_{1,2}& \gen{y}_{1,2} \ar@{|->}[l]\ar@{|->}[ur] && 2
}&
\\
\xymatrixrowsep{-.2cm}\xymatrix{
&\gen{y}_{4,1}& \gen{y}_{4,1} \ar@{|->}[l]\ar@{|->}[dr] && 1 \\
\partial^2_1|_{1110} &&& \gen{z}_5 & \\
&\gen{y}_{4,2}& \gen{y}_{4,2} \ar@{|->}[l]\ar@{|->}[ur] && 2
}&
\\ 
\xymatrixrowsep{-.2cm}\xymatrix{
&\gen{y}_{2,1}& \gen{y}_{2,1} \ar@{|->}[l]\ar@{|->}[dr] &&1  \\
\partial^2_2|_{1011} &&& \gen{z}_5 & \\
&\gen{y}_{2,2}& \gen{y}_{2,2} \ar@{|->}[l]\ar@{|->}[ur] && 2
}&
\end{align*}
and the bijections relevant for the spans $\partial^3_{01},\partial^3_{03}$ and $\partial^3_{13}$ are (we omit the superscript $3$ on $\mu^3_{i,j}$):
\begin{align*}
\mu_{01}\colon \partial^3_{01}&\lra \partial^2_{0}\circ \partial^3_{0} & \mu_{10}\colon \partial^3_{01}&\lra \partial^2_{0}\circ \partial^3_{1}\\
\gen{z}_{1,1} &\longmapsto (\gen{y}_{1,1},\gen{z}_{1,1}) & \gen{z}_{1,1} &\longmapsto (\gen{y}_{2,1},\gen{z}_{1,1})\\
\gen{z}_{1,2} &\longmapsto (\gen{y}_{1,2},\gen{z}_{1,2}) & \gen{z}_{1,2}&\longmapsto (\gen{y}_{2,1},\gen{z}_{1,2})\\
\gen{z}_{1,3} &\longmapsto (\gen{y}_{1,2},\gen{z}_{1,3}) & \gen{z}_{1,3} &\longmapsto (\gen{y}_{2,2},\gen{z}_{1,3}) 
\end{align*}
\begin{align*}
\mu_{03}\colon \partial^3_{03}&\lra \partial^2_{2}\circ \partial^3_{0} & \mu_{30}\colon \partial^3_{03}&\lra \partial^2_{0}\circ \partial^3_{3} \\
a &\longmapsto (\gen{y}_{1,1},\gen{y}_{1,1}) & a &\longmapsto (\gen{y}_{4,2},\gen{y}_{4,2}) \\
b &\longmapsto (\gen{y}_{1,2},\gen{y}_{1,2}) & b &\longmapsto (\gen{y}_{4,1},\gen{y}_{4,1})
\end{align*}
\begin{align*}
\mu_{13}\colon \partial^3_{13}&\lra \partial^2_{2}\circ \partial^3_{1} & \mu_{31}\colon \partial^3_{13}&\lra \partial^2_{1}\circ \partial^3_{3} \\
c &\longmapsto (\gen{y}_{2,1},\gen{y}_{2,1}) & c &\longmapsto (\gen{y}_{4,2},\gen{y}_{4,2})\\
d &\longmapsto (\gen{y}_{2,2},\gen{y}_{2,2})& d &\longmapsto (\gen{y}_{4,1},\gen{y}_{4,1}).
\end{align*}
To determine $\mu_{03},\mu_{30},\mu_{13}$ and $\mu_{31}$ we have used the ladybug matching. Now,to compute $\partial_{U^-}\wedge \partial_{U^+}$, we need to understand $\cO_{s,t}(\twoU)^+$ for each $(s,t)\in \partial_{U^-}\times \partial_{U^+}$. As in this case $n+|U^-|+|U^+| = 3+2+2 = 7$, we have that an $(s,t)$-good pair $(W^\shortparallel,W^\smallwedge)$ is positive if
\begin{align*}\lambda_{W^{\shortparallel}}(s) &= \lambda_{W^\shortparallel}(t) & \lambda_{W^\shortparallel,W^{\smallwedge}}(s) &> \lambda_{W^\shortparallel,W^\smallwedge}(t).\end{align*}

If $\twoU = (u_1,u_2)$ has two elements, then each $(s,t)$ has exactly two maximal $(s,t)$-good chains of length $2$, which we call ``left'' and ``right'': the left maximal chain is either $(\emptyset,\{u_1\})\prec (\emptyset,\{u_1,u_2\})$ or $(\{u_1\},\{u_2\})\prec (\emptyset,\{u_1,u_2\})$, whereas the right maximal chain is either $(\emptyset,\{u_2\})\prec (\emptyset,\{u_1,u_2\})$ or $(\{u_2\},\{u_1\})\prec (\emptyset,\{u_1,u_2\})$. We will refer to any of these maximal chains by its first pair, as the second pair is always $(\emptyset,\{u_1,u_2\})$. To simplify the notation, we will often write $(W^\shortparallel,W^\smallwedge)$ instead of $(W^\shortparallel_1,W^\smallwedge_1)$.

If $\twoU=(u)$ has a single element, then each $(s,t)$ has a single maximal chain of length $1$, namely $\{(\emptyset,\{u\})\}$. 

Here are the spans $\partial^3_{U^-}\wedge\partial^3_{U^+}$ for the five relevant cases. 
\begin{itemize}
\item $U = (0,0,1,1)$. Then $\partial^3_{U^-} = \partial^3_{U^+} = \partial^3_{01}=\{\gen{z}_{1,1},\gen{z}_{1,2},\gen{z}_{1,3}\}$, so $\twoU$ has two elements. Now, if $(s,t)$ is any of the pairs $(\z_{1,1},\z_{1,2}), (\z_{1,1},\z_{1,3}), (\z_{1,2},\z_{1,3})$, then $\cO_{s,t}(\twoU)^+ = \emptyset$, because as $\z_{1,1}<\z_{1,2}$, $\z_{1,1}<\z_{1,3}$ and $\z_{1,2}<\z_{1,3}$ we have that the pair $(\emptyset,\{u_1,u_2\})$, present in all maximal chains, is not positive. On the other hand, if $(s,t)$ is any of the pairs $(\z_{1,2},\z_{1,1})$, $(\z_{1,3},\z_{1,1})$, $(\z_{1,3},\z_{1,2})$, the pair $(\emptyset,\{u_1,u_2\})$ is positive, and we have the following table indicating the positiveness of each maximal chain. The column ``span'' specifies the target of $\lambda_{W^{\shortparallel},W^\smallwedge}$ (its source is always $\partial^3_{01}$). To compute the values $\lambda_{W^{\shortparallel},W^\smallwedge}(s)$ and $\lambda_{W^{\shortparallel},W^\smallwedge}(t)$, one uses the bijections $\mu_{01}$ and $\mu_{10}$ above.

\[\begin{array}{|cccccc|}\hline
(s,t) & \text{maximal chain} & \lambda_{W^{\shortparallel},W^\smallwedge}(s) & \lambda_{W^{\shortparallel},W^\smallwedge}(t) & \text{span}&\text{positiveness}  \\ \hline
(\z_{1,2},\z_{1,1}) & (\emptyset, \{0\}) & \y_{1,2} & \y_{1,1} &\partial^3_0& - \\
(\z_{1,2},\z_{1,1}) & (\{1\}, \{0\}) & \z_{1,2} & \z_{1,1} &\partial^2_0|_{1011}& + \\
(\z_{1,3},\z_{1,1}) & (\emptyset, \{0\}) & \y_{1,2} & \y_{1,1} &\partial^3_0& - \\
(\z_{1,3},\z_{1,1}) & (\emptyset, \{1\}) & \y_{2,2} & \y_{2,1} &\partial^3_1& + \\
(\z_{1,3},\z_{1,2}) & (\{0\}, \{1\}) & \z_{1,3} & \z_{1,2} &\partial^2_1|_{0111}& + \\
(\z_{1,3},\z_{1,2}) & (\emptyset, \{1\}) & \y_{2,2} & \y_{2,1} &\partial^3_1& + \\ \hline
\end{array}\]
For example, the first row in the table computes the left maximal chain of the pair $s=\z_{1,2},t=\z_{1,1}$. The first step is to find $W_1^\shortparallel$, which will be empty if $\lambda_{\{0\}}(s)\neq \lambda_{\{0\}}(t)$ and equal to $\{0\}$ if $\lambda_{\{0\}}(s)= \lambda_{\{0\}}(t)$. The projection $\lambda_{\{0\}}$ is defined as the composition 
\[\lambda_{\{0\}}\colon \partial^3_{01}\overset{\mu_{01}}{\lra} \partial^2_0\circ\partial^3_0\lra \partial^3_0\] 
that sends $s$ and $t$ first to $(\y_{1,2},\z_{1,2})$ and $(\y_{1,1},\z_{1,1})$, and then to $\y_{1,2}$ and $\y_{1,1}$. Since $\y_{1,2}\neq \y_{1,1}$, we deduce that $W_1^\shortparallel=\emptyset$, hence $(W^\shortparallel_1,W^\smallwedge_1)=(\emptyset,\{0\})$. The second step is to compute the image of $s$ and $t$ under the projection 
\[\lambda_{\emptyset,\{0\}}\colon \partial^3_{01}\lra \partial^2_0\circ\partial^3_0\circ \partial^2_\emptyset\lra \partial^3_0.\]
As $\partial^n_\emptyset$ is the identity morphism, the first bijection $\mu_{\emptyset,\{0\},\{1\}}$ may be replaced by $\mu_{01}$, and therefore the value of $\lambda_{\emptyset,\{0\}}$ on $s$ and $t$ is 
\begin{align*}
s\longmapsto (\y_{1,2},\z_{1,2}) &\longmapsto \y_{1,2} & t&\longmapsto (\y_{1,1},\z_{1,1})\longmapsto \y_{1,1}.\end{align*}
For the fourth step one checks that $\y_{1,2}<\y_{1,1}$ in the span $\partial^3_0$, and therefore the pair is negative.

As another example, the second row in the table computes the right maximal chain of the pair $s=\z_{1,2},t=\z_{1,1}$. The first step is to find $W_1^\shortparallel$, which will be empty if $\lambda_{\{1\}}(s)\neq \lambda_{\{1\}}(t)$ and equal to $\{1\}$ if $\lambda_{\{1\}}(s)= \lambda_{\{1\}}(t)$. The projection $\lambda_{\{1\}}$ is defined as the composition 
\[\lambda_{\{1\}}\colon \partial^3_{01}\overset{\mu_{10}}{\lra} \partial^2_0\circ\partial^3_1\lra \partial^3_1\] 
that sends $s$ and $t$ first to $(\y_{2,1},\z_{1,2})$ and $(\y_{2,1},\z_{1,1})$, and then to $\y_{2,1}$ and $\y_{2,1}$. Since $\y_{2,1}= \y_{2,1}$, we deduce that $W_1^\shortparallel=\{1\}$, hence $(W^\shortparallel_1,W^\smallwedge_1)=(\{1\},\{0\})$. The second step is to compute the image of $s$ and $t$ under the projection 
\[\lambda_{\{1\},\{0\}}\colon \partial^3_{01}\lra \partial^2_\emptyset\circ\partial^2_0\circ\partial^3_1\lra \partial^2_0.\]
As $\partial^n_\emptyset$ is the identity morphism, the first bijection $\mu_{\{1\},\{0\},\emptyset}$ may be replaced by $\mu_{10}$. Additionally, the image of the bijection is contained in the subspan $\partial^2_{0}|_{1011}$ of $\partial^2_0$, and the value of $\lambda_{\{1\},\{0\}}$ on $s$ and $t$ is 
\begin{align*}
s&\longmapsto (\y_{2,1},\z_{1,2}) \longmapsto \z_{1,2}\in \partial^2_{0}|_{1011} \\ 
t&\longmapsto (\y_{2,1},\z_{1,1})\longmapsto \z_{1,1}\in \partial^2_{0}|_{1011}.\end{align*}
For the fourth step one checks that $\z_{1,2}>\z_{1,1}$ in the span $\partial^2_{0}|_{1011}$, and therefore the pair is positive.

Going back to the table, we deduce that for the pairs $(\z_{1,2},\z_{1,1})$ and $(\z_{1,3},\z_{1,1})$, the set $\cO_{s,t}(\twoU)^+$ has a single element, whereas for the pair $(\z_{1,3},\z_{1,2})$ the set $\cO_{s,t}(\twoU)^+$ has two elements. Therefore $\partial^3_{01}\wedge\partial^3_{01}$ is isomorphic to the following span:
\[\xymatrixrowsep{-.1cm}\xymatrix{
&\star\ar@{|->}[dl]\ar@{|->}[r]&(\z_{1,2},\z_{1,1}) \\
(\gen{x},\gen{x}) &\star \ar@{|->}[l]\ar@{|->}[r] & (\z_{1,3},\z_{1,1})\\
&\star,\star\ar@{|->}[ul]\ar@{|->}[r]&(\z_{1,3},\z_{1,2})
}\]

\item $U = (0,0,3,3)$. Then $\partial^3_{U^-} = \partial^3_{U^+} =\partial^3_{03}= \{a,b\}$. Now, since $b>a$, then $\cO_{s,t}(\twoU)^+ = \emptyset$ for $(s,t) = (a,b)$. On the other hand, if $(s,t) = (b,a)$ then
\[\begin{array}{|cccccc|}\hline
(s,t) & \text{maximal chain} & \lambda_{W^{\shortparallel},W^\smallwedge}(s) & \lambda_{W^{\shortparallel},W^\smallwedge}(t) &\text{span}& \text{positiveness}  \\ \hline
(b,a) & (\emptyset, \{0\}) & \y_{1,2} & \y_{1,1} &\partial^3_{0}& - \\
(b,a) & (\emptyset, \{3\}) & \y_{4,1} & \y_{4,2} &\partial^3_{3}& + \\ \hline
\end{array}\]
Therefore, $\cO_{b,a}(\twoU)^+$  
 has a single element, hence $\partial^3_{03}\wedge\partial^3_{03}$ is
 isomorphic to the following span:
\[\xymatrixrowsep{-.2cm}\xymatrix{
(\gen{x},\gen{x}) &\star \ar@{|->}[l]\ar@{|->}[r]&(\z_{3},\z_{3}) 
}\]
\item $U = (1,1,3,3)$. Then $\partial^3_{U^-} = \partial^3_{U^+} =\partial^3_{13}= \{c,d\}$. Now, since $d>c$, then $\cO_{s,t}(\twoU)^+ = \emptyset$ for $(s,t) = (c,d)$. On the other hand, if $(s,t) = (d,c)$ then
\[\begin{array}{|cccccc|}\hline
(s,t) & \text{maximal chain} & \lambda_{W^{\shortparallel},W^\smallwedge}(s) & \lambda_{W^{\shortparallel},W^\smallwedge}(t) &\text{span}& \text{positiveness}  \\ \hline
(d,c) & (\emptyset, \{1\}) & \y_{2,2} & \y_{2,1} &\partial^3_1& + \\
(d,c) & (\emptyset, \{3\}) & \y_{4,1} & \y_{4,2} &\partial^3_3& + \\ \hline
\end{array}\]
Therefore, $\cO_{d,c}(\twoU)^+$ has two elements, so $\partial^3_{13}\wedge\partial^3_{13}$ is isomorphic to the following span:
\[\xymatrixrowsep{-.2cm}\xymatrix{
(\gen{x},\gen{x}) &\star,\star\ar@{|->}[l]\ar@{|->}[r]& (\z_5,\z_5)\\
}\]
\item $U=(0,0,1,3)$. Then $\partial^3_{U^-} =\partial^3_{01}= \{\z_{1,1},\z_{1,2},\z_{1,3}\}$ and $\partial^3_{U^+} =\partial^3_{03}= \{a,b\}$ and we have that for each $(s,t)\in \partial^3_{U^-}\times \partial^3_{U^+}$, the positiveness of the only maximal chain is as follows:
\[\begin{array}{|cccccc|}\hline
(s,t) & \text{maximal chain} & \lambda_{W^{\shortparallel},W^\smallwedge}(s) & \lambda_{W^{\shortparallel},W^\smallwedge}(t) &\text{span}& \text{positiveness}  \\ \hline
(\z_{1,1},a) & (\emptyset, \{0\}) & \y_{1,1} & \y_{1,1} &\partial^3_0& \text{not $(s,t)$-good} \\
(\z_{1,2},a) & (\emptyset, \{0\}) & \y_{1,2} & \y_{1,1} &\partial^3_0& - \\
(\z_{1,3},a) & (\emptyset, \{0\}) & \y_{1,2} & \y_{1,1} &\partial^3_0& - \\ 
(\z_{1,1},b) & (\emptyset, \{0\}) & \y_{1,1} & \y_{1,2} &\partial^3_0& + \\
(\z_{1,2},b) & (\emptyset, \{0\}) & \y_{1,2} & \y_{1,2} &\partial^3_0& \text{not $(s,t)$-good} \\
(\z_{1,3},b) & (\emptyset, \{0\}) & \y_{1,2} & \y_{1,2} &\partial^3_0& \text{not $(s,t)$-good} \\ \hline
\end{array}\]
Therefore, $\cO_{\z_{1,1},b}(\twoU)^+$ has a single element, and every other $\cO_{s,t}(\twoU)^+$ is empty, so $\partial^3_{01}\wedge\partial^3_{03}$ is isomorphic to the following span:
\[\xymatrixrowsep{-.2cm}\xymatrix{
(\gen{x},\gen{x})&\star\ar@{|->}[l]\ar@{|->}[r]& (\z_{1,1},\z_3)}\]
\item $U=(0,1,1,3)$. Then $\partial^3_{U^-} =\partial^3_{01}= \{\z_{1,1},\z_{1,2},\z_{1,3}\}$ and $\partial^3_{U^+} = \partial^3_{13}=\{c,d\}$ and we have that for each $(s,t)\in \partial^3_{U^-}\times \partial^3_{U^+}$, the positiveness of the only maximal chain is as follows:
\[\begin{array}{|cccccc|}\hline
(s,t) & \text{maximal chain} & \lambda_{W^{\shortparallel},W^\smallwedge}(s) & \lambda_{W^{\shortparallel},W^\smallwedge}(t) &\text{span}& \text{positiveness}  \\ \hline
(\z_{1,1},c) & (\emptyset, \{1\}) & \y_{2,1} & \y_{2,1} &\partial^3_1& \text{not $(s,t)$-good} \\
(\z_{1,2},c) & (\emptyset, \{1\}) & \y_{2,1} & \y_{2,1} &\partial^3_1& \text{not $(s,t)$-good} \\
(\z_{1,3},c) & (\emptyset, \{1\}) & \y_{2,2} & \y_{2,1} &\partial^3_1& +\\
(\z_{1,1},d) & (\emptyset, \{1\}) & \y_{2,1} & \y_{2,2} &\partial^3_1& - \\
(\z_{1,2},d) & (\emptyset, \{1\}) & \y_{2,1} & \y_{2,2} &\partial^3_1& - \\
(\z_{1,3},d) & (\emptyset, \{1\}) & \y_{2,2} & \y_{2,2} &\partial^3_1& \text{not $(s,t)$-good} \\ \hline
\end{array}\]
Therefore, $\cO_{\z_{1,3},c}(\twoU)^+$ has a single element, and every other $\cO_{s,t}(\twoU)^+$ is empty, so $\partial^3_{01}\wedge\partial^3_{03}$ is isomorphic to the following span:
\[\xymatrixrowsep{-.2cm}\xymatrix{
(\gen{x},\gen{x}) &\star\ar@{|->}[l]\ar@{|->}[r]&(\z_{1,3},\z_{5}) 
}\]
\end{itemize}
As a consequence, we have that
\[
\begin{array}{|cccl|} \hline
U &&& \btoab{\bF_2}{\partial^3_{U^-}\wedge\partial^3_{U^+}\circ \Delta_3}(\gen{x}) \\ \hline
(0,0,1,1) &&& \z_{1,2}\otimes \z_{1,1} + \z_{1,3}\otimes \z_{1,1} + 2\cdot\z_{1,3}\otimes \z_{1,2} \\
(0,0,3,3) &&& 1\cdot \z_3\otimes \z_3 \\
(1,1,3,3) &&& 2\cdot\z_5\otimes \z_5 \\
(0,0,1,3) &&& \z_{1,1}\otimes \z_{3}  \\
(0,1,1,3) &&& \z_{1,3}\otimes \z_5\\ \hline
\end{array}
\]
Dualising:
\begin{align*}
\alpha\smile_{-1}\alpha =&\  \nabla_{-1}^*\left((\z^*_{1,1}+\z^*_{1,3} + \z^*_{3} + \z^*_{5})\otimes(\z^*_{1,1}+\z^*_{1,3} + \z^*_{3} + \z^*_{5})\right) \\
=&\ \nabla_{-1}^*(
\z^*_{1,1}\otimes \z^*_{1,1}+ \z^*_{1,1}\otimes \z^*_{1,3} + \z^*_{1,1}\otimes \z^*_{3} + \z^*_{1,1}\otimes \z^*_{5}
\\ &+
\z^*_{1,3}\otimes \z^*_{1,1}+ \z^*_{1,3}\otimes \z^*_{1,3} + \z^*_{1,3}\otimes \z^*_{3} + \z^*_{1,3}\otimes \z^*_{5}
\\ &+
\z^*_{3}\otimes \z^*_{1,1}+ \z^*_{3}\otimes \z^*_{1,3} + \z^*_{3}\otimes \z^*_{3} + \z^*_{3}\otimes \z^*_{5}
\\ &+
\z^*_{5}\otimes \z^*_{1,1}+ \z^*_{5}\otimes \z^*_{1,3} + \z^*_{5}\otimes \z^*_{3} + \z^*_{5}\otimes \z^*_{5}) \\ 
=&\ (0 + 0 + 1 + 0  
\\ &+
1 + 0 + 0 + 1 
\\ &+
0 + 0 + 1 + 0 
\\ &+
0 + 0 + 0 + 0)\cdot \gen{x}^* \\
=&\  0
\end{align*}
So we conclude that $\Sq^2([\alpha]) = [0]$.

\end{example}

\def\cprime{$'$}
\providecommand{\bysame}{\leavevmode\hbox to3em{\hrulefill}\thinspace}
\providecommand{\MR}{\relax\ifhmode\unskip\space\fi MR }
\providecommand{\MRhref}[2]{%
  \href{http://www.ams.org/mathscinet-getitem?mr=#1}{#2}
}
\providecommand{\href}[2]{#2}

\end{document}